\DeclareSymbolFont{rsfs}{U}{rsfs}{m}{n}
\DeclareSymbolFontAlphabet{\mathscrsfs}{rsfs}
\newtheoremstyle{myexample} 
    {\topsep}                    
    {\topsep}                    
    {\rm }                   
    {}                           
    {\bf }                   
    {.}                          
    {.5em}                       
    {}  
\newtheoremstyle{myremark} 
    {\topsep}                    
    {\topsep}                    
    {\rm}                        
    {}                           
    {\bf}                        
    {.}                          
    {.5em}                       
    {}  
\theoremstyle{plain}
\renewenvironment{proof}{\noindent{\bfseries Proof.}}{\begin{flushright} \qedsymbol\end{flushright}}
\newtheorem{theorem}{Theorem}[section]
\newtheorem{proposition}{Proposition}[section]
\newtheorem{lemma}{Lemma}[section]
\newtheorem{corollary}{Corollary}[section]
\newtheorem{assumption}{Assumption}[section]
\theoremstyle{myremark} 
\newtheorem{remark}{Remark}[section]
\theoremstyle{myexample} 
\newtheorem{example}{Example}[section]
\DeclareMathOperator*{\plim}{p-lim}
\def\<{\langle}
\def\>{\rangle}
\def\dd{\mathrm{d}}
\newcommand{\indep}{\perp \!\!\! \perp}
\def\Pj{\mathrm{\mathbf{P}}}
\def\top{\sT}
\def\barbLambda{\bar{\bLambda}}
\def\barbTheta{\bar{\bTheta}}
\def\barTheta{\bar{\Theta}}
\def\barblambda{\bar{\vlambda}}
\def\barbtheta{\bar{\vtheta}}
\def\bartheta{\bar{\theta}}
\def\cF{\mathcal{F}}
\def\cE{\mathcal{E}}
\def\tensorl{ \mu_{\Lambda}^{\otimes n}(\dd\vlambda)}
\def\tensort{ \mu_{\Theta}^{\otimes d}(\dd\vtheta)}
\def\tensorlb{ {\mu}_{\bar\Lambda}^{\otimes n}(\dd \bar\vlambda)}
\def\tensortb{ {\mu}_{\bar\Theta}^{\otimes d}(\dd \bar{\vtheta})}
\def\vlambda{\boldsymbol{\lambda}}
\def\vtheta{\boldsymbol{\theta}}
\def\GOE{{\rm GOE}}
\def\MMSE{{\rm MMSE}}
\def\MMSEas{{\rm MMSE}^{\mbox{\tiny\rm asym}}}
\def\MMSEsy{{\rm MMSE}^{\mbox{\tiny\rm symm}}}
\def\Infoas{{\rm I}^{\mbox{\tiny\rm asym}}}
\def\Infosy{{\rm I}^{\mbox{\tiny\rm symm}}}
\def\de{{\rm d}}
\def\pertl{\bar H_{n,\lambda}^{\scriptscriptstyle  {(pert)}}}
\def\pertt{\bar H_{n,\theta}^{\scriptscriptstyle  {(pert)}}}
\def\perttot{\bar H_{n}^{\scriptscriptstyle  {(tot)}}}
\def\bartheta{\bar{\theta}}
\def\cF{\mathcal{F}}
\def\Overlap{\mbox{\rm Overlap}}
\def\Pairoverlap{\mbox{PairOverlap}}
\def\cE{\mathcal{E}}
\def\ep{\varepsilon}
\def\Holder{H\"{o}lder}
\def\NN{\mathbb{N}}
\def\RR{\mathbb{R}}
\def\bA{\mathbf{A}}
\def\bB{\mathbf{B}}
\def\bC{\mathbf{C}}
\def\bD{\mathbf{D}}
\def\bE{\mathbf{E}}
\def\bG{\mathbf{G}}
\def\bI{\mathbf{I}}
\def\bL{\mathbf{L}}
\def\bM{\mathbf{M}}
\def\bO{\mathbf{O}}
\def\bP{\mathbf{P}}
\def\bQ{\mathbf{Q}}
\def\bR{\mathbf{R}}
\def\bS{\mathbf{S}}
\def\bU{\mathbf{U}}
\def\bV{\mathbf{V}}
\def\bW{\mathbf{W}}
\def\bX{\mathbf{X}}
\def\bY{\mathbf{Y}}
\def\bZ{\mathbf{Z}}
\def\ba{\boldsymbol{a}}
\def\be{\boldsymbol{e}}
\def\bg{\boldsymbol{g}}
\def\bv{\boldsymbol{v}}
\def\bx{\boldsymbol{x}}
\def\by{\boldsymbol{y}}
\def\bz{\boldsymbol{z}}
\def\bA{\boldsymbol{A}}
\def\bB{\boldsymbol{B}}
\def\bC{\boldsymbol{C}}
\def\bD{\boldsymbol{D}}
\def\bE{\boldsymbol{E}}
\def\bG{\boldsymbol{G}}
\def\bI{\boldsymbol{I}}
\def\bL{\boldsymbol{L}}
\def\bM{\boldsymbol{M}}
\def\bO{\boldsymbol{O}}
\def\bP{\boldsymbol{P}}
\def\bQ{\boldsymbol{Q}}
\def\bR{\boldsymbol{R}}
\def\bS{\boldsymbol{S}}
\def\bU{\boldsymbol{U}}
\def\bV{\boldsymbol{V}}
\def\bW{\boldsymbol{W}}
\def\bX{\boldsymbol{X}}
\def\bY{\boldsymbol{Y}}
\def\bZ{\boldsymbol{Z}}
\def\normal{{\mathsf{N}}}
\def\btheta{\boldsymbol{\Theta}}
\def\bttheta{\tilde{\boldsymbol{\Theta}}}
\def\bhtheta{\hat{\boldsymbol{\Theta}}}
\def\bSigma{\boldsymbol{\Sigma}}
\def\bGamma{\boldsymbol{\Gamma}}
\def\blambda{\boldsymbol{\Lambda}}
\def\bhLambda{\hat{\boldsymbol{\Lambda}}}
\def\bOmega{\boldsymbol{\Omega}}
\def\bLambda{\boldsymbol{\Lambda}}
\def\bLambdas{\boldsymbol{\Lambda}_{0}}
\def\bTheta{\boldsymbol{\Theta}}
\def\bThetas{\boldsymbol{\Theta}_{0}}
\def\bfone{{\boldsymbol 1}}
\def\hbLambda{\hat{\boldsymbol \Lambda}}
\def\hbTheta{\hat{\boldsymbol \Theta}}
\def\id{{\boldsymbol I}}
\def\op{\mbox{\tiny\rm op}}
\def\cD{{\mathcal D}}
\def\sT{{\sf T}}
\renewcommand{\P}{\mathbb{P}}
\newcommand{\E}{\mathbb{E}}
\newcommand{\R}{\mathbb{R}}
\newcommand{\eps}{\varepsilon}
\newcommand{\Var}{\operatorname{Var}}
\newcommand{\argmax}{\operatorname{argmax}}
\newcommand{\argmin}{\operatorname{argmin}}
\newcommand{\toP}{\overset{P}{\to}}
\newcommand{\Cov}{\operatorname{Cov}}
\newcommand{\diag}{\operatorname{diag}}
\newcommand{\Unif}{\operatorname{Unif}}
\newcommand{\RN}[1]{%
  \textup{\uppercase\expandafter{\romannumeral#1}}%
}
\newcommand\iidsim{\stackrel{\mathclap{iid}}{\sim}}
\newcommand{\RNum}[1]{\uppercase\expandafter{\romannumeral #1\relax}}
\newcommand*{\rom}[1]{\expandafter\@slowromancap\romannumeral #1@}
\title{Fundamental Limits of Low-Rank Matrix Estimation \\
with Diverging Aspect Ratios}
\author{
	Andrea Montanari\footnotemark[2]
	\thanks{Department of Electrical Engineering, Stanford University; School of Mathematics, 
	Institute for Advanced Studies, Princeton} 
	\and 
	Yuchen Wu\thanks{Department of Statistics, Stanford University}
}
\date{}
\begin{document}
\maketitle

\begin{abstract}
We consider the problem of estimating the factors of a low-rank $n\times d$ matrix, 
when this is corrupted by additive Gaussian noise. A special example of our setting corresponds to 
clustering mixtures of Gaussians with equal (known) covariances. 
Simple spectral methods do not take into account the distribution of the entries of these
factors and are therefore often suboptimal. 
Here, we characterize the asymptotics of the minimum estimation error under
the assumption that the distribution of the entries  is known to the statistician.

Our results apply to the high-dimensional regime $n,d\to\infty$ and  $d/n\to\infty$
(or $d/n\to 0$) and  generalize earlier work that focused on the proportional asymptotics 
$n,d\to\infty$,  $d/n\to\delta\in (0,\infty)$. 
We outline an interesting signal strength regime in which $d/n\to \infty$ and 
partial recovery is possible for the left singular vectors while impossible 
for the right singular vectors. 
  
We illustrate the general theory by deriving consequences for Gaussian mixture clustering 
and carrying out a numerical study on genomics data.
\end{abstract}
\tableofcontents

\section{Introduction}\label{sec:Intro}

The problem of low-rank matrix estimation has received enormous attention 
within high-dimensional statistics, probability theory, random matrix theory,
and computer science. The \emph{spiked model} introduced by Johnstone 
\cite{johnstone2001distribution} (also known as `signal-plus-noise' model
or `deformed ensemble')  plays a central role in theoretical analysis, and
has inspired a number of statistical insights
 \cite{hoyle2004principal,baik2005phase,baik2006eigenvalues,johnstone2006high,amini2008high,johnstone2009sparse,ma2013sparse,deshpande2014sparse,deshpande2014information,benaych2011eigenvalues, benaych2012singular,johnstone2020testing,macris2020all}. 

In the spiked model, we observe a matrix $\bA\in\RR^{n\times d}$ which is 
given by the sum of a low-rank signal and random noise
\begin{align}\label{model:general}
	\bA = s_n \bLambda \bTheta^{\sT} + \bZ \, .
\end{align}
Here, $\bLambda\in \RR^{n\times r}$, $\bTheta\in \RR^{d\times r}$ are factors 
which we would like to estimate and $\bZ$ is a noise matrix with i.i.d. 
entries $Z_{ij}\sim\normal(0,1)$.
Finally,  $s_n \in \RR_{>0}$ is a signal-to-noise ratio which 
we also assume to be known. We will consider high-dimensional asymptotics
whereby $d,n\to\infty$ with $r$ fixed.
In what follows, we will denote by $\ba_1,\dots,\ba_n$ the rows of $\bA$.

Low-rank matrix estimation is ubiquitous in high-dimensional statistics, with special cases
including sparse PCA \cite{johnstone2009sparse,journee2010generalized,berthet2013optimal,deshpande2014sparse,deshpande2014information},
community detection \cite{abbe2017community,deshpande2017asymptotic}, submatrix localization 
\cite{kolar2011minimax,hajek2017submatrix}, and Gaussian mixture clustering 
\cite{moitra2010settling,regev2017learning,ndaoud2022sharp,cai2019chime}.
We illustrate the broad applicability of model \eqref{model:general} 
using two examples:
\begin{example}[Sparse PCA]\label{example:SparsePCA}
In a simple model for sparse PCA \cite{johnstone2009sparse}, we observe
vectors 
$$\ba_1, \dots,\ba_n  \sim_{iid}  \normal(0,\bSigma),$$ where 
$\bSigma= s_n^2 \bTheta\bTheta^{\sT}+\id_d$ with $\bTheta\in\RR^d$ a sparse vector that we would like to
estimate. 

This is the special case of model \eqref{model:general}, if we let $r=1$ and 
$\bLambda \sim \normal(\mathbf{0},\id_n)$.        
\end{example}  

\begin{example}[Mixture of Gaussians with known covariance]
In a mixture of Gaussian model, we observe vectors 
$\bar\ba_1,\dots,\bar\ba_n\sim_{iid} p\, \normal(\btheta_1,\bSigma_1)+(1-p)\, \normal(\btheta_2,\bSigma_2)$.
If the covariances coincide and are known: $\bSigma_1=\bSigma_2=\bSigma$,
and the population mean $\bar\btheta :=p\btheta_1+(1-p)\btheta_2$ can be estimated accurately, then 
we can define $\ba_i = \bSigma^{-1/2}(\bar\ba_i-\bar\btheta)$.
Hence the model is equivalent to observing
$\ba_1,\dots,\ba_n\sim_{iid} p\, \normal((1-p)\bTheta,\id_d)+(1-p)\, \normal(-p\bTheta,\id_d)$,
with $\bTheta :=\bSigma^{-1/2}(\btheta_1-\btheta_2)$.

This is another special case of model \eqref{model:general}, with $r=1$ and
$(\Lambda_i)_{i\le n}\sim_{iid} p\delta_{(1-p)}+(1-p)\delta_{-p}$. Estimating $\bLambda$
amounts to estimating the cluster labels.
\end{example}

As demonstrated in these examples, it is often the case that the latent
factors $\bLambda$, $\bTheta$ have additional structure. In the first example 
$\bTheta\in\RR^d$ is a sparse vector, while in the second one $\bLambda$ is a vector with i.i.d.
 entries distributed according to a two-point mixture.
 In this paper, we assume 
a stylized model whereby the rows $\bLambda$, $\bTheta$  are mutually independent (and independent of $\bZ$)
with  $(\bLambda_i)_{i \leq n} \iidsim \mu_{\Lambda}$ and $(\bTheta_j)_{j \leq d} \iidsim \mu_{\Theta}$.
Here, $\mu_{\Lambda}$ and $\mu_{\Theta}$ are fixed probability distributions on $\RR^r$.
We will be concerned with the problem of determining the Bayes optimal estimation error
under the idealized  setting in which the distributions
$\mu_{\Lambda}, \mu_{\Theta}$ and the signal-to-noise ratio $s_n$ are known to the statistician.

This setting was considered several times in recent past, see e.g., \cite{miolane2017fundamental,lelarge2019fundamental,
 montanari2021estimation,banks2018information} and Section \ref{sec:Related} for further
 references. Closely related 
 to our work  are the results of \cite{miolane2017fundamental,lelarge2019fundamental}, 
 who determined the precise asymptotics of mutual information and (certain)
 estimation error metrics when $n,d\to\infty$,
 in the proportional regime $d/n\to\delta\in (0,\infty)$.
 
 Our goal is to move beyond the proportional asymptotics 
 and consider the cases $d/n\to \infty$ and $d/n\to 0$. We believe an analysis
 of these regimes is essential for at least two reasons.
 \begin{itemize}
 \item \emph{First,} in modern applications, it is often the case that the dimension $d$ is much 
 larger than the sample size $n$. For instance, in genomics studies, $n$ is
 the number of sequences (subjects) and is often between one hundred and a few thousands, while 
 $d$ is the number of gene variations under study and can be of the order of hundreds of thousands.
 At the other end of the spectrum, $n$ can be  significantly 
 larger than $d$ yet classical low-dimensional theory is not accurate. 
\item \emph{Second,} in practice, the statistician does not observe a sequence of data matrices $\{\bA_n\}$
 with increasing $\{(n,d)\}$, but a single pair $(n,d)$. It is a natural reflex to apply proportional 
 asymptotics results with the nominal aspect ratio $\hat{\delta} := d/n$.
 However, this choice does not have a rigorous justification. This is not just a mathematicians’ quibble. 
 Imagine, for a moment, what would happen if  the asymptotics for 
 $d/n^{\alpha}\to \delta_{\alpha}\in (0,\infty)$ (with $\alpha>1$, say $\alpha =3/2$) 
 were different from the 
 one for $d/n\to\delta$ 
 followed by $\delta\to\infty$.
 Given data with $n=100$ and $d=1000$, we would obtain different theoretical prediction
 depending on whether we regarded it as an element of the sequence $d(n)=10n$
 (i.e., proportional asymptotics with $\delta=10$), or $d(n)=n^{3/2}$
 (i.e., $\alpha=3/2$, $\delta_{\alpha}=1$).
   We will see that this is not
 the case.
 \end{itemize}
Before proceeding, we point out that recent work addresses the assumption
that  $s_n, \mu_{\Lambda}, \mu_{\Theta}$ are known. Namely, \cite{zhong2022empirical}
uses empirical Bayes  techniques to show that the Bayes error can be achieved even if
$s_n, \mu_{\Lambda}, \mu_{\Theta}$ are unknown, under the proportional asymptotics
(and in certain settings). We expect that our results should translate into similar 
guarantees for empirical Bayes methods in the case of diverging aspect ratios.

We next summarize our results, focusing to be definite on the case $d/n\to\infty$:
analogous statements for $d/n\to 0$ follow by interchanging $n$ and $d$,
as well as $\blambda$ and $\bTheta$.

We show that depending on the scaling of the signal-to-noise ratio $s_n$,
there are two interesting regimes that control the behavior of the estimation problem. 
\begin{description}
\item[Strong signal regime.] This is obtained for $s_n \asymp n^{-1/2}$, and is relatively easy
to characterize analytically. Under this scaling, $\bLambda$ can be estimated 
 consistently (possibly up to a rotation), while the minimum normalized
 estimation error of $\bTheta$ remains bounded away from $0$. We characterize the limiting 
 error of estimating $\bTheta$.
\item[Weak signal regime: Estimation of $\bTheta$.] This regime corresponds to $s_n \asymp (nd)^{-1/4}$,
and most of our technical work is devoted to its analysis. 
We prove that, in this regime, non-trivial estimation of $\bTheta$ is impossible:
any estimator has asymptotically the same risk as the the null
estimator $\hbTheta_0 = \E[\bTheta] $. 
\item[Weak signal regime: Mutual information.] On the other hand, still in 
taking $s_n \asymp (nd)^{-1/4}$,
estimation 
of $\bLambda$ is non-trivial. As a first result in this direction, we characterize the asymptotic 
mutual information 
$$\lim_{n,d\to\infty}\frac{1}{n}I(\bA;\bLambda),$$ 
and show that this is
non-vanishing. 
Further, this mutual information is asymptotically the same as for a
symmetric observation model in which instead of $\bA\in\RR^{n\times d}$, we observe
 $\bY \in \RR^{n \times n}$ given by
\begin{align}\label{model:spike0}
	\bY = \frac{q_{\Theta}}{n}\bLambda\bLambda^{\sT} + \bW,\;\;\;\; \bW\sim\GOE(n)\, .
\end{align}
(Here, $q_{\Theta}: =r^{-1}\int \|\btheta\|^2\, \mu_{\Theta}(\dd\btheta)$, we take without loss of 
generality $s_n= (nd)^{-1/4}$, and
$\GOE(n)$ denotes the distribution of a symmetric matrix with independent entries 
on or above the diagonal $(W_{ij})_{i\le j}$ such that $W_{ii}\sim\normal(0,2/n)$
and $W_{ij}\sim\normal(0,1/n)$ for $i<j$.)
\item[Weak signal regime: Estimation error.] We then proceed to study the 
asymptotics of the Bayes optimal matrix mean square error:
\begin{align}
\MMSE(\mu_{\Lambda},\mu_{\Theta}):= \lim_{n,d\to\infty}\frac{1}{n^2}\E\big\{\big\|
\bLambda\bLambda^{\sT}-\E[\bLambda\bLambda^{\sT}|\bA]
\big\|_F^2\big\}\, .
\end{align}
We characterize this limit in two regimes: $n\ll d\ll n^{6/5}$ or $n^3\ll d$,
and in certain cases for all $n\ll d$ (here $\ll$ hides logarithmic factors.)
In these cases we prove equivalence with model \eqref{model:spike0}.
\end{description}
We believe that the conditions $n\ll d\ll n^{6/5}$ or $n^3\ll d$ are artifacts
of the proof. Indeed, the conclusion holds for all $n\ll d$ under a natural
(unproven) continuity assumption.  We leave it to future work to cover the intermediate range
$n^{6/5}\lesssim d\lesssim n^{3}$.

Both the limiting mutual information and the asymptotic estimation error
$\MMSE(\mu_{\Lambda},\mu_{\Theta})$ are given by explicit expressions known as `replica 
symmetric formulas,' because they are correctly predicted by the replica method in spin glass theory 
\cite{mezard1987spin,montanari2022short}. However, the asymptotic equivalence with the
symmetric model \eqref{model:spike0} is insightful in itself (i.e., independently of the fact
that we can give explicit formulas for the asymptotic error and mutual information):
\begin{enumerate}
\item The asymptotic equivalence between model \eqref{model:general} and model \eqref{model:spike0} 
 implies that the optimal estimation depends on $\mu_{\Theta}$
only though its second moment. In other words, no substantial improvement is achieved in the
regime covered by this equivalence exploiting the knowledge of the distribution of $\bTheta$.
\item The symmetric matrix $\bY$ is closely related to the Gram matrix $\bY'=(\bA\bA^{\sT}-d \id_n)/\sqrt{nd}$,
an observation that is confirmed by inspecting the proof. This implies that there is no 
substantial loss of accuracy in estimating $\bLambda$ uniquely on the basis of $\bY'$.
This yields a substantial reduction in complexity for $n\ll d$.
\end{enumerate}
We warn the reader that these conclusions do not apply in settings that are not 
captured here. For instance, in sparse PCA, cf. Example \ref{example:SparsePCA},
one might be interested in cases in which the number of non-zeros of the principal
component $\bTheta$ is sub-linear in the dimension $d$. This case cannot be modeled
as above, and requires instead to consider $\mu_{\Theta}$ dependent on $n,d$.

The rest of the paper is organized as follows. We briefly review related work in Section
\ref{sec:Related}.
We then present our results for the strong signal regime in Section \ref{sec:strong-signal} 
and the weak signal regime in Section \ref{sec:weak-signal}.
We finally apply the general theory to the case of Gaussian mixture models in \cref{sec:GMM} 
and compare it with  analysis
on real data in Section \ref{sec:real-data}.

\subsection{Notations and conventions}\label{sec:notation}


For $k \in \NN$, we define the set $[k] := \{1,2,\cdots, k\}$. 
We typically use lower case non-bold letters for scalars ($m$, $n$, $j$), 
and bold for vectors  and matrices ($\bx$, $\by$, $\bz$, $\bA$, $\bB$, $\bC$).
We use  $\|\bv\|$ to denote the Euclidean norm of a vector $\bv$, and $\|\bM\|_F$ to denote the Frobenius norm of a matrix $\bM$.
 For $\{c_n\}_{n \in \NN_+}, \{d_n\}_{n \in \NN_+} \subseteq \RR_+$, we say $c_n \gg d_n$ 
 if and only if $c_n / d_n \rightarrow \infty$, and for $\{e_n\}_{n \in \NN_+} \subseteq \RR$, we say $e_n = o_n(1)$ if and only if $e_n \to 0$ as $n \to \infty$. We denote by $\plim$ convergence in probability. 
%

For $k \in \NN_+$, we denote by $S_k^+$ the set of positive semi-definite matrices in
 $\RR^{k \times k}$, and denote by $\mathcal{O}(k)$ the set of orthogonal matrices in 
 $\RR^{k \times k}$. For $\bM \in S_k^+$, we let $\bM^{1/2} \in S_k^+$ be any positive
  semi-definite matrix such that $\bM = \bM^{1/2}\bM^{1/2}$. 
  

 We denote the $i$-th row of the factors $\bLambda$, $\bTheta$ by
 $\bLambda_i$ and $\bTheta_i$, respectively. We use $\bLambdas$ and 
 $\bThetas$ to 
 represent length-$r$ random vectors drawn from probability distributions $\mu_{\Lambda}$ and
  $\mu_{\Theta}$. We sometimes need to write the posterior distribution of 
  $(\bLambda, \bTheta)$ given particular observations. In this case, we use the lower case 
  letters $\vtheta, \vlambda, \vlambda_i, \vtheta_i$ to represent variables corresponding to 
  $(\bTheta, \bLambda, \bLambda_i, \bTheta_i)$ in the posterior distribution. 
 
Throughout the paper, we use capital letter $C$ 
 to represent various numerical constants.

\section{Further related work}
\label{sec:Related}

As mentioned in the introduction, most earlier work deriving sharp asymptotics results
focuses on the proportional regime $n\asymp d$, $s_n= n^{-1/2}$.
In particular, 
 \cite{lesieur2015mmse} first obtained the limiting expression for Bayesian mean square error
 using  non-rigorous tools from statistical mechanics. The conjectured expression was 
 rigorously justified for special distributions $\mu_{\Lambda}$, $\mu_{\Theta}$
  in \cite{deshpande2014information,deshpande2017asymptotic}. 
 However, the proof technique of \cite{deshpande2014information,deshpande2017asymptotic}
  relies on the fact that approximate message passing (AMP) algorithm 
 achieves Bayes optimality and does not apply to the general case.
 
 Several groups developed rigorous approaches to prove the asymptotic formulas in increasing degrees of
 generality: spatial coupling \cite{dia2016mutual}; the cavity method 
   \cite{lelarge2019fundamental, miolane2017fundamental,el2018estimation}; adaptive interpolation
   \cite{barbier2019adaptive}; partial differential equation
   techniques \cite{dominguez2022mutual}.

A different line of research uses the  second moment method to derive upper and 
lower bounds on the information-theoretic thresholds
\cite{banks2018information,perry2016optimality,perry2020statistical}
for partial or exact recovery. 
This approach typically yields non-asymptotic bounds, under a broader class of settings
but the results only determine such thresholds up to undetermined multiplicative constants. 
In contrast, here we attempt to obtain a characterization that is
accurate up to $(1+o_n(1))$
factors.

From  a computational viewpoint, AMP-based algorithms 
can be shown to achieve the Bayesian error for a large region of parameters
\cite{bayati2011dynamics,montanari2021estimation}. 
       One appealing fact about the AMP is that its high-dimensional behavior can be sharply 
       characterized by  \emph{state evolution}. 

Minimax guarantees were obtained by a number of groups for
special cases of the low-rank model \eqref{model:general}. 
Sparse PCA and Gaussian mixtures are arguably the most studied models
in the literature, see e.g.,
 \cite{royer2017adaptive,giraud2019partial,ndaoud2022sharp,berthet2013computational,cai2017computational} .
These works often yield characterizations that hold up to 
usually a constant or logarithmic multiplicative gap.

Gaussian mixture
  models (GMM) provide a useful context for evaluating and comparing various clustering algorithms. 
  We will use it here to illustrate the applicability of our general results. 
   The goal can be either estimating the centers
  \cite{dasgupta1999learning, dasgupta2007probabilistic, klusowski2016statistical,
   mixon2017clustering,regev2017learning}, or recovering the underlying cluster
    assignments \cite{vempala2004spectral, achlioptas2005spectral, brubaker2008isotropic,
    kumar2010clustering, awasthi2012improved, fei2018hidden}. 
    As we will see, in the high-dimensional weak signal regime, the cluster
    centers cannot be estimated, but the cluster assignments can be estimated with non-trivial accuracy. 

Several algorithms were studied in detail for clustering under GMM, including
    semi-definite programming
     (SDP)  \cite{peng2007approximating, awasthi2015relax,fei2018hidden,iguchi2015tightness,li2020birds},
      iterative algorithms with spectral initialization 
      \cite{achlioptas2005spectral,vempala2004spectral,kumar2010clustering,awasthi2012improved,lu2016statistical}, 
       the  method of moments 
      \cite{pearson1894contributions,feldman2006pac,kalai2010efficiently,moitra2010settling,hsu2013learning,belkin2015polynomial,hardt2015tight},
      and EM-based algorithms  \cite{dasgupta2007probabilistic,balakrishnan2017statistical,jin2016local,cai2019chime}.

Finally, in concurrent work, Donoho and Feldman recently
characterized the accuracy of eigenvalue
shrinkage methods in the spiked  model with diverging aspect ratio 
\cite{feldman2021spiked,donoho2022optimal}.

\section{Strong signal regime}\label{sec:strong-signal}
We first consider the strong signal regime in which we set $s_n = 1 / \sqrt{n}$,
and therefore we have
\begin{align}\label{model:strong-signal}
	\bA = \frac{1}{\sqrt{n}}\bLambda \bTheta^\sT + \bZ \in \mathbb{R}^{n \times d}. 
\end{align}
%
%
We define $\bQ_{\Lambda} := \E_{\bLambdas \sim \mu_{\Lambda}}[\bLambdas\bLambdas^{\sT}]
\in S_r^+$ 
and $\bQ_{\Theta} := \E_{\bThetas \sim \mu_{\Theta}}[\bThetas\bThetas^{\sT}] \in S_r^+$. Before we proceed,  we establish the following conventions for the 
distributions $\mu_{\Lambda}, \mu_{\Theta}$.
\begin{remark}\label{assumption:second-moment}

Without loss of generality we can and will assume that both $\bQ_{\Lambda}$ and $\bQ_{\Theta}$ 
are invertible.  Furthermore, we can assume that $\bQ_{\Theta} = q_{\Theta} \id_r$ for some
$q_{\Theta} \in \RR_{>0}$.
\end{remark} 
 More precisely, we next show that 
 ---given arbitrary probability distributions $(\mu_{\Theta}, \mu_{\Lambda})$---
the conditions of Remark \ref{assumption:second-moment} 
can always be satisfied by a reparameterization. 

 For $\bLambdas \sim \mu_{\Lambda}$ and $\bThetas \sim \mu_{\Theta}$, if either
 $\bLambdas \overset{a.s.}{=} 0 $ or $\bThetas \overset{a.s.}{=} 0$ then 
  estimation  becomes trivial. We can therefore assume that this is not the case.
  By eigendecomposition of $\bQ_{\Lambda}$ and $\bQ_{\Theta}$, there exist  $ 0 < k_1, k_2 \leq r$ 
  and non-random matrices $\bM_1 \in \RR^{r\times k_1}$, $\bM_2 \in \RR^{r\times k_2}$ with 
  full column ranks such that
  $\bLambdas = \bM_1\bLambdas'$, $\bThetas = \bM_2 \bThetas'$,
  and $\E[\bLambdas'(\bLambdas')^{\top}] = \id_{k_1}$, $\E[\bThetas'(\bThetas')^{\top}]=\id_{k_2}$.

 Assume $\bM_1^{\sT} \bM_2$ has rank $k_3\le \min(k_1,k_2)$, and let 
 $\bM_1^{\sT} \bM_2 = \bU\bS\bV^{\sT}$ be its singular value decomposition (SVD) with $\bU\in\RR^{k_1\times k_3}$,
 $\bV\in\RR^{k_2\times k_3}$ having orthonormal columns. We then
 set $\bar\bLambda\in\RR^{n\times k_3}$ a matrix with i.i.d. rows that are copies
 of $\bS\bU^{\sT}\bLambdas'$ and $\bar\bTheta\in\RR^{d\times k_3}$ a matrix with i.i.d. 
 rows that are copies
 of $\bV^{\sT}\bThetas'$. We can then write $\bLambda\bTheta^{\sT} =  \bar\bLambda\bar\bTheta^{\sT}$
 and the latter satisfies the conditions of Remark \ref{assumption:second-moment}.

Note that this argument shows that we could assume $q_{\Theta}=1$ as well,
but it is convenient to keep this as a free parameter.

\subsection{Estimation of $\bLambda$}
We first consider  estimation of $\bLambda$. We will show that
 a simple spectral estimator provides a consistent estimate up to a rotation in the 
$r$-dimensional Euclidean space.
Consistency in terms of vector mean
 square error is not guaranteed due to potential non-identifiability issues. 
 We propose sufficient conditions on $(\mu_{\Lambda}, \mu_{\Theta})$,
 that imply consistency in terms of vector mean square error as well.

 Denote by  $\hat{\bLambda}_s \in \RR^{n \times r}$ the matrix whose columns are the 
 top $r$ eigenvectors of $\bA \bA^{\sT}$, normalized so that
 $\hat{\bLambda}_s^{\sT}\hat{\bLambda}_s / n = \id_r$.
Denote by $\Pj, \hat\Pj\in \mathcal{O}(n)$ the projection matrices onto the column spaces of 
$\bLambda$ and $\hat{\bLambda}_s$, respectively. We use the following distance between the two subspaces as estimation loss
\begin{align}\label{eq:loss-function}
	L^{\sin}(\hat{\blambda}_s, \blambda) := \|\Pj(\id - \hat\Pj)\|_{\op} = \|\hat\Pj(\id - \Pj)\|_{\op} 
	= \sin \alpha(\hat{\bLambda}_s, \bLambda),
\end{align}  
where $\alpha(\hat{\bLambda}_s, \bLambda)$ is the principal angle between the two column spaces. 
\begin{theorem}\label{thm:strong-signal-lambda-consistent}
Assume $\mu_{\Lambda}$, $\mu_{\Theta}$ have finite non-singular second moments 
$\bQ_{\Lambda},\bQ_{\Theta}$ with $\bQ_{\Theta}=q_{\Theta}\id_r$ (with no loss of generality
per Remark~\ref{assumption:second-moment}).
If
$n, d \rightarrow \infty$  with $d / n \rightarrow \infty$, then under the model 
of Eq.~\eqref{model:strong-signal}:
\begin{enumerate}
	\item $L^{\sin}(\hat{\bLambda}_s, \bLambda) \toP 0$. 
	\item If we further assume that for some $\eps > 0$ we have $\E_{\bLambdas \sim \mu_{\Lambda}}[\|\bLambdas\|^{4 + \eps}] < \infty$, then there exists 
	an estimator $\hat{\bL}: \bA\mapsto \hat{\bL}(\bA) \in \RR^{n \times n}$, such that
	 $\E\big[ \|\hat\bL(\bA) - \bLambda \bLambda^{\sT}\|_F^2 \big] / n^2 \rightarrow 0$ as 
	 $n,d \rightarrow \infty$. 
	\item Let $\bLambdas \sim \mu_{\Lambda}$. If we further assume that there does not exist 
	$\bOmega \in \mathcal{O}(r)$, such that $\bOmega \neq \id_r$ and $\bOmega \bLambdas \overset{d}{=} \bLambdas$,
	 then there exists $\hat{\bLambda} :\bA\mapsto \hat{\bLambda}(\bA) \in \RR^{n \times r}$, 
	 such that $\E\big[\|\hat{\bLambda}(\bA) - \bLambda\|_F^2 \big] / n \to 0$ as $n,d \to \infty$. 
\end{enumerate}
\end{theorem}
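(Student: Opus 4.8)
The plan is to treat the three items in order, each reusing the previous one, and in each case first to prove the statement ``in probability'' on a high-probability event and then, when an $L^2$ bound on the normalized risk is needed, to upgrade it by truncating on the complementary event. For item 1, I would start from
\[ \bA\bA^{\sT} \;=\; \tfrac1n\,\bLambda\bTheta^{\sT}\bTheta\bLambda^{\sT} \;+\; \tfrac{1}{\sqrt n}\big(\bLambda\bTheta^{\sT}\bZ^{\sT}+\bZ\bTheta\bLambda^{\sT}\big) \;+\; \bZ\bZ^{\sT}. \]
By the law of large numbers $\tfrac1d\bTheta^{\sT}\bTheta\to q_{\Theta}\id_r$ and $\tfrac1n\bLambda^{\sT}\bLambda\to\bQ_{\Lambda}$ in probability (only second moments are used), so the first term equals $\tfrac{d}{n}q_{\Theta}\bLambda\bLambda^{\sT}$ up to an operator-norm error $o_P(d)$, while $\|\bLambda\|_{\op}=O_P(\sqrt n)$ and $\|\bLambda\bLambda^{\sT}\|_{\op}=O_P(n)$ with smallest nonzero eigenvalue of order $n$ in probability, because $\bQ_{\Lambda}$ is nonsingular. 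For the Gaussian block, standard bounds for Wishart matrices give $\|\bZ\bZ^{\sT}-d\,\id_n\|_{\op}=O_P(\sqrt{nd})$ (recall $d\gg n$), and conditioning on $(\bLambda,\bTheta)$ gives $\|\bZ\bTheta\|_F=O_P(\sqrt{nd})$, so both cross terms have operator norm $\le \tfrac1{\sqrt n}\|\bLambda\|_{\op}\|\bZ\bTheta\|_F=O_P(\sqrt{nd})$. Hence $\bA\bA^{\sT}=d\,\id_n+\tfrac{d}{n}q_{\Theta}\bLambda\bLambda^{\sT}+\bE$ with $\|\bE\|_{\op}=O_P(\sqrt{nd})=o_P(d)$. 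The matrix $d\,\id_n+\tfrac{d}{n}q_{\Theta}\bLambda\bLambda^{\sT}$ has top-$r$ eigenspace equal to $\mathrm{col}(\bLambda)$ and spectral gap $\tfrac{d}{n}q_{\Theta}\,\sigma_r(\bLambda\bLambda^{\sT})$, which is of order $d$ and therefore dominates $\|\bE\|_{\op}$; the Davis--Kahan theorem then gives $L^{\sin}(\hat{\bLambda}_s,\bLambda)=O_P(\|\bE\|_{\op}/d)=o_P(1)$, proving item 1.

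\textbf{Item 2.} Write $\hat\Pj=\tfrac1n\hat{\bLambda}_s\hat{\bLambda}_s^{\sT}$ for the projector onto $\mathrm{col}(\hat{\bLambda}_s)$ and $\bR:=\tfrac1n\hat{\bLambda}_s^{\sT}\bLambda$, so that $\hat\Pj\bLambda=\hat{\bLambda}_s\bR$. Item 1 gives $\|\bLambda-\hat{\bLambda}_s\bR\|_F\le\sqrt r\,L^{\sin}\|\bLambda\|_{\op}=o_P(\sqrt n)$, hence $\|\bLambda\bLambda^{\sT}-\hat{\bLambda}_s(\bR\bR^{\sT})\hat{\bLambda}_s^{\sT}\|_F=o_P(n)$, so it suffices to estimate the $r\times r$ matrix $\bR\bR^{\sT}=\tfrac1{n^2}\hat{\bLambda}_s^{\sT}\bLambda\bLambda^{\sT}\hat{\bLambda}_s$. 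The expansion from item 1 yields $\hat{\bG}:=\tfrac{n}{d\,q_{\Theta}}\cdot\tfrac1{n^2}\hat{\bLambda}_s^{\sT}(\bA\bA^{\sT}-d\,\id_n)\hat{\bLambda}_s=\bR\bR^{\sT}+O_P(\sqrt{n/d})$ in Frobenius norm, since $\|\hat{\bLambda}_s^{\sT}\bE\hat{\bLambda}_s\|_{\op}\le n\|\bE\|_{\op}$. I would then set $\hat{\bL}:=\hat{\bLambda}_s\,\Pi(\hat{\bG})\,\hat{\bLambda}_s^{\sT}$, where $\Pi$ is the Euclidean projection onto the \emph{known} spectral box $\{\bM:\tfrac12\lambda_{\min}(\bQ_{\Lambda})\id_r\preceq\bM\preceq2\lambda_{\max}(\bQ_{\Lambda})\id_r\}$; this forces $\|\hat{\bL}\|_F\le2\sqrt r\,\lambda_{\max}(\bQ_{\Lambda})\,n$ deterministically. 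On a high-probability event $\cE$ on which all the $O_P/o_P$ bounds hold with explicit deterministic rates, $\bR\bR^{\sT}$ has eigenvalues inside the box, $\Pi$ is inactive up to $o(1)$, and (using $\|\hat{\bLambda}_s\bM\hat{\bLambda}_s^{\sT}\|_F=n\|\bM\|_F$) one gets $\|\hat{\bL}-\bLambda\bLambda^{\sT}\|_F\le\delta_n n$ for a deterministic $\delta_n\to0$. On $\cE^{c}$ I would use $\|\hat{\bL}-\bLambda\bLambda^{\sT}\|_F^2\le Cn^2+2\big(\sum_i\|\bLambda_i\|^2\big)^2$ and bound $\E\big[(\sum_i\|\bLambda_i\|^2)^2\I_{\cE^{c}}\big]$ by H\"older's inequality applied to the i.i.d.\ sum $\sum_i\|\bLambda_i\|^2$ (Minkowski controls its $(2+\tfrac{\eps}{2})$-th moment by $\E\|\bLambdas\|^{4+\eps}<\infty$), producing a bound of the form $C'n^2\,\P(\cE^{c})^{\eps/(4+\eps)}$. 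Adding the two pieces gives $\E\|\hat{\bL}-\bLambda\bLambda^{\sT}\|_F^2/n^2\le\delta_n^2+C''\P(\cE^{c})^{\eps/(4+\eps)}\to0$.

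\textbf{Item 3.} The new ingredient is to kill the rotational ambiguity using the known law $\mu_{\Lambda}$. On $\cE$, $\bR$ is invertible and, by the polar decomposition, $\bR=(\bR\bR^{\sT})^{1/2}\bOmega_{\star}$ for a random, unobserved $\bOmega_{\star}\in\mathcal{O}(r)$. Since the PSD square root is Frobenius-Lipschitz on matrices bounded below and $\|\hat{\bG}-\bR\bR^{\sT}\|_F=o_P(1)$, the estimator $\hat{\bLambda}^{(0)}:=\hat{\bLambda}_s\,\Pi(\hat{\bG})^{1/2}$ satisfies $\|\hat{\bLambda}^{(0)}\bOmega_{\star}-\bLambda\|_F\le\|\hat{\bLambda}_s\bR-\bLambda\|_F+\|\hat{\bLambda}_s\|_{\op}\,\|\Pi(\hat{\bG})^{1/2}-(\bR\bR^{\sT})^{1/2}\|_F=o_P(\sqrt n)$: so $\bLambda$ is recovered up to $\bOmega_{\star}$. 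Let $\hat\mu:=\tfrac1n\sum_i\delta_{(\hat{\bLambda}^{(0)}_i)^{\sT}}$ be the empirical law of the rows of $\hat{\bLambda}^{(0)}$ (viewed as column vectors). Because $\tfrac1n\sum_i\delta_{\bLambda_i^{\sT}}\to\mu_{\Lambda}$ in the Wasserstein-$2$ distance $W_2$ (using $\E\|\bLambdas\|^2<\infty$) and $(\hat{\bLambda}^{(0)}_i)^{\sT}\approx\bOmega_{\star}\bLambda_i^{\sT}$ on average, we get $W_2(\hat\mu,(\bOmega_{\star})_{\#}\mu_{\Lambda})\to0$ in probability. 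Define $\hat\bOmega:=\argmin_{\bOmega\in\mathcal{O}(r)}W_2(\bOmega_{\#}\mu_{\Lambda},\hat\mu)$ --- a minimizer exists since $\mathcal{O}(r)$ is compact and $\bOmega\mapsto\bOmega_{\#}\mu_{\Lambda}$ is $W_2$-continuous (again by the second moment) --- and set $\hat{\bLambda}:=\hat{\bLambda}^{(0)}\hat\bOmega$. Feasibility of $\bOmega_{\star}$ gives $W_2(\hat\bOmega_{\#}\mu_{\Lambda},\hat\mu)\le W_2((\bOmega_{\star})_{\#}\mu_{\Lambda},\hat\mu)\to0$ in probability, hence $W_2(\hat\bOmega_{\#}\mu_{\Lambda},(\bOmega_{\star})_{\#}\mu_{\Lambda})\to0$, and applying the isometry $\bOmega_{\star}^{-1}$ this reads $W_2\big((\bOmega_{\star}^{-1}\hat\bOmega)_{\#}\mu_{\Lambda},\mu_{\Lambda}\big)\to0$. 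The function $g(\bOmega):=W_2(\bOmega_{\#}\mu_{\Lambda},\mu_{\Lambda})$ is continuous on the compact group $\mathcal{O}(r)$ and vanishes precisely on $\{\bOmega:\bOmega\bLambdas\overset{d}{=}\bLambdas\}=\{\id_r\}$ by hypothesis, so $g(\bOmega_{\star}^{-1}\hat\bOmega)\to0$ forces $\bOmega_{\star}^{-1}\hat\bOmega\to\id_r$, i.e.\ $\|\hat\bOmega-\bOmega_{\star}\|_{\op}\to0$ in probability. Consequently $\|\hat{\bLambda}-\bLambda\|_F\le\|\hat{\bLambda}^{(0)}\|_{\op}\|\hat\bOmega-\bOmega_{\star}\|_{\op}+\|\hat{\bLambda}^{(0)}\bOmega_{\star}-\bLambda\|_F=o_P(\sqrt n)$, and $\E\|\hat{\bLambda}-\bLambda\|_F^2/n\to0$ follows by the same good-event split, where on $\cE^{c}$ it now suffices to use uniform integrability of the sample means $\{\tfrac1n\sum_i\|\bLambda_i\|^2\}_n$ (valid under the second-moment assumption alone).

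\textbf{Main obstacle.} The linear-algebra backbone --- expanding $\bA\bA^{\sT}$, reading off that its top-$r$ eigenspace is $\mathrm{col}(\bLambda)$, rescaling, and taking a square root --- is routine once one notices that for $d/n\to\infty$ the relevant spectral gap is of order $d$ while every error term is $O(\sqrt{nd})=o(d)$. The two genuinely delicate points are (i) converting convergence in probability into $L^2$ convergence of the normalized risk, which is exactly where the $(4+\eps)$-th moment hypothesis enters, through the H\"older truncation on the low-probability event; and (ii) in item 3, upgrading ``recovery up to a rotation'' to exact recovery, which cannot be achieved by perturbation theory and instead needs the measure-matching step, where the no-nontrivial-symmetry hypothesis is precisely what makes the population functional $g$ have a unique zero and hence $\hat\bOmega$ consistent. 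I would expect (ii) --- in particular ensuring that the implication ``$W_2(\hat\bOmega_{\#}\mu_{\Lambda},\hat\mu)$ small $\Rightarrow \hat\bOmega\approx\bOmega_{\star}$'' is strong enough, and measurable enough in $\bA$, to survive the $L^2$ truncation --- to take the most care.
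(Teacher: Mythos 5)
Your proposal is correct and follows the same overall architecture as the paper: a Wedin/Davis--Kahan perturbation bound for the subspace (item 1), a reconstruction of $\bLambda\bLambda^{\sT}$ from the spectral data plus a truncation argument for the $L^2$ statement (item 2), and resolution of the rotational ambiguity by matching the empirical row distribution to rotations of $\mu_{\Lambda}$ in $W_2$, using compactness of $\mathcal{O}(r)$ and the no-nontrivial-symmetry hypothesis (item 3) --- this last step is essentially identical to the paper's $\argmin_{\bOmega}W_2(\hat p_{\bR\bOmega},\mu_{\Lambda})$ device. The main difference is in the mechanics of item 2: the paper writes $\bA\bA^{\sT}/d-\id_n=q_{\Theta}\bLambda\bLambda^{\sT}/n+\bW$ with $\|\bW\|_{\op}=o_P(1)$, groups the top eigenvalues into clusters according to the distinct eigenvalues of $q_{\Theta}\bQ_{\Lambda}$, applies Wedin cluster-by-cluster, and sets $\hat\bL=\sum_i\hat\bV_i\hat\bD_i\hat\bV_i^{\sT}/n$; you instead compress $\bA\bA^{\sT}-d\id_n$ onto the estimated subspace to get the $r\times r$ core $\hat\bG\approx\bR\bR^{\sT}$, clip it to a known spectral box, and set $\hat\bL=\hat\bLambda_s\Pi(\hat\bG)\hat\bLambda_s^{\sT}$. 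Your version avoids eigenvalue-multiplicity bookkeeping and, because the clipped estimator is deterministically bounded, makes the $L^2$ upgrade on the bad event cleaner; you also spell out the H\"older truncation with the $(4+\eps)$-moment that the paper dismisses as ``a standard truncation argument,'' and your item 3 (square root of the clipped core, then Wasserstein matching) correspondingly replaces the paper's factorization $\bR=\argmin_{\bX}\|\hat\bL-\bX\bX^{\sT}\|_F$. One cosmetic point: the rates $\|\bE\|_{\op}=O_P(\sqrt{nd})$ and $\|\hat\bG-\bR\bR^{\sT}\|_F=O_P(\sqrt{n/d})$ implicitly assume a $1/\sqrt{d}$ rate for $\|\bTheta^{\sT}\bTheta/d-q_{\Theta}\id_r\|$, which the stated second-moment assumption on $\mu_{\Theta}$ does not give; but your argument only uses $o_P(d)$ and $o_P(1)$ respectively, which the weak law does provide, so nothing breaks.
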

We delay the proof of \cref{thm:strong-signal-lambda-consistent} to Appendix \ref{proof-of-thm:strong-signal-lambda-consistent}.
\subsection{Estimation of $\bTheta$}
Next, we turn to the estimation of $\bTheta$.   
According to Theorem \ref{thm:strong-signal-lambda-consistent}, $\bLambda$ can be estimated consistently 
under identifiability conditions. Therefore,
a reasonable first step is to study the case in which $\bLambda$ is given.
This yields a lower bound on the Bayesian error of the original problem. 
We will see that this lower bound
can be achieved asymptotically even if $\bLambda$ must be estimated.

We can explicitly write the conditional distribution of $\bTheta$ given $(\bLambda, \bA)$. 
Using the Gaussian density formula, we see that for all $j \in [d]$, the posterior distribution of 
$\bTheta_j$ is
\begin{align}
	p(\dd\vtheta_j | \bLambda, \bA) \propto \exp\left( -\frac{1}{2n} \sum\limits_{i = 1}^n \langle \bLambda_i, \vtheta_j \rangle^2 + \frac{1}{\sqrt{n}}\sum\limits_{i = 1}^n  A_{ij} \langle \bLambda_i, \vtheta_j\rangle \right) \mu_{\Theta}(\dd\vtheta_j).\label{eq:5}
\end{align}
\cref{eq:5} leads to the following asymptotic lower bound: 
\begin{theorem}\label{thm:strong-signal-theta-lower-bound}
Consider the strong signal model of Eq.~\eqref{model:strong-signal},
assuming, without loss of generality, the setting of Remark \ref{assumption:second-moment}.
We let $n, d \rightarrow \infty$ simultaneously with $d / n \rightarrow \infty$, then for any 
estimator $ \hat{\btheta}:\bA\mapsto \hat{\btheta}(\bA) \in \RR^{d \times r}$, we have
	\begin{align}\label{eq:6}
		\liminf\limits_{n,d \rightarrow \infty}\frac{1}{d}\E\left[ \| \hat{\btheta}(\bA) - \btheta \|_F^2\right] \geq rq_{\Theta} - \E\left[ \big\|\E[\bThetas | \bQ_{\Lambda}^{1/2} \bThetas + \bG] \big\|^2 \right],
	\end{align}
	where $\bG \sim \normal(0,\id_r)$, $\bThetas \sim \mu_{\Theta}$ are mutually independent. Notice that the right hand side of \cref{eq:6} is independent of $(n,d)$.
	 
	If we further assume $\E[\|\bThetas\|^4] < \infty$, then for any 
	$\hat{\bM}:\bA\mapsto\hat{\bM}(\bA) \in \RR^{d \times d}$, we have
	\begin{align}\label{eq:7}
		\liminf\limits_{n,d \rightarrow \infty} \frac{1}{d^2} \E\left[ \|\hat{\bM}(\bA) - \bTheta \bTheta^{\top} \|_F^2 \right] \geq rq_{\Theta}^2 - \Big\|\E\left[ \E[\bThetas | \bQ_{\Lambda}^{1/2} \bThetas + \bG] \E[\bThetas | \bQ_{\Lambda}^{1/2} \bThetas + \bG]^\sT\right]\Big\|_F^2.
	\end{align}
\end{theorem}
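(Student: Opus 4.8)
The plan is to reduce the problem to estimation of $\bTheta$ in a decoupled Gaussian channel by \emph{conditioning on} $\bLambda$ (the true factor), which can only decrease the Bayes risk and hence provides the claimed lower bound. First I would observe that for any estimator $\hat\btheta(\bA)$, the Bayes risk satisfies
\begin{align}
\E\big[\|\hat\btheta(\bA)-\bTheta\|_F^2\big]\;\ge\;\E\big[\|\E[\bTheta\,|\,\bA]-\bTheta\|_F^2\big]\;\ge\;\E\big[\|\E[\bTheta\,|\,\bA,\bLambda]-\bTheta\|_F^2\big],
\end{align}
the last step because additional conditioning (on $\bLambda$) reduces posterior variance, i.e. $\E\big[\Var(\bTheta\,|\,\bA)\big]\ge\E\big[\Var(\bTheta\,|\,\bA,\bLambda)\big]$ by the tower property. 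So it suffices to lower bound (in fact compute the limit of) $\frac1d\E\big[\|\E[\bTheta\,|\,\bA,\bLambda]-\bTheta\|_F^2\big]$.

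Next I would use the explicit posterior factorization in \eqref{eq:5}: given $\bLambda$, the columns $\btheta_j$ are conditionally independent across $j$, with $\btheta_j$ having posterior proportional to $\exp\big(-\tfrac{1}{2n}\sum_i\langle\bLambda_i,\vtheta_j\rangle^2+\tfrac{1}{\sqrt n}\sum_i A_{ij}\langle\bLambda_i,\vtheta_j\rangle\big)\mu_\Theta(\dd\vtheta_j)$. Writing $\hat\bQ_\Lambda:=\tfrac1n\sum_i\bLambda_i\bLambda_i^\sT=\tfrac1n\bLambda^\sT\bLambda$ and $\bg_j:=\tfrac{1}{\sqrt n}\sum_i Z_{ij}\bLambda_i$, the exponent is $-\tfrac12\vtheta_j^\sT\hat\bQ_\Lambda\vtheta_j+\langle\hat\bQ_\Lambda\bTheta_j+\bg_j,\vtheta_j\rangle$, so conditionally on $\bLambda$ the observation for column $j$ is information-equivalent to $\by_j:=\hat\bQ_\Lambda\bTheta_j+\bg_j$ with $\bg_j\,|\,\bLambda\sim\normal(0,\hat\bQ_\Lambda)$, independent across $j$. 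Since $\hat\bQ_\Lambda$ is invertible w.h.p. (it concentrates at $\bQ_\Lambda$ by the law of large numbers, using finiteness of the second — resp. fourth — moment), this channel is equivalent to observing $\bQ_\Lambda^{1/2}\bTheta_j+\tilde\bg_j$ asymptotically, $\tilde\bg_j\sim\normal(0,\id_r)$. Thus
\begin{align}
\frac1d\E\big[\|\E[\bTheta\,|\,\bA,\bLambda]-\bTheta\|_F^2\big]\;=\;\E\big[\|\E[\bTheta_1\,|\,\hat\bQ_\Lambda\bTheta_1+\bg_1,\hat\bQ_\Lambda]-\bTheta_1\|^2\big]\;\longrightarrow\;rq_\Theta-\E\big[\|\E[\bThetas\,|\,\bQ_\Lambda^{1/2}\bThetas+\bG]\|^2\big],
\end{align}
using $\E\|\bThetas\|^2=rq_\Theta$ and the orthogonality/Pythagorean identity $\E\|\bThetas\|^2=\E\|\E[\bThetas|\cdot]\|^2+\E\|\bThetas-\E[\bThetas|\cdot]\|^2$. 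For the matrix version \eqref{eq:7} the same conditioning reduces to bounding $\tfrac1{d^2}\E\|\bTheta\bTheta^\sT-\E[\bTheta\bTheta^\sT|\bA,\bLambda]\|_F^2$; expanding $\bTheta\bTheta^\sT=\sum_j\bTheta_j\bTheta_j^\sT$ and using conditional independence of columns, the off-diagonal terms $j\ne k$ contribute $\tfrac1{d^2}\sum_{j\ne k}\|\E[\bTheta_j\bTheta_j^\sT|\cdot]-\bTheta_j\bTheta_j^\sT\|$-type cancellations while the diagonal is $O(1/d)$, leaving
\begin{align}
rq_\Theta^2-\big\|\E\big[\E[\bThetas|\bQ_\Lambda^{1/2}\bThetas+\bG]\,\E[\bThetas|\bQ_\Lambda^{1/2}\bThetas+\bG]^\sT\big]\big\|_F^2,
\end{align}
which is \eqref{eq:7}; here the fourth-moment assumption controls the diagonal remainder and the concentration of $\hat\bQ_\Lambda$ in the relevant norm.

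The main obstacle is controlling the replacement of the \emph{random} matrix $\hat\bQ_\Lambda$ by its limit $\bQ_\Lambda$ inside a posterior mean, uniformly enough to pass to the limit. I would handle this by a Lipschitz/continuity estimate: the map $(\bM,\by)\mapsto\E_\mu[\vtheta\mid \bM\vtheta+\bM^{1/2}\bg=\by]$ (for $\bg\sim\normal(0,\id)$, or equivalently the posterior under the natural channel parametrized by $\bM$) is continuous in $\bM$ on the set $\{\bM\succeq c\,\id\}$, and by the strong law $\hat\bQ_\Lambda\to\bQ_\Lambda\succ0$ almost surely; combining this with uniform integrability — supplied by $\E\|\bThetas\|^2<\infty$ for \eqref{eq:6} and $\E\|\bThetas\|^4<\infty$ for \eqref{eq:7} (since the conditional second moment of $\bThetas$ is dominated by $\|\bThetas\|^2$) — gives convergence of the expected squared errors. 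An alternative, slightly cleaner route is an explicit I-MMSE / interpolation bound: couple the channel $\hat\bQ_\Lambda\bThetas+\bg$ to $\bQ_\Lambda^{1/2}\bThetas+\bG$ and bound the MMSE difference by the expected operator-norm discrepancy $\E\|\hat\bQ_\Lambda-\bQ_\Lambda\|_{\op}$, which $\to0$. Either way, the remaining steps — the tower-property reduction, the posterior factorization, and the Pythagorean bookkeeping — are routine.
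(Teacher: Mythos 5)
Your proposal is correct and follows essentially the same route as the paper's proof: lower-bound by the Bayes estimator given $(\bA,\bLambda)$, use the column-wise posterior factorization to reduce to a decoupled Gaussian channel with the empirical covariance $\hat{\bQ}_{\Lambda}=\bLambda^{\sT}\bLambda/n$, pass to the limit by continuity of the posterior-mean map plus uniform integrability (the paper implements this via convergence in distribution and Skorokhod's representation, a minor technical variant of your continuity/coupling step), and handle \eqref{eq:7} through conditional independence of distinct rows of $\bTheta$ with the fourth moment supplying uniform integrability. The only blemish is notational ($\bTheta\bTheta^{\sT}\neq\sum_j\bTheta_j\bTheta_j^{\sT}$; the relevant expansion is over entries $\langle\bTheta_j,\bTheta_k\rangle$), which does not affect the argument.
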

We postpone the proof of \cref{thm:strong-signal-theta-lower-bound} to Appendix 
\ref{proof-of-thm:strong-signal-theta-lower-bound}. Next, we show that the lower bound proposed in  
\cref{thm:strong-signal-theta-lower-bound} can be achieved under identifiability
 conditions. 
\begin{theorem}\label{thm:strong-signal-theta-achieve-lower-bound}
Under the conditions of Theorem \ref{thm:strong-signal-lambda-consistent}, claim 3, 
there exist estimators
$ \hat{\btheta}:\bA\mapsto \hat{\btheta}(\bA)$ and 
$\hat{\bM}:\bA\mapsto\hat{\bM}(\bA)$, such that
	\begin{align*}
		& \lim\limits_{n,d \rightarrow \infty}\frac{1}{d} \E\Big[ \|\hat{\bTheta}(\bA) - \bTheta\|_F^2  \Big] = rq_{\Theta} - \E\left[ \big\|\E[\bThetas | \bQ_{\Lambda}^{1/2} \bThetas + \bG] \big\|^2 \right], \\
		& \lim\limits_{n,d \rightarrow \infty}  \frac{1}{d^2} \E\Big[\|\hat{\bM}(\bA) - \bTheta \bTheta^{\top}\|_F^2 \Big] =  rq_{\Theta}^2 - \Big\|\E\left[ \E[\bThetas | \bQ_{\Lambda}^{1/2} \bThetas + \bG] \E[\bThetas | \bQ_{\Lambda}^{1/2} \bThetas + \bG]^\sT\right]\Big\|_F^2.
	\end{align*}
\end{theorem}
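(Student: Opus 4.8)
The plan is to build explicit estimators for $\bTheta$ and $\bTheta\bTheta^{\sT}$ that exploit the consistency of $\hat\bLambda_s$ from \cref{thm:strong-signal-lambda-consistent}, show their risk converges to the conditional-expectation expression, and then invoke \cref{thm:strong-signal-theta-lower-bound} to conclude that this limit is exactly the Bayes-optimal one. The key observation is that in the strong-signal model $\bA = n^{-1/2}\bLambda\bTheta^{\sT}+\bZ$, if we \emph{knew} $\bLambda$ (and hence its column space), the problem would decouple across the $d$ rows of $\bTheta$: conditionally on $\bLambda$, the sufficient statistic for $\bTheta_j$ is $n^{-1/2}\sum_i \bLambda_i A_{ij} = n^{-1}(\bLambda^{\sT}\bLambda)\bTheta_j + n^{-1/2}\bLambda^{\sT}\bZ_{\cdot j}$. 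Since $n^{-1}\bLambda^{\sT}\bLambda\to\bQ_\Lambda$ a.s. and $n^{-1/2}\bLambda^{\sT}\bZ_{\cdot j}\to\normal(0,\bQ_\Lambda)$, this is asymptotically the channel $\bQ_\Lambda\bThetas + \bQ_\Lambda^{1/2}\bG$, equivalently (after multiplying by $\bQ_\Lambda^{-1/2}$) the channel $\bQ_\Lambda^{1/2}\bThetas + \bG$ appearing in the theorem statement. The posterior-mean estimator of $\bThetas$ in that channel has squared risk exactly $rq_\Theta - \E[\|\E[\bThetas\mid\bQ_\Lambda^{1/2}\bThetas+\bG]\|^2]$, matching the target.

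The steps I would carry out, in order: (1) First treat the oracle case where $\bLambda$ is known. Define $\hat\bTheta_j^{\mathrm{or}} := \E[\bThetas \mid \bQ_\Lambda^{1/2}\bThetas + \bG = n^{-1/2}\bQ_\Lambda^{-1/2}\bLambda^{\sT}\bA_{\cdot j}]$, i.e. the Bayes estimator for the asymptotic decoupled channel, plugged in at the finite-$n$ statistic. Using $n^{-1}\bLambda^{\sT}\bLambda=\bQ_\Lambda+o(1)$ a.s. and a uniform-integrability / bounded-second-moment argument (here the hypothesis $\E\|\bThetas\|^4<\infty$, resp.\ $\E\|\bThetas\|^4<\infty$ for the matrix case, is what one needs), show $d^{-1}\E\|\hat\bTheta^{\mathrm{or}}-\bTheta\|_F^2 \to rq_\Theta - \E[\|\E[\bThetas\mid\bQ_\Lambda^{1/2}\bThetas+\bG]\|^2]$ by the law of large numbers over $j\in[d]$ together with dominated convergence to replace the finite-$n$ channel by its limit. (2) Remove the oracle: replace $\bLambda$ by $\hat\bLambda := \hat\bL(\bA)^{1/2}$-type reconstruction, or more directly use the consistent estimator $\hat\bLambda$ of claim 3 of \cref{thm:strong-signal-lambda-consistent} (available precisely under the identifiability hypothesis assumed here), which satisfies $n^{-1}\E\|\hat\bLambda-\bLambda\|_F^2\to0$. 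Show that feeding $\hat\bLambda$ in place of $\bLambda$ into the estimator of step (1) changes the risk by $o(1)$: the statistic $n^{-1/2}\hat\bLambda^{\sT}\bA_{\cdot j}$ differs from $n^{-1/2}\bLambda^{\sT}\bA_{\cdot j}$ by a term whose $\ell_2$ norm is controlled by $\|\hat\bLambda-\bLambda\|_F$ times the (bounded, with high probability) operator norms of the relevant blocks, and the posterior-mean map is Lipschitz-in-the-conditioning-variable in an $L^2$-averaged sense. (3) For $\hat\bM$ estimating $\bTheta\bTheta^{\sT}$, take $\hat\bM := \hat\bTheta(\bA)\hat\bTheta(\bA)^{\sT}$ with an appropriate bias correction (subtracting the estimated "diagonal" $d\cdot\E[\E[\bThetas\mid\cdot]\E[\bThetas\mid\cdot]^{\sT}]$-type term is not needed because the cross-terms already average correctly; what one does need is to control the fluctuation of $d^{-2}\|\hat\bTheta\hat\bTheta^{\sT}-\bTheta\bTheta^{\sT}\|_F^2$), and again push the LLN over pairs $(j,j')$ through, using $\E\|\bThetas\|^4<\infty$. (4) Combine with the matching lower bound from \cref{thm:strong-signal-theta-lower-bound} to upgrade "$\le$ in the limit" to "$=$".

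The main obstacle I anticipate is step (2): showing that the error incurred by using the estimated $\hat\bLambda$ rather than the true $\bLambda$ is negligible, uniformly enough to survive the $d^{-1}\sum_{j=1}^d$ averaging. The subtlety is that $\hat\bLambda$ depends on the \emph{same} matrix $\bA$ used in the per-coordinate statistics, so there is a delicate dependence between the "noise" $n^{-1/2}\bLambda^{\sT}\bZ_{\cdot j}$ and the estimation error $\hat\bLambda-\bLambda$; one wants to argue that $\hat\bLambda$ is essentially a function of the spectral data which is asymptotically "orthogonal" to any single column $\bZ_{\cdot j}$, so that conditionally on $\hat\bLambda$ the per-column channel is still approximately the desired Gaussian one. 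A clean way around this is a leave-one-column-out (or leave-a-small-block-out) device: re-estimate $\bLambda$ from $\bA$ with column $j$ removed, show this barely changes $\hat\bLambda$ since one column out of $d\to\infty$ is negligible, and thereby decouple. The remaining pieces — the decoupled-channel CLT, dominated convergence to pass to the limiting channel, and the $L^2$-Lipschitz property of posterior means — are routine given the fourth-moment hypotheses, and I would not belabor them.
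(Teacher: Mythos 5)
Your plan is essentially the paper's proof: take the consistent $\hat\bLambda$ from claim 3 of \cref{thm:strong-signal-lambda-consistent}, form the statistic $n^{-1/2}\bA^{\sT}\hat\bLambda\bQ_{\Lambda}^{-1/2}$ whose rows approximate $\bQ_{\Lambda}^{1/2}\bTheta_j+\bg_j$ with $\bg$ Gaussian and independent of $(\bLambda,\bTheta)$, denoise rowwise with the posterior-mean map of that channel (and its product version for $\bTheta\bTheta^{\sT}$), and match against the lower bound of \cref{thm:strong-signal-theta-lower-bound} to get equality. The only divergence is that the obstacle you flag in step (2) needs no leave-one-column-out device: the paper handles the dependence of $\hat\bLambda$ on $\bA$ by the aggregate bound $\|\bA^{\sT}(\hat\bLambda-\bLambda)\|_F/\sqrt{nd}\le(\|\bA\|_{\op}/\sqrt{d})\,(\|\hat\bLambda-\bLambda\|_F/\sqrt{n})=o_P(1)$ combined with a Lipschitz $L^2$-approximation $F^w$ of the (merely continuous, not necessarily Lipschitz) posterior-mean map, which absorbs the perturbation after averaging over $j$ — this approximation step is what you should substitute for your asserted $L^2$-Lipschitz property of the posterior mean.
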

We defer the proof of \cref{thm:strong-signal-theta-achieve-lower-bound} to Appendix \ref{proof:thm:strong-signal-theta-achieve-lower-bound}. \cref{thm:strong-signal-theta-lower-bound,thm:strong-signal-theta-achieve-lower-bound} together complete the analysis for the estimation of $\bTheta$ in the strong signal regime. 

\section{Weak signal regime}\label{sec:weak-signal}
In this section, we consider the weak signal regime where $s_n = 1 / \sqrt[4]{nd}$. Thus the model of 
interest is
\begin{align}\label{model:weak-signal}
	\bA = \frac{1}{\sqrt[4]{nd}}\blambda \btheta^\sT + \bZ \in \mathbb{R}^{n \times d}\, .
\end{align}
For convenience, we define $r_n := \sqrt[4]{d / n}$. By assumption,
we see that $r_n \to \infty$ as $n,d \to \infty$.

\subsection{Background: the symmetric spiked model}

As mentioned in the introduction,  our main technical result is that,
in the weak signal regime, estimation under model \eqref{model:weak-signal} is equivalent to estimation 
under a symmetric spiked model. 
Under this model we observe $\bY \in \RR^{n \times n}$ given by
\begin{align}\label{model:symmetric}
	\bY = \frac{q_{\Theta}}{n}\bLambda\bLambda^{\top} + \bW,
\end{align}
where
$\bW \overset{d}{=}  \GOE(n)$, and $\bLambda_i \iidsim \mu_{\Lambda}$, independent of each other. 
 We view $q_{\Theta}>0$ as a signal-to-noise ratio parameter.

We denote the Bayesian MMSE of model \eqref{model:symmetric} by
\begin{align}
	\MMSEsy_n(\mu_{\Lambda}; q_{\Theta}) := \min\limits_{\hat{\bM}(\,\cdot\,)} \frac{1}{n^2} 
	\E\left[ \left\| \hat{\bM}(\bY)- \bLambda\bLambda^\sT  \right\|_{F}^2 \right]. \label{eq:MMSEY}
\end{align}
Note that the Bayesian MMSE is achieved by the posterior expectation
$\hat{\bM}(\bY)=\E[ \bLambda\bLambda^\sT|\bY]$. We also define the normalized mutual information
\begin{align}
\Infosy_n(\mu_{\Lambda};q_{\Theta}) &:= \frac{1}{n}\E\log \frac{\de \P_{\bLambda,\bY}}{\de (\P_{\bLambda}
\times\P_{\bY})}
  (\bLambda,\bY)\, .\label{eq:Info-Symmetric-Def}
  \end{align}
A significant amount of rigorous information is available
about this model. For $s > 0$ and $\bQ \in S_r^+$, we define the 
free energy functional $\cF(s, \bQ)$ and its maximizer  $\bQ^{\ast}(s)\in  S_r^+$
via
\begin{align}
	& \cF(s, \bQ) := -\frac{s}{4}\|\bQ\|_F^2 + \E\left\{\log \left( \int \exp(\sqrt{s}\bz^{\top} \bQ^{1/2}\vlambda + s\vlambda^{\top} \bQ\bLambdas - \frac{s}{2}\vlambda^{\top} \bQ\vlambda)\mu_{\Lambda}(\dd\vlambda) \right)\right\}.\label{eq:FsQ}\\
	& \bQ^{\ast}(s) \in \argmax_{\bQ \in S_r^+} \cF(s, \bQ). \label{eq:Qstar}
\end{align}
In the above expression,  expectation is taken over $\bLambdas \sim \mu_{\Lambda}$ and  
$\bz \sim \normal(0, \id_r)$
 independent of each other. These functionals are directly related to the 
 mutual information and the Bayes MMSE of model 
 \eqref{model:symmetric}, as stated below.
\begin{theorem}[\cite{lelarge2019fundamental}, Corollary 42, Proposition 43]\label{prop:symmetric-spike}
There exists a deterministic countable set $\cD\subseteq\RR_{\ge 0}$ such that
\begin{align*}
	q_{\Theta}\in \RR_{\ge 0}
	\;\;\Rightarrow\;\; &
\lim_{n\to\infty}	\Infosy_n(\mu_{\Lambda};q_{\Theta}) =  \frac{1}{4}q_{\Theta}^2\|\E_{\bLambdas \sim \mu_{\Lambda}}[\bLambdas \bLambdas^{\top}]\|_F^2
 -  \sup_{\bQ\in S_r^+} \cF(q_{\Theta}^2, \bQ)\, ,\\
	q_{\Theta}\in \RR_{\ge 0}\setminus \cD
	\;\;\Rightarrow\;\;&
	\lim_{n\rightarrow \infty}\MMSEsy_n(\mu_{\Lambda}; q_{\Theta}) = \|\E_{\bLambdas \sim \mu_{\Lambda}}[\bLambdas \bLambdas^{\top}]\|_F^2 - \|\bQ^{\ast}(s)\|_F^2.
\end{align*}
\end{theorem}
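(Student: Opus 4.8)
The plan is to read the two displays as the replica-symmetric formula for Bayes-optimal inference in model~\eqref{model:symmetric}, to prove the mutual-information limit first, and to deduce the MMSE limit from it via an I-MMSE identity together with a convexity argument. Write $s := q_{\Theta}^2$ and let $\phi_n(s) := \tfrac1n\E\log Z_n(s)$ denote the normalized free energy, $Z_n(s)$ being the partition function of the posterior of $\bLambda$ given $\bY$. A standard computation with the $\GOE(n)$ density gives $\Infosy_n(\mu_{\Lambda};q_{\Theta}) = \tfrac{s}{4n^2}\E\|\bLambda\bLambda^{\sT}\|_F^2 - \phi_n(s) + o(1)$, and, under the moment hypotheses on $\mu_{\Lambda}$, $\tfrac1{n^2}\E\|\bLambda\bLambda^{\sT}\|_F^2 = \E\big[\langle\bLambda_1,\bLambda_2\rangle^2\big] + O(1/n) = \|\E_{\bLambdas\sim\mu_{\Lambda}}[\bLambdas\bLambdas^{\sT}]\|_F^2 + O(1/n)$. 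Hence the first display is equivalent to the free-energy limit $\phi_n(s)\to\phi(s) := \sup_{\bQ\in S_r^+}\cF(s,\bQ)$.

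To prove $\phi_n(s)\to\phi(s)$ I would use the (adaptive) interpolation method. One interpolates between model~\eqref{model:symmetric} at $t=1$ and, at $t=0$, the decoupled rank-$r$ scalar denoising channels in which one observes $\sqrt{s}\,\bQ\bLambdas + \sqrt{s}\,\bQ^{1/2}\bz$ independently for each row --- precisely the channels whose free energy is the $\E\log(\cdots)$ term of $\cF(s,\bQ)$ --- along an interpolation path $\bQ(t)\in S_r^+$ allowed to depend on $t$ and on the data. Differentiating in $t$ and integrating by parts in the Gaussian disorder produces, besides lower-order terms, a remainder of the form $-\tfrac{s}{4}\,\E\big[\|\bR^{(1,2)} - \bQ(t)\|_F^2\big]$, where $\bR^{(1,2)} := \tfrac1n(\bLambda^{(1)})^{\sT}\bLambda^{(2)}$ is the overlap of two samples $\bLambda^{(1)},\bLambda^{(2)}$ drawn from the posterior and the expectation includes the posterior randomness; choosing $\bQ(t)$ to solve the ODE that matches the averaged overlap makes this remainder negligible and yields $\phi_n(s) = \sup_{\bQ}\cF(s,\bQ) + o(1)$ --- provided the overlap concentrates. (Alternatively, a plain Guerra interpolation gives, for each fixed $\bQ$, one side of the variational formula, and the Aizenman--Sims--Starr cavity computation supplies the other.)

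The crux, and the step I expect to be hardest, is this replica-symmetry estimate: under the Bayes-optimal posterior the matrix overlap $\bR^{(1,2)}$ concentrates around a deterministic matrix. I would establish it from the Nishimori identities (which reduce replica--replica overlaps to replica--planted overlaps), a concentration bound for $\phi_n$ in the Gaussian disorder via the Gaussian Poincar\'e inequality, and a small random side-observation perturbation of the model (a pinning / Aizenman--Contucci device) that forces concentration for Lebesgue-almost-every perturbation strength and is then removed by continuity. For $r>1$ one carries the bookkeeping of $S_r^+$-valued overlaps; the possible $\mathcal{O}(r)$-symmetries of $\mu_{\Lambda}$ are invisible at the level of $\bLambda\bLambda^{\sT}$ and so do not obstruct the argument. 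Combined with compactness of $\argmax_{\bQ\in S_r^+}\cF(s,\bQ)$ (guaranteed by the coercive term $-\tfrac{s}{4}\|\bQ\|_F^2$), this yields $\phi_n(s)\to\phi(s)$ and hence the first display, for every $s\ge 0$.

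For the MMSE, Gaussian integration by parts (using a finite-fourth-moment hypothesis on $\mu_{\Lambda}$ to discard diagonal contributions) gives the finite-$n$ I-MMSE identity $\tfrac{\dd}{\dd s}\big(\tfrac1n\Infosy_n(\mu_{\Lambda};q_{\Theta})\big) = \tfrac14\MMSEsy_n(\mu_{\Lambda};q_{\Theta}) + o(1)$; moreover the exact derivative of $\tfrac1n\Infosy_n$ is a normalized matrix MMSE, which is non-increasing in $s$, so $s\mapsto\tfrac1n\Infosy_n$ is concave. Its pointwise limit $\mathcal I(s) := \tfrac{s}{4}\|\E_{\bLambdas\sim\mu_{\Lambda}}[\bLambdas\bLambdas^{\sT}]\|_F^2 - \phi(s)$ is therefore concave, so $\phi$ is convex in $s$ and hence differentiable outside a countable set $\cD_0\subseteq\RR_{\ge 0}$. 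For $s\notin\cD_0$, pointwise convergence of concave functions upgrades to convergence of derivatives, so $\MMSEsy_n(\mu_{\Lambda};q_{\Theta}) = 4\,\tfrac{\dd}{\dd s}\big(\tfrac1n\Infosy_n\big) + o(1)\to 4\,\mathcal I'(s)$. At such $s$, Danskin's envelope theorem gives $\phi'(s) = \partial_s\cF(s,\bQ^{\ast}(s))$ for any maximizer $\bQ^{\ast}(s)$, and evaluating the $s$-derivative of $\cF$ at a stationarity point via Gaussian integration by parts and the Nishimori identity in the scalar channel yields $\partial_s\cF(s,\bQ^{\ast}(s)) = \tfrac14\|\bQ^{\ast}(s)\|_F^2$ (in particular $\|\bQ^{\ast}(s)\|_F$ is the same at every maximizer). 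Hence $\mathcal I'(s) = \tfrac14\big(\|\E_{\bLambdas\sim\mu_{\Lambda}}[\bLambdas\bLambdas^{\sT}]\|_F^2 - \|\bQ^{\ast}(s)\|_F^2\big)$, which is the second display, with $\cD := \{q_{\Theta}\ge 0 : q_{\Theta}^2\in\cD_0\}$, a countable set.
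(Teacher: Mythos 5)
This statement is not proved in the paper at all: it is imported verbatim from \cite{lelarge2019fundamental} (Corollary 42, Proposition 43), so there is no internal proof to compare against. Your outline is essentially a faithful reconstruction of the argument in that reference — free-energy convergence by Guerra/adaptive interpolation plus a cavity (Aizenman--Sims--Starr) matching bound, overlap concentration via the Nishimori identity, Gaussian concentration of the free energy and a small pinning perturbation, and then the I-MMSE identity combined with convexity/Danskin to extract the MMSE limit off a countable exceptional set — and it is correct in outline. One cosmetic slip: the paper's $\Infosy_n$ in Eq.~\eqref{eq:Info-Symmetric-Def} is already normalized by $n$, so the de Bruijn identity reads $\tfrac14\MMSEsy_n(\mu_{\Lambda};\sqrt{s})=\tfrac{\de}{\de s}\Infosy_n(\mu_{\Lambda};\sqrt{s})$ with no extra factor $1/n$; with that adjustment your derivation of the second display (and of the countable set $\cD$ from non-differentiability points of the convex limit) matches the cited source.
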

 
\subsection{Estimation of  $\bTheta$}\label{sec:weak-regime-theta}

We first consider  estimation of $\bTheta$. We claim that in this case no estimator outperforms a naive one. 
\begin{theorem}\label{thm:weak-signal-theta}
Consider the weak signal model of \cref{model:weak-signal}, assuming, without loss of generality, the 
setting of Remark \ref{assumption:second-moment}. Let $n, d \rightarrow \infty$ simultaneously with $d / n \rightarrow \infty$.
 Then for any estimator $\bhtheta:\bA\mapsto\bhtheta(\bA) \in \mathbb{R}^{d \times r}$, we have 
	\begin{align*}
		 \liminf\limits_{n,d \rightarrow \infty}\frac{1}{d} \E[ \|\bhtheta(\bA) - \btheta\|_F^2] \geq rq_{\Theta} - \|\E_{\bThetas \sim \mu_{\Theta}}[\bThetas]\|^2. 
	\end{align*}
	If we further assume $\mu_{\Theta}$ has bounded fourth moment, then for any 
	$\hat\bM :\bA\mapsto\hat{\bM}(\bA)\in \RR^{d \times d}$, we have
	\begin{align*}
		 \liminf\limits_{n,d \rightarrow \infty}\frac{1}{d^2} \E[ \|\hat\bM(\bA)-\bTheta \bTheta^{\top} \|_F^2 ] \geq rq_{\Theta}^2 - \|\E_{\bThetas \sim \mu_{\Theta}}[\bThetas]\|^4\, .
	\end{align*}
	Notice that the above lower bounds are achieved by the null estimators $\bhtheta(\bA) = 
	\E[\bTheta] \in \RR^{d \times r}$ and $\hat\bM(\bA) =
	 \E[\bTheta \bTheta^{\top}] \in \RR^{d \times d}$. 
\end{theorem}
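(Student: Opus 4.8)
The plan is to bound the information that $\bA$ carries about $\btheta$ and show it vanishes fast enough that the posterior mean of $\btheta$ cannot beat $\E[\bTheta]$. Concretely, I would start from the identity relating MMSE to mutual information: for the channel $\bA = s_n\blambda\btheta^\sT+\bZ$, estimation of $\btheta$ from $\bA$ is at least as hard as estimation of $\btheta$ when $\blambda$ is \emph{also} revealed. So it suffices to lower bound the Bayes risk in the model where the statistician observes $(\bA,\blambda)$. Given $\blambda$, the rows $\bA_{\cdot j}$ are conditionally independent across $j$, with $\bA_{\cdot j} = s_n\blambda\vtheta_j + \bZ_{\cdot j}$; this is a Gaussian vector channel with signal $s_n\blambda\vtheta_j$ and the Fisher-information / single-letter structure is governed by the Gram matrix $s_n^2\blambda^\sT\blambda$. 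Since $s_n^2 = 1/\sqrt{nd}$ and $\blambda^\sT\blambda \approx n\bQ_\Lambda$ (by the law of large numbers, using the finite second moment of $\mu_\Lambda$), the effective per-coordinate signal-to-noise is $s_n^2\,\blambda^\sT\blambda \approx \sqrt{n/d}\,\bQ_\Lambda = r_n^{-2}\bQ_\Lambda \to 0$. The hard part is turning this heuristic into a uniform lower bound on the per-coordinate MMSE of $\vtheta_j$.

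The cleanest route is an I-MMSE / de Bruijn argument. First, I would establish that for the scalar-ish problem of estimating a single $\bThetas\sim\mu_\Theta$ from the observation $\bG_t := \sqrt{t}\,\bThetas + \bg$ with $\bg\sim\normal(0,\id_r)$, the function $t\mapsto \mathrm{mmse}(t) := \E[\|\bThetas - \E[\bThetas\mid\bG_t]\|^2]$ is continuous, nonincreasing, and satisfies $\mathrm{mmse}(0) = rq_\Theta - \|\E[\bThetas]\|^2$ (here $q_\Theta = r^{-1}\E\|\bThetas\|^2$ under the normalization $\bQ_\Theta = q_\Theta\id_r$ of Remark \ref{assumption:second-moment}). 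Then, conditionally on $\blambda$, the effective channel seen by $\vtheta_j$ is $\bA_{\cdot j}$ whose log-likelihood depends on $\vtheta_j$ only through $s_n\blambda^\sT\bA_{\cdot j}$, a Gaussian observation of $\vtheta_j$ with precision matrix $s_n^2\blambda^\sT\blambda$. Diagonalizing, this is equivalent to a family of scalar Gaussian channels with SNRs equal to $s_n^2$ times the eigenvalues of $\blambda^\sT\blambda$, all of which are $O(r_n^{-2}) = o(1)$ on a high-probability event $\calE_n := \{\|s_n^2\blambda^\sT\blambda\|_{\op}\le \eps_n\}$ with $\eps_n\to 0$. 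On $\calE_n$, monotonicity of the MMSE in the SNR gives $\E[\|\vtheta_j - \E[\vtheta_j\mid\bA,\blambda]\|^2\mid\blambda] \ge \mathrm{mmse}(\eps_n) \to \mathrm{mmse}(0) = rq_\Theta - \|\E[\bThetas]\|^2$.

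Summing over $j\in[d]$, dividing by $d$, and handling the complement of $\calE_n$ (whose probability is $o(1)$; the contribution there is controlled since the Bayes risk per coordinate is bounded by $rq_\Theta$, using $\mathbb{E}\|\bThetas\|^2<\infty$), I get
\begin{align*}
\liminf_{n,d\to\infty}\frac1d\,\E\big[\|\bhtheta(\bA) - \btheta\|_F^2\big]
\;\ge\; \liminf_{n,d\to\infty}\frac1d\sum_{j=1}^d\E\big[\|\vtheta_j - \E[\vtheta_j\mid\bA,\blambda]\|^2\big]
\;\ge\; rq_\Theta - \|\E_{\bThetas\sim\mu_\Theta}[\bThetas]\|^2,
\end{align*}
which is the first claim; the final matching statement is immediate since the null estimator $\bhtheta(\bA)=\E[\bTheta]$ has risk exactly $d^{-1}\E\|\bTheta - \E[\bTheta]\|_F^2 = rq_\Theta - \|\E[\bThetas]\|^2$. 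For the second, bilinear bound on $\|\hat\bM(\bA) - \bTheta\bTheta^\sT\|_F^2$, I would run the same argument at the level of $\bTheta\bTheta^\sT$: the bounded fourth moment ensures $d^{-2}\E\|\bTheta\bTheta^\sT\|_F^2$ concentrates and the relevant single-letter MMSE (for $\bThetas\bThetas^\sT$ observed through a vanishing-SNR Gaussian channel) converges to $rq_\Theta^2 - \|\E[\bThetas]\|^4$; again revealing $\blambda$ only decreases the risk and the contribution of $\calE_n^c$ is negligible by Cauchy–Schwarz using the fourth moment bound. The main obstacle is the continuity of the single-letter MMSE at $t=0$ together with a clean reduction of the conditional-on-$\blambda$ channel to independent scalar Gaussian channels with explicitly controlled SNRs; once that is in place, monotonicity and a union bound over the event $\calE_n$ do the rest.
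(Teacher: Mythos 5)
Your proposal follows essentially the same route as the paper's: reveal $\bLambda$ as side information, exploit the resulting product/Gaussian-channel structure of the posterior of $\bTheta$ given $(\bA,\bLambda)$ with effective SNR $s_n^2\bLambda^{\sT}\bLambda \approx \sqrt{n/d}\,\bQ_{\Lambda}\to 0$, and pass to the zero-SNR single-letter limit, the only difference being that the paper handles the limit via convergence in distribution and uniform integrability (mimicking its strong-signal proof) while you use monotonicity and continuity of the MMSE in the SNR, which is an interchangeable technicality. One small point: argue the matrix bound through the off-diagonal pairs $\bTheta_j^{\sT}\bTheta_k$, $j\neq k$, using the conditional independence of the posterior coordinates, since these (and not per-coordinate estimation of $\bThetas\bThetas^{\sT}$) are what produce the constant $rq_{\Theta}^2 - \|\E[\bThetas]\|^4$.
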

The proof of this statement is similar to the one of \cref{thm:strong-signal-theta-lower-bound}.
Namely, we will prove that the mean square error achieved by simply taking the
 prior mean asymptotically agrees with the Bayesian MMSE for an estimator that
 has access to $\bLambda$ as additional information. 
 The argument is summarized in Appendix \ref{proof:thm:weak-signal-theta}.

\subsection{Estimation of  $\bLambda$}\label{sec:weak-regime-estimate-lambda}
We finally consider the technically most interesting case, namely the
 estimation of $\blambda$ in the weak signal regime.  For simplicity, we will restrict ourselves to
studying the matrix mean square error:
\begin{align}\label{eq:MMSEA}
 \MMSEas_n(\mu_{\Lambda}, \mu_{\Theta}) := \inf\limits_{\hat{M}(\,\cdot\, )} \frac{1}{n^2}
  \E\left[ \left\| \bLambda\bLambda^\sT - \hat{\bM}(\bA) \right\|_{F}^2 \right],
\end{align} 
where the infimum is taken over all estimators (measurable functions) 
$\hat{\bM}:\bA\mapsto \hat{\bM}(\bA) \in \RR^{n \times n}$.
Of course $\MMSEas_n$ depends on the distributions $\mu_{\Lambda}, \mu_{\Theta}$.
 
 In the rank-one case $r = 1$, if $\E_{\bThetas \sim \mu_{\Theta}}[\bThetas] \neq 0$, then 
 the naive estimator $r_n^{-1}\bA\by$ with 
 $\by = \E_{\mu_{\Theta}}[\bThetas] ^{-1}\mathbf{1}_d / \sqrt{d}$ is consistent:
\begin{align*}
	\frac{1}{n}\left\| r_n^{-1} \bA \by - \bLambda \right\|^2 \toP 0. 
\end{align*} 
In this case, a consistent estimate of $\bLambda \bLambda^{\sT}$ naturally follows.
Therefore, if $\bThetas$ has non-vanishing expectation, the  estimation problem is significantly easier. 
 
 When $r \geq 2$,  if $\mu_{\Theta}$ has non-zero mean, the same construction 
 leads to consistent estimation of the projection 
 of $\bLambda$ onto the direction determined by $\E[\bThetas]$
 (for $\bThetas\sim\mu_{\Theta}$). 
 Once this component is subtracted, the problem is effectively reduced to one 
 in which $\mu_{\Theta}$ has zero mean.
 
 For the remainder of this section, we focus on the more challenging case
 $\E[\bThetas] = \mathbf{0}_r$.
 In addition, for technical reasons  we will require $\mu_{\Theta}$ to have vanishing third  moment. 
\begin{assumption}\label{assumption:info}
	We assume $\E[\bThetas] =  \mathbf{0}_r$, 
	$\E[\bThetas  \otimes \bThetas \otimes \bThetas] = \bold{0}_{r \times r \times r}$, where $\otimes$ denotes the tensor product. 
	Furthermore, we assume that $\mu_{\Theta}, \mu_{\Lambda}$ are sub-Gaussian.
\end{assumption}
Our main results establish that, according to several criteria, 
estimation in the asymmetric model \eqref{model:weak-signal} with $n,d\to\infty$,
$d/n\to\infty$ is equivalent to estimation in the  symmetric spiked model 
\eqref{model:symmetric}.

Our first result on the relation between these models is in terms of mutual information.

\begin{theorem}\label{thm:free-energy}
Define the mutual information per coordinate in asymmetric model
of Eqs.~\eqref{model:weak-signal}, via
\begin{align} 
\Infoas_n(\mu_{\Lambda},\mu_{\Theta}) &:=\frac{1}{n}
 \E\log \frac{\de \P_{\bLambda,\bA}}{\de (\P_{\bLambda}\times\P_{\bA})}
  (\bLambda,\bA)\, . \label{eq:Info-Asymmetric-Def}
  \end{align}
 Further recall the definition of mutual information in the symmetric model 
 \eqref{model:symmetric}  given by Eq.~\eqref{eq:Info-Symmetric-Def}.
Within the setting of  Assumption \ref{assumption:info}, 
	we let $n,d \rightarrow \infty$ simultaneously with $d / n \rightarrow \infty$. In addition, 
	we require without loss of generality $\bQ_{\Theta} = q_{\Theta} \id_r$, cf.	Remark \ref{assumption:second-moment}. 
Then 
	the following limits exist and are equal
	\begin{align*}
		  \lim_{n,d \to \infty}  \Infoas_n(\mu_{\Lambda},\mu_{\Theta})  = 
		   \lim_{n\to \infty} \Infosy_n(\mu_{\Lambda};q_{\Theta})  \, .
	\end{align*}
\end{theorem}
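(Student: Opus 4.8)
The plan is to establish the two-sided bound between $\Infoas_n(\mu_\Lambda,\mu_\Theta)$ and $\Infosy_n(\mu_\Lambda;q_\Theta)$ by interpolating through intermediate observation channels and controlling derivatives of the free energy along the interpolation. The natural starting point is the $I$-MMSE type identity: express $\Infoas_n$ as an integral over the signal-to-noise path of a (matrix) MMSE quantity, and likewise for $\Infosy_n$. Concretely, I would introduce a one-parameter family of models that at one endpoint reduces to the asymmetric model \eqref{model:weak-signal} and at the other to the symmetric model \eqref{model:symmetric}, or — more robustly — compare both to a common ``Gaussian-equivalent'' reference channel in which the spike is replaced by its first- and second-order statistics. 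Since Assumption \ref{assumption:info} kills the first and third moments of $\bThetas$ and imposes sub-Gaussianity, the key heuristic is that the asymmetric channel, after averaging over $\bTheta$, looks to leading order like a channel whose sufficient statistic is the Gram-type matrix $\bA\bA^\sT$, which in turn (after centering and rescaling by $\sqrt{nd}$) is close in total variation to a GOE-plus-spike matrix with effective SNR $q_\Theta$; this is exactly model \eqref{model:symmetric}.

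The main steps, in order, would be: (i) Write $\Infoas_n$ via the fundamental theorem of calculus in an interpolation parameter $t\in[0,1]$, so that $\Infoas_n = \int_0^1 \frac{\dd}{\dd t}\phi_n(t)\,\dd t + (\text{boundary terms})$, where $\phi_n(t)$ is the free energy of an interpolated model; standard Nishimori/Bayes-optimality identities turn $\frac{\dd}{\dd t}\phi_n$ into an overlap expectation. (ii) Perform a small-noise / Gaussian expansion of the likelihood in the asymmetric model: expand $\log$ of the per-row posterior normalization $\int \exp(\tfrac{1}{\sqrt[4]{nd}}\sum_i A_{ij}\langle\bLambda_i,\vtheta_j\rangle - \tfrac{1}{2\sqrt{nd}}\sum_i\langle\bLambda_i,\vtheta_j\rangle^2)\mu_\Theta(\dd\vtheta_j)$ in powers of $r_n^{-1} = (n/d)^{1/4}\to 0$, using $\E[\bThetas]=0$ and $\E[\bThetas^{\otimes 3}]=0$ to cancel the $O(r_n^{-1})$ and $O(r_n^{-3})$ contributions and to show the leading surviving term reproduces the symmetric free energy functional $\cF(q_\Theta^2,\bQ)$ from \eqref{eq:FsQ}. (iii) Control the error terms in this expansion uniformly: sub-Gaussianity of $\mu_\Theta,\mu_\Lambda$ plus concentration of the log-partition function (Gaussian Poincaré / bounded-difference for the Gaussian noise $\bZ$, Latala-type bounds for the spike) gives that the remainder is $o(1)$ after dividing by $n$. (iv) Match this with Theorem \ref{prop:symmetric-spike} / the known variational formula for $\Infosy_n$, and invoke existence of the limit on the symmetric side to conclude the asymmetric limit exists and coincides.

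The hard part will be step (iii): making the Gaussian expansion of the asymmetric likelihood \emph{quantitatively} uniform. The naive expansion produces terms of the form $(nd)^{-k/4}\sum_{i_1,\dots}\prod \langle\bLambda_{i_\ell},\vtheta_j\rangle$ integrated against the posterior, and one must show that the fourth-order term (which survives and contributes to the effective SNR $q_\Theta$) concentrates while all higher-order and cross terms vanish — this is where the diverging aspect ratio genuinely matters, and where I expect the proof to need the regime restrictions the authors mention elsewhere (though for the mutual-information statement alone, sub-Gaussianity and vanishing low moments should suffice to push it through for all $d/n\to\infty$). A secondary technical nuisance is handling the non-compactness of $\mu_\Lambda,\mu_\Theta$: I would first prove the result assuming bounded supports (where the interpolation and concentration estimates are cleanest), then remove boundedness by a truncation argument, showing that truncating $\bLambda_i,\bTheta_j$ at a slowly growing level changes both mutual informations by $o(1)$, using sub-Gaussian tail bounds and Lipschitz continuity of the free energy in the prior (in, say, Wasserstein distance).
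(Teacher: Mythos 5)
Your overall architecture is essentially the paper's: rewrite both mutual informations as constants minus free energy densities, use the vanishing first and third moments of $\mu_{\Theta}$ to replace the $\Theta$-prior by $\normal(0,q_{\Theta})$ (the paper does this by a coordinate-by-coordinate Lindeberg swap, which is the rigorous form of your step (ii)), integrate out the Gaussianized $\vtheta$ exactly, reduce to the symmetric model, and handle unbounded priors by truncation plus continuity of the free energy in the prior. Two remarks, one of which is a genuine gap.

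The gap is the mechanism you propose for passing from the Wishart-type noise to the GOE noise. After Gaussianizing and integrating out $\vtheta$, the effective Hamiltonian contains the quadratic form $\tfrac{q_{\Theta}}{2}\,\vlambda^{\sT}\bigl(\bZ\bZ^{\sT}-d\,\id_n\bigr)\vlambda/\sqrt{nd}$, and you justify identifying this with the $\GOE(n)$ term of model \eqref{model:symmetric} by asserting that the centered, rescaled Gram matrix is ``close in total variation'' to a GOE-plus-spike matrix. That statement is false in most of the regime covered by the theorem: by \cite{bubeck2016testing}, the Wishart and GOE ensembles are asymptotically mutually singular whenever $d\ll n^{3}$, so a TV-based replacement only works when $d\gg n^{3}$ (this is exactly why the paper needs condition $(a)$ of Theorem \ref{thm:upper-bound} for the TV route, but not here). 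For the mutual information statement the paper instead compares the two models at the level of free energies: it couples $(\bZ\bZ^{\sT}-d\,\id_n)/\sqrt{nd}$ and $\bW$ through a common Haar eigenbasis (both are orthogonally invariant), so the free energy difference is bounded by $\tfrac{q_{\Theta}K^{2}}{2}\,\E\|\bS_1-\bS_2\|_{\op}$ with $\bS_1,\bS_2$ the ordered eigenvalue matrices, and this vanishes using only the semicircle law for both spectra plus edge (largest/smallest eigenvalue) convergence and uniform integrability. Unless you replace your TV step by an argument of this weaker, spectral type (or some other argument that operates directly on the log-partition function), your reduction to the symmetric model does not go through for $n\ll d\lesssim n^{3}$.

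A secondary point: your step (i), a Guerra-type interpolation directly between the asymmetric and symmetric Hamiltonians with the derivative controlled by Nishimori overlap identities, is not what carries the proof and would not obviously close on its own — the $t$-derivative is a difference of overlap terms with no sign and no small parameter, so it is essentially as hard as the original comparison. The quantity that actually makes the comparison tractable is the per-column expansion you describe in step (ii) (where, to be precise, it is the second-moment term in $\vtheta_j$ that produces the effective coupling of strength $q_{\Theta}$, while the fourth-moment term must be shown to contribute $O(d\cdot n^{3/2}d^{-3/2})/n=o(1)$ rather than to survive), carried out one column at a time so that the remaining $d-1$ columns only enter through a measure independent of $(\bZ_{\cdot k},\bTheta_k)$. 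With that substitution, and with the eigenvalue-coupling fix above, your truncation strategy for unbounded $\mu_{\Lambda}$ matches the paper's and the rest of the plan is sound.
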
 

The proof of \cref{thm:free-energy}  is presented in 
 Appendix \ref{sec:proof-of-thm-lower-bound-lower-bound} for $\mu_{\Lambda}$ with bounded support.
  The generalization to
   $\mu_{\Lambda}$ with unbounded support is discussed in Appendix \ref{section:reduction-to-bdd-support}.

As mentioned above, earlier work determined 
the asymptotics of the  mutual information for the symmetric model
 $\Infosy_n(\mu_{\Lambda}; q_{\Theta})$. In particular, the next corollary follows 
directly from \cref{thm:free-energy} and Theorem \ref{prop:symmetric-spike}.
\begin{corollary}\label{corollary:free-energy-limit}
Recall that $S_r^+$ denotes the set of $r\times r$ positive semidefinite matrices, and  
 $\cF: \R_{\ge 0} \times S_r^+\to \R$ is defined in \cref{eq:FsQ}.
Under the conditions of \cref{thm:free-energy}, we have 
\begin{align*}
\lim_{n,d \to \infty}   \Infoas_n(\mu_{\Lambda},\mu_{\Theta}) = & \,
 \frac{1}{4}q_{\Theta}^2\|\E_{\bLambdas \sim \mu_{\Lambda}}[\bLambdas \bLambdas^{\top}]\|_F^2 -\cF_*(q_{\Theta}) \\
  := & \,
 \frac{1}{4}q_{\Theta}^2\|\E_{\bLambdas \sim \mu_{\Lambda}}[\bLambdas \bLambdas^{\top}]\|_F^2  - \sup_{\bQ\in S_r^+} \cF(q_{\Theta}^2, \bQ) \, .
\end{align*}
\end{corollary}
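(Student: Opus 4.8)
The plan is short, since this corollary is nothing more than the concatenation of the two results stated immediately before it, combined with an unpacking of notation. First I would invoke \cref{thm:free-energy}: under Assumption \ref{assumption:info} and the normalization $\bQ_\Theta = q_\Theta\id_r$ (available by Remark \ref{assumption:second-moment}), it asserts that $\lim_{n,d\to\infty}\Infoas_n(\mu_\Lambda,\mu_\Theta)$ exists and coincides with $\lim_{n\to\infty}\Infosy_n(\mu_\Lambda;q_\Theta)$. This transfers the whole problem of computing the asymmetric mutual information to the much better-understood symmetric spiked model \eqref{model:symmetric}.

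Second, I would apply the mutual-information half of \cref{prop:symmetric-spike} (the result of \cite{lelarge2019fundamental}), which evaluates $\lim_{n\to\infty}\Infosy_n(\mu_\Lambda;q_\Theta)$ as $\tfrac14 q_\Theta^2\|\E_{\bLambdas\sim\mu_\Lambda}[\bLambdas\bLambdas^\top]\|_F^2 - \sup_{\bQ\in S_r^+}\cF(q_\Theta^2,\bQ)$. I would stress that, unlike the MMSE statement in that theorem, this mutual-information statement holds for \emph{every} $q_\Theta\in\RR_{\ge 0}$, with no exceptional set $\cD$ to avoid, so no subsequence or continuity argument is needed here. Chaining the two identities produces the first line of the displayed formula; the second line is then obtained simply by recording the abbreviation $\cF_*(q_\Theta) := \sup_{\bQ\in S_r^+}\cF(q_\Theta^2,\bQ)$.

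The one compatibility check I would carry out is that the hypotheses of the two theorems agree: Assumption \ref{assumption:info} forces $\mu_\Lambda$ to be sub-Gaussian, hence its second moment $\bQ_\Lambda$ is finite and, by Remark \ref{assumption:second-moment}, invertible, and a finite second moment of the prior is all that \cref{prop:symmetric-spike} demands. Beyond this bookkeeping there is no genuine obstacle — the entire mathematical content of the corollary lives in \cref{thm:free-energy}, whose proof is carried out in the appendices (for $\mu_\Lambda$ of bounded support, with the unbounded-support case reduced to it separately).
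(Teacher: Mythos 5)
Your proposal is correct and matches the paper's own argument: the corollary is stated there as a direct consequence of chaining \cref{thm:free-energy} with the mutual-information formula of \cref{prop:symmetric-spike}, exactly as you do. Your remark that the mutual-information statement holds for all $q_{\Theta}\ge 0$ (so no exceptional set $\cD$ enters) is the right observation and requires no further work.
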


Recall the de Bruijn identity relating mutual information and minimum mean square error, 
see \cite{stam1959some,guo2005mutual,deshpande2017asymptotic}:
\begin{align}
\frac{1}{4}\MMSEsy_n(\mu_{\Lambda}; \sqrt s)= 	\frac{\de \;}{\de s}\Infosy_n(\mu_{\Lambda};\sqrt s) \, .
\label{eq:Identity-I-MMSE}
\end{align}
Since $\MMSEsy_n(\mu_{\Lambda};\sqrt s)$  is non-increasing in $s$, the asymptotics of
$\Infosy_n(\mu_{\Lambda};\sqrt s)$ essentially determines the asymptotics of 
$\MMSEsy_n(\mu_{\Lambda};\sqrt s)$. Namely, we have $\lim_{n\to\infty}\MMSEsy_n(\mu_{\Lambda};\sqrt s)=
\|\E_{\bLambdas \sim \mu_{\Lambda}}[\bLambdas \bLambdas^{\top}]\|_F^2 -4\frac{\partial }{\partial s}\cF_*(\sqrt s)$ for almost all values of $s$.

It would be tempting to conclude that Theorem \ref{thm:free-energy} and Corollary 
\ref{corollary:free-energy-limit} lead directly to analogous theorems relating
$\MMSEas_n(\mu_{\Lambda},\mu_{\Theta})$ and $\MMSEsy_n(\mu_{\Lambda};q_{\Theta})$.
Establishing such a consequence is more challenging than one would naively expect because 
we do not have an identity 
analogous
\footnote{One could differentiate the mutual information $\Infoas_n(\mu_{\Lambda},\mu_{\Theta}) $ with respect to the signal-to-noise ratio parameter ${q_{\Theta}}$, but the result is related to error in estimating $\bLambda\bTheta^{\sT}$ instead of $\bLambda\bLambda^{\sT}$.} 
to \cref{eq:Identity-I-MMSE} for the asymmetric model.
We can nevertheless establish the following, via a perturbation argument.
\begin{theorem}\label{thm:lower-bound}
	Under the conditions of \cref{thm:free-energy}, for all but countably many values of $q_{\Theta} > 0$, 
	 we have
	\begin{align}\label{eq:thm-lower-bound}
		\liminf\limits_{n,d \rightarrow \infty}\MMSEas_n(\mu_{\Lambda},\mu_{\Theta}) 
		\geq \lim\limits_{n \rightarrow \infty} \MMSEsy_n(\mu_{\Lambda};q_{\Theta}). 
	\end{align}
Further, consider a modified model in which the statistician observes $(\bA,\bY'(\ep))$,
where $\bA$ is given by Eq.~\eqref{model:weak-signal}, and 
\begin{align*}
	\bY'(\ep) := \frac{\sqrt{\eps}}{n} \bLambda \bLambda^{\top} + \bW', \;\;\; \bW'\sim\GOE(n)\, .
\end{align*}
Here, we assume $\bW'$ is independent of everything else. Denote by $\MMSEas_n(\mu_{\Lambda},\mu_{\Theta};\ep)$ the corresponding matrix mean square error.
Then, for all but countably many values of $q_{\Theta} > 0$, 
	 we have
\begin{align}\label{eq:thm-upper-ep-bound}
		\lim_{\eps\to 0+}\limsup\limits_{n,d \rightarrow \infty}\MMSEas_n(\mu_{\Lambda},\mu_{\Theta};\eps) 
		\le \lim\limits_{n \rightarrow \infty} \MMSEsy_n(\mu_{\Lambda};q_{\Theta}). 
	\end{align}
\end{theorem}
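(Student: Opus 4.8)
The plan is to exploit the side observation $\bY'(\eps)$ precisely because it restores an I--MMSE identity. Set $\Infoas_n(\mu_\Lambda,\mu_\Theta;\eps):=\tfrac1n I(\bLambda;\bA,\bY'(\eps))$, so that $\Infoas_n(\mu_\Lambda,\mu_\Theta;0)=\Infoas_n(\mu_\Lambda,\mu_\Theta)$. Conditioning on $\bA$ and applying \eqref{eq:Identity-I-MMSE} to the $\GOE$ channel $\bLambda\mapsto\bY'(\eps)$ (whose signal-to-noise squared is $\eps$), one obtains
\begin{align*}
\Infoas_n(\mu_\Lambda,\mu_\Theta;\eps)=\Infoas_n(\mu_\Lambda,\mu_\Theta)+\frac14\int_0^\eps\MMSEas_n(\mu_\Lambda,\mu_\Theta;s)\,\de s ,
\end{align*}
which is exactly the analogue of \eqref{eq:Identity-I-MMSE} that the asymmetric model lacks (the footnote's obstruction disappears because here we perturb by a channel that observes $\bLambda\bLambda^\sT$, not $\bLambda\bTheta^\sT$). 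The one real input needed is a strengthening of \cref{thm:free-energy} to the perturbed model: writing $\mathcal{I}_\infty(t):=\lim_{n\to\infty}\Infosy_n(\mu_\Lambda;\sqrt t)$ (which exists for all $t\ge0$ by \cref{prop:symmetric-spike}), I would prove
\begin{align*}
\lim_{n,d\to\infty}\Infoas_n(\mu_\Lambda,\mu_\Theta;\eps)=\mathcal{I}_\infty\big(q_\Theta^2+\eps\big)\qquad\text{for each fixed }\eps\ge 0 .
\end{align*}
Heuristically, $\bA$ is asymptotically equivalent, for inference on $\bLambda$, to a symmetric spike at signal-to-noise $q_\Theta$, and two independent $\GOE$ channels observing $\bLambda\bLambda^\sT/n$ at strengths $q_\Theta$ and $\sqrt\eps$ amount to a sufficient statistic distributed as a single $\GOE$ channel at strength $\sqrt{q_\Theta^2+\eps}$. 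Since equality of mutual informations is not automatically preserved under adjoining a common side channel, making this rigorous means re-running the interpolation of \cref{sec:proof-of-thm-lower-bound-lower-bound} with the auxiliary $\GOE$ observation carried along throughout; the extra term is of the same type as the noise already present in the symmetric model, so no genuinely new difficulty arises, but this is the main (and essentially only nontrivial) obstacle.

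Granting the above, the remainder is soft analysis. The map $s\mapsto\MMSEas_n(\mu_\Lambda,\mu_\Theta;s)$ is non-increasing, since for $s_1<s_2$ the observation $(\bA,\bY'(s_1))$ has the law of a rescaling of $(\bA,\bY'(s_2))$ with independent Gaussian noise added, hence is a stochastic degradation. Consequently the average of the integrand above over $[0,\eps]$ lies between its two endpoint values, i.e.
\begin{align*}
\MMSEas_n(\mu_\Lambda,\mu_\Theta;\eps)\ \le\ \frac{4}{\eps}\Big[\Infoas_n(\mu_\Lambda,\mu_\Theta;\eps)-\Infoas_n(\mu_\Lambda,\mu_\Theta)\Big]\ \le\ \MMSEas_n(\mu_\Lambda,\mu_\Theta).
\end{align*}
Letting $n,d\to\infty$ in the central quantity (using the strengthened equivalence for the first term and \cref{thm:free-energy} for the second) shows it converges to $\tfrac4\eps\big[\mathcal{I}_\infty(q_\Theta^2+\eps)-\mathcal{I}_\infty(q_\Theta^2)\big]$. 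The right inequality then gives $\liminf_{n,d}\MMSEas_n(\mu_\Lambda,\mu_\Theta)\ge\tfrac4\eps\big[\mathcal{I}_\infty(q_\Theta^2+\eps)-\mathcal{I}_\infty(q_\Theta^2)\big]$, and the left inequality gives $\limsup_{n,d}\MMSEas_n(\mu_\Lambda,\mu_\Theta;\eps)\le\tfrac4\eps\big[\mathcal{I}_\infty(q_\Theta^2+\eps)-\mathcal{I}_\infty(q_\Theta^2)\big]$; since $\eps\mapsto\limsup_{n,d}\MMSEas_n(\mu_\Lambda,\mu_\Theta;\eps)$ is non-increasing, its limit as $\eps\to0+$ exists.

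Finally I would identify the common right-hand side in the limit $\eps\to0+$. The function $\mathcal{I}_\infty$ is concave, being a pointwise limit of the concave maps $t\mapsto\Infosy_n(\mu_\Lambda;\sqrt t)$, so $\tfrac1\eps\big[\mathcal{I}_\infty(q_\Theta^2+\eps)-\mathcal{I}_\infty(q_\Theta^2)\big]$ decreases to the right derivative of $\mathcal{I}_\infty$ at $q_\Theta^2$. On the other hand, integrating \eqref{eq:Identity-I-MMSE} for the symmetric model, invoking \cref{prop:symmetric-spike}, and passing to the limit by dominated convergence (the MMSE is bounded by $\|\E_{\mu_\Lambda}[\bLambdas\bLambdas^\sT]\|_F^2$), $\mathcal{I}_\infty$ is seen to be the primitive of the non-increasing function $s\mapsto\tfrac14\lim_{n\to\infty}\MMSEsy_n(\mu_\Lambda;\sqrt s)$. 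Hence the right derivative of $\mathcal{I}_\infty$ at $q_\Theta^2$ equals $\tfrac14\lim_{n\to\infty}\MMSEsy_n(\mu_\Lambda;q_\Theta)$ whenever $q_\Theta^2\notin\cD$ and $q_\Theta^2$ is a continuity point of that monotone limit, i.e. for all but countably many $q_\Theta>0$. Multiplying by $4$ and combining with the two inequalities above proves \eqref{eq:thm-lower-bound} and \eqref{eq:thm-upper-ep-bound}.
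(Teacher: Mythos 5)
Your proposal is correct and follows essentially the same route as the paper: the auxiliary GOE observation of $\bLambda\bLambda^{\sT}$ restores an I--MMSE relation, the key input you flag (extending \cref{thm:free-energy} to the perturbed model) is exactly what the paper's \cref{lemma:free-energy-4} supplies --- the interpolation lemmas are all proved with $H_n(\vlambda;\bY'(h))$ carried along --- and the passage from free-energy convergence to MMSE convergence is done, as you do, via convexity/concavity in the perturbation signal-to-noise ratio together with continuity of the limiting derivative at all but countably many $q_{\Theta}$. Your integrated, difference-quotient formulation (sandwiching $\tfrac{4}{\eps}\big[\Infoas_n(\mu_\Lambda,\mu_\Theta;\eps)-\Infoas_n(\mu_\Lambda,\mu_\Theta)\big]$ between $\MMSEas_n(\mu_\Lambda,\mu_\Theta;\eps)$ and $\MMSEas_n(\mu_\Lambda,\mu_\Theta)$, then identifying the right derivative of the limiting mutual information) is just the integral form of the paper's derivative argument via \cref{lemma:convex-derivative} and the formula $\phi'(q_\Theta^2)=\tfrac14 q^{\ast}(q_\Theta^2)^2$.
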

The proof of \cref{thm:lower-bound} is outlined in
 Appendix  \ref{sec:proof-of-thm:lower-bound} 
 (for distributions with bounded support)
  and \ref{section:reduction-to-bdd-support} (for the general case).
  
  \begin{remark}
In particular, Theorem \ref{thm:lower-bound} establishes that 
the estimation errors under the symmetric and asymmetric models coincide asymptotically,
provided that the error in the perturbed model $\MMSEas_n(\mu_{\Lambda},\mu_{\Theta};\eps)$
is uniformly continuous (in $n$) as $\eps\downarrow 0$. We expect this to be generically the case,
but proving this remains an open problem.
\end{remark}
The next theorem establishes a sequence of sufficient conditions under 
which we can prove asymptotic equivalence of estimation errors in
the asymmetric and symmetric models.
\begin{theorem}\label{thm:upper-bound}
	Under the conditions of Theorem \ref{thm:free-energy}, we further assume at 
	least one of the following conditions holds:
	\begin{enumerate}
		\item[$(a)$] $d n^{-3}(\log n)^{-6} \rightarrow \infty$.
		\item[$(b)$] $d(\log d)^{8/5}/n^{6/5}\to 0$ and $\mu_{\Lambda}$ has bounded support.
		\item[$(c)$] For the case $r=1$, define $Y = \sqrt{\gamma} \Lambda_0 + G$
		with $G\sim\normal(0,1)$ independent of $\Lambda_0\sim\mu_{\Lambda}$, 
		and define $\mathsf{I}(\gamma) = \E \log \frac{\dd p_{Y \mid \Lambda_0}}{\dd p_Y}(Y, \Lambda_0)$.
		Let 
	\begin{align*}
		\Psi(\gamma, s) = \frac{s^2}{4} + \frac{\gamma^2}{4{s}} - \frac{\gamma}{2} + \mathsf{I}(\gamma).
	\end{align*}
	Assume that the global maximum of 
	$\gamma \mapsto \Psi(\gamma, q_{\Theta})$ over $(0, \infty)$ is also the first stationary 
	point of the same function. 
	\end{enumerate}
Then, we have
\begin{align}
		\lim\limits_{n,d \rightarrow \infty}\MMSEas_n(\mu_{\Lambda},\mu_{\Theta}) 
		= \lim\limits_{n \rightarrow \infty} \MMSEsy_n(\mu_{\Lambda};q_{\Theta}). 
		\label{eq:EqualityErrors}
	\end{align}
(For condition $(b)$, the conclusion is guaranteed to hold 
for all but countably many values of  $q_{\Theta} > 0$.)
\end{theorem}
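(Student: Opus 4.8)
The plan is to combine the lower bound \eqref{eq:thm-lower-bound} from Theorem \ref{thm:lower-bound} with a matching asymptotic upper bound on $\MMSEas_n(\mu_{\Lambda},\mu_{\Theta})$, so that the two limits coincide. The lower bound already holds for all but countably many $q_{\Theta}$; the entire task is therefore to show
\begin{align*}
\limsup_{n,d\to\infty}\MMSEas_n(\mu_{\Lambda},\mu_{\Theta}) \le \lim_{n\to\infty}\MMSEsy_n(\mu_{\Lambda};q_{\Theta})
\end{align*}
under any of the three hypotheses. The natural route is via the perturbed model of Theorem \ref{thm:lower-bound}: we already know $\lim_{\eps\to0+}\limsup_{n,d}\MMSEas_n(\mu_{\Lambda},\mu_{\Theta};\eps)\le\lim_n\MMSEsy_n(\mu_{\Lambda};q_{\Theta})$, and clearly $\MMSEas_n(\mu_{\Lambda},\mu_{\Theta};\eps)\le\MMSEas_n(\mu_{\Lambda},\mu_{\Theta})$ since adding observations cannot hurt a Bayes estimator. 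What is missing is a bound in the other direction, controlling how much the extra observation $\bY'(\eps)$ actually helps: one needs $\MMSEas_n(\mu_{\Lambda},\mu_{\Theta}) \le \MMSEas_n(\mu_{\Lambda},\mu_{\Theta};\eps) + o_{n}(1) + o_{\eps}(1)$, i.e. an $(n,d,\eps)$-uniform modulus of continuity. I would obtain this by an I-MMSE style argument applied along the $\eps$ direction: $\partial_\eps \Infoas_n$ (with the $\bY'(\eps)$ channel) equals $\tfrac14\E\|\bLambda\bLambda^\sT-\E[\bLambda\bLambda^\sT\mid\bA,\bY'(\eps)]\|_F^2/n^2$ up to normalization, so the integral of the MMSE drop over $[0,\eps]$ is exactly the mutual-information gain, which is $O(\eps \cdot \|\bQ_\Lambda\|_F^2)$ and vanishes with $\eps$ uniformly in $n$ under sub-Gaussianity of $\mu_\Lambda$ (Assumption \ref{assumption:info}). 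This reduces everything to showing that, after passing $\eps\to0$, no gap remains — which is exactly where the three conditions $(a)$, $(b)$, $(c)$ enter.

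For the three conditions themselves, the structure I expect is: each one guarantees that the free-energy (replica) functional governing the asymmetric problem has no ``first-order phase transition'' obstructing the interpolation/perturbation argument, so the $\limsup$ matches the replica-symmetric value $\|\bQ_\Lambda\|_F^2-\|\bQ^*(q_\Theta)\|_F^2$. Concretely, (a) $d\gg n^3(\log n)^6$ puts us deep in the regime where, after the Gram-matrix reduction sketched in the introduction (the observation $\bY'=(\bA\bA^\sT-d\id_n)/\sqrt{nd}$ carries essentially all information about $\bLambda$), the noise in $\bY'$ is close enough to exact $\GOE(n)$ that a direct comparison to model \eqref{model:symmetric} can be made term-by-term with a controllable error; the $(\log n)^6$ slack absorbs sub-Gaussian tail/truncation losses. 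Condition (b) $d(\log d)^{8/5}\ll n^{6/5}$ with bounded support is the opposite extreme, where a second-moment / conditional-second-moment computation on the likelihood ratio (the same technology used for Theorem \ref{thm:weak-signal-theta}) closes the gap; boundedness of $\mu_\Lambda$ makes the relevant exponential moments finite, and the $(\log d)^{8/5}$ factor is the price of a union bound, which forces the ``all but countably many $q_\Theta$'' caveat. Condition (c) is the rank-one case where one can be fully explicit: $\Psi(\gamma,q_\Theta)$ is precisely the (one-dimensional) potential whose stationary points are the fixed points of the scalar state-evolution/AMP recursion, and the hypothesis ``global max $=$ first stationary point'' rules out the metastable regime, so the interpolation bound from Theorem \ref{thm:free-energy} is tight without any size restriction on $d/n$.

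I would carry out the steps in this order: (1) record the monotonicity $\MMSEas_n(\cdot;\eps)\le\MMSEas_n(\cdot)$ and the $\eps$-continuity bound from I-MMSE along the $\bY'(\eps)$-channel, reducing the theorem to an $\eps\to0$ statement; (2) invoke Theorem \ref{thm:lower-bound} for the $\le$ direction in the perturbed model and Theorem \ref{thm:lower-bound}'s \eqref{eq:thm-lower-bound} for the $\ge$ direction, so that the only thing to prove is that the $\eps$-continuity modulus is uniform in $(n,d)$ — i.e. that $\limsup_{n,d}\MMSEas_n(\cdot)$ and $\lim_{\eps}\limsup_{n,d}\MMSEas_n(\cdot;\eps)$ agree; (3) prove this uniformity under each of (a), (b), (c) separately, using respectively the Gram-matrix/GOE-comparison estimate, the (conditional) second-moment method, and the explicit scalar-potential analysis. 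The main obstacle is step (3) under condition (a): controlling the discrepancy between the Wishart-type noise in $(\bA\bA^\sT-d\id_n)/\sqrt{nd}$ and a genuine $\GOE(n)$ matrix uniformly well enough that the MMSE comparison survives — the diagonal correction, the $O(1/\sqrt{d})$-scale non-Gaussianity of the off-diagonal entries, and the weak dependence among them all have to be shown negligible at the level of the Bayes risk, not merely in operator norm, and the interplay of these errors with the sub-Gaussian (unbounded) prior is what dictates the $n^3$ threshold and the logarithmic slack.
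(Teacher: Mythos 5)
Your reduction in steps (1)--(2) contains the decisive gap. You claim that the I-MMSE identity along the $\bY'(\eps)$ channel yields $\MMSEas_n(\mu_\Lambda,\mu_\Theta)\le \MMSEas_n(\mu_\Lambda,\mu_\Theta;\eps)+o_n(1)+o_\eps(1)$ because the mutual-information gain is $O(\eps)$. But the identity only says that $c\int_0^\eps \MMSEas_n(\mu_\Lambda,\mu_\Theta;\eps')\,\dd\eps'$ equals the information gain; since the perturbed MMSE is bounded, this integral is trivially $O(\eps)$ and gives no control whatsoever on the drop $\MMSEas_n(\mu_\Lambda,\mu_\Theta)-\MMSEas_n(\mu_\Lambda,\mu_\Theta;\eps)$. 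A first-order phase transition is exactly the scenario in which the information gain is $O(\eps)$ while the MMSE drops by $\Omega(1)$ as soon as $\eps>0$, uniformly in $n$. Indeed, the paper states explicitly (Remark following Theorem \ref{thm:lower-bound}) that uniform-in-$n$ continuity of $\MMSEas_n(\mu_\Lambda,\mu_\Theta;\eps)$ as $\eps\downarrow 0$ is an \emph{open problem}; the whole purpose of hypotheses $(a)$, $(b)$, $(c)$ is to circumvent it by other means, not to verify it. So your architecture reduces the theorem to precisely the statement the theorem is designed to avoid, and your proposed justification of that statement is invalid.

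The paper's actual arguments are also substantively different from your sketches for $(a)$ and $(b)$. For $(a)$ it does not pass through the $\eps$-perturbation at all: after truncating $\bLambda$ at scale $\sqrt{\log n}$, it uses the exact high-probability coupling of the Wishart matrix $(\bZ'\bZ'^{\sT}-d\id_n)/\sqrt{d}$ with a GOE matrix (valid for $d\gg n^3$, from \cite{bubeck2016testing}), an \emph{entrywise} stability estimate for the posterior-mean map $\bM_n(\cdot)$ to absorb the residual error matrix (this is where $(\log n)^6$ arises), and a separate interpolation lemma (Lemma \ref{lemma:toMMSE}) to remove the extra Gaussian component in the spike. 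For $(b)$, the paper does not run a second-moment computation on the likelihood ratio (which, as it notes elsewhere, only gives constant-factor thresholds); instead it truncates $\bTheta$, introduces vanishing side-information channels, proves overlap concentration via concentration of the free energy (Gaussian \Poincare{} and Efron--Stein, which is where the $n^{6/5}$ and $(\log d)^{8/5}$ thresholds come from), and builds an explicit estimator $\bM_0(\bar\bA)$ in the unperturbed model whose risk is controlled by $D_\Theta(h)-D_\Theta(0)$, finally sending $h\to 0^+$; the ``all but countably many $q_\Theta$'' caveat comes from differentiability of the limiting free energy (uniqueness of the maximizer of $\cF$), not from a union bound. Your sketch for $(c)$ is closest in spirit: the paper indeed matches the lower bound of Theorem \ref{thm:lower-bound} with rotation-invariant Bayes AMP run on the Gram matrix, whose optimality under the stated no-metastability condition is taken from the AMP literature.
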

We defer the proof of \cref{thm:upper-bound} to Appendix \ref{sec:proof-thm:upper-bound}.

As anticipated in the introduction,  the results presented
in \cref{sec:weak-regime-theta} and \cref{sec:weak-regime-estimate-lambda} 
support two key statistical insights, which we next summarize:
\begin{enumerate}
\item In the weak signal regime, it is possible to partially recover $\bLambda$ while
 impossible to recover $\bTheta$ in any non-trivial sense. For instance, in the high-dimensional
 Gaussian mixture model, we might be able to estimate the labels, even if it is 
 impossible to estimate the cluster centers.
 
 In the next section, we will further explore the application of these results
 to Gaussian mixture models,  while in \cref{sec:real-data} we will investigate
 such asymmetry in real world datasets.
\item In this regime, ideal estimation accuracy is asymptotically independent of the distribution 
of the high-dimensional factor $\bTheta$. As demonstrated, for instance, by 
Eq.~\eqref{eq:EqualityErrors}, the only dependence on $\mu_{\Theta}$ is through its second moment.
\end{enumerate}

 The three sufficient conditions given in Theorem \ref{thm:upper-bound} correspond to
 three different arguments. 
 
 The most straightforward case is the one of condition $(a)$. We use the fact that 
 \begin{align}
 	\frac{1}{\sqrt{nd}}
 	\big(\bA\bA^{\sT}-d\id_n\big) = \frac{1}{n}\blambda \hat\bQ_{\Theta}
 	\blambda^{\sT} + \frac{1}{\sqrt{nd}}
 	\big(\bZ\bZ^{\sT}-d\id_n\big)+\mbox{cross terms}\, ,
 	\end{align}
 	where $\hat\bQ_{\Theta}:=\bTheta^{\sT}\bTheta/d \approx q_{\Theta}\id_r$.
 	For $d\gg n^3$, \cite{bubeck2016testing} proved that the total variation distance between
 	the distribution of  the Wishart matrix
 	$\big(\bZ\bZ^{\sT}-d\id_n\big)/\sqrt{nd}$ and the one 
 	of $\bW\sim \GOE(n)$ converges to $0$. While we still have to deal with the cross terms, 
 	under this condition the two models are close to each other.
 	
  	For $d\ll n^{3}$
  	the Wishart and GOE distributions are asymptotically mutually singular  \cite{bubeck2016testing},
  	and therefore proving asymptotic equality of the mean square error has
  	to rely on a more carefully analysis. 
 	 In fact, the proof of part $(b)$ follows a different path and relies heavily on
 	 Theorem \ref{thm:lower-bound}.
 	 
 	 Finally, part $(c)$ combines the bound of Theorem \ref{thm:lower-bound}
 	 with a matching bound that is based on the analysis of a Bayesian approximate 
 	 message passing (AMP) algorithm \cite{montanari2021estimation}. Indeed, the sufficient condition of part $(c)$ 
 	 coincides with the condition that Bayes AMP achieves Bayes optimal estimation error.

\section{Clustering under the Gaussian mixture model}\label{sec:GMM}

As an application of our theory, we consider clustering under Gaussian mixture model (GMM).
Throughout, we will assume that all Gaussian components have equal covariance 
$\bSigma$, and that $\bSigma$ is known. Without loss of generality, we can therefore assume
 that data are preprocessed  so that $\bSigma = \id_d$.  We will focus on the weak signal regime, 
 because  it is mathematically the most interesting regime. 

The Gaussian mixture model fits our general framework, with the $\bLambda_i$'s encoding 
the data point labels: $\bLambda_i$ takes $k$ possible values, with $k$ being the number of clusters. 
We will measure estimation accuracy using the overlap
\begin{align*}
\Overlap_n := \max\limits_{\pi\in \mathfrak{S}_k}\frac{1}{n}\sum\limits_{i = 1}^n 
\mathbbm{1}\left\{\hat{\bLambda}_i  = \bLambda^{\pi}_i\right\}\, .
\end{align*}
Here, $\mathfrak{S}_k$ denotes the group of permutations over $k$ elements, and $\bLambda_i^{\pi}$
denotes the action of this group on the cluster label encodings of the $i$-th sample.
(We will work with slightly different encodings for the cases $k=2$ and $k\ge 3$ below.)

%

\subsection{Two clusters with symmetric centers}\label{sec:symmetric-GMM}

As a warm-up example, we consider the case of $k=2$ clusters with equal weights. For $i\in\{1,\dots\, , n\}$, we observe an independent  sample
\begin{align}\label{model:simple-GMM}
\ba_i\sim \frac{1}{2}\,\normal(\bTheta / \sqrt[4]{nd}, \id_d)+\frac{1}{2}\,\normal(-\bTheta / \sqrt[4]{nd}, \id_d)\, .
\end{align}
Here, $\pm\bTheta / \sqrt[4]{nd} \subseteq \RR^d$ are the cluster
 centers.
Denoting by  $(\Lambda_i)_{i\le n} \iidsim \Unif(\{-1,+1\})$ the cluster labels, 
and by $\bA\in\RR^{n\times d}$ the matrix whose $i$-th row  corresponds to the $i$-th sample,
we have that $\bA$ follows model \eqref{model:weak-signal} with $r=1$.

 We further assume $\bTheta$ has independent coordinates: $(\Theta_j)_{j\le d} \iidsim \mu_{\Theta}$, where
 $\mu_{\Theta}$ is a centered sub-Gaussian distribution and has zero  third moment.
 \begin{remark}\label{rmk:MinimaxCenters}
 We note that, for $r=1$, there is no real loss of generality in assuming 
 $(\Theta_j)_{j\le d}$ to be i.i.d. sub-Gaussian. Indeed, model \eqref{model:weak-signal}
 is equivariant under rotations $\bTheta\mapsto \bOmega\bTheta$, 
 $\bA\mapsto \bA\bOmega^{\sT}$, where $\bOmega\in\RR^{d\times d}$
 is an orthogonal matrix. Further, any loss function that depends uniquely on $\bLambda$
 is also invariant under the same group. Consider minimax estimation when $\bTheta$
 belongs to the sphere:  $\|\bTheta\|^2_2 = d\, q_{\Theta}$. As a consequence of the 
 Hunt-Stein theorem, the least favorable prior is the uniform distribution over 
 the same sphere. We expect the asymptotic Bayes risk under this prior
 (and therefore the minimax risk) to be the same as the risk under the 
 prior $(\Theta_j)_{j\le d} \iidsim \normal(0,q_{\Theta})$.
 \end{remark} 
 
 \begin{proposition}\label{prop:GMM}
 Consider the Gaussian mixture model as in \cref{model:simple-GMM}.
 Assume $n, d \to \infty$ simultaneously and $d/n\to \infty$, then the following results hold:
 \begin{itemize}
 \item[$(a)$] If $q_{\Theta}\le 1$, then, for any clustering estimator $\hbLambda$, as $n,d \to \infty$ we have
 \begin{align}
 \Overlap_n \toP \frac{1}{2}\, .
 \end{align} 
 \item[$(b)$] If $q_{\Theta}> 1$, let $s_*$ be the largest non-negative solution of
 \begin{align}
 s = q_{\Theta}^2\E\big\{\tanh\big(s+ \sqrt{s}G\big)^2\big\}\, ,
 \end{align} 
 where $G \sim \normal(0,1)$. Then $s_*>0$ and there exists an estimator achieving 
 \begin{align}
 \Overlap_n \toP \Phi(\sqrt{s_*}), \,
 \end{align}
 where $\Phi$ denotes the cumulative distribution function for standard Gaussian distribution.
 \end{itemize}
 \end{proposition}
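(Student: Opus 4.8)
The plan is to recognize \eqref{model:simple-GMM} as the rank-one instance of the weak-signal model \eqref{model:weak-signal} with $\mu_{\Lambda}=\Unif(\{-1,+1\})$ (so $\bQ_{\Lambda}=1$, $\|\bQ_\Lambda\|_F^2=1$) and $\mu_{\Theta}$ the prescribed centered, sub-Gaussian law with vanishing third moment, and then to transport what is known about the symmetric spiked Wigner model \eqref{model:symmetric} with Rademacher prior and signal-to-noise ratio $q_{\Theta}$. First one checks the hypotheses: $\mu_{\Lambda}$ is bounded, hence sub-Gaussian, with invertible second moment, and $\mu_{\Theta}$ satisfies \cref{assumption:info}; Remark \ref{assumption:second-moment} is vacuous for $r=1$. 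For the Rademacher prior the replica-symmetric free energy \eqref{eq:FsQ} is explicit: using $\lambda^2=1$ and the substitution $\gamma=q_{\Theta}^2Q$ one finds $\cF(q_{\Theta}^2,Q)=-\gamma^2/(4q_{\Theta}^2)-\gamma/2+\E\log\cosh(\gamma+\sqrt{\gamma}\,G)$, and its stationarity equation is exactly $\gamma=q_{\Theta}^2\,\E[\tanh(\gamma+\sqrt{\gamma}\,G)^2]$, i.e. the equation defining $s_*$ (the largest root being the relevant one). For the Rademacher prior the potential is unimodal / there is no hard phase, which is the classical input giving that condition $(c)$ of \cref{thm:upper-bound} holds at every $q_{\Theta}$; hence, by \cref{thm:lower-bound} and \cref{thm:upper-bound}$(c)$, $\lim_{n,d}\MMSEas_n(\mu_{\Lambda},\mu_{\Theta})=\lim_{n}\MMSEsy_n(\mu_{\Lambda};q_{\Theta})$, and by \cref{prop:symmetric-spike} this common value equals $1-\|\bQ^{\ast}(q_{\Theta}^2)\|_F^2$, which is $1$ when $q_{\Theta}\le 1$ (the positive stationary point is absent) and $1-s_*^2/q_{\Theta}^4<1$ when $q_{\Theta}>1$.

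\textbf{Part $(a)$.} Fix any clustering rule $\hat{\bLambda}=\hat{\bLambda}(\bA)\in\{-1,+1\}^n$ and write $\langle\,\cdot\,\rangle$ for posterior expectation given $\bA$. Using the tower property ($\hat{\bLambda}$ is $\bA$-measurable), $|\hat{\Lambda}_i\hat{\Lambda}_j|=1$, and Cauchy--Schwarz,
\[
\E\!\left[\left(n^{-1}\sum_{i}\hat{\Lambda}_i\Lambda_i\right)^{\!2}\right]
=\frac{1}{n^2}\sum_{i,j}\E\!\left[\hat{\Lambda}_i\hat{\Lambda}_j\,\langle\Lambda_i\Lambda_j\rangle\right]
\le\left(\frac{1}{n^2}\sum_{i,j}\E\!\left[\langle\Lambda_i\Lambda_j\rangle^2\right]\right)^{\!1/2}
=\bigl(1-\MMSEas_n(\mu_{\Lambda},\mu_{\Theta})\bigr)^{1/2},
\]
where the last identity uses $\Lambda_i^2=1$ (so $\E\|\bLambda\bLambda^\sT\|_F^2=n^2$) and optimality of the posterior mean. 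When $q_{\Theta}\le 1$ the right-hand side tends to $0$ by the previous paragraph (for the boundary value $q_{\Theta}=1$ this additionally invokes continuity of the limiting MMSE in the signal-to-noise ratio together with the classical continuity of the Rademacher spiked-Wigner transition). Hence $n^{-1}\sum_i\hat{\Lambda}_i\Lambda_i\toP 0$; since $\mathfrak{S}_2$ acts on the $\pm1$ labels by a global sign flip, $\Overlap_n=\tfrac12+\tfrac{1}{2n}\bigl|\sum_i\hat{\Lambda}_i\Lambda_i\bigr|$, and therefore $\Overlap_n\toP\tfrac12$.

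\textbf{Part $(b)$.} For $q_{\Theta}>1$ one has $s_*>0$, because $\gamma\mapsto q_{\Theta}^2\E[\tanh(\gamma+\sqrt{\gamma}\,G)^2]-\gamma$ has derivative $q_{\Theta}^2-1>0$ at $\gamma=0$ and is negative for large $\gamma$. The estimator I would use is $\hat{\Lambda}_i:=\sign(\hat{\Lambda}^t_i)$, where $\hat{\bLambda}^t$ is the iterate of the Bayes approximate message passing algorithm for model \eqref{model:weak-signal} underlying the proof of \cref{thm:upper-bound}$(c)$ (see \cite{montanari2021estimation}), run from a spectral initialization --- a nontrivial initialization is required since $\mu_{\Theta},\mu_{\Lambda}$ are centered, and for $q_{\Theta}>1$ the leading singular vectors of $\bA$ correlate with $\bLambda$. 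Its state evolution for the effective scalar signal-to-noise ratio is $\gamma_{t+1}=q_{\Theta}^2\,\E[\tanh(\gamma_t+\sqrt{\gamma_t}\,G)^2]$, converging to $s_*$, and the empirical distribution of $(\Lambda_i,\hat{\Lambda}^t_i)$ converges to the law of $\bigl(\Lambda_0,\tanh(s_*\Lambda_0+\sqrt{s_*}\,G)\bigr)$ with $G\sim\normal(0,1)$ independent of $\Lambda_0\sim\mu_{\Lambda}$. Since $\tanh$ is sign-preserving, $\sign(\hat{\Lambda}^t_i)$ agrees with $\Lambda_i$ in the limit with probability $\P\bigl(\sqrt{s_*}\Lambda_0+G>0 \text{ when } \Lambda_0=+1, \text{ etc.}\bigr)=\P(\sqrt{s_*}+\Lambda_0 G>0)=\Phi(\sqrt{s_*})$ (using $\Lambda_0^2=1$ and symmetry of $G$), so $n^{-1}\sum_i\mathbbm{1}\{\hat{\Lambda}_i=\Lambda_i\}\toP\Phi(\sqrt{s_*})$; as $\Phi(\sqrt{s_*})>\tfrac12$, the maximum over $\mathfrak{S}_2$ in $\Overlap_n$ is attained by the identity with probability $\to 1$, giving $\Overlap_n\toP\Phi(\sqrt{s_*})$. (At $q_{\Theta}=1$ this is consistent with part $(a)$: $s_*=0$ and $\Phi(0)=\tfrac12$.)

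\textbf{Main obstacle.} The delicate steps are: (i) confirming that condition $(c)$ of \cref{thm:upper-bound} --- unimodality of the replica potential, equivalently Bayes-optimality of AMP --- indeed holds for the Rademacher prior at all $q_{\Theta}$, and that the asymmetric AMP state evolution with spectral initialization converges to the \emph{largest} root $s_*$; (ii) obtaining, in the diverging-aspect-ratio regime $d/n\to\infty$, the state-evolution and empirical-distribution-concentration guarantees for the asymmetric AMP that Part $(b)$ relies on; and (iii) handling the boundary value $q_{\Theta}=1$ in Part $(a)$, which may be excluded by the countable exceptional set in \cref{prop:symmetric-spike} and must be recovered by a continuity argument.
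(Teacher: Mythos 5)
Your part $(a)$ is correct and follows the same skeleton as the paper's argument in Appendix G.1: both reduce the claim to $\lim_{n,d}\MMSEas_n(\mu_\Lambda,\mu_\Theta)=1$ for $q_\Theta\le 1$ (via \cref{thm:lower-bound} together with the known Rademacher spiked-Wigner result, the trivial upper bound being the null estimator), and then convert triviality of the matrix MMSE into triviality of the overlap. The conversion step is where you differ: the paper introduces the pairwise overlap, tests the rescaled estimator $\sqrt{a}\,\hat\bLambda$ and lets $a\to0^+$ to force $\limsup\E[\Pairoverlap_n]\le 1/2$, then uses $\Pairoverlap_n=2\Overlap_n^2-2\Overlap_n+1+o_n(1)$; you instead bound the empirical correlation directly by Cauchy--Schwarz against the posterior mean, using $1-\MMSEas_n=n^{-2}\E\big\|\E[\bLambda\bLambda^\sT\mid\bA]\big\|_F^2\to0$ and the identity $\Overlap_n=\tfrac12+\tfrac1{2n}\big|\sum_i\hat\Lambda_i\Lambda_i\big|$. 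Your route is a little more direct and equally valid. Both treatments share the loose end you flag in obstacle (iii): \cref{thm:lower-bound} excludes a countable set of $q_\Theta$, so covering every $q_\Theta\le1$ (in particular the boundary $q_\Theta=1$) strictly requires a short monotonicity/continuity remark or the explicit Rademacher fixed-point fact that $s=0$ is the only nonnegative solution at $q_\Theta\le1$; the paper glosses over this as well.

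For part $(b)$ your route has a genuine gap, which you yourself identify in obstacle (ii). You propose running the asymmetric Bayes AMP on $\bA$ with spectral initialization and quoting state evolution from \cite{montanari2021estimation}; those guarantees are proved in the proportional regime $d/n\to\delta$, and no state-evolution or empirical-distribution result is available off the shelf for the present regime $d/n\to\infty$, so the key quantitative step (empirical law of $(\Lambda_i,\hat\Lambda_i^t)$ converging to $(\Lambda_0,\tanh(s_*\Lambda_0+\sqrt{s_*}G))$) is unsupported as written. The paper avoids this entirely: it does not analyze an AMP on $\bA$, but instead runs the rotationally invariant Bayes AMP of \cite{mondelli2021pca} with spectral initialization on the symmetrized $n\times n$ matrix $(\bA\bA^\sT-d\,\id_n)/\sqrt{nd}$ (the same device as in the proof of \cref{thm:upper-bound} under condition $(c)$); since the Wishart noise $(\bZ\bZ^\sT-d\,\id_n)/\sqrt{nd}$ is orthogonally invariant and close in operator norm to the relevant spiked model, the existing $n\times n$ analysis applies even though $d/n\to\infty$, its state evolution is exactly the recursion you wrote, it converges from the spectral initialization (which has nonvanishing overlap when $q_\Theta>1$) to the unique positive fixed point $s_*$, and the sign estimator then gives $\Overlap_n\toP\Phi(\sqrt{s_*})$. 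Two smaller points: condition $(c)$ of \cref{thm:upper-bound} (absence of a hard phase / Bayes optimality of AMP) is not needed for $(b)$, which only asserts achievability; and your explicit computation of $\cF$ for the Rademacher prior and the identification of its stationarity equation with the fixed-point equation for $s_*$ are correct.
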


The proof of this result uses the characterization of optimal estimation in the 
corresponding symmetric model proven in \cite{deshpande2017asymptotic}, and we
 present the proof of point $(a)$ in Appendix \ref{sec:proof-of-prop:GMM}.
The overlap in point $(b)$ can be achieved using orthogonal invariant Bayes AMP with 
spectral initialization  on $(\bA\bA^{\top} - d\id_n) / \sqrt{nd}$.
This algorithm is described and analyzed in \cite{mondelli2021pca}. 

\subsection{Two or more clusters with orthogonal centers}\label{sec:general}

We next consider the case of $k\ge 2$ clusters with approximately orthogonal centers. We denote by 
$\{\bTheta_{\cdot i} / \sqrt[4]{nd}: i \in [k]\} \subseteq \RR^d$ the cluster centers.
 Let $\bTheta \in \RR^{d \times k}$ with the $i$-th column given by $\bTheta_{\cdot i}$.
  For $j \in [d]$, we let  $\bTheta_j \in \RR^k$ be the $j$-th row of $\bTheta$. 
  We assume $\bTheta_j \iidsim \mu_{\Theta}$, where $\mu_{\Theta}$ 
  is sub-Gaussian  with vanishing first and third moments 
  and diagonal covariance: $\Cov(\bTheta_1) = q_{\Theta} \id_k$. 
  
Let $\be_j$ be the $j$-th standard basis vector in $\RR^k$.
 We encode the data point labels by setting $\bLambda_i = \be_j$ if and only if the  
 $i$-th sample belongs to the $j$-th cluster,
 and consider the case of equal proportions, so that 
 $(\bLambda_i)_{i\le n} \iidsim \Unif(\{\be_1, \cdots, \be_k\})$.

As before, we let  $\bA \in \RR^{n \times d}$ be the matrix  whose 
rows are i.i.d. samples $\ba_i$ from the Gaussian mixture model with centers 
$\{\bTheta_{\cdot i} / \sqrt[4]{nd}: i \in [k]\}$. With these definitions,
the matrix $\bA$ is distributed according to model \eqref{model:weak-signal}.
\begin{remark}
While we state our results for random $\bTheta$, we can 
generalize Remark \ref{rmk:MinimaxCenters} to the present setting. 
This argument implies that the results of
this section also characterize the minimax estimation error over the class
of problems with orthogonal centers $\bTheta^{\sT}\bTheta =  d\, q_{\Theta} \id_k$.
\end{remark}

Our next theorem establishes the threshold for weak recovery of the cluster
labels in the high-dimensional regime $d/n\to\infty$. Recall the function $\cF(s, \bQ)$
is defined in Eq.~\eqref{eq:FsQ}, where we take $\mu_{\Lambda} = 
\sum_{i=1}^k\delta_{\be_i}/k$. We let $\bQ_0 := \bfone_k\bfone_k^{\sT}/k^2$ and define the threshold
\begin{align}
q_{\Theta}^{\mathsf{info}}(k):= \inf\big\{q_{\Theta}\ge 0:\; \sup_{\bQ\in S_k^+}\cF(q_{\Theta}^2, \bQ)
>\cF(q_{\Theta}^2, \bQ_0)\big\}\, .\label{eq:ClusterThreshold}
\end{align}

 \begin{theorem}\label{thm:GMM1}
 Consider the Gaussian mixture model with $k$ components of equal weights, 
 in the high-dimensional asymptotics $d,n\to\infty$, $d/n\to\infty$.
 Under the above assumptions on the centers $\bTheta$, the following results hold:
\begin{itemize}
\item[$(a)$] If $q_{\Theta} < q_{\Theta}^{\mathsf{info}}(k)$, then for any estimator $\hat{\bLambda}: \RR^{n \times d} \to \{\be_j: j \in [k]\}^n$ that is a measurable function of the input $\bA$, we have
 	\begin{align*}
 		 \plim_{n,d\to\infty}\Overlap_n  = \frac{1}{k} \, . 
 	\end{align*}
\item[$(b)$] Assume either $d n^{-3}(\log n)^{-6} \rightarrow \infty$ or 
$d n^{-6/5}(\log d)^{8/5} \to 0$.
If $q_{\Theta}>q_{\Theta}^{\mathsf{info}}(k)$, 
then there exists an estimator $\hat{\bLambda}: \RR^{n \times d} \to \{\be_j: j \in [k]\}^n$ that is a measurable function of the input $\bA$, such that 
 	\begin{align*}
 		  \liminf_{n,d\to\infty}\E[\Overlap_n]  > \frac{1}{k} \, . 
 	\end{align*}
\end{itemize}
 %
 \end{theorem}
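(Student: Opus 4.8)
The plan is to reduce both parts to statements about the symmetric spiked model \eqref{model:symmetric} with $\mu_{\Lambda}=\sum_{i=1}^k\delta_{\be_i}/k$ and signal-to-noise ratio $q_{\Theta}$, and then invoke the known results on that model together with the equivalence theorems of \cref{sec:weak-regime-estimate-lambda}. Throughout, note that the encoding $\bLambda_i=\be_{J_i}$ with $J_i\sim\Unif([k])$ satisfies $\bQ_{\Lambda}=\E[\bLambda_1\bLambda_1^{\sT}]=\diag(1/k,\dots,1/k)$, and the key observation is that $\Overlap_n$ is essentially a functional of the overlap matrix $\bLambda^{\sT}\hat{\bLambda}/n$, which in turn is controlled by how well one can estimate $\bLambda\bLambda^{\sT}$.

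For part $(a)$: First I would observe that $q_{\Theta}^{\mathsf{info}}(k)$ is, by its definition in \eqref{eq:ClusterThreshold} and the variational characterization in Theorem \ref{prop:symmetric-spike}, exactly the threshold below which the symmetric model \eqref{model:symmetric} has $\bQ^{\ast}(q_{\Theta}^2)=\bQ_0=\bfone_k\bfone_k^{\sT}/k^2$, i.e. the MMSE for $\bLambda\bLambda^{\sT}$ equals the trivial value $\|\bQ_{\Lambda}\|_F^2-\|\bQ_0\|_F^2$ attained by the constant estimator $\E[\bLambda\bLambda^{\sT}]$ (whose entries are all $1/k^2$ off-diagonal and $1/k$ on-diagonal). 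Below the threshold, no estimator based on $\bY$ can do better than trivial. I would transfer this to the asymmetric model using Theorem \ref{thm:lower-bound}, specifically the inequality \eqref{eq:thm-lower-bound}: $\liminf \MMSEas_n(\mu_{\Lambda},\mu_{\Theta})\ge \lim\MMSEsy_n(\mu_{\Lambda};q_{\Theta})$, valid for all but countably many $q_{\Theta}$; for the countable exceptional set one uses monotonicity of the MMSE in $q_{\Theta}$ together with a sandwiching argument, or restricts the claim accordingly. This forces the matrix MMSE in model \eqref{model:weak-signal} to its trivial value, hence $\bLambda\bLambda^{\sT}$ cannot be estimated nontrivially. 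Finally I would convert this into a statement about $\Overlap_n$: if some estimator $\hat{\bLambda}$ achieved $\E[\Overlap_n]\ge 1/k+\epsilon$ along a subsequence, then forming $\hat{\bM}=\hat{\bLambda}\hat{\bLambda}^{\sT}$ and bounding $\|\bLambda\bLambda^{\sT}-\hat{\bM}\|_F^2$ from above in terms of the number of misclassified points (using the permutation $\pi$ attaining the overlap) would contradict the MMSE lower bound; a short second-moment/concentration argument shows $\Overlap_n\toP 1/k$ rather than just in expectation.

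For part $(b)$: under either hypothesis $dn^{-3}(\log n)^{-6}\to\infty$ or $dn^{-6/5}(\log d)^{8/5}\to 0$ (the latter also needs bounded support, which holds here since $\mu_{\Lambda}$ is supported on $\{\be_1,\dots,\be_k\}$), Theorem \ref{thm:upper-bound} conditions $(a)$ or $(b)$ apply, giving $\lim\MMSEas_n(\mu_{\Lambda},\mu_{\Theta})=\lim\MMSEsy_n(\mu_{\Lambda};q_{\Theta})$. For $q_{\Theta}>q_{\Theta}^{\mathsf{info}}(k)$ the right side is strictly below the trivial value $\|\bQ_{\Lambda}\|_F^2-\|\bQ_0\|_F^2$, so the posterior mean $\E[\bLambda\bLambda^{\sT}\mid\bA]$ achieves nontrivial estimation of $\bLambda\bLambda^{\sT}$. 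I would then round this matrix estimate to a hard label assignment: e.g. run a spectral/$k$-means-type clustering on $\E[\bLambda\bLambda^{\sT}\mid\bA]$, and argue that a $\Omega(1)$ Frobenius-norm correlation with the rank-$k$ structured matrix $\bLambda\bLambda^{\sT}-\E[\bLambda\bLambda^{\sT}]$ translates into a $\Omega(1)$ excess overlap. Concretely, writing $\bM^\star:=\bLambda\bLambda^{\sT}$, nontrivial MMSE means $\E\langle \E[\bM^\star\mid\bA]-\E\bM^\star,\ \bM^\star-\E\bM^\star\rangle_F=\E\|\E[\bM^\star\mid\bA]-\E\bM^\star\|_F^2$ is bounded below by $c n^2$; since $\bM^\star-\E\bM^\star$ has block structure, a greedy or eigenvector-based assignment recovers a permutation-aligned labeling with expected overlap exceeding $1/k$ by a constant.

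The main obstacle I anticipate is the last conversion step in both parts — going from a matrix-MMSE (in)equality to a clustering-overlap (in)equality. The lower-bound direction (part $(a)$) is the more delicate: one must rule out that a cleverly designed hard-assignment estimator extracts label information that is invisible at the level of $\|\bLambda\bLambda^{\sT}-\hat{\bM}\|_F^2$ — this requires carefully relating $\Overlap_n$ to a quadratic functional of $\hat{\bLambda}$ and using that the only $k\times k$ overlap matrices consistent with near-trivial MMSE are (asymptotically) permutations of $\bfone_k\bfone_k^{\sT}/k$. For the upper bound (part $(b)$), the rounding argument is standard in spirit but needs a quantitative bound: a uniform control on the operator norm of $\E[\bLambda\bLambda^{\sT}\mid\bA]-\E\bLambda\bLambda^{\sT}$ (not merely Frobenius norm) would make the $k$-means step clean; alternatively one invokes the AMP analysis of \cite{montanari2021estimation,mondelli2021pca} directly to produce an estimator with explicitly computable overlap, bypassing the rounding.
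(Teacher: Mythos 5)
Your overall plan for part $(a)$ (reduce to the symmetric model via Theorem \ref{thm:lower-bound}, then convert a matrix--MMSE statement into an overlap statement) is the paper's plan, but the conversion mechanism you propose would fail. If $\Overlap_n=1/k+\eps$ with $\eps$ small, then $\hbLambda\hbLambda^{\sT}$ disagrees with $\bLambda\bLambda^{\sT}$ on a non-vanishing fraction of pairs (roughly $2(k-1)/k^2$ of them when $\hbLambda$ is nearly uninformative), so $\|\bLambda\bLambda^{\sT}-\hbLambda\hbLambda^{\sT}\|_F^2/n^2$ \emph{exceeds} the trivial MMSE $(k-1)/k^2$ and no contradiction with the lower bound can be extracted; "number of misclassified points" is of order $n$ throughout the regime of interest, so bounding the Frobenius error by it gives nothing. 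The missing idea is the small-coupling interpolation used in the paper: test the MMSE lower bound against $k^{-1}\bfone_{n\times n}+a\,\hbLambda\hbLambda^{\sT}$ and let $a\to0^+$, which shows that trivial MMSE forces the correlation $\frac{1}{n^2}\sum_{i,j}\E[\ind\{\hbLambda_i=\hbLambda_j\}(\ind\{\bLambda_i=\bLambda_j\}-k^{-1})]$ to be asymptotically nonpositive. Writing this through the confusion matrix $C_{sr}=n^{-1}\sum_i\ind\{\hbLambda_i=\be_s,\bLambda_i=\be_r\}$, it equals $\sum_{s,r}C_{sr}^2-k^{-1}\sum_s(\sum_r C_{sr})^2\ge 0$, hence it vanishes, each row of $C$ is asymptotically flat, and $\Overlap_n\toP 1/k$. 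Your closing remark about "overlap matrices consistent with near-trivial MMSE" points in this direction, but the step you actually wrote down is the wrong one and needs to be replaced by this argument (which also delivers the in-probability statement directly).

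For part $(b)$ the gap you flag is real and your fallback does not close it: invoking the AMP analysis can only give nontrivial overlap above the algorithmic/spectral threshold $q_{\Theta}>k$, whereas part $(b)$ is an information-theoretic claim and for $k\ge 5$ there is a nonempty window $q_{\Theta}^{\mathsf{info}}(k)<q_{\Theta}<k$ where AMP is not expected to work; likewise the spectral/$k$-means rounding of $\E[\bLambda\bLambda^{\sT}\mid\bA]$ would require operator-norm control you do not have. The paper avoids rounding entirely: take $\hbLambda$ to be a \emph{sample from the posterior} $\P(\bLambda=\cdot\mid\bA)$. Then $\hbLambda\overset{d}{=}\bLambda$, so by the law of large numbers the rows and columns of $C$ sum to $1/k+o_P(1)$; moreover, since $\hbLambda$ and $\bLambda$ are conditionally independent given $\bA$, $\frac{1}{n^2}\sum_{i,j}\E[\ind\{\hbLambda_i=\hbLambda_j\}\ind\{\bLambda_i=\bLambda_j\}]=\frac{1}{n^2}\sum_{i,j}\E\big[\E[\ind\{\bLambda_i=\bLambda_j\}\mid\bA]^2\big]$, which stays bounded away from $k^{-2}$ above the threshold by Theorem \ref{thm:upper-bound} combined with \cite[Theorem 2]{banks2018information}. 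Since $\frac{1}{n^2}\sum_{i,j}\ind\{\hbLambda_i=\hbLambda_j,\bLambda_i=\bLambda_j\}=\sum_{s,r}C_{sr}^2$, the same confusion-matrix identity as in part $(a)$ shows that $\liminf\E[\Overlap_n]=1/k$ would force $C_{sr}\toP k^{-2}$ for all $s,r$ along a subsequence, contradicting the strict inequality; hence the posterior sample achieves $\liminf\E[\Overlap_n]>1/k$. You would need to supply this (or an equivalent) argument to complete part $(b)$.
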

 We defer the proofs of parts $(a)$ and $(b)$ of
 \cref{thm:GMM1} to Appendices \ref{sec:proof-of-thm:GMM1} and \ref{sec:proof-of-thm:GMM2},
  respectively. Note that \cite[Theorem 2]{banks2018information} 
  implies $q_{\Theta}^{\mathsf{info}}(k) = 2\sqrt{k \log k}\cdot (1+o_k(1))$ as
  $k \to \infty$ (however, \cite{banks2018information} does not establish  a sharp threshold).
 In contrast, Theorem  \ref{thm:GMM1} derives the exact threshold for every $k$.
 
 Table \ref{table:thresholds} collects values for the thresholds 
 $q_{\Theta}^{\mathsf{info}}(k)$ for a few values of $k$,
 as obtained by numerically evaluating Eq.~\eqref{eq:ClusterThreshold}. 
 For $k\le 4$, this is expected to coincide with the spectral threshold,
 namely  $q_{\Theta}^{\mathsf{info}}(k)=k$ \cite{lesieur2016phase}. (Notice that the apparent discrepancy with the 
 threshold for $k=2$ in the previous section is due to the different normalization adopted here.) 
 
  \begin{table}
 \centering 
 	\begin{tabular}{lc}
 	\hline
 		$k$ & $q_{\Theta}^{\mathsf{info}}(k)$ \\ \hline
 		2 & 2 \\
 		3 & 3 \\
 		4 & 4 \\
 		5 & 4.95 \\
 		6 & 5.81 \\
 		7 & 6.61 \\
 		8 & 7.36 \\ \hline
 	\end{tabular}
 	\caption{Information-theoretic thresholds $q_{\Theta}^{\mathsf{info}}(k)$ for $2 \leq k \leq 8$.}
 	\label{table:thresholds}
 \end{table}

\subsection{Numerical experiments}

We present in this section numerical experiments suggesting that
the theory of the last sections is already relevant at moderate values of $d,n$. 
We consider several clustering methods, and compare their performances 
with the threshold $q_{\Theta}^{\mathsf{info}}(k)$.

For our experiment we use built-in functions in {\sf Python3}, for
the following clustering methods:
\begin{enumerate}
	\item[$(1)$] Lloyd's algorithm, as implemented by the function $\mathsf{KMeans()}$ in the
	 $\mathsf{scikit}$-$\mathsf{learn}$ module with option $\mathsf{algorithm = ``lloyd"}$.
	\item[$(2)$] Agglomerative clustering, implemented by 
	the function $\mathsf{AgglomerativeClustering()}$ in the 
	$\mathsf{scikit}$-$\mathsf{learn}$ module with default parameters.
	\item[$(3)$] EM algorithm, implemented by the function 
	 $\mathsf{GaussianMixture()}$ in the  $\mathsf{scikit}$-$\mathsf{learn}$ module with 
	 default parameters. 
	\item[$(4)$] A semidefinite programming (SDP) relaxation  described in \cite{peng2007approximating}. 
	We use the $\mathsf{cvxpy}$ module for the optimization steps. 
\end{enumerate}
In Figure \ref{fig:sim} we present results for these algorithms for 
 $n = 100$, $d = 2000$, and $\mu_{\Theta} = \normal(0,q_{\Theta})$. 
 For each value of the pair $(k, q_{\Theta})$, we run 100 independent trials, and plot 
 the average overlap versus $q_{\Theta}$. For the case $k=2$,
 we consider two slightly different settings: ``Symmetric=True'' 
 corresponds to the case of two centers symmetric around the origin, as
 in \cref{sec:symmetric-GMM}, and 
 ``Symmetric=False'' corresponds to the case of two approximately orthogonal centers as per
 \cref{sec:general}. 
 We also report the threshold $q_{\Theta}^{\mathsf{info}}(k)$, its large $k$ approximation
 $2\sqrt{k\log k}$, and the algorithmic threshold $q^{\mathsf{algo}}_{\Theta}(k) = k$
 (this is the conjectured threshold for efficient recovery, which coincides with the spectral threshold 
 \cite{lesieur2016phase,montanari2021estimation}).
 
Despite the small sample size, we observe that $q_{\Theta}^{\mathsf{info}}(k)$ 
appears to capture the onset of non-trivial clustering accuracy across multiple algorithms.

\begin{figure}[htp]
     \centering
     \begin{subfigure}[b]{0.48\textwidth}
         \centering
         \includegraphics[width=\textwidth]{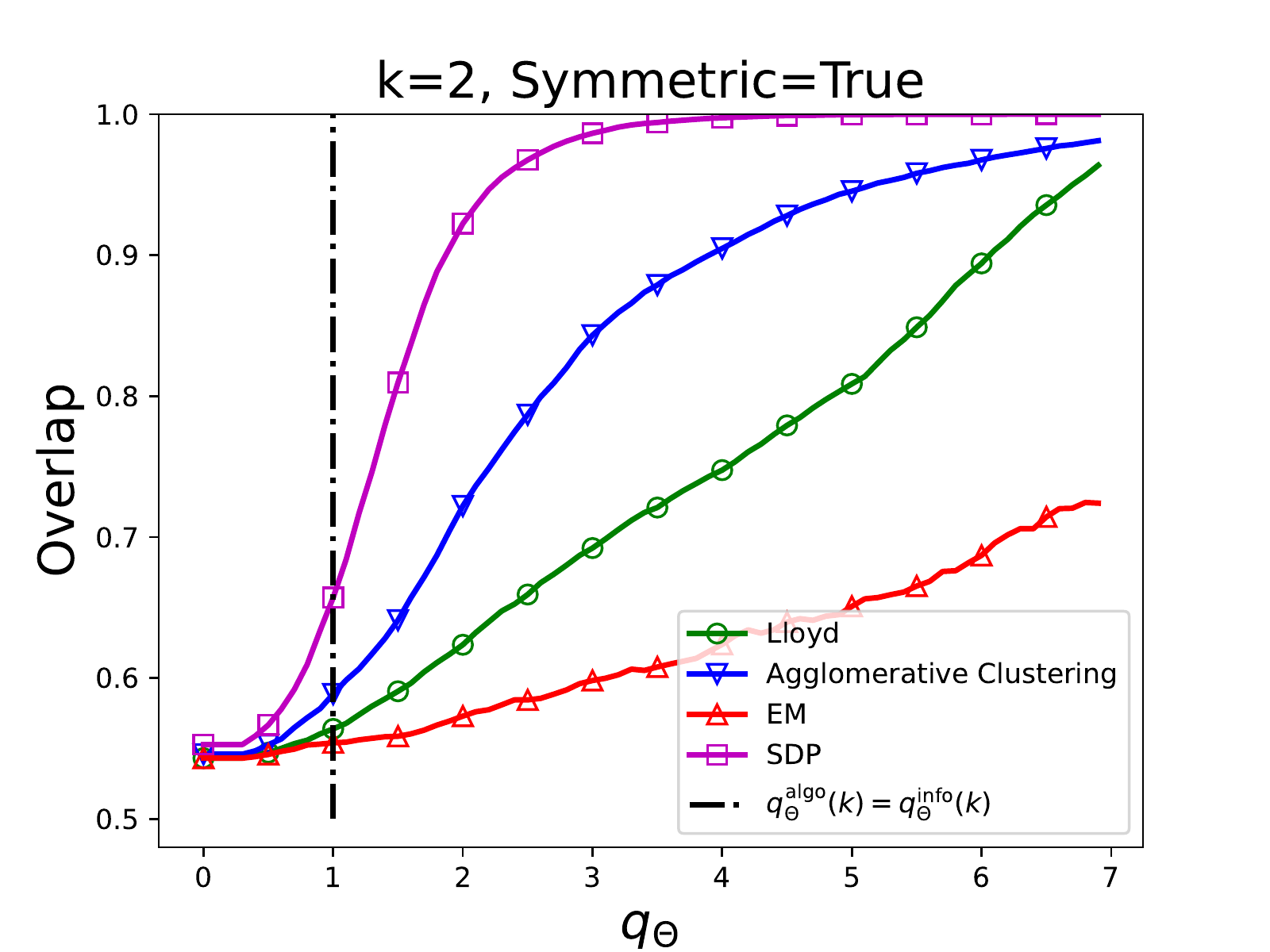}
     \end{subfigure}
     \hfill
     \begin{subfigure}[b]{0.48\textwidth}
         \centering
         \includegraphics[width=\textwidth]{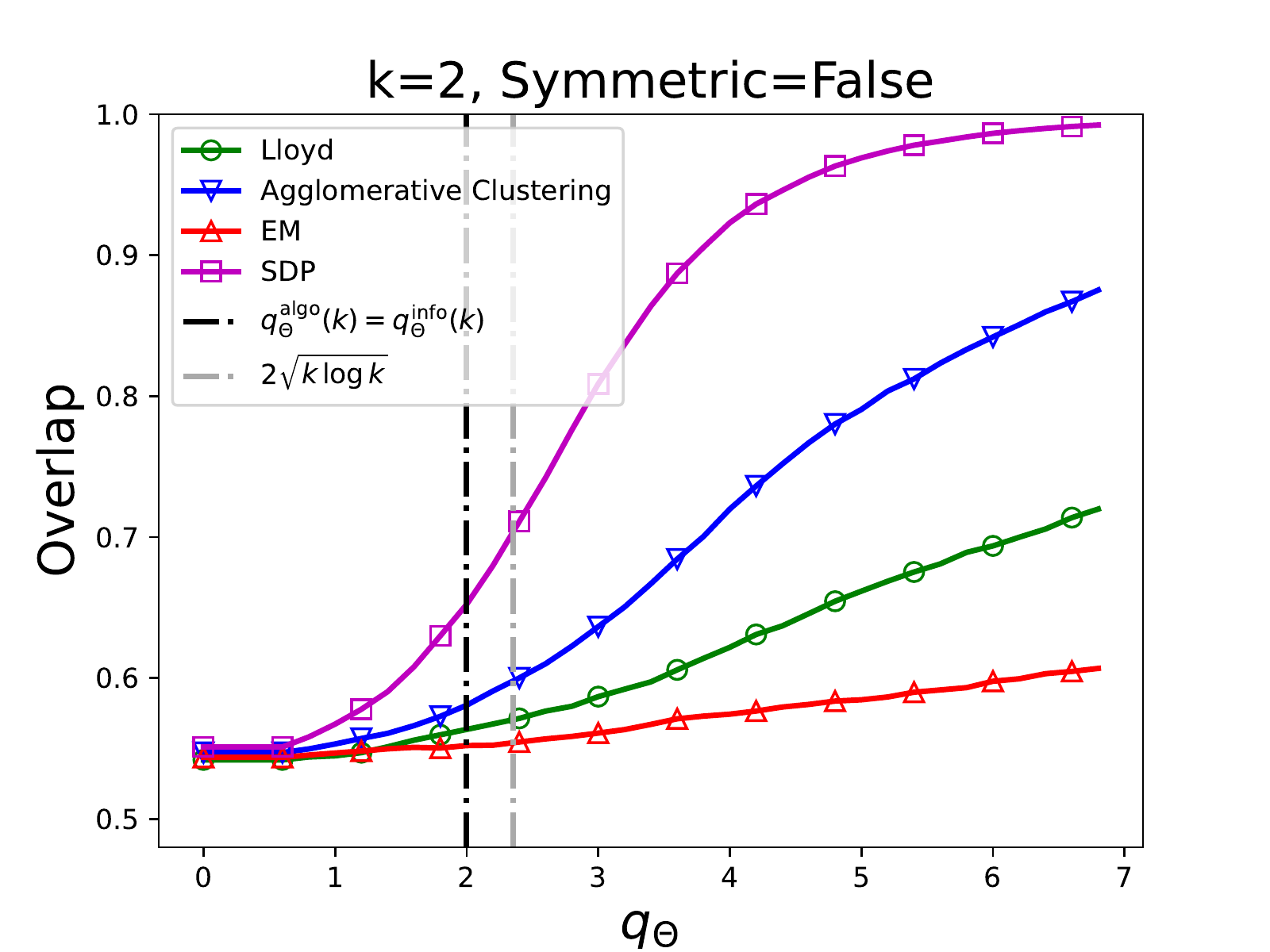}
     \end{subfigure} \\
	 \begin{subfigure}[b]{0.48\textwidth}
         \centering
         \includegraphics[width=\textwidth]{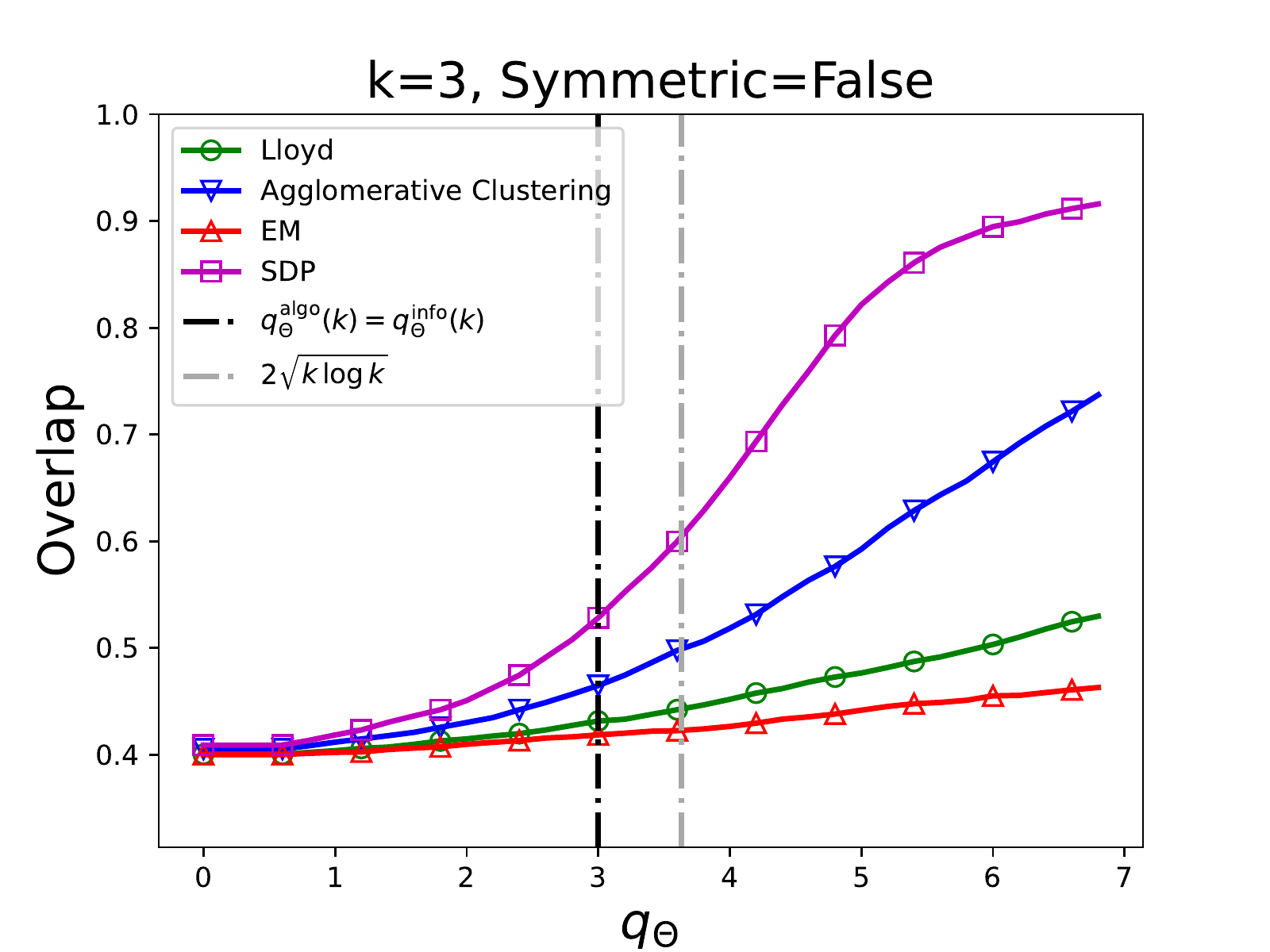}
     \end{subfigure}
     \hfill
     \begin{subfigure}[b]{0.48\textwidth}
         \centering
         \includegraphics[width=\textwidth]{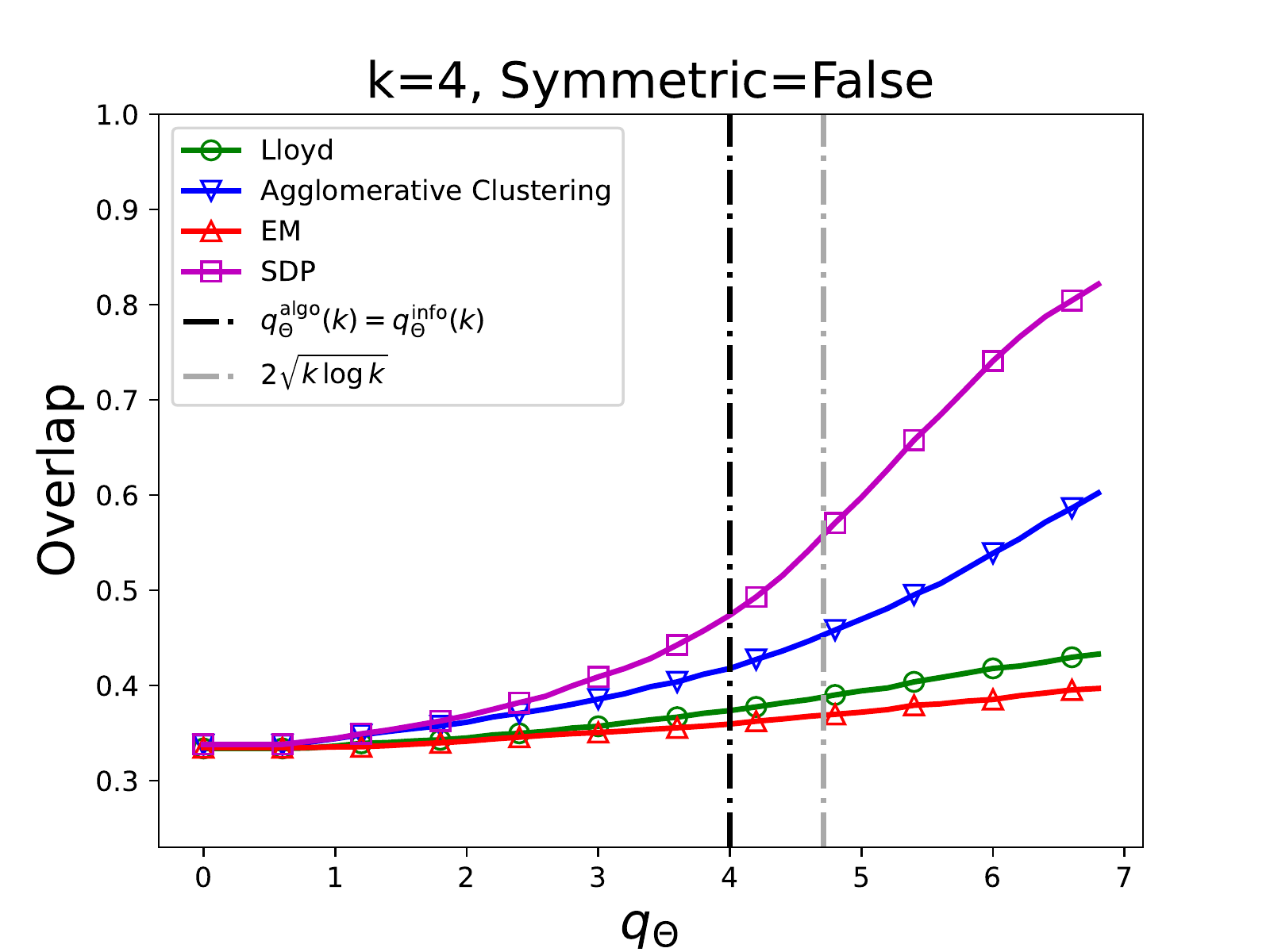}
     \end{subfigure}\\
	 \begin{subfigure}[b]{0.48\textwidth}
         \centering
         \includegraphics[width=\textwidth]{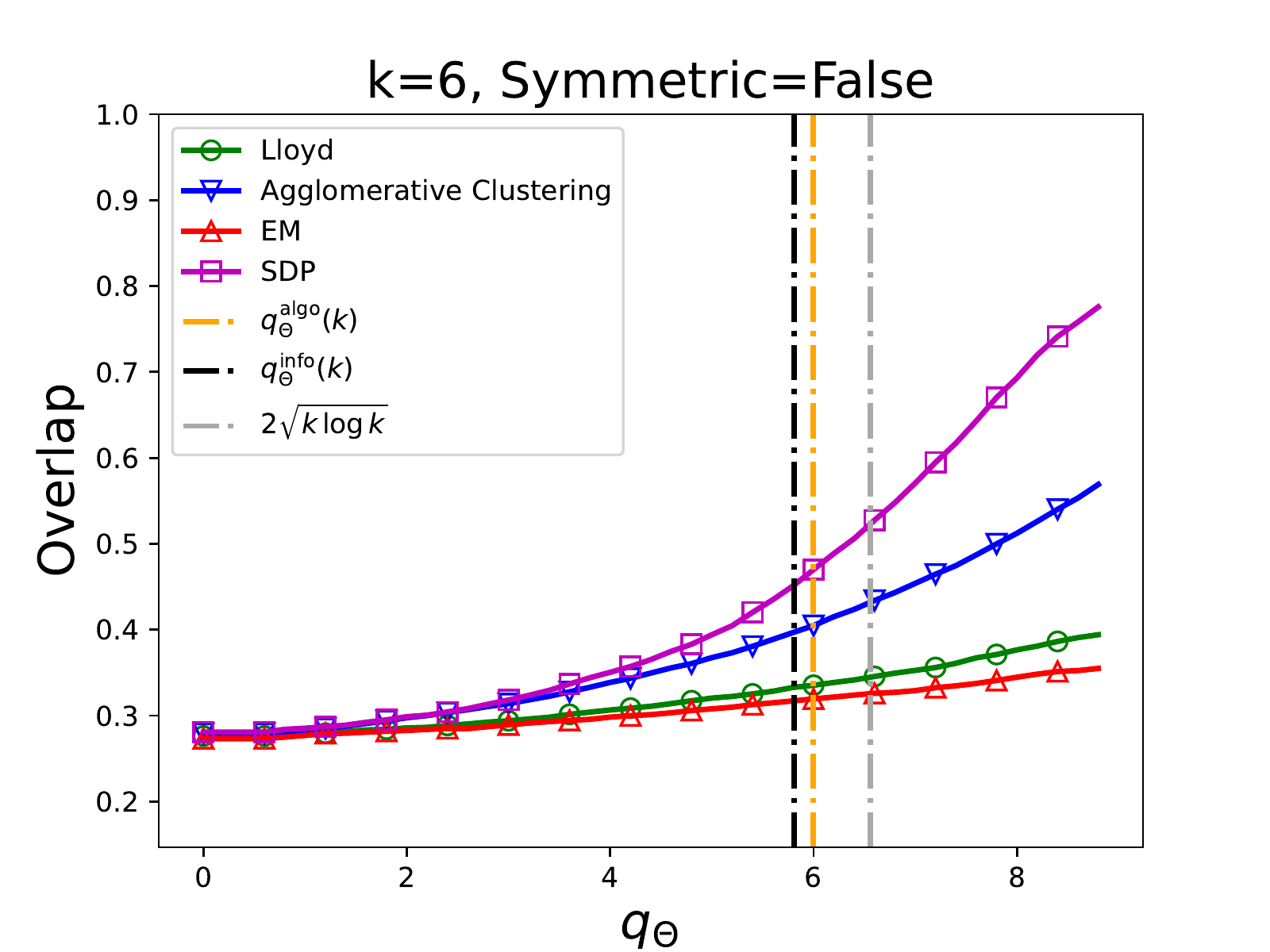}
     \end{subfigure}
     \hfill
     \begin{subfigure}[b]{0.48\textwidth}
         \centering
         \includegraphics[width=\textwidth]{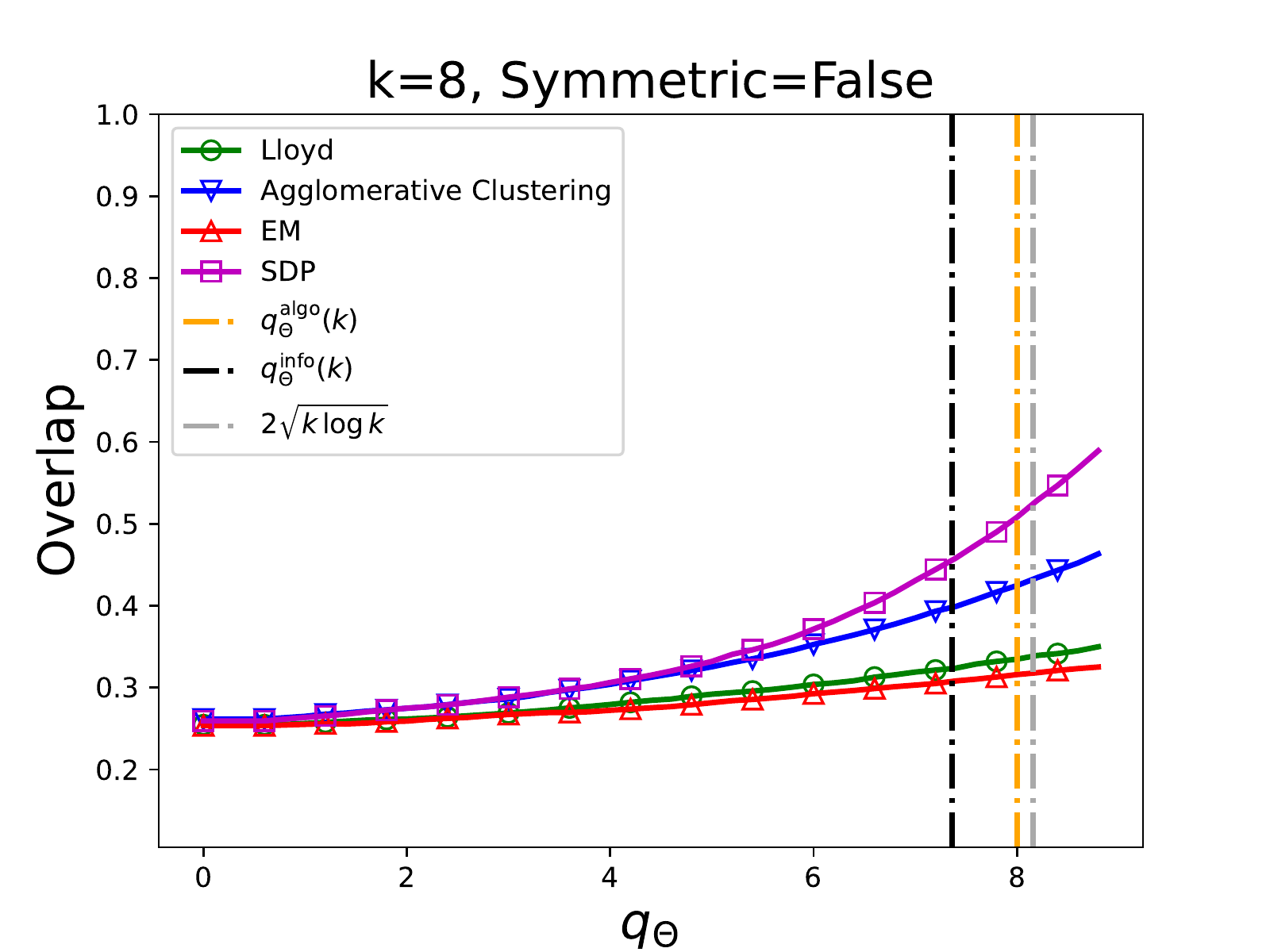}
     \end{subfigure}
        \caption{Average overlap achieved by several clustering algorithms on the Gaussian mixture
        model with $n = 100$ datapoints, $d = 2000$ dimensions, averaged
        over $100$ instances. The black vertical line corresponds to the information-theoretic threshold for
        identifying clusters significantly better than random guessing; the orange vertical line 
        corresponds to the spectral or algorithmic threshold; the grey vertical line corresponds
        to the approximated information-theoretic threshold $2\sqrt{k \log k}$.}
        \label{fig:sim}
\end{figure}

\section{Asymmetry in factors estimation: real world datasets}\label{sec:real-data}

Previous sections imply the existence of gaps in the estimation of 
 $\bLambda$ and $\bTheta$, in the high-dimensional asymptotics $d/n\to\infty$. 
In summary, in the strong signal regime, $\bLambda$ can be estimated consistently up 
to a potential rotation, while $\bTheta$ can only be partially recovered. 
On the other hand, in the weak signal regime, $\bLambda$ can be partially recovered, 
while no estimator achieves better asymptotic performance than a naive one in terms
of the estimation of $\bTheta$. 

In this section, we investigate this asymmetry in real world datasets. 
We focus on a problem that can be modeled as clustering with $k=2$ clusters
(this can be modeled as a GMM model, leading to Eq.~\eqref{model:spike0} with $r=1$ 
as described in the previous section). 

\subsection{1000 Genomes Project}\label{sec:1000G}

Our first experiment involves genotype data from the 1000 Genomes Project \cite{10002015global}. 
This provides genotypes for $n=2,504$ individuals grouped in five
population groups (corresponding to their geographic origins).
For our experiments, we extract $d=100,000$ common single-nucleotide polymorphisms (SNPs).
 Our preprocessing steps follow from \cite{zhong2020empirical}. After preprocessing, we add
 independent Gaussian noise with variance 5 to the data matrix, to make the problem more challenging.

Principal component analysis (PCA) is often used in genome-wide association studies, 
in particular to explore the genetic structure of human populations \cite{novembre2008genes,novembre2008interpreting}.  As a first step of
 our experiment, for each pair of population groups, we randomly extract 30 subjects from each 
 group without replacement. The subsampled observations form a $60 \times 100,000$ genotype matrix, 
 the columns of which are then centered and rescaled. We next run PCA on this subset, and plot the projections 
 onto the top 2 principal components. We display one typical outcome of PCA in
  Figure \ref{fig:1000G_PCA}. From the figures, we see that despite the high-dimensionality, 
  PCA still reflects the underlying population structure. We interpret this as indicating that 
  non-trivial clustering can be achieved on these data.

\begin{figure}[htp]
\centering
\includegraphics[width=17cm]{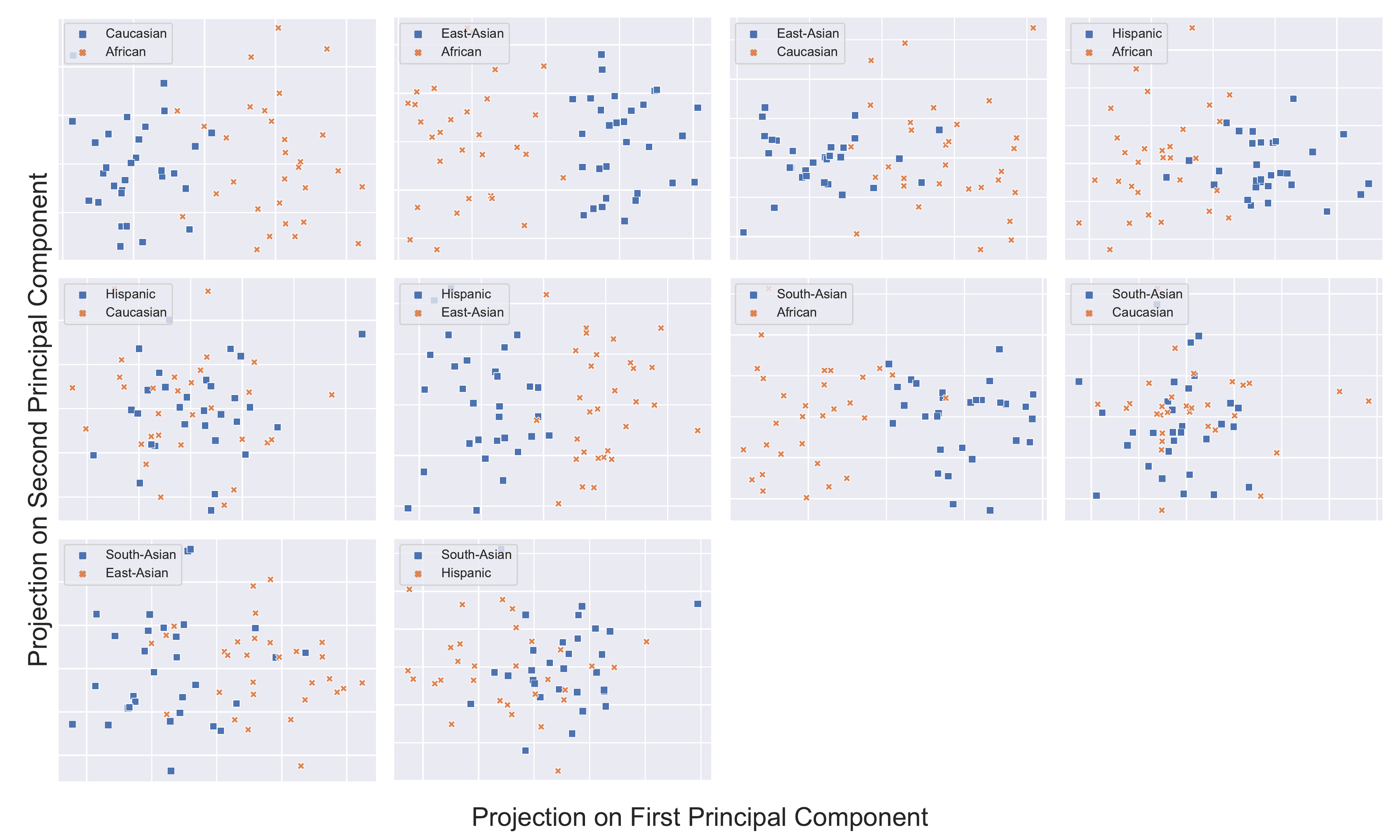}
\caption{Illustration of PCA on a subset of 1000 Genomes Project data. 
In these plots, the $x$ axis represents the projection onto the first principal component, 
and the $y$ axis represents projection onto the second principal component. 
Point colors and shapes correspond to population groups.  
Each experiment involves 60 individuals in total, with 30 individuals from each of the 
two population groups. }
\label{fig:1000G_PCA}
\end{figure} 

\begin{figure}[htp]
\centering
\includegraphics[width=17cm]{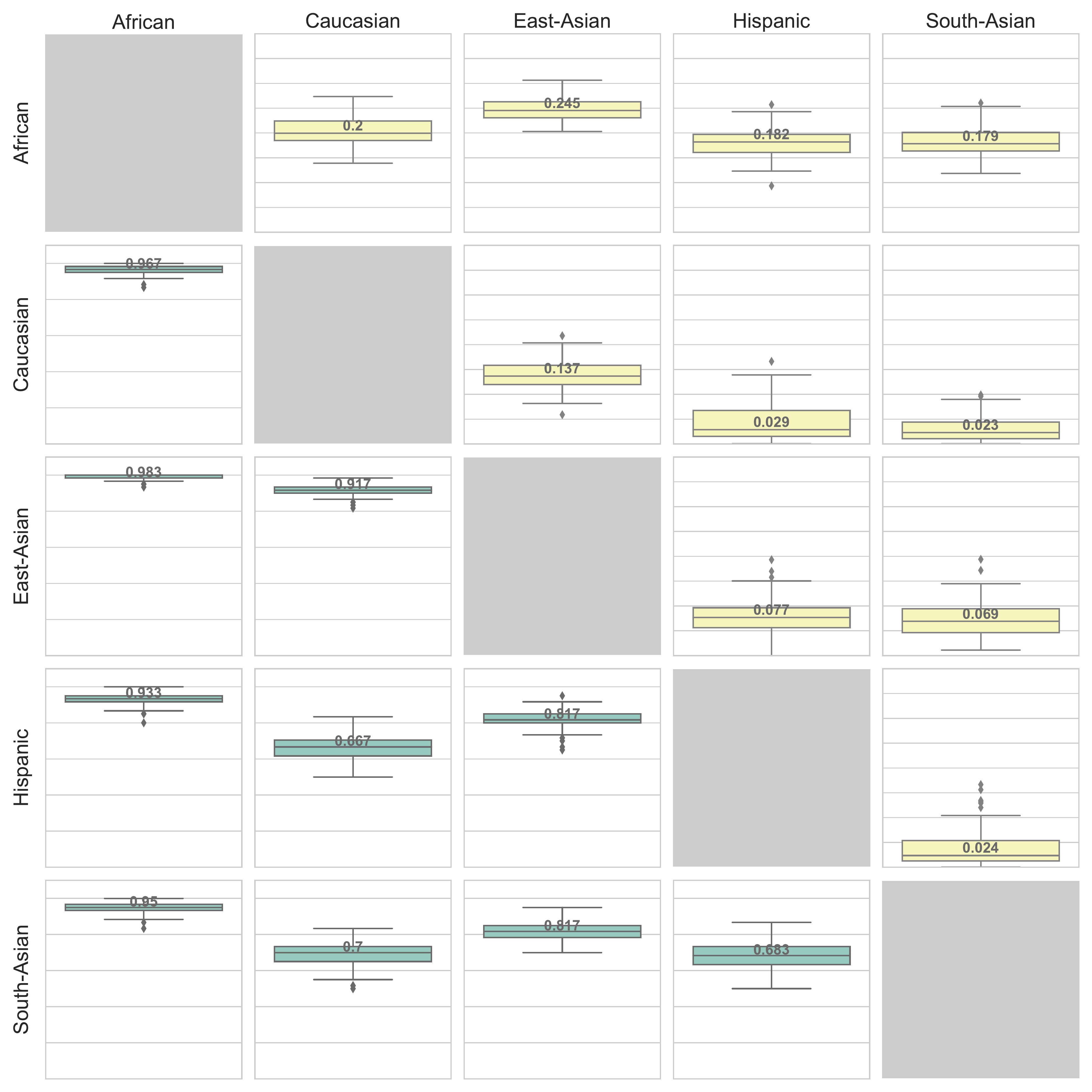}
\caption{Simulation results derived from 1000 independent experiments for the 1000 Genomes 
Project dataset. The boxplots in the upper triangle display the 
quantiles of the  normalized inner products between the estimated cluster centers.
 The boxplots in the lower triangle display the quantiles of overlaps between true labels
 and labels obtained via K-means
  clustering. We annotate the medians in the corresponding figures for readers' 
  convenience.}
\label{fig:1000G_boxplots}
\end{figure}

To further support our conclusion, we run K-means clustering on the subsampled datasets 
(using $\mathsf{sklearn.cluster.KMeans}$ in Python 3 with default parameters).
 We then compute the overlap between the true and estimated labels (in this example labels correspond
 to population groups). 
 We repeat this procedure  independently  1000 times on randomly selected subsets 
  of the data. 
 The outcomes are recorded and displayed in the lower triangle of Figure \ref{fig:1000G_boxplots}. 
 From the figures, we see that K-means clustering estimates the labels significantly 
 better than random guessing (i.e., better than $50\%$ accuracy)
 and achieves near-perfect recovery for certain pairs of population groups.

We next  estimate the cluster centers $\bTheta$, for each pair of population groups.
We take two non-overlapping subsets of the data (each with size 60) and run K-means 
on each subset: this leads to two distinct estimates of the cluster  centers
$(\hbTheta_i^{(1)})_{i\in\{1,2\}}$ for data subset $1$, and
 $(\hbTheta_i^{(2)})_{i\in\{1,2\}}$ for data subset $2$. 
 We then compute the  maximum normalized inner
  product 
  $$\max_{i,j \le 2}|\langle\hbTheta_i^{(1)}, \hbTheta_j^{(2)}\rangle|/(\|\hbTheta_i^{(1)}\|_2\|\hbTheta_j^{(2)}\|_2)$$ 
   between the estimated
  cluster centers obtained via K-means from these two subsets of data.
 
This procedure is again repeated for
    1000 times independently, and the distributions of the maximum normalized inner products are displayed in
     the upper triangle of Figure \ref{fig:1000G_boxplots}. 
We observe that, for several population pairs, the estimates  $\hbTheta^{(1)}$,
$\hbTheta^{(2)}$ are not significantly correlated (using initials, this is the case
for the pairs C-H, C-SA, EA-H, EA-SA, H-SA). Since these estimates are obtained based on independent samples from the same population, we conclude that they are also 
not significantly correlated with the true centers. When this happens, the behavior of this
clustering problem seems to be captured by the weak signal regime analyzed in the
previous sections: clusters can be estimated in a non-trivial way, but cluster centers
cannot be estimated.

For the other population pairs, the cluster centers estimates are correlated,
and clustering accuracy is very high (this is the case for pairs A-C, A-EA, A-H, A-SA, 
with C-EA not as clear a case). This is analogous to what we observe in our model
in the strong signal regime.


\subsection{RNA-Seq gene expression}

\begin{figure}[htp]
\centering
\includegraphics[width=17cm]{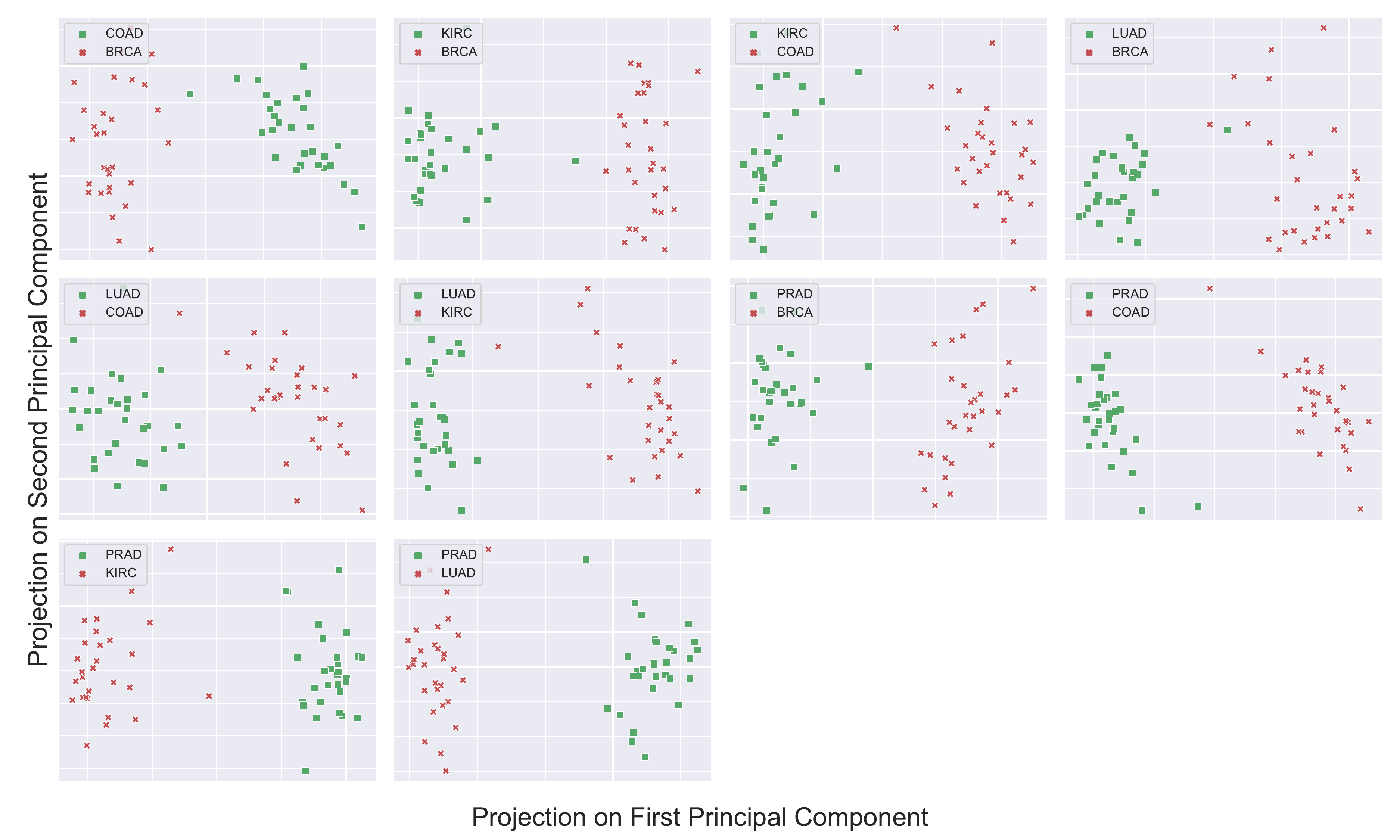}
\caption{PCA on subsets of RNA-Seq gene expression data: each time we select 30 datapoints from
each of the two cancer groups at random. Point colors and shapes stand for different cancer groups.
We plot the  projections of these datapoints onto the subspace defined by their first two 
principal components.}
\label{fig:cancer_PCA}
\end{figure} 

\begin{figure}[htp]
\centering
\includegraphics[width=16cm]{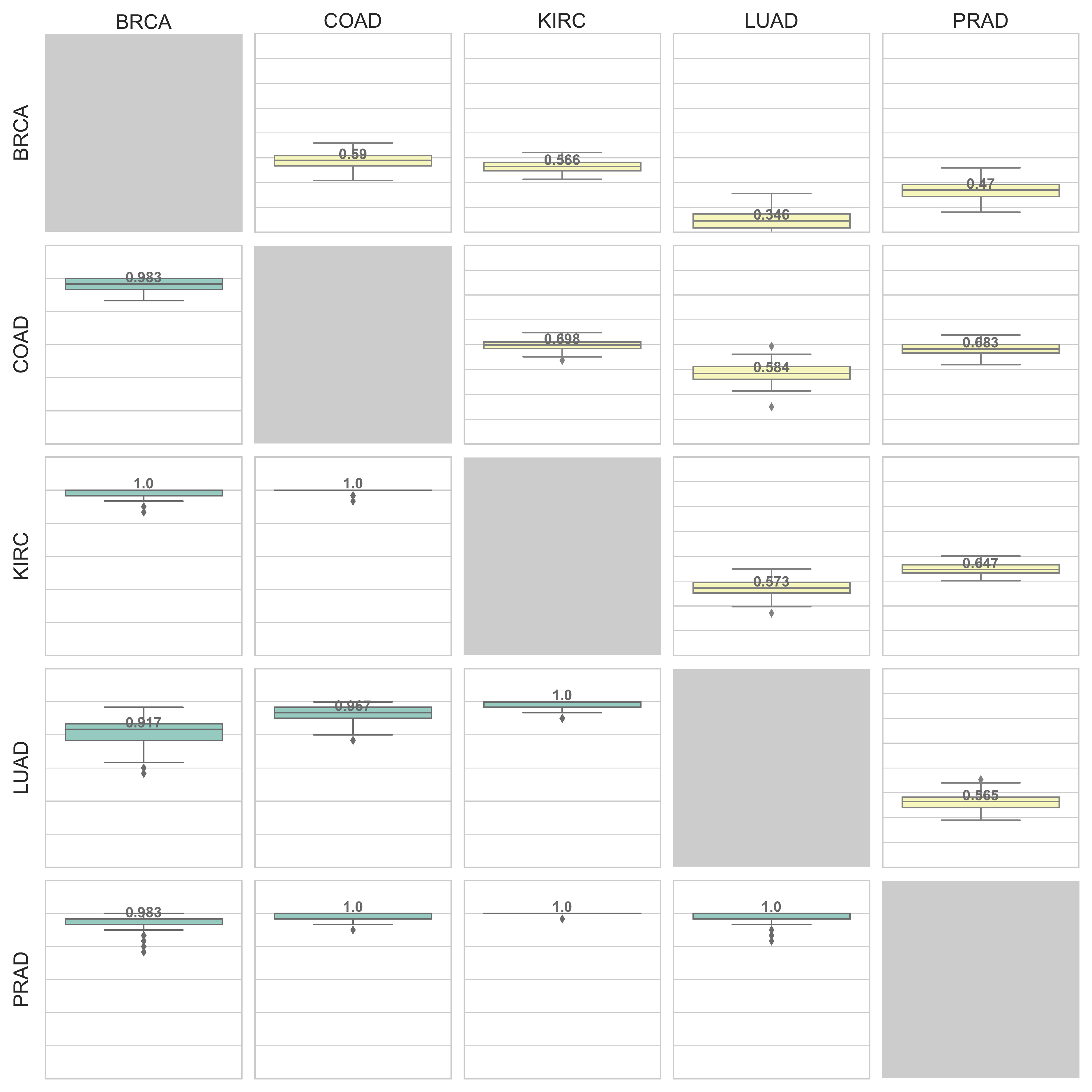}
\caption{Simulation results derived from 1000 independent experiments for the
UCI gene expression dataset.
 The boxplots in the upper triangle display the 
quantiles of the  normalized inner products between estimated cluster centers.
  The boxplots in the lower triangle display the quantiles of the accuracy
(overlap) in reconstructing the true clusters. Medians are annotated in the figures.}
\label{fig:cancer_boxplots}
\end{figure}

We carry out a similar experiment on gene expression data for different types of cancers from the UCI Machine Learning 
Repository\footnote{\url{https://archive.ics.uci.edu/ml/datasets/gene+expression+cancer+RNA-Seq}} 
\cite{Dua:2019}. The dataset contains 801 samples and 20531 attributes, with the 
predictors being 
RNA-Seq gene expression levels measured by the Illumina HiSeq platform. Before proceeding, 
again we apply additive Gaussian noise to the data matrix, with mean zero and variance 5. 
We consider five different cancer types, denoted by 
``COAD'', ``BRCA'', ``KIRC'', ``LUAD'' and ``PRAD''.

For each pair of cancer groups we subsample 30 subjects from each group, 
 to construct a $60 \times 20531$ data matrix.
We then center and rescale the columns of this matrix to unit norms. 
 A typical 
outcome of PCA is presented in Figure \ref{fig:cancer_PCA}. 
We observe that clusters corresponding to different cancer groups are well separated
for each of the pairs. 
In Figure \ref{fig:cancer_boxplots}, we report the overlaps between the 
labels obtained from K-means clustering and the ground truth labels. 
The overlaps are very high for 
all pairs. These plots summarize the results of 1000 independent repetitions of this 
experiment. 

In the upper half of the same figure, we present the 
maximum normalized inner products between the estimated cluster centers on 
two independent subsamples. The correlation is significantly different from zero, but 
far from being close to one. 
Once more, this is analogous to the strong signal regime in 
our analysis.

\section*{Acknowledgements}

This work was supported by the NSF through award DMS-2031883, the Simons Foundation
 through Award 814639 for the Collaboration on the Theoretical Foundations of Deep Learning, 
 the NSF grant CCF-2006489 and the ONR grant N00014-18-1-2729, and a grant from 
 Eric and Wendy Schmidt at the Institute for Advanced Studies. 
 Part of this work was carried out while Andrea Montanari was on partial leave
  from Stanford and a Chief Scientist at Ndata Inc dba Project N. The present 
  research is unrelated to AM’s activity while on leave.

\bibliographystyle{alpha}
\bibliography{bibliography.bib}

\newpage
\begin{appendices}
\section{Preliminaries}\label{sec:preliminaries}

\subsection{Further notations and conventions}\label{app:notation}


In this section, we present an incomplete summary of the notations and conventions that 
will be applied throughout the appendix.

For two sequences of random vectors $\{\bX_{n}\}_{n \in \NN_+} \subseteq \RR^k$ and $\{\bY_{n}\}_{n \in \NN_+}  \subseteq \RR^k$, we say $\bX_{n} \overset{P}{\simeq} \bY_{n}$ if and only if $\|\bX_{n} - \bY_{n}\| = o_p(1)$. 
For $n,k \in \NN_+$ and matrix $\bX \in \RR^{n \times k}$ with the $i$-th row denoted by $\bx_i \in \RR^k$, we let $\hat{p}_{\bX}$ be the empirical distribution of the $\bx_i$'s:
\begin{align*}
	\hat{p}_{\bX} := \frac{1}{n}\sum_{i = 1}^n\delta_{\bx_i}\, ,
\end{align*}
where $\delta_{\bx_i}$ is the point mass at $\bx_i$. 

\subsection{Wasserstein distance}

For two probability distributions $\mu_1, \mu_2$ over $\RR^r$, recall that the Wasserstein distance between $\mu_1$ and $\mu_2$ is defined as
\begin{align}\label{eq:wasserstein}
	W_2(\mu_1, \mu_2) := \left( \inf_{\gamma \in \Gamma(\mu_1, \mu_2)} \int_{\RR^r \times \RR^r} \|x - y\|^2 \dd \gamma(x, y) \right)^{1/2},
\end{align}
where $\Gamma(\mu_1, \mu_2)$ denotes the collection of all probability distributions over $\RR^r \times \RR^r$ with marginals $\mu_1$ and $\mu_2$ on the first and last $r$ coordinates, respectively. One observation is that for matrices $\bL_1, \bL_2 \in \RR^{n \times r}$, the $W_2$ distance between the empirical distributions of their rows is upper bounded by the Frobenius norm of their difference: $W_2(\hat{p}_{\bL_1}, \hat{p}_{\bL_2}) \leq \frac{1}{\sqrt{n}}\|\bL_1 - \bL_2\|_F$.  
 
\section{Technical lemmas}

\begin{lemma}\label{lemma:concentration-of-sample-covariance}
	Let $\bx_1, \cdots, \bx_n \in \mathbb{R}^p$ be independent $\tau^2$ sub-Gaussian random vectors, with mean 0 and covariance $\E[\bx_i\bx_i^T] = \bSigma$. We define the sample covariance matrix $\hat{\bSigma} = \frac{1}{n} \sum_{i = 1}^n \bx_i\bx_i^T$. Then for any $s \geq 100$, with probability at least $1 - 2e^{-ps^2/3200}$ we have
	\begin{align*}
		\| \hat{\bSigma} - \bSigma \|_{\rm{op}} \leq s\tau^2 \sqrt{\frac{p}{n}},
	\end{align*}
	provided that $s\sqrt{p/n} \leq 25$.
\end{lemma}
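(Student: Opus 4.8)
The plan is to prove the sub-Gaussian sample covariance concentration bound via the standard $\epsilon$-net argument on the unit sphere, combined with a Bernstein-type tail bound for sums of independent sub-exponential random variables. First I would reduce the operator norm to a supremum over a finite net: fix a $1/4$-net $\mathcal{N}$ of the unit sphere $S^{p-1}$, which can be chosen with $|\mathcal{N}| \le 9^p$ by a volume argument, and use the standard fact that $\|\hat{\bSigma} - \bSigma\|_{\op} \le 2 \sup_{\bv \in \mathcal{N}} |\langle (\hat{\bSigma} - \bSigma)\bv, \bv\rangle|$. This converts the problem to controlling, for each fixed unit vector $\bv$, the deviation of $\frac{1}{n}\sum_{i=1}^n \langle \bx_i, \bv\rangle^2$ from its mean $\langle \bSigma \bv, \bv\rangle$.

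Next I would apply a one-dimensional concentration inequality to the fixed-$\bv$ quantity. Since each $\bx_i$ is $\tau^2$ sub-Gaussian, $\langle \bx_i, \bv\rangle$ is a mean-zero $\tau^2$ sub-Gaussian scalar, hence $\langle \bx_i, \bv\rangle^2 - \E[\langle \bx_i, \bv\rangle^2]$ is a mean-zero sub-exponential random variable with sub-exponential norm $O(\tau^2)$. Bernstein's inequality for independent sub-exponential summands then gives
\begin{align*}
	\P\left( \left| \frac{1}{n}\sum_{i=1}^n \langle \bx_i, \bv\rangle^2 - \langle \bSigma\bv, \bv\rangle \right| \ge t \right) \le 2\exp\left( -c n \min\left\{ \frac{t^2}{\tau^4}, \frac{t}{\tau^2} \right\} \right)
\end{align*}
for an absolute constant $c$. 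I would then set $t = \frac{s}{2}\tau^2\sqrt{p/n}$; under the hypothesis $s\sqrt{p/n} \le 25$, the quantity $t/\tau^2 = \frac{s}{2}\sqrt{p/n}$ is bounded, so the minimum in the exponent is achieved by the quadratic term (up to adjusting constants, since $\min\{a^2, a\} = a^2$ when $a \le 1$ and here $a = \frac{s}{2}\sqrt{p/n}$ may exceed $1$ but is at most $25/2$, so $\min\{a^2,a\} \ge a^2/c'$ for a fixed $c'$). This yields a bound of the form $2\exp(-c'' s^2 p)$ for the single-$\bv$ event with the right scaling.

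Finally I would take a union bound over the net: the probability that $\sup_{\bv\in\mathcal{N}}|\langle(\hat\bSigma-\bSigma)\bv,\bv\rangle| > \frac{s}{2}\tau^2\sqrt{p/n}$ is at most $9^p \cdot 2\exp(-c'' s^2 p) = 2\exp(p(\log 9 - c'' s^2))$. For $s \ge 100$, the constant $c'' s^2$ dominates $\log 9$ comfortably, and bookkeeping the absolute constants should deliver the claimed exponent $2e^{-ps^2/3200}$ (the specific constant $3200$ being an artifact of tracking the Bernstein constant, the factor of $2$ from the net reduction, and the $\log 9$ term). The main obstacle is purely quantitative: one must choose the Bernstein constants and the net mesh size carefully enough that the final exponent comes out as stated, and verify that the side condition $s\sqrt{p/n}\le 25$ is exactly what is needed to stay in the sub-Gaussian (quadratic) regime of Bernstein's inequality rather than the heavier-tailed linear regime. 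There is no conceptual difficulty beyond this careful accounting.
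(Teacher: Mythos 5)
The paper does not actually supply a proof of this lemma: it is stated as a standard off-the-shelf concentration bound for sub-Gaussian sample covariance matrices, so there is nothing to compare your argument against line by line. Your route --- a $1/4$-net of $S^{p-1}$ with $|\mathcal{N}|\le 9^p$, the reduction $\|\hat{\bSigma}-\bSigma\|_{\op}\le 2\sup_{\bv\in\mathcal{N}}|\langle(\hat{\bSigma}-\bSigma)\bv,\bv\rangle|$, Bernstein for the sub-exponential variables $\langle \bx_i,\bv\rangle^2-\E\langle \bx_i,\bv\rangle^2$, and a union bound --- is exactly the standard proof of this statement, and your observation that the side condition $s\sqrt{p/n}\le 25$ keeps the Bernstein exponent in (a constant multiple of) the quadratic regime is the right way to read that hypothesis. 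The only unfinished business is the numerical bookkeeping tying the absolute Bernstein/sub-exponential-norm constants to the specific values $100$, $3200$, and $25$, which you correctly flag as routine rather than conceptual.
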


\begin{lemma}[Wedin's theorem \cite{wedin1972perturbation}]\label{lemma:wedin}
Let $\bA_0$, $\bA_1 \in \mathbb{R}^{m \times n}$ have singular value decomposition (for $a \in \{0, 1\}$)
\begin{align*}
	\bA_a = \bU_a \bSigma_a \bV_a^T, 
\end{align*}
with $\bSigma_a$ containing the singular values of $\bA_a$ in decreasing order. Furthermore, we let $\bU_{a,+} \in \mathbb{R}^{m \times k(a)}$, $\bV_{a, +} \in \mathbb{R}^{n \times k(a)}$, be formed by the first $k(a)$ columns of $\bU_a$, $\bV_a$, respectively, such that
\begin{align*}
	\bU_a = [\bU_{a, +} | \bU_{a, -}], \qquad \bV_a = [\bV_{a, +} | \bV_{a, -}].
\end{align*}
Let $\sigma_k(\cdot)$ denote the $k$-th largest singular value of a matrix. Finally assume $\Delta \equiv \sigma_{k(1)}(\bA_1) - \sigma_{k(0) + 1}(\bA_0) > 0$. Let $\bP_a = \bV_{a, +}\bV_{a, +}^T$(respectively, $\bQ_a = \bU_{a, +} \bU_{a, +}^T$) denote the projector onto the right singular space (left singular space) corresponding to the top $k(a)$ singular values of $\bA_a$. Then we have
\begin{align*}
	\|(\bI - \bP_0)\bP_1\|_{\rm{op}} \leq \frac{1}{\Delta} \left\{ \|(\bI - \bQ_0)(\bA_0 - \bA_1)\bP_1\|_{\rm{op}} \vee \|\bQ_1(\bA_0 - \bA_1)(\bI - \bP_0)\|_{\rm{op}} \right\}.
\end{align*}
If instead we have $\Delta \equiv \sigma_{k(0)}(\bA_0) - \sigma_{k(1) + 1}(\bA_1) > 0$, then 
\begin{align*}
	\|\bP_0(\bI - \bP_1)\|_{\rm{op}} \leq \frac{1}{\Delta} \left\{ \|(\bI - \bQ_1)(\bA_0 - \bA_1) \bP_0\|_{\rm{op}} \vee \|\bQ_0(\bA_0 - \bA_1)(\bI - \bP_1)\|_{\rm{op}} \right\}.
\end{align*}
\end{lemma}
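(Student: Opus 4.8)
The plan is to recall the standard proof of this classical perturbation estimate \cite{wedin1972perturbation}. Set $\bE := \bA_0 - \bA_1$. Since the columns of $\bU_{a,\pm}$, $\bV_{a,\pm}$ are orthonormal, multiplication by these partial isometries leaves operator norms unchanged, so $\|(\bI - \bP_0)\bP_1\|_{\op} = \|\bV_{0,-}^{\sT}\bV_{1,+}\|_{\op}$, and the two quantities on the right-hand side of the first claimed inequality equal $\|\bU_{0,-}^{\sT}\bE\bV_{1,+}\|_{\op}$ and $\|\bU_{1,+}^{\sT}\bE\bV_{0,-}\|_{\op}$ respectively. Hence it suffices to bound $\bO := \bV_{0,-}^{\sT}\bV_{1,+}$ in operator norm.

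First I would extract two algebraic identities from the defining SVD relations. From $\bA_1\bV_{1,+} = \bU_{1,+}\bSigma_{1,+}$ and $\bA_0 = \bA_1 + \bE$, together with $\bU_{0,-}^{\sT}\bA_0 = \bSigma_{0,-}\bV_{0,-}^{\sT}$, one obtains $\bSigma_{0,-}\bO - \bF\bSigma_{1,+} = \bU_{0,-}^{\sT}\bE\bV_{1,+}$, where $\bF := \bU_{0,-}^{\sT}\bU_{1,+}$; and from $\bA_1^{\sT}\bU_{1,+} = \bV_{1,+}\bSigma_{1,+}$ and $\bV_{0,-}^{\sT}\bA_0^{\sT} = \bSigma_{0,-}\bU_{0,-}^{\sT}$ one obtains symmetrically $\bSigma_{0,-}\bF - \bO\bSigma_{1,+} = \bV_{0,-}^{\sT}\bE^{\sT}\bU_{1,+}$. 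Stacking the unknowns as $\bZ := [\,\bO^{\sT}\ \bF^{\sT}\,]^{\sT}$ and setting $\bS := \left[\begin{smallmatrix}\bzero & \bSigma_{0,-}\\ \bSigma_{0,-} & \bzero\end{smallmatrix}\right]$, these two identities combine into the single Sylvester equation $\bS\bZ - \bZ\bSigma_{1,+} = \bH$, where $\bH$ is the corresponding stacking of the two residual matrices; in particular $\|\bO\|_{\op}\le\|\bZ\|_{\op}$ and $\|\bH\|_{\op}$ is controlled by the two right-hand-side quantities above.

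The final step uses the spectral separation: $\bS$ is symmetric with $\|\bS\|_{\op}\le\sigma_{k(0)+1}(\bA_0)$, while $\bSigma_{1,+}\succeq\sigma_{k(1)}(\bA_1)\,\bI$, so $\mathrm{spec}(\bS)$ and $\mathrm{spec}(\bSigma_{1,+})$ lie at distance at least $\Delta$. Consequently the Sylvester operator $\bX\mapsto\bS\bX - \bX\bSigma_{1,+}$ is invertible, with $\bZ = -\int_0^\infty e^{t\bS}\,\bH\, e^{-t\bSigma_{1,+}}\,\dd t$, an integral that converges because $\|e^{t\bS}\bH e^{-t\bSigma_{1,+}}\|_{\op}\le e^{-t\Delta}\|\bH\|_{\op}$, and hence $\|\bZ\|_{\op}\le\Delta^{-1}\|\bH\|_{\op}$. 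A careful pairing of the two blocks of $\bH$ — as carried out in \cite{wedin1972perturbation}, or in Stewart--Sun, \emph{Matrix Perturbation Theory}, Thm.~V.4.4 — removes a spurious $\sqrt 2$ and yields exactly $\|\bO\|_{\op}\le\Delta^{-1}\big(\|\bU_{0,-}^{\sT}\bE\bV_{1,+}\|_{\op}\vee\|\bU_{1,+}^{\sT}\bE\bV_{0,-}\|_{\op}\big)$, which is the first inequality. The second inequality is the same statement with the roles of $\bA_0$ and $\bA_1$ (equivalently, of the labels $0$ and $1$) interchanged, using $\|\bP_0(\bI-\bP_1)\|_{\op} = \|(\bI-\bP_1)\bP_0\|_{\op}$.

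The only genuinely delicate point — the main obstacle — is the spectral-separation bookkeeping: verifying that the stacked operator's spectrum really splits across the gap $\Delta$ (this is where the ordering of singular values and the hypothesis $\Delta>0$ enter), and extracting the sharp constant with the maximum, rather than the sum, of the two residuals. As the statement is verbatim Wedin's theorem, an acceptable alternative is simply to cite \cite{wedin1972perturbation} and omit the derivation.
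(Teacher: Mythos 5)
Your proposal is fine: the paper itself gives no proof of this lemma, simply importing it verbatim from \cite{wedin1972perturbation}, so your closing remark that citing Wedin suffices is exactly what the paper does. Your sketch of the classical argument (reduce to bounding $\bV_{0,-}^{\sT}\bV_{1,+}$, derive the stacked Sylvester equation from the SVD relations, invert it using the spectral gap $\Delta$, and defer the removal of the $\sqrt{2}$ and the rectangular-$\bSigma_{0,-}$ bookkeeping to Wedin/Stewart--Sun) is the standard and correct route, and the second inequality does follow from the first by swapping the roles of $\bA_0$ and $\bA_1$ as you say.
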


\begin{lemma}[Nishimori identity, Proposition 16 in \cite{lelarge2019fundamental}]\label{lemma:nishimori}
	Let $(\bX, \bY)$ be a couple of random variables on a polish space. Let $k \geq 1$ and let $\bx^{(1)}, \cdots, \bx^{(k)}$ be $k$ i.i.d. samples (given $\bY$) from the distribution $\P(\bX = \cdot \mid \bY)$, independently of every other random variables. Let us denote $\langle \cdot \rangle$ the expectation with respect to $\P(\bX = \cdot \mid \bY)$ and $\E$ the expectation with respect to $(\bX, \bY)$. Then for all continuous bounded function $f$, 
	\begin{align*}
		\E\langle f(\bY, \bx^{(1)}, \cdots, \bx^{(k)}) \rangle = \E\langle f(\bY, \bx^{(1)}, \cdots, \bx^{(k- 1)}, \bX) \rangle.
	\end{align*}
\end{lemma}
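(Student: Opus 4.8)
The plan is to prove the identity from the observation that, conditionally on $\bY$, the tuple $(\bx^{(1)},\dots,\bx^{(k)},\bX)$ is a vector of $k+1$ i.i.d.\ draws from the regular conditional distribution $\mu_{\bY}(\cdot):=\P(\bX\in\cdot\mid\bY)$, and is therefore conditionally exchangeable; transposing the last two coordinates interchanges $\bx^{(k)}$ and $\bX$ without changing the conditional law, which is exactly the content of the lemma. The Polish assumption enters precisely here: it guarantees the existence of the regular conditional probability $\mu_{\bY}$, equivalently the disintegration $\P(\bX\in\dd\bx,\bY\in\dd\by)=\mu_{\by}(\dd\bx)\,\P_{\bY}(\dd\by)$, which is what makes the manipulations below legitimate.

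Concretely, I would write both sides as iterated integrals against $\mu_{\by}$ and $\P_{\bY}$. For the left-hand side, $f$ does not depend on $\bX$, so the outer expectation over $(\bX,\bY)$ collapses to an expectation over $\bY$ alone, and the conditional independence of the $\bx^{(i)}$ given $\bY$ gives
\[
\E\langle f(\bY,\bx^{(1)},\dots,\bx^{(k)})\rangle
= \int\Big(\int f(\by,\bx^{(1)},\dots,\bx^{(k)})\prod_{i=1}^{k}\mu_{\by}(\dd\bx^{(i)})\Big)\P_{\bY}(\dd\by)\, .
\]
For the right-hand side, I would use the disintegration to rewrite the expectation over $(\bX,\bY)$ as the inner $\mu_{\by}$-integral over $\bX$ followed by the $\P_{\bY}$-integral over $\bY$:
\[
\E\langle f(\bY,\bx^{(1)},\dots,\bx^{(k-1)},\bX)\rangle
= \int\Big(\int f(\by,\bx^{(1)},\dots,\bx^{(k-1)},\bx)\,\mu_{\by}(\dd\bx)\prod_{i=1}^{k-1}\mu_{\by}(\dd\bx^{(i)})\Big)\P_{\bY}(\dd\by)\, .
\]
Since in the inner integral $\bx$ and $\bx^{(1)},\dots,\bx^{(k-1)}$ are all integrated against the same measure $\mu_{\by}$, relabeling the dummy variable $\bx$ as $\bx^{(k)}$ turns the second display into the first, which proves the equality.

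The remaining points are purely technical: boundedness of $f$ (indeed mere measurability plus integrability would suffice) makes every integral finite and licenses the Fubini--Tonelli interchanges and the relabeling used above, and the stated joint independence of $(\bx^{(1)},\dots,\bx^{(k)})$ from all other randomness given $\bY$ is exactly what permits writing $\prod_i\mu_{\by}(\dd\bx^{(i)})$ inside $\langle\cdot\rangle$. I do not expect a genuine obstacle here; the only step needing a word of justification is the existence of the regular conditional distribution $\mu_{\bY}$, which is a standard fact for random elements valued in a Polish space, and everything else is bookkeeping. This reproduces the argument of Proposition~16 in \cite{lelarge2019fundamental}.
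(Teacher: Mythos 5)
Your argument is correct and is exactly the standard proof of the Nishimori identity: conditionally on $\bY$, the tuple $(\bx^{(1)},\dots,\bx^{(k)},\bX)$ is i.i.d.\ from $\P(\bX=\cdot\mid\bY)$, so relabeling the last coordinate via the disintegration gives the identity. The paper itself offers no proof (it simply cites Proposition 16 of \cite{lelarge2019fundamental}), and your write-up reproduces the argument of that reference, so there is nothing to reconcile.
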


\begin{lemma}\label{lemma:convex-derivative}
	Let $\{f_n\}_{n \in \NN_+}$ be a sequence of convex differentiable functions on $\RR$, and $f_n(x) \rightarrow f(x)$ for all $x \in \RR$. Let $D_f = \left\{x \in \RR: f \mbox{ is differentiable at }x \right\}$, then $f_n'(x) \rightarrow f'(x)$ for all $x \in D_f$.
\end{lemma}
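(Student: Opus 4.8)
Let $\{f_n\}_{n \in \NN_+}$ be convex differentiable functions on $\RR$ with $f_n(x) \to f(x)$ for all $x$. Then at every point $x$ where $f$ is differentiable, $f_n'(x) \to f'(x)$.

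This is a classical fact, and I'll sketch the standard proof.

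\textbf{Proof proposal.} The plan is to exploit the monotonicity of difference quotients of convex functions together with pointwise convergence, and then let the increment shrink.

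First, observe that $f$ is itself convex: for any $x,y\in\RR$ and $\lambda\in[0,1]$, passing to the limit in $f_n(\lambda x+(1-\lambda)y)\le \lambda f_n(x)+(1-\lambda)f_n(y)$ gives the same inequality for $f$. In particular $f$ is finite and continuous on $\RR$, and its one-sided derivatives exist everywhere; at a point $x\in D_f$ they coincide and equal $f'(x)$.

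Next, recall that for a differentiable convex function $g$ and any $h>0$ one has
\begin{align*}
\frac{g(x)-g(x-h)}{h}\;\le\; g'(x)\;\le\;\frac{g(x+h)-g(x)}{h},
\end{align*}
since the slope of a chord of a convex function is nondecreasing in both endpoints. Applying this with $g=f_n$ and fixing $h>0$, then sending $n\to\infty$ and using $f_n\to f$ pointwise, yields
\begin{align*}
\frac{f(x)-f(x-h)}{h}\;\le\;\liminf_{n\to\infty} f_n'(x)\;\le\;\limsup_{n\to\infty} f_n'(x)\;\le\;\frac{f(x+h)-f(x)}{h}.
\end{align*}

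Finally, let $h\downarrow 0$. Because $x\in D_f$, both the left- and right-hand sides converge to $f'(x)$. Hence $\liminf_n f_n'(x)=\limsup_n f_n'(x)=f'(x)$, i.e.\ $f_n'(x)\to f'(x)$, as claimed. There is no serious obstacle here; the only point requiring a moment's care is recording that the pointwise limit $f$ is again convex (so that its difference quotients are monotone and its derivative at $x\in D_f$ is the common limit of the one-sided difference quotients), and that interchanging the $n\to\infty$ limit with the fixed-$h$ difference quotient is legitimate since $h$ is held fixed while passing to the limit in $n$.
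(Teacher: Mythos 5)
Your proof is correct and is exactly the standard argument for this classical fact (the paper itself states the lemma without proof, treating it as known, cf.\ Rockafellar's theory of convex functions): convexity passes to the pointwise limit, the difference-quotient sandwich $\frac{f_n(x)-f_n(x-h)}{h}\le f_n'(x)\le \frac{f_n(x+h)-f_n(x)}{h}$ survives the $n\to\infty$ limit for fixed $h$, and letting $h\downarrow 0$ at a point of differentiability of $f$ squeezes $\liminf_n f_n'(x)$ and $\limsup_n f_n'(x)$ to $f'(x)$. No gaps; nothing further is needed.
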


\begin{lemma}\label{lemma:A10}
	If $f$ and $g$ are two differentiable convex functions, then for any $b > 0$, 
	\begin{align*}
		|f'(a) - g'(a)| \leq g'(a + b) - g'(a - b) + \frac{d}{b},
	\end{align*}
	where $d = |f(a + b) - g(a + b)| + |f(a - b) - g(a - b)| + |f(a) - g(a)|$. 
\end{lemma}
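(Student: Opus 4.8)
The plan is to derive the estimate from the single elementary fact that the derivative of a differentiable convex function sits between consecutive chord slopes. Concretely, for any differentiable convex $h$ and any $b>0$, monotonicity of $h'$ gives the two-sided bound
\[
h'(a-b)\;\le\;\frac{h(a)-h(a-b)}{b}\;\le\;h'(a)\;\le\;\frac{h(a+b)-h(a)}{b}\;\le\;h'(a+b).
\]
I would apply the appropriate halves of this chain to $f$ and to $g$, substitute $f=g+\delta$ with $\delta(x):=f(x)-g(x)$, and telescope so that the residual is a discrete second difference of $g$, which convexity controls by $g'(a+b)-g'(a-b)$.

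First, to bound $f'(a)-g'(a)$ from above I would use $f'(a)\le\frac{f(a+b)-f(a)}{b}$ and $g'(a)\ge\frac{g(a)-g(a-b)}{b}$, so that
\[
f'(a)-g'(a)\;\le\;\frac1b\bigl(f(a+b)-f(a)-g(a)+g(a-b)\bigr)
=\frac1b\bigl(g(a+b)-2g(a)+g(a-b)\bigr)+\frac1b\bigl(\delta(a+b)-\delta(a)\bigr).
\]
Since $g(a+b)-2g(a)+g(a-b)=\bigl(g(a+b)-g(a)\bigr)-\bigl(g(a)-g(a-b)\bigr)\le b\,g'(a+b)-b\,g'(a-b)$ by convexity, this yields $f'(a)-g'(a)\le g'(a+b)-g'(a-b)+\tfrac1b\bigl(|\delta(a+b)|+|\delta(a)|\bigr)$.

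Second, symmetrically, to bound $g'(a)-f'(a)$ I would use $g'(a)\le\frac{g(a+b)-g(a)}{b}$ and $f'(a)\ge\frac{f(a)-f(a-b)}{b}$; the same substitution and the same second-difference estimate for $g$ give $g'(a)-f'(a)\le g'(a+b)-g'(a-b)+\tfrac1b\bigl(|\delta(a)|+|\delta(a-b)|\bigr)$. Taking the maximum of the two one-sided bounds, and observing that the three quantities $|\delta(a+b)|,|\delta(a)|,|\delta(a-b)|$ appearing together are exactly $d$, produces the claimed inequality. There is no genuine obstacle here: the only care required is tracking which chord slope bounds which derivative from which side, and arranging the telescoping so the $(f-g)$ terms land at $a+b,a,a-b$; convexity of $g$ is used twice (once for $g'$ nondecreasing to get the chord bounds, once for the second difference), while $f$ only needs its own chord bounds.
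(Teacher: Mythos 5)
Your argument is correct: the chord-slope inequalities and the second-difference bound are all valid consequences of convexity, and the residual terms you collect are indeed dominated by $d$. The paper itself simply cites Panchenko's Lemma 3.2 for this fact, and your proof is essentially the same standard convexity argument used there (bounding derivatives by difference quotients over $[a-b,a]$ and $[a,a+b]$ and using monotonicity of $g'$), merely organized through the second difference of $g$ rather than by sandwiching $f'(a)$ between $g'(a-b)-d/b$ and $g'(a+b)+d/b$.
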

\begin{proof}
	See \cite{panchenko2013sherrington}, Lemma 3.2.
\end{proof}

\begin{lemma}\label{lemma:A14}
For $\bX \in \RR^{n \times d}$, we define $\|\bX\|_1 = \sum_{i \in [n], j \in [d]}|X_{ij}|$.	
Then for $\bX_1, \bX_2 \in \RR^{n \times d}$, we have $\|\bX_1 \bX_1^{\top} - \bX_2 \bX_2^{\top}\|_1 \leq n \|\bX_1 - \bX_2\|_F(\|\bX_1\|_F + \|\bX_2\|_F)$.
\end{lemma}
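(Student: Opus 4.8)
The plan is to reduce the bilinear expression to two simpler terms via the standard splitting identity
$\bX_1\bX_1^{\top} - \bX_2\bX_2^{\top} = \bX_1(\bX_1 - \bX_2)^{\top} + (\bX_1 - \bX_2)\bX_2^{\top}$,
apply the triangle inequality for $\|\cdot\|_1$, and then control each resulting term, which has the shape $\|\bA\bB^{\top}\|_1$ with $\bA, \bB \in \RR^{n \times d}$, by the quantity $n\|\bA\|_F\|\bB\|_F$.

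The one estimate to establish is therefore: for $\bA, \bB \in \RR^{n\times d}$ one has $\|\bA\bB^{\top}\|_1 \le n\|\bA\|_F\|\bB\|_F$. To see this, write $\ba_i$ and $\bb_j$ for the rows of $\bA$ and $\bB$, so that $(\bA\bB^{\top})_{ij} = \langle \ba_i, \bb_j\rangle$. By Cauchy--Schwarz, $|(\bA\bB^{\top})_{ij}| \le \|\ba_i\|\,\|\bb_j\|$, hence $\|\bA\bB^{\top}\|_1 = \sum_{i,j}|(\bA\bB^{\top})_{ij}| \le \big(\sum_i \|\ba_i\|\big)\big(\sum_j \|\bb_j\|\big)$. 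A second application of Cauchy--Schwarz (this time over the index $i \in [n]$, respectively $j \in [n]$) gives $\sum_i \|\ba_i\| \le \sqrt{n}\,\big(\sum_i \|\ba_i\|^2\big)^{1/2} = \sqrt{n}\,\|\bA\|_F$ and likewise $\sum_j \|\bb_j\| \le \sqrt{n}\,\|\bB\|_F$, yielding the claimed bound.

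Applying this estimate with $(\bA,\bB) = (\bX_1, \bX_1 - \bX_2)$ and with $(\bA,\bB) = (\bX_1 - \bX_2, \bX_2)$, and adding the two contributions, produces $\|\bX_1\bX_1^{\top} - \bX_2\bX_2^{\top}\|_1 \le n\|\bX_1 - \bX_2\|_F\big(\|\bX_1\|_F + \|\bX_2\|_F\big)$, as desired.

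There is no genuine obstacle in this argument; the only point meriting attention is bookkeeping of the dimension-dependent constant. The factor $n$ appears as $\sqrt{n}\cdot\sqrt{n}$ from the two passages from a sum of row norms to a Frobenius norm, each costing a $\sqrt{n}$ precisely because $\bA$ and $\bB$ each have exactly $n$ rows — so it is the left dimension, not the inner dimension $d$, that governs the constant.
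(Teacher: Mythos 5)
Your proof is correct and follows essentially the same route as the paper: split $\bX_1\bX_1^{\top}-\bX_2\bX_2^{\top}$ via the standard identity, apply the triangle inequality for $\|\cdot\|_1$, bound each entry of the cross terms by Cauchy--Schwarz on rows, and then convert sums of row norms to Frobenius norms with two more Cauchy--Schwarz steps, each costing a $\sqrt{n}$. The only cosmetic difference is that you isolate the intermediate estimate $\|\bA\bB^{\top}\|_1\le n\|\bA\|_F\|\bB\|_F$ as a standalone claim (and note that the relevant factor is the row dimension $n$), whereas the paper carries out the same computation inline.
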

\begin{proof}
	we let $\bx_i^1 \in \RR^d$ be the $i$-th row of $\bX_1$ and we let $\bx_i^2 \in \RR^d$ be the $i$-th row of $\bX_2$. Then by triangle inequality,  
\begin{align*}
	\|\bX_1 \bX_1^{\top} - \bX_2 \bX_2^{\top}\|_1  \leq & \|\bX_1(\bX_1 - \bX_2)^{\top}\|_1 + \|\bX_2(\bX_1 - \bX_2)^{\top}\|_1 \\
	\leq & \sum_{i,j \in [n]}  (\|\bx_i^1\|_2 + \|\bx_i^2\|_2) \times \|\bx_j^1 - \bx_j^2\|_2 \\
	\leq & \sum_{i \in [n]} \|\bx_i^1\|_2 \times \sqrt{n\sum_{j \in [n]}\|\bx_j^1 - \bx_j^2\|_2^2} + \sum_{i \in [n]} \|\bx_i^2\|_2 \times \sqrt{n\sum_{j \in [n]}\|\bx_j^1 - \bx_j^2\|_2^2} \\
	\leq & \sqrt{n \sum_{i \in [n]} \|\bx_i^1\|_2^2}\times \sqrt{n\sum_{j \in [n]}\|\bx_j^1 - \bx_j^2\|_2^2} + \sqrt{n \sum_{i \in [n]} \|\bx_i^1\|_2^2}\times \sqrt{n\sum_{j \in [n]}\|\bx_j^1 - \bx_j^2\|_2^2} \\
	= & n\|\bX_1 - \bX_2\|_F(\|\bX_1\|_F + \|\bX_2\|_F).
\end{align*}
\end{proof}

\section{Proofs for the strong signal regime}
\subsection{Proof of \cref{thm:strong-signal-lambda-consistent}}\label{proof-of-thm:strong-signal-lambda-consistent}
\noindent
\textbf{Proof of claim 1}

\noindent
The top $r$ eigenvectors of $\bA\bA^{\sT}$ are also the top $r$ eigenvectors of the following matrix
\begin{align*}
	\frac{1}{d}{\bA\bA^\sT} - \id_n = \frac{\blambda \bTheta^{\top} \bTheta \blambda^\sT}{nd} + \frac{1}{d \sqrt{n}}\left( \blambda \btheta^\sT\bZ^\sT + \bZ\btheta \blambda^\sT \right) + \frac{1}{d}\bZ\bZ^\sT - \id_n. 
\end{align*}
We let $\bX =  \left(\blambda \btheta^\sT\bZ^\sT + \bZ \btheta \blambda^\sT \right) / {d\sqrt{n}} + \bZ\bZ^\sT / d - \id_n$. Applying in sequence triangle inequality, \cref{lemma:concentration-of-sample-covariance} and the law of large numbers, we conclude that there exists a constant $C > 0$, such that with probability $1 - o_n(1)$
\begin{align}
	\|\bX\|_{\rm{op}} \leq & \frac{2}{d \sqrt{n}} \|\blambda \btheta^T\bZ^T\|_{\rm{op}} + \|\frac{1}{d} \bZ\bZ^T - \id_n\|_{\rm{op}} \nonumber \\
	\leq & \frac{2}{d\sqrt{n}} \|\blambda\|_F \|\bZ \btheta\|_F + \|\frac{1}{d}\bZ\bZ^T - \id_n\|_{\rm{op}} \nonumber\\
	\leq &  C \sqrt{\frac{n}{d}}. \label{eq:Xop}
\end{align}
Using \cref{lemma:wedin}, we see that there exists another  constant $\tilde{C} > 0$, such that with probability $1 - o_n(1)$ we have 
\begin{align*}
	L^{\sin}(\hat{\blambda}_s, \blambda) \leq \tilde{C} \sqrt{\frac{n}{d}}. 
\end{align*}
This completes the proof of the first claim of the theorem since by assumption $d / n \to \infty$.

\vspace{0.3cm}

\noindent
\textbf{Proof of claim 2}

\noindent

We write $ \bA\bA^{\top} /  d -\id_n = {q_{\Theta}}\bLambda\bLambda^{\top} / n + \bW$, where $\bW$ is an $n \times n$ symmetric matrix. Using \cref{eq:Xop} and the law of large numbers, we find out that $\|\bW\|_{\rm op} = o_P(1)$.

We denote the unique eigenvalues of ${q_{\Theta}} \bQ_{\Lambda}$ by $u_1 > u_2 > \cdots > u_k > 0$, where $k \leq r$. The corresponding geometric multiplicities are denoted by $s_1, \cdots, s_k \in \NN_+$. We let $\delta_l$, $\hat{\delta}_l$ be the $l$-th largest eigenvalues of $q_{\Theta} \bLambda \bLambda^{\top} / n$ and $q_{\Theta} \bLambda \bLambda^{\top} / n + \bW$, respectively. We then see immediately that  $\delta_l, \hat{\delta}_l \toP \sum_{i = 1}^ku_i \mathbbm{1}\{ \sum_{j = 1}^{i - 1} s_j + 1  \leq l \leq \sum_{j = 1}^{i} s_j \}$ as $n,d \to \infty$. Let 
\begin{align*}
	& {\bD}_i = \diag\big({\delta}_{\sum_{j = 1}^{i - 1}s_j + 1}, \cdots, {\delta}_{\sum_{j = 1}^{i}s_j}\big) \in \RR^{s_i \times s_i}, \\
	& \hat{\bD}_i = \diag\big(\hat{\delta}_{\sum_{j = 1}^{i - 1}s_j + 1}, \cdots, \hat{\delta}_{\sum_{j = 1}^{i}s_j}\big) \in \RR^{s_i \times s_i}.
\end{align*}
The above arguments imply that $\bD_i, \hat{\bD}_i \toP u_i \id_{s_i}$. 

For $i \in [k]$, we define the matrices $\bV_i, \hat\bV_i \in \RR^{n \times s_i}$, such that the columns of $\bV_i / \sqrt{n}$, $\hat\bV_i / \sqrt{n}$ are the eigenvectors of ${q_{\Theta}} \bLambda \bLambda^{\top} / n$, ${q_{\Theta}} \bLambda \bLambda^{\top} / n + \bW$ that correspond to the top $\sum_{j = 1}^{i - 1} s_j +1$ to $\sum_{j = 1}^{i} s_j $ eigenvalues, respectively. 
By Wedin's theorem (\cref{lemma:wedin}), we see that $L^{\sin}(\bV_i, \hat{\bV}_i) \toP 0$ for all $i \in [k]$. Combining all arguments derived, we conclude that 
\begin{align*}
	\Big\|\frac{1}{n}\sum_{i = 1}^r \bV_i \bD_i \bV_i^{\top} -  \frac{1}{n}\sum_{i = 1}^r \hat\bV_i \hat\bD_i \hat\bV_i^{\top}\Big\|_F^2 \toP 0.
\end{align*}
Note that $\sum_{i = 1}^r \bV_i \bD_i \bV_i^{\top} / n = q_{\Theta} \bLambda \bLambda^{\top} / n$ and $\hat\bD_i$, $\hat\bV_i$ are functions of $\bA$, thus we have found an estimator $\hat{\bL} \in \RR^{n \times n}$ such that $\|\hat{\bL} - \bLambda \bLambda^{\top}\|_F^2 / n^2 = o_P(1)$. Based on this convergence, we only need to apply a standard truncation argument to show the expected mean square error vanishes. We skip the details here for the sake of simplicity.

\vspace{0.3cm} 

\noindent
\textbf{Proof of claim 3}

\noindent
By claim 2 of the theorem, we see that there exists an estimate $\hat\bL$ of $\bLambda \bLambda^{\top} $ which achieves consistency: $\|\hat\bL - \bLambda \bLambda^{\top}\|_F / n = o_P(1)$. Let
\begin{align*}
	\bR := \argmin_{\bX \in \RR^{n \times r}} \|\hat\bL - \bX \bX^{\top}\|_F.
\end{align*}
By definition, $\|\hat\bL - \bR \bR^{\top}\|_F \leq \|\hat\bL - \bLambda \bLambda^{\top}\|_F$, thus $\|\hat\bL - \bR \bR^{\top}\|_F / {n} = o_P(1)$. By triangle inequality we see that $\|\bLambda \bLambda^{\top} - \bR\bR^{\top}\|_F / {n} = o_P(1)$, from which we conclude that there exists $\bOmega_0 \in \mathcal{O}(r)$ such that 
\begin{align*}
	\frac{1}{\sqrt{n}}\|\bR \bOmega_0 - \bLambda\|_F = o_P(1). 
\end{align*}
We define
\begin{align*}
	\bOmega_{\ast} := \argmin_{\bOmega \in \mathcal{O}(r)} W_2(\hat{p}_{\bR\bOmega}, \mu_{\Lambda}), \qquad \bR_{\ast} := \bR \bOmega_{\ast}.
\end{align*}
Denote by $\mu_{\bOmega \Lambda}$ the distribution of $\bOmega\bLambdas$ for $\bOmega \in \mathcal{O}(r)$ and $\bLambdas \sim \mu_{\Lambda}$. We then have $\|\bLambda \bOmega_0^{\sT} \bOmega_{\ast} - \bR_{\ast} \|_F / \sqrt{n} = o_P(1)$, which implies $W_2(\hat{p}_{\bLambda \bOmega_0^{\sT} \bOmega_{\ast}}, \hat{p}_{\bR_{\ast}}) = o_P(1)$. Furthermore, 
\begin{align*}
	 W_2(\hat{p}_{\bR_{\ast}}, \mu_{\Lambda})  \leq & W_2(\hat{p}_{\bR\bOmega_0}, \mu_{\Lambda}) \\
	\leq & W_2(\hat{p}_{\bR\bOmega_0}, \hat{p}_{\bLambda}) + W_2(\hat{p}_{\bLambda}, \mu_{\Lambda}) \\
	\leq & \frac{1}{\sqrt{n}}\|\bR\bOmega_0 - \bLambda\|_F + W_2(\hat{p}_{\bLambda}, \mu_{\Lambda}) = o_P(1).
\end{align*}
Invoking triangle inequality, we have
\begin{align*}
	W_2(\mu_{\bOmega_{\ast}^{\top} \bOmega_0\Lambda}, \mu_{\Lambda}) \leq & W_2(\mu_{\Lambda}, \hat{p}_{\bR_{\ast}}) + W_2( \hat{p}_{\bR_{\ast}}, \hat{p}_{\bLambda \bOmega_0^{\sT} \bOmega_{\ast}}) + W_2(\hat{p}_{\bLambda \bOmega_0^{\sT} \bOmega_{\ast}}, \mu_{\bOmega_{\ast}^{\top} \bOmega_0\Lambda}) \\
	= & W_2(\mu_{\Lambda}, \hat{p}_{\bR_{\ast}}) + W_2( \hat{p}_{\bR_{\ast}}, \hat{p}_{\bLambda \bOmega_0^{\sT} \bOmega_{\ast}}) + W_2(\hat{p}_{\bLambda }, \mu_{\Lambda}).
\end{align*}
Combining the above results, we see that $W_2(\mu_{\bOmega_{\ast}^{\top} \bOmega_0\Lambda}, \mu_{\Lambda}) = o_P(1)$.
Notice that the mapping $\bOmega \mapsto W_2(\mu_{\Lambda}, \mu_{\bOmega\Lambda})$ is continuous on $\mathcal{O}(r)$, and $W_2(\mu_1, \mu_2) = 0$ if and only if $\mu_1 = \mu_2$. Therefore, by assumption we obtain that $\|\bOmega_{\ast}^{\top}\bOmega_0- \id_r\|_F = o_P(1)$, thus $\|\bR_{\ast} - \bLambda \|_F / \sqrt{n} = o_P(1)$. Notice that $\bR$ is a function of the observation $\bA$, thus $\bR_{\ast}$ is a function of $\bA$ as well. Therefore, we have constructed a consistent estimator for $\bLambda$ under the metric of vector mean square error. The rest parts of the proof again follow from a standard truncation argument.

\subsection{Proof of Theorem \ref{thm:strong-signal-theta-lower-bound}} \label{proof-of-thm:strong-signal-theta-lower-bound}

\noindent
\textbf{Proof of claim 1}

\noindent
	We first prove \cref{eq:6}. Define $\bA_0 := (\sum_{i = 1}^n \bLambda_i\bLambda_i^\sT / n)^{-1/2} \in S_r^+$. We note that under the assumptions of remark \ref{assumption:second-moment}, with high probability $\bA_0$ is well-defined. For $j \in [d]$, we let $\bB_0^j := \frac{1}{\sqrt{n}}\bA_0^{2} \sum_{i = 1}^n A_{ij}\bLambda_i \in \RR^r$. We denote by $\bM_r$ the set of symmetric invertible matrices in $\RR^{r \times r}$. Let $\bG \sim \normal(0, \id_r)$, independent of $\bThetas \sim \mu_{\Theta}$. We define the mapping $f_{\Theta}: \bM_r \times \RR^r  \rightarrow \mathbb{R}^r$ such that 
	\begin{align*}
		f_{\Theta}(\bA, \bB) := \E\left[\bThetas \mid \bThetas + \bA\bG = \bB\right] =\frac{\int\vtheta\exp\left(-\frac{1}{2} \vtheta^\sT \bA^{-2} \vtheta + \bB^\sT \bA^{-2}\vtheta \right)\mu_{\Theta}(\dd\vtheta)}{\int\exp\left(-\frac{1}{2} \vtheta^\sT \bA^{-2} \vtheta + \bB^\sT\bA^{-2}\vtheta \right)\mu_{\Theta}(\dd\vtheta)}. 
	\end{align*}
	Let $\hat{\bTheta}_j^B := \E[\bTheta_j \mid \bA, \bLambda] = f_{\Theta}(\bA_0, \bB_0^j)$, then $\hat{\bTheta}_j^B$ achieves Bayesian mean square error.

	Dominated convergence theorem reveals  that $f_{\Theta}(\cdot, \cdot)$ is continuous. By the law of large numbers and central limit theorem, we see that $(\bA_0, \bB_0^j) \overset{d}{\rightarrow} (\bQ_{\Lambda}^{-1/2}, \bThetas + \bQ_{\Lambda}^{-1/2}\bG)$ as $n,d \to \infty$. 
	Using Skorokhod's representation theorem, there exist $(\bA_n, \bB_n^j)$ and $(\bA_{\infty}, \bB_{\infty}^j)$ being random vectors defined on the same probability space, such that $(\bA_n, \bB_n^j) \overset{a.s.}{\rightarrow} (\bA_{\infty}, \bB_{\infty}^j)$, $(\bA_n, \bB_n^j) \overset{d}{=} (\bA_0, \bB_0^j)$, and $\bA_{\infty} = \bQ_{\Lambda}^{-1/2}$, $\bB_{\infty}^j \overset{d}{=} \bThetas + \bQ_{\Lambda}^{-1/2}\bG$. Therefore, $f_{\Theta}(\bA_0, \bB_0^j) \overset{d}{=} f_{\Theta}(\bA_n, \bB_n^j) \overset{a.s.}{\rightarrow} f_{\Theta}(\bA_{\infty}, \bB_{\infty}^j)$. Since $\|f_{\Theta}(\bA_n, \bB_n^j )\|^2 \overset{d}{=} \|\E[\bTheta_j \mid \bA, \bLambda]\|^2$, 
	we conclude that the set of random variables $\left\{ \|f_{\Theta}(\bA_n, \bB_n^j)\|^2: n \in \mathbb{N}_+ \right\}$ is uniformly integrable. Therefore, we have $\|f_{\Theta}(\bA_n, \bB_n^j )\|^2 \overset{L_1}{\rightarrow} \|f_{\Theta}(\bA_{\infty}, \bB_{\infty}^j )\|^2$. This further implies that as $n,d \rightarrow \infty$
	\begin{align*}
		\E[\|\bTheta_j - \hat{\bTheta}_j^B\|_F^2] = \E_{\bThetas \sim \mu_{\Theta}}[\|\bThetas\|^2] - \E\left[\| f_{\Theta}(\bA_{n}, \bB_{n}^j ) \|^2\right] \rightarrow & \E[\|\bThetas\|^2] - \E\left[\| f_{\Theta}(\bA_{\infty}, \bB_{\infty} ) \|^2\right]\\
		  =& \E[\|\bThetas\|^2] - \E\left[ \|\E[\bThetas | \bQ_{\Lambda}^{1/2} \bThetas + \bG ] \|^2\right], 
	\end{align*}
	thus completing the proof of the first claim.

\vspace{0.3cm} 

\noindent
\textbf{Proof of claim 2}

\noindent	
Next, we prove \cref{eq:7}.	For $k,j \in [d]$, $k \neq j$, notice that $\bTheta_k$ and $\bTheta_j$ are conditionally independent conditioning on $(\bA, \bLambda)$. Then we have $\E[\bTheta_k^{\top}\bTheta_j \mid \bA, \bLambda] = \E[\bTheta_k \mid \bA, \bLambda]^{\top} \E[\bTheta_j \mid \bA, \bLambda] = f_{\Theta}(\bA_0, \bB_0^k)^{\top}f_{\Theta}(\bA_0, \bB_0^j)$, thus
	\begin{align*}
		\E[(\bTheta_k^{\top}\bTheta_j - \E[\bTheta_k^{\top}\bTheta_j \mid \bA, \bLambda] )^2] = rq_{\Theta}^2 - \E[(f_{\Theta}(\bA_0, \bB_0^k)^{\top}f_{\Theta}(\bA_0, \bB_0^j))^2].
	\end{align*}
By the law of large numbers and the central limit theorem, we have $(\bA_0, \bB_0^j, \bB_0^k) \overset{d}{\rightarrow} (\bQ_{\Lambda}^{-1/2}, \bTheta_1 + \bQ_{\Lambda}^{-1/2} \bG_1, \bTheta_2 + \bQ_{\Lambda}^{-1/2} \bG_2 )$, where $\bTheta_1, \bTheta_2 \sim \mu_{\Theta}$, $\bG_1, \bG_2 \sim \normal(0, \id_r)$ are mutually independent. By Skorokhod's representation theorem, there exist $(\bA_n, \bB_n^j, \bB_n^k)$ and $(\bA_{\infty}, \bB_{\infty}^j, \bB_{\infty}^k)$ being random vectors on the same probability space, such that $(\bA_n, \bB_n^j, \bB_n^k) \overset{a.s.}{\rightarrow} (\bA_{\infty}, \bB_{\infty}^j, \bB_{\infty}^k)$, $(\bA_n, \bB_n^j, \bB_n^k) \overset{d}{=} (\bA_0, \bB_0^j, \bB_0^k)$, and $\bA_{\infty} = \bQ_{\Lambda}^{-1/2}$, $(\bB_{\infty}^j, \bB_{\infty}^k) \overset{d}{=} (\bTheta_1 + \bQ_{\Lambda}^{-1/2}\bG_1, \bTheta_2 + \bQ_{\Lambda}^{-1/2}\bG_2)$. Therefore, as $n,d \to \infty$
	\begin{align*}
		& (f_{\Theta}(\bA_0, \bB_0^k)^{\top}f_{\Theta}(\bA_0, \bB_0^j))^2 \\
		 \overset{d}{=} & (f_{\Theta}(\bA_n, \bB_n^k)^{\top} f_{\Theta}(\bA_n, \bB_n^j))^2 \overset{a.s.}{\rightarrow} (f_{\Theta}(\bA_{\infty}, \bB_{\infty}^k)^{\top}f_{\Theta}(\bA_{\infty}, \bB_{\infty}^j))^2.
	\end{align*}
	Notice that 
	\begin{align*}
		(f_{\Theta}(\bA_0, \bB_0^k)^{\top}f_{\Theta}(\bA_0, \bB_0^j))^2 \leq &  \|f_{\Theta}(\bA_0, \bB_0^k)\|^2 \|f_{\Theta}(\bA_0, \bB_0^j)\|^2 \\
		\leq & \E[\|\bTheta_k\|^2 \mid \bA, \bLambda] \E[\|\bTheta_j\|^2 \mid \bA, \bLambda] \\
		= & \E[\|\bTheta_k\|^2\|\bTheta_j\|^2 \mid \bA, \bLambda].
	\end{align*}
	Therefore, the set of random variables $\{(f_{\Theta}(\bA_0, \bB_0^k)^{\top}f_{\Theta}(\bA_0, \bB_0^j))^2: n \in \NN_+\}$ is uniformly integrable. This further implies that $(f_{\Theta}(\bA_0, \bB_0^k)^{\top}f_{\Theta}(\bA_0, \bB_0^j))^2 \overset{L_1}{\rightarrow} (f_{\Theta}(\bA_{\infty}, \bB_{\infty}^k)^{\top}f_{\Theta}(\bA_{\infty}, \bB_{\infty}^j))^2$, thus as $n,d \rightarrow \infty$
	\begin{align*}
		\E[(\bTheta_k^{\top}\bTheta_j - \E[\bTheta_k^{\top}\bTheta_j \mid \bA, \bLambda] )^2] \rightarrow rq_{\Theta}^2 - \Big\|\E\left[ \E[\bThetas \mid  \bQ_{\Lambda}^{1/2} \bThetas + \bG]\E[\bThetas \mid \bQ_{\Lambda}^{1/2} \bThetas + \bG]^{\top} \right]\Big\|_F^2,
	\end{align*} 
	which concludes the proof of the second claim of the theorem. 

\subsection{Proof of Theorem \ref{thm:strong-signal-theta-achieve-lower-bound}}\label{proof:thm:strong-signal-theta-achieve-lower-bound}

By \cref{thm:strong-signal-lambda-consistent} claim 3, we see that there exists estimate $\hat{\bLambda}$ of $\bLambda$, such that $\|\bLambda - \hat{\bLambda}\|_F / \sqrt{n} \toP 0$. Notice with high probability $\|\bA\|_{\rm op} \leq C \sqrt{d} $ for some constant $C > 0$ that depends uniquely on $(\mu_{\Lambda}, \mu_{\Theta})$, we then conclude that 
\begin{align*}
	\frac{1}{\sqrt{nd}} \| \bA^{\top} \hat\bLambda -  \bA^{\top} \bLambda\|_F \leq \frac{\|\bA\|_{op}}{\sqrt{d}} \cdot \frac{\|\bLambda - \hat{\bLambda}\|_F}{\sqrt{n}} = o_P(1). 
\end{align*}
Since $\bZ$ is independent of $\bLambda$, we immediately see that there exists $\bg \in \RR^{d \times r}$ that has i.i.d. standard Gaussian entries and is independent of $(\bLambda, \bTheta)$, such that
\begin{align}\label{eq:Theta-g}
	\frac{1}{\sqrt{d}}\Big\| \frac{1}{\sqrt{n}} \bA^{\top} \hat{\bLambda}\bQ_{\Lambda}^{-1/2} - \bTheta \bQ_{\Lambda}^{1/2} - \bg  \Big\|_F = o_P(1). 
\end{align}

\subsubsection*{Proof of the first result}

We let $\bG \sim \normal(0, \id_r)$, $\bThetas \sim \mu_{\Theta}$, independent of each other. Define the mapping $F: \RR^r \rightarrow \RR^r$, such that
	\begin{align*}
		F(\by) := \E[\bThetas \mid \bQ_{\Lambda}^{1/2}\bThetas + \bG = \by].
	\end{align*}
Dominated convergence theorem straightforwardly implies that $F$ is continuous on $\RR^r$. Therefore, for any $w \in (0,1)$, we see that there exists a mapping $F^w: \RR^r \to \RR^r$, such that $F^w$ is Lipschitz continuous. In addition, 
\begin{align*}
	& \E\left[ \| F(\bQ_{\Lambda}^{1/2} \bThetas + \bG) - F^w(\bQ_{\Lambda}^{1/2} \bThetas + \bG)  \|^2 \right] \leq w^2
\end{align*}
We denote the Lipschitz constant of $F^w$ by $L_w > 0$. 
Let $\bg_i \in \RR^r$ be the $i$-th row of $\bg$. The law of large numbers gives the following convergence:
\begin{align*}
	\frac{1}{d}\sum_{i = 1}^d\| \bTheta_i - F( \bQ_{\Lambda}^{1/2}\bTheta_i + \bg_i)\|^2 \toP \E[\|\bThetas\|^2] -  \E\left[ \left\| \E[\bThetas \mid \bQ_{\Lambda}^{1/2} \bThetas + \bG ] \right\|^2 \right].
\end{align*}
Therefore, as $n, d \to \infty$
\begin{align*}
	\left|\frac{1}{d}\sum_{i = 1}^d\| \bTheta_i - F^w( \bQ_{\Lambda}^{1/2}\bTheta_i + \bg_i)\|^2 - \E[\|\bThetas\|^2] + \E\left[ \left\| \E[\bThetas \mid \bQ_{\Lambda}^{1/2} \bThetas + \bG ] \right\|^2 \right] \right| \leq Cw + o_P(1),
\end{align*}
where $C > 0$ is a constant depending only on $\mu_{\Theta}$. We denote by $\bv_i$ the $i$-th row of $ \bA^{\top} \hat{\bLambda}\bQ_{\Lambda}^{-1/2} / \sqrt{n}$. By assumption we have 
\begin{align*}
	 \frac{1}{d} \sum_{i = 1}^d \|F^w( \bQ_{\Lambda}^{1/2}\bTheta_i + \bg_i) - F^w(\bv_i) \|^2 \leq \frac{L_w^2}{d} \sum_{i = 1}^d \| \bQ_{\Lambda}^{1/2} \bTheta_i + \bg_i - \bv_i \|^2,
\end{align*}
which is $o_P(1)$ according to \cref{eq:Theta-g}. Combining the above analysis, we conclude that for any $w \in (0,1)$, there exists $n_w \in \NN_+$, such that for $n \geq n_w$, there exists estimator $\hat{\bTheta}_w \in \RR^{d \times r}$, such that with probability at least $1 - w$
\begin{align*}
	\frac{1}{d} \sum_{i = 1}^d \|\bTheta_i - F^w(\bv_i) \|^2 \leq \E[\|\bThetas\|^2] -  \E\left[ \left\| \E[\bThetas \mid \bQ_{\Lambda}^{1/2} \bThetas + \bG ] \right\|^2 \right] + 2w + o_P(1), 
\end{align*}
Since $w$ is arbitrary, the rest parts of the proof follow from a simple truncation argument.

\subsubsection*{Proof of the second result}

By analyzing the second moment we obtain that
\begin{align*}
	& \frac{1}{d^2} \sum_{i,j \in [d]} |\bTheta_i^{\top} \bTheta_j - F(\bQ_{\Lambda}^{1/2} \bTheta_i +  \bg_i)^{\top} F(\bQ_{\Lambda}^{1/2} \bTheta_j +  \bg_j)|^2 \\
	= &\, rq_{\Theta}^2 - \Big\| \E\left[\E[\bThetas \mid \bQ_{\Lambda}^{1/2} \bThetas + \bG] \E[\bThetas \mid \bQ_{\Lambda}^{1/2} \bThetas + \bG]^{\top} \right] \Big\|_F^2 + o_P(1).
\end{align*}
Since $F$ is continuous, then for any $w \in (0,1)$, there exists $\tilde F^w: \RR^{2r} \to \RR$ such that $\tilde F^w$ is $\tilde L_w$-Lipschitz continuous. Furthermore, 
\begin{align*}
	\E\left[ |F(\bQ_{\Lambda}^{1/2} \bTheta_i +  \bg_i)^{\top} F(\bQ_{\Lambda}^{1/2} \bTheta_j +  \bg_j) - \tilde F^w(\bQ_{\Lambda}^{1/2} \bTheta_i +  \bg_i, \bQ_{\Lambda}^{1/2} \bTheta_j +  \bg_j)|^2  \right] \leq w^2. 
\end{align*}
Again through analysis of the second moment we have
\begin{align*}
	& \frac{1}{d^2} \sum_{i,j \in [d]} |\bTheta_i^{\top} \bTheta_j - \tilde F^w(\bQ_{\Lambda}^{1/2} \bTheta_i +  \bg_i, \bQ_{\Lambda}^{1/2} \bTheta_j +  \bg_j)|^2 \\
	\leq & rq_{\Theta}^2 - \Big\| \E\left[\E[\bThetas \mid \bQ_{\Lambda}^{1/2} \bThetas + \bG] \E[\bThetas \mid \bQ_{\Lambda}^{1/2} \bThetas + \bG]^{\top} \right] \Big\|_F^2 + \tilde Cw + o_P(1),
\end{align*} 
where $\tilde C > 0$ is a constant depending uniquely on $\mu_{\Theta}$. By Lipschitzness we have
\begin{align*}
	& \frac{1}{d^2} \sum_{i,j \in [d]} |\tilde F^w(\bQ_{\Lambda}^{1/2} \bTheta_i +  \bg_i, \bQ_{\Lambda}^{1/2} \bTheta_j +  \bg_j) - \tilde F^w(\bv_i, \bv_j)|^2 \\
	\leq & \frac{\tilde L_w^2}{d^2} \sum_{i,j \in [d]} \left\{ \|\bQ_{\Lambda}^{1/2} \bTheta_i +  \bg_i - \bv_i\|^2 + \|\bQ_{\Lambda}^{1/2} \bTheta_j +  \bg_j - \bv_j\|^2 \right\},
\end{align*}
which by \cref{eq:Theta-g} is $o_P(1)$. Since $w$ is arbitrary, again the claim follows by applying standard truncation argument.

\section{Proof outlines for the weak signal regime}
\subsection{Proof of  \cref{thm:weak-signal-theta}}\label{proof:thm:weak-signal-theta}
	Assume $\blambda$ is given, then for any $j \in [d]$, the posterior distribution of $\bTheta_j$ given $(\bA, \bLambda)$ can be expressed as 
	\begin{align*}
		p(\dd\vtheta_j | \blambda, \bA) \propto \exp\left( -\frac{1}{2\sqrt{nd}}\sum\limits_{i = 1}^n \langle \bLambda_i, \vtheta_j \rangle^2 + \frac{1}{\sqrt[4]{nd}}\sum\limits_{i = 1}^n  A_{ij} \langle \bLambda_i, \vtheta_j \rangle^2 \right) \mu_{\Theta}(\dd\vtheta_j).
	\end{align*}
	From the above equation we see that the posterior of $\bTheta$ given $(\bA, \bLambda)$ is a product distribution over $\RR^d$, thus greatly simplifies the analysis. The rest of the proof is similar to that of Theorem \ref{thm:strong-signal-theta-lower-bound}, and we skip it for simplicity.  

\subsection{Proof outline of \cref{thm:free-energy}}\label{sec:proof-of-thm-lower-bound-lower-bound}

In this section we outline the proof of  \cref{thm:free-energy}. We leave the proofs of technical lemmas to Appendix \ref{sec:tec-lemmas-lower-bound}. For the sake of simplicity, here we consider only $r = 1$. We comment that cases with $r \geq 2$ can be proven similarly. 

\subsubsection{Free energy density}

Note that the posterior distributions that correspond to the symmetric and asymmetric models can be expressed as follows:
\begin{align*}
	& \dd\P(\bLambda = \vlambda \mid \bY) = \frac{e^{H_{s,n}(\vlambda)} \tensorl}{\int e^{H_{s,n}(\vlambda)} \tensorl }, \\
	& \dd\P(\bLambda = \vlambda, \bTheta = \vtheta \mid \bA) = \frac{e^{H_n(\vlambda, \vtheta)} \tensort\tensorl}{\int e^{H_n(\vlambda, \vtheta)} \tensort\tensorl},
\end{align*} 
where $ \tensorl$ ($\tensort$) is the product distribution over $\RR^n$ ($\RR^d$) with each coordinate having marginal distribution $\mu_{\Lambda}$ ($\mu_{\Theta}$), and $H_{s,n}, H_n$ are the Hamiltonians that correspond to models \eqref{model:symmetric} and \eqref{model:weak-signal}, respectively:
\begin{align}\label{eq:Hs-and-H}
\begin{split}
	& H_{s,n}(\vlambda ) := \, \frac{q_{\Theta}^2}{2n}\langle \bLambda, \vlambda \rangle^2 + \frac{q_{\Theta}}{2}\vlambda^\intercal\bW \vlambda - \frac{q_{\Theta}^2}{4n}\|\vlambda\|^4, \\
	& H_n(\vlambda, \vtheta) := \frac{1}{\sqrt{nd}}\langle \bLambda, \vlambda \rangle \langle \bTheta, \vtheta \rangle + \frac{1}{\sqrt[4]{nd}}  \vlambda^{\top}\bZ \vtheta - \frac{1}{2\sqrt{nd}}\|\vlambda\|^2 \|\vtheta\|^2.
\end{split}
\end{align}
Following the terminology of statistical mechanics, the \emph{free energy density} is defined as the expected log-partition function (also known as log normalizing constant):
\begin{align*}
	& \Psi_n^s := \frac{1}{n} \E \log \int e^{H_{s,n}(\vlambda) } \tensorl, \\
	& \Psi_n := \frac{1}{n} \E \log \int e^{H_n(\vlambda, \vtheta)} \tensorl \tensort.  
\end{align*}
The lemma below connects free energy densities with the corresponding mutual informations. 
\begin{lemma}\label{lemma:I-F}
	The following equations hold:
	\begin{align*}
		& \Psi_n^s = \frac{q_{\Theta}^2\E[\bLambdas^2]^2}{4} -\,\Infosy_n(\mu_{\Lambda};q_{\Theta})  + o_n(1), \\
		& \Psi_n = \frac{q_{\Theta}^2\E[\bLambdas^2]^2}{4} -\, \Infoas_n(\mu_{\Lambda},\mu_{\Theta}) + o_n(1).
	\end{align*}
\end{lemma}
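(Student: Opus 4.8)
The plan is to read off both identities from Bayes' rule --- which expresses the relevant Radon--Nikodym derivative explicitly through the Hamiltonians --- and then to evaluate the leftover ``energy'' term. Following the excerpt I take $r=1$; the general case is analogous. For the symmetric model, writing the $\GOE(n)$ density of $\bY$ given $\bLambda$ and dividing by the marginal density of $\bY$ gives
\[
\log\frac{\de\,\P_{\bLambda,\bY}}{\de\,(\P_{\bLambda}\times\P_{\bY})}(\bLambda,\bY)=H_{s,n}(\bLambda)-\log\int e^{H_{s,n}(\vlambda)}\,\tensorl ,
\]
hence $\Infosy_n(\mu_{\Lambda};q_{\Theta})=\tfrac1n\E[H_{s,n}(\bLambda)]-\Psi_n^s$. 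It then remains to compute $\tfrac1n\E[H_{s,n}(\bLambda)]$: the term $\tfrac{q_{\Theta}}{2}\vlambda^{\intercal}\bW\vlambda$ evaluated at $\vlambda=\bLambda$ is centered (as $\bW$ has mean $0$ and is independent of $\bLambda$), so $\E[H_{s,n}(\bLambda)]=\tfrac{q_{\Theta}^2}{4n}\E[\|\bLambda\|^4]$; using $\E[\|\bLambda\|^4]=n\E[\bLambdas^4]+n(n-1)\E[\bLambdas^2]^2$ (finite by sub-Gaussianity of $\mu_{\Lambda}$) yields $\tfrac1n\E[H_{s,n}(\bLambda)]=\tfrac{q_{\Theta}^2}{4}\E[\bLambdas^2]^2+O(1/n)$, which is the first claim.

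For the asymmetric model I would use the decomposition obtained by integrating out only the $\bTheta$-coordinates. Writing the Gaussian density of $\bA$ given $(\bLambda,\bTheta)$ and using $p(\bA\mid\bLambda)=\int p(\bA\mid\bLambda,\vtheta)\,\tensort$ gives
\[
\Infoas_n(\mu_{\Lambda},\mu_{\Theta})=\Phi_n-\Psi_n,\qquad \Phi_n:=\frac1n\E\log\int e^{H_n(\bLambda,\vtheta)}\,\tensort ,
\]
so it suffices to show $\Phi_n=\tfrac{q_{\Theta}^2}{4}\E[\bLambdas^2]^2+o_n(1)$. Conditionally on $\bLambda$, the Hamiltonian $H_n(\bLambda,\cdot)$ splits into a sum over the $d$ coordinates of $\vtheta$, each contributing an independent scalar Gaussian-channel free energy at effective signal-to-noise ratio $\beta:=\|\bLambda\|^2/\sqrt{nd}$, with noise realized through $g_j:=\bLambda^{\intercal}\bZ_{\cdot j}/\|\bLambda\|$ (i.i.d.\ $\normal(0,1)$ given $\bLambda$). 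By the same identity as above applied to one scalar channel, $\Phi_n=\tfrac dn\,\E_{\bLambda}\big[\tfrac{q_{\Theta}}{2}\beta-\mathsf{I}_{\Theta}(\beta)\big]$, where $\mathsf{I}_{\Theta}(\gamma)$ is the mutual information between $\bThetas\sim\mu_{\Theta}$ and $\sqrt{\gamma}\,\bThetas+G$, $G\sim\normal(0,1)$ independent (the analogue of the function $\mathsf{I}$ of \cref{thm:upper-bound}, condition $(c)$, with $\mu_{\Theta}$ in place of $\mu_{\Lambda}$). Since $d/n\to\infty$ the variable $\beta$ is small and concentrates near $\E[\bLambdas^2]\sqrt{n/d}$, so I would Taylor-expand $\mathsf{I}_{\Theta}$ at $0$: using $\E[\bThetas]=0$ (equivalently, the scalar minimum mean-square error equals $q_{\Theta}$ at $\gamma=0$ with derivative $-q_{\Theta}^2$, via the de Bruijn / I--MMSE identity) one gets $\mathsf{I}_{\Theta}(\gamma)=\tfrac{q_{\Theta}}{2}\gamma-\tfrac{q_{\Theta}^2}{4}\gamma^2+o(\gamma^2)$, hence $\tfrac{q_{\Theta}}{2}\beta-\mathsf{I}_{\Theta}(\beta)=\tfrac{q_{\Theta}^2}{4}\beta^2+o(\beta^2)$. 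Multiplying by $d/n$, using $\E[\beta^2]=\E[\|\bLambda\|^4]/(nd)$ and the moment estimate above, gives $\tfrac dn\cdot\tfrac{q_{\Theta}^2}{4}\E[\beta^2]=\tfrac{q_{\Theta}^2}{4}\E[\bLambdas^2]^2+O(1/n)$, while the remainder is $\tfrac dn\,\E[o(\beta^2)]=o_n(1)$ (since $\tfrac dn\,\beta^2\approx\E[\bLambdas^2]^2$ is bounded and $\beta\to 0$). Therefore $\Psi_n=\Phi_n-\Infoas_n=\tfrac{q_{\Theta}^2}{4}\E[\bLambdas^2]^2-\Infoas_n+o_n(1)$, the second claim.

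The main obstacle is this last estimate: making the small-$\beta$ expansion of the scalar free energy rigorous and, in particular, controlling the Taylor remainder \emph{uniformly} after averaging over the random $\beta$. This requires a quantitative tail bound on $\|\bLambda\|^2$ (from sub-Gaussianity of $\mu_{\Lambda}$) so that atypically large $\|\bLambda\|$ contributes negligibly and the dominated-convergence step above is justified; a quantitative Taylor remainder for $\mathsf{I}_{\Theta}$ uniform on the relevant range of $\beta$; and the exact cancellation of the $O(\beta)$ term, which holds precisely because $\E[\bThetas]=0$ and is what keeps $\tfrac dn\,\E[\,\cdot\,]$ from diverging. (That $\E[\bThetas]=0$ is needed here matches the remark in \cref{sec:weak-regime-estimate-lambda} that a nonzero mean of $\mu_{\Theta}$ makes partial recovery of $\bLambda$ strictly easier.) The case $r\ge 2$ is handled the same way, with $\beta$ replaced by the scaled sample second-moment matrix $\|\bLambda\|$-type quantity and $\mathsf{I}_{\Theta}$ by its vector analogue, the vanishing third moment of $\mu_{\Theta}$ playing the same bookkeeping role.
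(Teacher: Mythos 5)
Your proposal is correct and follows essentially the same route as the paper: the Bayes-rule/Hamiltonian rewriting of both mutual informations, the factorization over the $d$ coordinates of $\bTheta$ into independent scalar Gaussian channels at signal-to-noise ratio $\|\bLambda\|^2/\sqrt{nd}$, and a second-order expansion around zero SNR whose linear term cancels because $\E[\bThetas]=0$. The only cosmetic difference is that the paper works with the scalar free energy $F(q)$ and computes $F'(0)=0$, $F''(0)=q_{\Theta}^2/2$ via Stein's lemma, whereas you phrase the same expansion through $\mathsf{I}_{\Theta}$ and the I--MMSE identity (equivalent, since $F(\gamma)=\tfrac{q_{\Theta}}{2}\gamma-\mathsf{I}_{\Theta}(\gamma)$), and the averaged Taylor-remainder control you flag is handled no more explicitly in the paper than in your sketch.
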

\begin{proof}
	By definition, the mutual information that corresponds to the symmetric model can be reformulated as 
	\begin{align*}
		\Infosy_n(\mu_{\Lambda};q_{\Theta}) = & \frac{1}{n}\E \left\{ \log \frac{\dd \mu_{\Lambda}^{\otimes n}(\bLambda) \cdot \exp(H_{s,n}(\bLambda))}{\dd \mu_{\Lambda}^{\otimes n}(\bLambda) \cdot \int \exp(H_{s,n}(\vlambda)) \dd \mu_{\Lambda}^{\otimes n}(\vlambda)} \right\} \\
		= & \frac{q_{\Theta}^2\E[\bLambdas^2]^2}{4} - \Psi_n^s. 
	\end{align*}
	The asymmetric mutual information is a slightly more complicated, which we write below
	\begin{align*}
		& \Infoas_n(\mu_{\Lambda},\mu_{\Theta}) \\ = & \frac{1}{n} \E\left\{ \log \frac{\dd \mu_{\Lambda}^{\otimes n}(\bLambda) \cdot \int \exp(H_n(\bLambda, \vtheta)) \dd\mu_{\Theta}^{\otimes n}(\vtheta)}{\dd \mu_{\Lambda}^{\otimes n}(\bLambda) \cdot \int \exp(H_n(\vlambda, \vtheta))  \dd \mu_{\Lambda}^{\otimes n}(\vlambda) \dd \mu_{\Theta}^{\otimes n}(\vtheta)} \right\} \\
		= & \frac{1}{n} \sum_{i = 1}^d \E\Big\{  \log \int {\exp\left( \frac{1}{\sqrt{nd}} \|\bLambda\|^2 \bTheta_i \vtheta_i + \frac{1}{\sqrt[4]{nd}} \langle \bZ_{\cdot i}, \bLambda\rangle \vtheta_i - \frac{1}{2\sqrt{nd}}\|\bLambda\|^2 \vtheta_i^2 \right)}  \mu_{\Theta}(\dd \vtheta_i)\Big\} - \Psi_n \\
		= & \frac{d}{n}\, \E\left\{ \log \int \exp\left( \frac{1}{\sqrt{nd}}\|\bLambda\|^2 \bTheta_1 \vtheta_1 + \frac{1}{\sqrt[4]{nd}} \langle \bZ_{\cdot 1}, \bLambda\rangle \vtheta_1 - \frac{1}{2\sqrt{nd}}\|\bLambda\|^2 \vtheta_1^2  \right)\mu_{\Theta}(\dd \vtheta_1) \right\} - \Psi_n.  
	\end{align*}
	Define
	\begin{align*}
		F(q) = \E\left\{ \log \int \exp\left( q \bThetas \vtheta + \sqrt{q} \bG \vtheta - \frac{1}{2} q\vtheta^2  \right)\mu_{\Theta}(\dd \vtheta) \right\},
	\end{align*}
	where the expectation is taken over $\bThetas \sim \mu_{\Theta}, \bG \sim \normal(0,1)$ that are independent of each other. Applying Stein's lemma, we obtain that for $q > 0$, $F(q)$ has second order continuous derivatives satisfying
	\begin{align*}
		F'(q) 
		=& \frac{1}{2} \E \left\{ \E[\bThetas \mid \sqrt{q} \bThetas + \bG]^2\right\}, \\
		F''(q) = & \E\left\{ \frac{1}{2} \E[\bThetas^2 \mid \sqrt{q} \bThetas + \bG]^2   + \frac{1}{2}  \E[\bThetas \mid \sqrt{q} \bThetas + \bG]^4   -   \E[\bThetas \mid \sqrt{q} \bThetas + \bG]^2  \E[\bThetas^2 \mid \sqrt{q} \bThetas + \bG]\right\}.
	\end{align*}
	Since $\mu_{\Theta}$ has mean zero, we conclude that $F$ also has second order continuous derivatives at zero, and $F'(0) = 0$, $F''(0)  = q_{\Theta}^2 / 2$. These arguments imply that
	\begin{align*}
		\Infoas_n(\mu_{\Lambda},\mu_{\Theta}) = \frac{d}{n}\E\left\{ F\left(\frac{1}{\sqrt{nd}}\|\bLambda\|^2 \right) \right\} - \Psi_n = \frac{q_{\Theta}^2\E[\bLambdas^2]^2}{4} - \Psi_n + o_n(1),
	\end{align*}
	which concludes the proof of the lemma.
	
	%
	%
	%
	%
\end{proof}
From \cref{lemma:I-F} we see that in order to prove the theorem, it suffices to show that the free energy densities agree asymptotically:
\begin{align}\label{eq:Psis-Psi}
	\lim_{n \to \infty} \Psi_n^s = \lim_{n,d \to \infty} \Psi_n.
\end{align}

\subsubsection{Asymptotic equivalence of free energy densities}

We then proceed to prove \cref{eq:Psis-Psi}. We will start with the additional constraint that $\mu_{\Lambda}$ has bounded support. Later in Appendix \ref{section:reduction-to-bdd-support}, we show that proofs for general $\mu_{\Lambda}$ can be reduced to the bounded case. 
\begin{assumption}\label{assumption:bounded-support}
	We assume that support$\,(\mu_{\Lambda}) \subseteq [-K, K]$, with $K > 0$ being some fixed constant that is independent of $n,d$. 
\end{assumption}
%
%
%
For $h,s \geq 0$, we define the perturbations
\begin{align*}
	& \bY'(h) = \frac{\sqrt{h}}{n} \bLambda \bLambda^{\top} + \bW', \\
	& \bx'(s) = \sqrt{s} \bLambda + \bg',
\end{align*}
where $\bW' \overset{d}{=}  \GOE(n)$ and $\bg' \overset{d}{=} \normal(0, \id_n)$. Furthermore, we require that $(\bW', \bg', \bLambda, \bTheta, \bZ, \bW)$ are mutually independent. We define the Hamiltonians associated with the perturbations $\bY'(h)$ and $\bx'(s)$ respectively as follows:
\begin{align}
	 & H_n(\vlambda ; \bY'(h)) := \, \frac{h}{2n}\langle \bLambda, \vlambda \rangle^2 + \frac{\sqrt{h}}{2}\vlambda^\sT\bW' \vlambda - \frac{h}{4n}\|\vlambda\|^4, \label{eq:Hamiltonian-Yp} \\
	 & H_n(\vlambda ; \bx'(s)) := \, \sqrt{s} \langle \vlambda, \bg' \rangle + s\langle \bLambda, \vlambda \rangle - \frac{s}{2}\|\vlambda\|^2.\label{eq:Hamiltonian-xp}
\end{align}
The posterior distribution of $\bLambda$ given $(\bA, \bY'(h), \bx'(s))$ can be expressed as 
\begin{align*}
	\mu(\dd \vlambda) = \frac{1}{Z_n(h,s)} \tensorl \int \exp\left(H_n(\vlambda, \vtheta) + H_n(\vlambda ; \bY'(h)) + H_n(\vlambda ; \bx'(s)) \right)\tensort,
\end{align*}
where $Z_n(h,s)$ is the normalizing constant:
\begin{align*}
	Z_n(h,s) = \int \exp\left(H_n(\vlambda, \vtheta) + H_n(\vlambda ; \bY'(h)) + H_n(\vlambda ; \bx'(s)) \right)\tensort \tensorl.
\end{align*}
Note that $Z_n(h,s)$ is random and depends on $(\bA, \bY'(h), \bx'(s))$. We define the  free energy density that corresponds to observations $(\bA, \bY'(h), \bx'(s))$ as 
\begin{align}\label{eq:C25-1}
	\Phi_n(h,s) := \frac{1}{n}\E\left[ \log Z_n(h,s) \right]. 
\end{align}
%
The next equation follows from Gaussian integration by parts and Nishimori identity (\cref{lemma:nishimori}):
\begin{align}\label{eq:C25-2}
	\frac{\partial}{ \partial h} \Phi_n(h,s) = \frac{1}{4n^2}\E\left[ \langle \bLambda \bLambda^{\top}, \E[ \bLambda \bLambda^{\top} \mid \bA, \bY'(h), \bx'(s) ]\rangle \right].
\end{align}
\cref{eq:C25-2} holds for all   $h > 0, s \geq 0$, and is directly related to the MMSE in the perturbed model. 
The rest parts of the proof will be devoted to proving convergence of $\Phi_n(h,s)$ as $n,d \rightarrow \infty$. 

To this end, we first show that asymptotically speaking, the free energy density depends on $\mu_{\Theta}$ only through its second moment. More precisely, we can replace $\mu_{\Theta}$ with a Gaussian distribution which has mean zero and variance $q_{\Theta}$. This vastly simplifies further computation. 

\begin{lemma}\label{lemma:free-energy-1}
For $k \in [d]$, we let $P_{\Theta, k}$ be a distribution over $\RR^d$ with independent coordinates, such that $(\vtheta_1, \vtheta_2, \cdots, \vtheta_d) \sim P_{\Theta, k}$ if and only if $\vtheta_1, \cdots, \vtheta_k \iidsim \mu_{\Theta}$ and $\vtheta_{k + 1}, \cdots, \vtheta_d \iidsim \normal(0,q_{\Theta})$. We define 
\begin{align*}
	\Phi_n^{(k)}(h,s) := \frac{1}{n} \E\left[ \log \left( \int \exp(H_n(\vlambda, \vtheta)+ H_n(\vlambda ; \bY'(h)) + H_n(\vlambda ; \bx'(s))) \tensorl \dd P_{\Theta, k}(\vtheta) \right) \right].
\end{align*}	
In the above expression, the expectation is taken over $(\bLambda, \bTheta, \bZ, \bW', \bg')$. Notice that by definition $\Phi_n(h,s) = \Phi_n^{(d)}(h,s)$. Then under the conditions of \cref{thm:free-energy} and in addition Assumption \ref{assumption:bounded-support}, as $n,d \rightarrow \infty$ we have $\Phi_n(h,s) - \Phi_n^{(0)}(h, s) = o_n(1)$ for all fixed $h,s \geq 0$. 
\end{lemma}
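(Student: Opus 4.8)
The plan is to interpolate the prior on $\bTheta$ one coordinate at a time, swapping each $\mu_{\Theta}$-distributed coordinate for a $\normal(0,q_{\Theta})$-distributed one, and to show that each such swap changes the free energy density by $o_n(1/d)$ uniformly, so that the accumulated error over the $d$ coordinates is $o_n(1)$. Concretely, I would bound $|\Phi_n^{(k)}(h,s) - \Phi_n^{(k-1)}(h,s)|$ for each $k\in[d]$ and then telescope: $|\Phi_n(h,s) - \Phi_n^{(0)}(h,s)| \le \sum_{k=1}^d |\Phi_n^{(k)}(h,s) - \Phi_n^{(k-1)}(h,s)|$. The difference $\Phi_n^{(k)} - \Phi_n^{(k-1)}$ isolates the effect of the single coordinate $\vtheta_k$: in $\Phi_n^{(k)}$ it is drawn from $\mu_{\Theta}$, in $\Phi_n^{(k-1)}$ from $\normal(0,q_{\Theta})$, while all other coordinates and all of $(\bLambda,\bZ,\bW',\bg')$ have the same law in both. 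The coupling to $\vtheta_k$ enters the Hamiltonian $H_n(\vlambda,\vtheta)$ only through the terms $\frac{1}{\sqrt{nd}}\|\bLambda\|^2\bTheta_k\vtheta_k + \frac{1}{\sqrt[4]{nd}}\langle\bZ_{\cdot k},\vlambda\rangle\vtheta_k - \frac{1}{2\sqrt{nd}}\|\vlambda\|^2\vtheta_k^2$, i.e. through a scalar channel of strength $\Theta(1/\sqrt{nd})$.

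The key step is a smoothing argument: write $\Phi_n^{(k)}(h,s)$ as an expectation over $\bTheta_k\sim\mu_{\Theta}$ (resp.\ $\normal(0,q_{\Theta})$) of $\frac1n\E[\log \tilde Z_n(\bTheta_k)]$, where $\tilde Z_n(t)$ is the partition function with coordinate $k$ of the planted signal fixed to $t$ and integrated against the chosen prior; then Taylor-expand $t\mapsto \frac1n\E[\log\tilde Z_n(t)]$ (or rather its expectation after also averaging over the fresh Gaussian noise $Z_{ik}$) to third order in $t$ around $t=0$. Because $\mu_{\Theta}$ and $\normal(0,q_{\Theta})$ have matching zeroth, first, second, and third moments (Assumption \ref{assumption:info}: $\E[\bThetas]=0$ and $\E[\bThetas^{\otimes 3}]=0$, and $\Cov = q_{\Theta}\id$, i.e.\ here $\E[\bThetas^2]=q_{\Theta}$ in the $r=1$ case), the first four terms of the Taylor expansion cancel between $\Phi_n^{(k)}$ and $\Phi_n^{(k-1)}$, and the difference is controlled by the fourth-order remainder, which carries a factor $(1/\sqrt{nd})^4 = 1/(nd)^2$ times a fourth derivative. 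The fourth derivative of $\frac1n\E[\log\tilde Z_n(t)]$ in $t$ is, by standard cumulant/cluster-expansion identities, a polynomial in posterior moments of the form $\E\langle(\cdots)\rangle$ involving $\|\bLambda\|^2$, $\|\vlambda\|^2$, $\langle\bZ_{\cdot k},\vlambda\rangle$; under Assumption \ref{assumption:bounded-support} ($\mathrm{supp}(\mu_{\Lambda})\subseteq[-K,K]$) we have $\|\bLambda\|^2\le K^2 n$ and $\|\vlambda\|^2\le K^2 n$ almost surely under the posterior, and the Gaussian factors $\langle\bZ_{\cdot k},\vlambda\rangle$ contribute $O(\sqrt n)$ in the relevant moments; combining, the fourth derivative is $O(n^2\cdot\mathrm{poly})$ in magnitude, so $|\Phi_n^{(k)}-\Phi_n^{(k-1)}| = O(n^2/(nd)^2)\cdot\frac1n = O(1/(n d^2))$ per coordinate, hence $O(1/(nd))$ after summing over $k\in[d]$ — actually, being slightly more careful, one wants the per-coordinate bound $o(1/d)$, and the sub-Gaussianity of $\mu_{\Theta}$ (controlling the tails of $\bTheta_k$ so that the Taylor remainder, which is evaluated at an intermediate point, is integrable) finishes this. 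I would use Assumption \ref{assumption:bounded-support} exactly to make the posterior moments of $\|\vlambda\|^2$ deterministically bounded, and the sub-Gaussianity of $\mu_{\Theta}$ and $\mu_{\Lambda}$ to bound the planted-signal moments and the Gaussian-noise cross terms.

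The main obstacle I anticipate is the uniform control of the fourth-order remainder: one must show that the (random) fourth $t$-derivative of $\log\tilde Z_n(t)$, after expectation, is bounded by something like $C n^{2}$ with $C$ independent of $n,d,k$ and of the intermediate point $t$ at which the remainder is evaluated. This requires (i) writing the derivatives explicitly via Gaussian integration by parts and the Nishimori identity (\cref{lemma:nishimori}) to reduce higher posterior moments to lower ones, (ii) using $|\vtheta_k|$ tail bounds to handle the fact that the remainder point $t$ ranges over the support of $\mu_{\Theta}$ (unbounded, hence the need for sub-Gaussianity and a truncation of the event $\{|\bTheta_k|>\log n\}$, say), and (iii) checking that the cross-term $\frac{1}{\sqrt[4]{nd}}\langle\bZ_{\cdot k},\vlambda\rangle\vtheta_k$ — which is the only term not of order $1/\sqrt{nd}$ — still yields a net contribution of the right order, because it appears an even number of times in each surviving remainder monomial and $\E[\langle\bZ_{\cdot k},\vlambda\rangle\langle\bZ_{\cdot k},\vlambda'\rangle] = \langle\vlambda,\vlambda'\rangle = O(n)$. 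All of this is routine but bookkeeping-heavy; the conceptual content is simply "match three moments, so the leading perturbative error is fourth order, which is $o_n(1)$ after summing $d$ coordinates of strength $(nd)^{-1/2}$."
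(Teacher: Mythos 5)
Your high-level skeleton (telescoping over the $d$ coordinates, exploiting that $\mu_{\Theta}$ and $\normal(0,q_{\Theta})$ match moments, controlling a higher-order remainder under Assumption \ref{assumption:bounded-support}) is the same as the paper's, but the core cancellation step in your proposal does not work as stated. The lemma swaps the \emph{prior on the integration variable} $\vtheta_k$ inside the partition function (from $\mu_{\Theta}$ to $\normal(0,q_{\Theta})$), whereas you Taylor-expand in the \emph{planted} coordinate $t=\bTheta_k$ and argue that orders $0$--$3$ cancel because the two laws of $t$ have matching moments. That inference is invalid: the Taylor coefficients of $t\mapsto \frac1n\E\log\tilde Z_n(t)$ are posterior/prior averages taken under the two \emph{different} priors for $\vtheta_k$ (already the $t$-independent zeroth-order term differs), so matching moments of $t$ alone does not make the difference fourth order. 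The expansion has to be carried out in powers of $\vtheta_k$ itself (as the paper does, writing $\exp(h_n^{(k)})=1+\sum_{l\le 4}c_l^{(k)}\vtheta_k^l+\dots$), using that the two priors agree on the first three moments of $\vtheta_k$; their fourth moments do \emph{not} agree, so the fourth-order term cannot be disposed of by moment matching and must instead be shown to have a negligible coefficient.

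This is exactly where your power counting breaks down, and it is the genuinely delicate point of the lemma. The dominant coupling to $\vtheta_k$ is the noise term $(nd)^{-1/4}\langle \bZ_{\cdot k},\vlambda\rangle\vtheta_k$, whose effective size is $(n/d)^{1/4}$, not $\Theta((nd)^{-1/2})$ as your ``scalar channel of strength $1/\sqrt{nd}$'' framing suggests. Its fourth power contributes $\E[\langle\bZ_{\cdot k},\vlambda\rangle^4]/(nd)=O(n/d)$ per coordinate, hence $O(1)$ after summing over $d$ coordinates and dividing by $n$ --- not $o_n(1)$. Your item (iii) acknowledges this term but assigns it the wrong order and offers no mechanism for its removal. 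The paper closes this gap by an exact Gaussian-moment identity: since the reduced measure is independent of $\bZ_{\cdot k}$, the combination $\tfrac{1}{8nd}\|\vlambda\|^4-\tfrac{1}{4nd}\|\vlambda\|^2\langle\bZ_{\cdot k},\vlambda\rangle^2+\tfrac{1}{24nd}\langle\bZ_{\cdot k},\vlambda\rangle^4$ appearing in $c_4^{(k)}$ has zero expectation, after which $|\E[\mu_n^{(k,+)}[c_4^{(k)}]]|=O((n/d)^{3/2})$ and the mismatched fourth moments become harmless; a separate log-linearization (concavity) step and bounds on the series tail complete the argument. Without this cancellation (or some substitute for it), your per-coordinate bound does not sum to $o_n(1)$, so the proposal as written has a real gap rather than just missing bookkeeping; your quoted per-coordinate rate $O(1/(nd^2))$ is also inconsistent with the actual sizes involved (the correct benign terms are of order $n/d^2$ per coordinate, i.e.\ $O(n/d)$ in total, which still suffices since $d/n\to\infty$).
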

\cref{lemma:free-energy-1} can be proved via a Lindeberg type argument, and we postpone the details to Appendix \ref{sec:proof-free-energy-density1}. According to \cref{lemma:free-energy-1}, in order to derive the limiting expression of $\Phi_n(h,s)$, it suffices to compute the limit of $\Phi_n^{(0)}(h,s)$ instead, which can be done via Gaussian integration techniques. 

\begin{lemma}\label{lemma:free-energy-2}
For fixed $h,s \geq 0$, we define 
\begin{align*}
	\tilde{H}_n(\vlambda; \bY'(h), \bx'(s)) := & \frac{q_{\Theta}^2}{2n}\langle \bLambda, \vlambda \rangle^2 + \frac{q_{\Theta}}{2\sqrt{nd}} \|\bZ^{\top} \vlambda\|^2 - \frac{dq_{\Theta}}{2\sqrt{nd}} \|\vlambda\|^2 - \frac{q_{\Theta}^2}{4n}\|\vlambda\|^4 \\
	& + H_n(\vlambda ; \bY'(h)) + H_n(\vlambda ; \bx'(s)),  \\
	\tilde{\Phi}_n(h,s) := & \frac{1}{n} \E\Big[ \log \Big( \int \exp\Big(\tilde{H}_n(\vlambda; \bY'(h), \bx'(s)) \Big) \tensorl \Big) \Big].
\end{align*}
Then under the conditions of \cref{thm:free-energy} and Assumption \ref{assumption:bounded-support}, as $n,d \rightarrow \infty$, we have $\tilde{\Phi}_n(h,s) - \Phi_n^{(0)}(h,s) = o_n(1)$.
\end{lemma}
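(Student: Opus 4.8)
The plan is to pass from $\Phi_n^{(0)}$ to $\tilde\Phi_n$ by \emph{explicitly integrating out the Gaussian vector $\vtheta$}. Under $P_{\Theta,0}$ the coordinates of $\vtheta$ are i.i.d.\ $\normal(0,q_\Theta)$, the perturbation Hamiltonians $H_n(\vlambda;\bY'(h))$ and $H_n(\vlambda;\bx'(s))$ do not involve $\vtheta$, and $H_n(\vlambda,\vtheta)$ is quadratic in $\vtheta$ with negative definite leading part; hence a one-dimensional Gaussian integral in each coordinate gives $\int e^{H_n(\vlambda,\vtheta)}\,\dd P_{\Theta,0}(\vtheta)=e^{\hat H_n(\vlambda)}$ with
\begin{align*}
\hat H_n(\vlambda):=-\frac d2\log(1+q_\Theta a)+\frac{q_\Theta}{2(1+q_\Theta a)}\sum_{j=1}^d b_j(\vlambda)^2,\qquad a:=\frac{\|\vlambda\|^2}{\sqrt{nd}},\qquad b_j(\vlambda):=\frac{\langle\bLambda,\vlambda\rangle\Theta_j}{\sqrt{nd}}+\frac{\vlambda^\top\bZ_{\cdot j}}{\sqrt[4]{nd}}.
\end{align*}
Expanding the square and using $\sum_j b_j(\vlambda)^2=\tfrac{\langle\bLambda,\vlambda\rangle^2\|\bTheta\|^2}{nd}+\tfrac{2\langle\bLambda,\vlambda\rangle\,\vlambda^\top\bZ\bTheta}{(nd)^{3/4}}+\tfrac{\|\bZ^\top\vlambda\|^2}{\sqrt{nd}}$, one gets $\Phi_n^{(0)}(h,s)=\tfrac1n\E\log\int\exp(\hat H_n(\vlambda)+H_n(\vlambda;\bY'(h))+H_n(\vlambda;\bx'(s)))\,\tensorl$. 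Writing $\tilde H_n^{(0)}(\vlambda):=\tilde H_n(\vlambda;\bY'(h),\bx'(s))-H_n(\vlambda;\bY'(h))-H_n(\vlambda;\bx'(s))$ for the first line in the definition of $\tilde H_n$, the two perturbation terms appear identically inside $\Phi_n^{(0)}$ and $\tilde\Phi_n$; since $|\log\int e^f\dd\mu-\log\int e^g\dd\mu|\le\sup|f-g|$ for a probability measure $\mu$, the lemma reduces to
\begin{align*}
\frac1n\,\E\Big[\sup_{\vlambda\in\mathrm{supp}(\mu_\Lambda^{\otimes n})}\big|\hat H_n(\vlambda)-\tilde H_n^{(0)}(\vlambda)\big|\Big]\longrightarrow 0 .
\end{align*}

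Next I would match $\hat H_n$ and $\tilde H_n^{(0)}$ term by term. On $\mathrm{supp}(\mu_\Lambda^{\otimes n})\subseteq[-K,K]^n$ one has $\|\vlambda\|^2\le nK^2$ and $|\langle\bLambda,\vlambda\rangle|\le nK^2$, so $a\le K^2\sqrt{n/d}\to0$ uniformly. Taylor expanding $\log(1+q_\Theta a)$ and $(1+q_\Theta a)^{-1}$ about $a=0$: the first orders reproduce exactly $-\tfrac{dq_\Theta}{2\sqrt{nd}}\|\vlambda\|^2$ and $\tfrac{q_\Theta}{2\sqrt{nd}}\|\bZ^\top\vlambda\|^2$ (this is precisely why $\tilde H_n^{(0)}$ keeps $\|\bZ^\top\vlambda\|^2$ unexpanded); the quartic pieces $+\tfrac{q_\Theta^2\|\vlambda\|^4}{4n}$ (from $\log$) and $-\tfrac{q_\Theta^2\|\vlambda\|^2\|\bZ^\top\vlambda\|^2}{2nd}$ (from the $(1+q_\Theta a)^{-1}$ correction), after substituting $\|\bZ^\top\vlambda\|^2=d\|\vlambda\|^2+\vlambda^\top(\bZ\bZ^\top-d\id)\vlambda$ with $\|\bZ\bZ^\top/d-\id_n\|_{\op}\le C\sqrt{n/d}$ (\cref{lemma:concentration-of-sample-covariance}), combine into $-\tfrac{q_\Theta^2}{4n}\|\vlambda\|^4$; and $\tfrac{q_\Theta\langle\bLambda,\vlambda\rangle^2\|\bTheta\|^2}{2nd}$ equals $\tfrac{q_\Theta^2}{2n}\langle\bLambda,\vlambda\rangle^2$ up to $\tfrac{q_\Theta\langle\bLambda,\vlambda\rangle^2}{2nd}(\|\bTheta\|^2-dq_\Theta)$, controlled by sub-Gaussian concentration of $\|\bTheta\|^2$ around $dq_\Theta$. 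All the leftovers are $o(n)$ uniformly in $\vlambda$: the higher-order Taylor remainders are deterministically of order $n^{3/2}d^{-1/2}$ times polynomials in the data norms, and since $\bZ\bTheta\mid\bTheta\sim\normal(0,\|\bTheta\|^2\id_n)$ one has $\|\bZ\bTheta\|\le C\sqrt{nd}$ on a high-probability event, whence $\sup_\vlambda\big|\tfrac{q_\Theta\langle\bLambda,\vlambda\rangle\,\vlambda^\top\bZ\bTheta}{(nd)^{3/4}}\big|\le\tfrac{q_\Theta\cdot nK^2\cdot K\sqrt n\cdot C\sqrt{nd}}{(nd)^{3/4}}\le C'n^{5/4}d^{-1/4}=o(n)$ — here the diverging ratio $d/n\to\infty$ is used essentially. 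Collecting these estimates yields a good event $\mathcal G$ with $\P(\mathcal G^c)\le e^{-cn}$ (from sub-Gaussianity of $\mu_\Lambda,\mu_\Theta$ and Gaussianity of $\bZ$) on which $\sup_\vlambda|\hat H_n(\vlambda)-\tilde H_n^{(0)}(\vlambda)|\le\eta_n n$ with a deterministic $\eta_n\to0$.

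On $\mathcal G^c$ I would use the crude bound $|\log\int e^f\dd\mu|\le\|f\|_\infty$ to control $|\hat H_n-\tilde H_n^{(0)}|$ (and hence the quantity inside the expectation) by a polynomial in $n,d$ and in $\|\bZ\|_{\op},\|\bZ\bTheta\|,\|\bTheta\|,\|\bW'\|_{\op},\|\bg'\|$; Cauchy--Schwarz then gives $\E[(\cdots)\I_{\mathcal G^c}]\le\sqrt{\E(\cdots)^2}\cdot\sqrt{\P(\mathcal G^c)}=o(1)$, and dividing by $n$ preserves this, so $\Phi_n^{(0)}(h,s)-\tilde\Phi_n(h,s)=o_n(1)$. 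I expect the main obstacle to be the term-by-term matching of the previous paragraph: organizing the Taylor expansion so that \emph{every} remainder is genuinely $o(n)$ after division by $n$, \emph{uniformly} over the product support of $\mu_\Lambda$, and in particular bounding the bilinear cross term $\langle\bLambda,\vlambda\rangle\,\vlambda^\top\bZ\bTheta/(nd)^{3/4}$, whose control rests precisely on $d/n\to\infty$; the remainder is bookkeeping plus the standard good-event/bad-event split.
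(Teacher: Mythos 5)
Your proposal is correct and follows essentially the same route as the paper: integrate out the Gaussian $\vtheta$ exactly (your $\hat H_n$ is precisely the paper's intermediate Hamiltonian), bound $|\Phi_n^{(0)}(h,s)-\tilde\Phi_n(h,s)|$ by $\frac{1}{n}\E\big[\sup_{\vlambda}|\hat H_n(\vlambda)-\tilde H_n^{(0)}(\vlambda)|\big]$, and match terms via Taylor expansion in $\|\vlambda\|^2/\sqrt{nd}=O(\sqrt{n/d})$, controlling the cross term through $\|\bZ\bTheta\|$ and the quartic substitution through $\|\bZ\bZ^{\top}/d-\id_n\|_{\op}$, exactly as in the paper. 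The only (harmless) deviation is your good-event/Cauchy--Schwarz split at the end; the paper instead takes expectations of the random sup bounds directly, which already suffices since $\E\big|\|\bTheta\|^2-dq_{\Theta}\big|$, $\E\|\bZ\bTheta\|$ and $\E\|\bZ\bZ^{\top}-d\id_n\|_{\op}$ are all of the required order.
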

We defer the proof of \cref{lemma:free-energy-2} to Appendix \ref{sec:proof-free-energy-density2}. Under the asymptotics $n,d \rightarrow \infty$, $d / n \to \infty$, according to \cite{bai1988convergence}, the matrix $\left( \bZ\bZ^\sT - d\id_n \right) / \sqrt{nd}$ behaves like a GOE$(n)$ matrix. Replacing $\left( \bZ\bZ^\sT - d\id_n \right) / \sqrt{nd}$ with a $\GOE(n)$ matrix in the definition of $\tilde{\Phi}_n(h,s)$, we see that this allows us to approximate $\tilde{\Phi}_n(h,s)$ via the  free energy density of the symmetric model \eqref{model:symmetric}. Such heuristics can be made rigorous via the following lemma:
\begin{lemma}\label{lemma:free-energy-3}
	Recall that $\bY$ is defined in \cref{model:symmetric}.  For $h,s \geq 0$, we define the free energy density $\Phi_n^Y(h,s)$ that corresponds to the observations $(\bY, \bY'(h), \bx'(s))$ as 
	\begin{align}\label{eq:26}
	& \Phi_n^Y(h,s) := \frac{1}{n} \E\left[ \log \left( \int \exp\left( H_n^Y(\vlambda) + H_n(\vlambda ; \bY'(h)) + H_n(\vlambda ; \bx'(s)) \right)\tensorl \right) \right], \\ \nonumber 
	& H_n^Y(\vlambda) := \frac{q_{\Theta}^2}{2n} \langle \bLambda, \vlambda \rangle^2 + \frac{q_{\Theta}}{2} \vlambda^\sT\bW\vlambda - \frac{q_{\Theta}^2}{4n}\|\vlambda\|^4.
\end{align}
Then under the conditions of \cref{thm:free-energy} and Assumption \ref{assumption:bounded-support}, as $n,d \rightarrow \infty$, we have $\Phi_n^Y(h,s) - \tilde{\Phi}_n(h,s) = o_n(1)$.
\end{lemma}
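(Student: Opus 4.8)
The plan is to reduce the statement to a universality fact. Note first that $\frac{q_{\Theta}}{2\sqrt{nd}}\|\bZ^{\top}\vlambda\|^2-\frac{dq_{\Theta}}{2\sqrt{nd}}\|\vlambda\|^2=\frac{q_{\Theta}}{2}\langle\bM_n\vlambda,\vlambda\rangle$ with $\bM_n:=(\bZ\bZ^{\top}-d\,\id)/\sqrt{nd}$, so that $\tilde H_n(\vlambda;\bY'(h),\bx'(s))$ and $H_n^Y(\vlambda)+H_n(\vlambda;\bY'(h))+H_n(\vlambda;\bx'(s))$ coincide term by term except that the noise form $\frac{q_{\Theta}}{2}\langle\bM_n\vlambda,\vlambda\rangle$ is replaced by $\frac{q_{\Theta}}{2}\langle\bW\vlambda,\vlambda\rangle$, $\bW\sim\GOE(n)$. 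Writing $G(\vlambda)$ for the common remaining terms (the spike $\frac{q_{\Theta}^2}{2n}\langle\bLambda,\vlambda\rangle^2-\frac{q_{\Theta}^2}{4n}\|\vlambda\|^4$ and the two perturbation Hamiltonians), which are independent of $(\bZ,\bW)$, it suffices to show that
$$\frac1n\E\log\int e^{\frac{q_{\Theta}}{2}\langle\bM_n\vlambda,\vlambda\rangle+G(\vlambda)}\tensorl\quad\text{and}\quad\frac1n\E\log\int e^{\frac{q_{\Theta}}{2}\langle\bW\vlambda,\vlambda\rangle+G(\vlambda)}\tensorl$$
differ by $o_n(1)$, i.e. that the two unnormalized log-partition functions differ by $o_n(n)$.

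I would prove this by a Lindeberg-type interpolation that swaps rank-one Wishart pieces for Gaussian ones. Write $\bz_1,\dots,\bz_d$ for the columns of $\bZ$ and set $\bU_k:=(\bz_k\bz_k^{\top}-\id)/\sqrt{nd}$, so $\bM_n=\sum_{k=1}^d\bU_k$ with the $\bU_k$ i.i.d.; dually realize $\bW\overset{d}{=}\sum_{k=1}^d\bV_k$ with $\bV_k:=\bW_k/\sqrt d$, $\bW_k\sim\GOE(n)$ i.i.d. For $0\le m\le d$ let $\Phi_n^{[m]}$ be the free energy density (at the given $h,s$) built from $\sum_{k\le m}\bV_k+\sum_{k>m}\bU_k$ in place of $\bM_n$, so $\Phi_n^{[0]}=\tilde\Phi_n$ and $\Phi_n^{[d]}\overset{d}{=}\Phi_n^Y$. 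At step $m$, freeze all randomness except $(\bz_m,\bW_m)$ and Taylor-expand the log-partition function to third order in the increment $\bU_m$, and likewise in $\bV_m$. Since $\bU_m$ and $\bV_m$ are independent of the frozen data, centered, and have equal covariances — by the Isserlis identity $\mathrm{Cov}(X^2,Y^2)=2\,\mathrm{Cov}(X,Y)^2$ for the centered jointly Gaussian pair $X=\langle\bz_m,\vlambda^{(1)}\rangle$, $Y=\langle\bz_m,\vlambda^{(2)}\rangle$ one finds
$$\E\big[\langle\bU_m\vlambda^{(1)},\vlambda^{(1)}\rangle\,\langle\bU_m\vlambda^{(2)},\vlambda^{(2)}\rangle\big]=\frac{2\langle\vlambda^{(1)},\vlambda^{(2)}\rangle^2}{nd}=\E\big[\langle\bV_m\vlambda^{(1)},\vlambda^{(1)}\rangle\,\langle\bV_m\vlambda^{(2)},\vlambda^{(2)}\rangle\big]$$
— the zeroth-, first- and second-order contributions to $\E[\Phi_n^{[m]}-\Phi_n^{[m-1]}]$ coincide on the two sides, leaving only third-order and remainder terms.

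For the estimates, the third-order term is a third Gibbs cumulant of $\frac{q_{\Theta}}{2}\langle\bU_m\vlambda,\vlambda\rangle$; taking its expectation over $\bz_m$ and using Isserlis once more, it equals $O((nd)^{-3/2})$ times a triple-overlap Gibbs average of $|\langle\vlambda^{(1)},\vlambda^{(2)}\rangle\,\langle\vlambda^{(2)},\vlambda^{(3)}\rangle\,\langle\vlambda^{(3)},\vlambda^{(1)}\rangle|$ over three replicas, whereas the analogous Gaussian quantity vanishes (it is an odd Gaussian moment). By \cref{assumption:bounded-support}, $\mu_{\Lambda}$ is supported in $[-K,K]$, so every overlap is at most $nK^2$ and this term is $O(K^6(n/d)^{3/2})$ per swap; the fourth-order remainders are $O(K^8(n/d)^2)$ per swap, via the Gaussian bounds $\E[\langle\bz_m,\vlambda\rangle^{2j}]\lesssim\|\vlambda\|^{2j}\le(nK^2)^j$. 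Summing over the $d$ swaps produces a total error $O(d\cdot(n/d)^{3/2})=O(n^{3/2}/\sqrt d)$, which is $o_n(n)$ exactly because $d/n\to\infty$; dividing by $n$ gives $\Phi_n^Y(h,s)-\tilde\Phi_n(h,s)=O((n/d)^{1/2})=o_n(1)$. The almost sure convergence $\|\bM_n\|_{\mathrm{op}}\to2$ of \cite{bai1988convergence} provides the spectral control used in making these bounds rigorous.

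The hard part will be controlling the Taylor remainders uniformly along the interpolation: although $\langle\bU_m\vlambda,\vlambda\rangle$ has typical size $O(\sqrt{n/d})$, it is not uniformly that small over the continuum support of the prior, and the crude third-derivative bound by $(\sup_{\vlambda}|\langle\bU_m\vlambda,\vlambda\rangle|)^3$ is far too lossy. The remedy is to localize: work on the high-probability event that $\|\bZ^{\top}\vlambda\|^2$ concentrates near $\|\vlambda\|^2$ uniformly over a fine net of $[-K,K]^n$, and dispose of the complementary event using the sub-Gaussian tails of $\langle\bz_m,\vlambda\rangle$ together with the deterministic bound $\|\vlambda\|_{\infty}\le K$, so that the per-swap error is genuinely $O((n/d)^{3/2})$. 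Combining this bookkeeping with the moment-matching computation above yields $\Phi_n^Y(h,s)-\tilde\Phi_n(h,s)=o_n(1)$, as claimed.
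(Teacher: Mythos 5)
Your moment-matching skeleton is sound and genuinely different from the paper's argument. The paper proves \cref{lemma:free-energy-3} by exploiting orthogonal invariance: it couples $(\bZ\bZ^{\sT}-d\id_n)/\sqrt{nd}$ and $\bW$ so that they share the same (Haar-distributed) eigenbasis and differ only through their ordered eigenvalue matrices $\bS_1,\bS_2$, bounds $|\Phi_n^Y(h,s)-\tilde{\Phi}_n(h,s)|\le \tfrac{q_{\Theta}K^2}{2}\,\E[\|\bS_1-\bS_2\|_{\op}]$ using $\|\vlambda\|_\infty\le K$, and sends this to zero via semicircle-law convergence, edge convergence, and a Scheff\'e/uniform-integrability step. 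Your Lindeberg route replaces this with a column-by-column swap; the covariance matching $\E[\langle\bU_m\vlambda^{(1)},\vlambda^{(1)}\rangle\langle\bU_m\vlambda^{(2)},\vlambda^{(2)}\rangle]=2\langle\vlambda^{(1)},\vlambda^{(2)}\rangle^2/(nd)$, the vanishing Gaussian third cumulant, and the formal accounting ($O((n/d)^{3/2})$ per swap, hence $O(\sqrt{n/d})$ after summing $d$ swaps and dividing by $n$) are all correct, and in principle this would even give a quantitative rate the paper does not state.

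However, the step you yourself flag as the hard part is a genuine gap, and the remedy you sketch does not repair it. The per-swap increment $u_m(\vlambda)=\tfrac{q_{\Theta}}{2\sqrt{nd}}\bigl(\langle\bz_m,\vlambda\rangle^2-\|\vlambda\|^2\bigr)$ has supremum over $\{\|\vlambda\|_\infty\le K\}$ of order $K^2 n^{3/2}/\sqrt{d}$ (take $\vlambda$ proportional to the sign pattern of $\bz_m$, so $\langle\bz_m,\vlambda\rangle\asymp K\|\bz_m\|_1\asymp Kn$), and no high-probability event or union bound over a net of $[-K,K]^n$ can improve this: a net has $e^{Cn}$ points, so uniform control forces tail deviations $t\gtrsim n$ of $\langle\bz_m,\vlambda\rangle^2$, which reproduces the same $n^{3/2}/\sqrt{d}$ bound. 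Since $n^{3/2}/\sqrt{d}\to 0$ only when $d\gg n^3$, while the lemma must hold for every $d/n\to\infty$ (in particular $n\ll d\ll n^{6/5}$, the regime used in part $(b)$ of \cref{thm:upper-bound}), any sup-based or net-based localization fails exactly where the result is needed. To complete your argument you must keep the third-order and remainder terms as Gibbs averages over replicas and decouple them from $\bz_m$ through a change-of-measure/exponential-moment argument — the tilting factor $e^{tu_m}$ has bounded conditional exponential moments because $q_{\Theta}\|\vlambda\|^2/\sqrt{nd}\le q_{\Theta}K^2\sqrt{n/d}\to 0$ — in the spirit of the leave-one-out Gibbs bookkeeping the paper carries out for \cref{lemma:free-energy-1}. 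This is feasible but is a substantive missing piece, and it makes your route considerably heavier than the paper's spectral coupling.
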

We defer the proof of \cref{lemma:free-energy-3} to Appendix \ref{sec:proof-free-energy-density3}. 
Combining \cref{lemma:free-energy-1,lemma:free-energy-2,lemma:free-energy-3}, we conclude that as $n, d \rightarrow \infty$, for all fixed $h,s \geq 0$, we have $\Phi_n(h,s) - \Phi_n^Y(h,s) = o_n(1)$. This relates the asymmetric model  to the symmetric model through their free energy densities. The following lemma summarizes this result and lists several additional useful properties for future reference. 
\begin{lemma}\label{lemma:free-energy-4}
	Under the conditions of \cref{thm:free-energy} and Assumption \ref{assumption:bounded-support}, for all fixed $h,s \geq 0$, the following claims hold:
	\begin{enumerate}
		\item As $n,d \rightarrow \infty$,  we have $\Phi_n(h,s) = \Phi^Y_n(h,s) + o_n(1)$.
		\item The following mappings $x \mapsto \Phi_n(h, x)$, $x \mapsto \Phi_n(x, s)$, $x \mapsto \Phi_n^Y(h, x)$, $x \mapsto \Phi_n^Y(x, s)$ are all convex on $[0, \infty)$ and differentiable on $(0, \infty)$.
		\item  $\lim_{n \rightarrow \infty}\Phi^Y_n(0,s)$ exists for all 
		$$(q_{\Theta}^2,s) \in \{(tx, (1 - t)xq^{\ast}(x)): x \geq 0, q^{\ast}(x) \mbox{ exists and is unique, }  t \in [0,1]\},$$ where $\cF(\cdot, \cdot)$ is defined in \cref{eq:FsQ} and
		\begin{align}\label{qstar}
			q^{\ast}(x) := \argmax_{q \geq 0} \cF(x,q).
		\end{align} 
	\end{enumerate}
\end{lemma}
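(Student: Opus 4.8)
\emph{Claim 1} is obtained by chaining the three preceding lemmas: for every fixed $h,s\ge 0$, \cref{lemma:free-energy-1} gives $\Phi_n(h,s)=\Phi_n^{(0)}(h,s)+o_n(1)$, \cref{lemma:free-energy-2} gives $\tilde\Phi_n(h,s)=\Phi_n^{(0)}(h,s)+o_n(1)$, and \cref{lemma:free-energy-3} gives $\Phi_n^Y(h,s)=\tilde\Phi_n(h,s)+o_n(1)$, so that $\Phi_n(h,s)=\Phi_n^Y(h,s)+o_n(1)$. Throughout, as in the rest of this proof, we work under \cref{assumption:bounded-support} and in the rank-one case ($r=1$, so that $\bQ\in S_1^+$ is a scalar $q\ge 0$).

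\emph{Claim 2.} Both $\Phi_n$ and $\Phi_n^Y$ are free energy densities of Bayes-optimal observation models in which $h$ is the signal-to-noise ratio of the symmetric matrix Gaussian channel $\bY'(h)$ and $s$ is the signal-to-noise ratio of the vector Gaussian channel $\bx'(s)$, both independent of all other observations; the full vector of observations is a valid Bayesian channel output of the latent factors, so the Nishimori identity (\cref{lemma:nishimori}) applies. Under \cref{assumption:bounded-support}, once $\vlambda$ is restricted to the support of $\tensorl$, all Hamiltonians are smooth in $(h,s)$ with derivatives bounded by fixed polynomials in $\|\vlambda\|_\infty\le K$, so one may differentiate $\Phi_n$ and $\Phi_n^Y$ under the expectation and integral. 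Gaussian integration by parts applied to $\bW'$ and $\bg'$, together with the Nishimori identity, expresses the first partial derivatives in $h$ and $s$ as expectations of nonnegative posterior-overlap functionals (for instance $\partial_h\Phi_n=\tfrac1{4n^2}\E[\langle\bLambda\bLambda^\top,\E[\bLambda\bLambda^\top\mid\bA,\bY'(h),\bx'(s)]\rangle]$, cf. \eqref{eq:C25-2}, with analogous formulas for $\partial_s\Phi_n$ and for the derivatives of $\Phi_n^Y$); these are continuous in $(h,s)\in(0,\infty)^2$ by dominated convergence, hence the four maps are $C^1$ on $(0,\infty)$. Differentiating a second time and again using Gaussian integration by parts and Nishimori, the second $h$- and $s$-derivatives reduce to variances of overlaps under the coupled-replica Nishimori measure, which are $\ge 0$; this is the standard convexity computation for Gaussian matrix and vector channels in the Bayes-optimal regime (see, e.g., \cite{lelarge2019fundamental,barbier2019adaptive}). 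Because $h$ and $s$ enter $\Phi_n$ and $\Phi_n^Y$ identically — replacing $\bA$ by $\bY$ changes neither the $h$- nor the $s$-channel — the computation is the same for $\Phi_n^Y$.

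\emph{Claim 3.} Fix $x\ge 0$ with $q^*(x)$ (see \eqref{qstar}) well-defined and unique, fix $t\in[0,1]$, and set $q_\Theta^2=tx$, $s=(1-t)x\,q^*(x)$. If $t=0$ then $q_\Theta=0$, the matrix Hamiltonian $H_n^Y$ vanishes, and $\Phi_n^Y(0,s)$ factorizes over the $n$ coordinates into a quantity independent of $n$, so the limit exists. If $t=1$ then $s=0$ and $\Phi_n^Y(0,0)$ equals the free energy density $\Psi_n^s$ of the symmetric model \eqref{model:symmetric} at SNR $q_\Theta^2=x$; by \cref{lemma:I-F} and the (unconditional) convergence of $\Infosy_n$ in \cref{prop:symmetric-spike}, $\lim_n\Phi_n^Y(0,0)=\sup_{q\ge 0}\cF(x,q)=\cF(x,q^*(x))$. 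For $t\in(0,1)$, the model underlying $\Phi_n^Y(0,s)$ is the symmetric spike \eqref{model:symmetric} at SNR $tx$ observed together with the independent scalar Gaussian side channel $\bx'(s)=\sqrt{s}\,\bLambda+\bg'$, $s=(1-t)x\,q^*(x)$ — i.e., $\bLambda$ seen through one extra decoupled Gaussian channel of SNR $s$. This joint model lies within the scope of the Guerra-interpolation/Aizenman--Sims--Starr analysis underlying \cref{prop:symmetric-spike}: the side channel is of exactly the type of the infinitesimal perturbations already used in \cite{lelarge2019fundamental,miolane2017fundamental}, here carrying an $O(1)$ SNR, and one obtains that $\lim_n\Phi_n^Y(0,s)$ exists and equals the replica-symmetric value $\sup_{q\ge 0}\{-\tfrac{tx}{4}q^2+\varphi_\Lambda(tx\,q+(1-t)x\,q^*(x))\}$, where $\varphi_\Lambda(\gamma):=\E\log\int\exp(\sqrt{\gamma}\,g\lambda+\gamma\Lambda_0\lambda-\tfrac{\gamma}{2}\lambda^2)\,\mu_\Lambda(\dd\lambda)$ is the free energy of the scalar Gaussian channel of SNR $\gamma$ (so that $\cF(x,q)=-\tfrac x4 q^2+\varphi_\Lambda(xq)$). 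In particular the limit exists for every parameter pair in the stated set. (Note $q=q^*(x)$ is always a stationary point of the displayed functional, since $tx\,q^*(x)+(1-t)x\,q^*(x)=x\,q^*(x)$; the uniqueness of $q^*(x)$ is used later, when passing from free-energy convergence to MMSE identities via convexity, but is not needed for the existence of the limit here.)

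\emph{Main obstacle.} Claims 1 and 2 are bookkeeping and the standard Gaussian-interpolation convexity computation, respectively. The one substantive point is claim 3 for $t\in(0,1)$: identifying the joint ``symmetric matrix $+$ scalar side channel'' model with one covered by the replica-symmetric theory of \cite{lelarge2019fundamental} (or, lacking a black-box citation, running both the Guerra upper bound and the matching cavity/ASS lower bound for this model with an $O(1)$ side channel), which is what secures the existence of $\lim_n\Phi_n^Y(0,s)$ away from the two endpoints.
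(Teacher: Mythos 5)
Your treatment of claims 1 and 2 matches the paper. Claim 1 is exactly the paper's proof (chain \cref{lemma:free-energy-1}, \cref{lemma:free-energy-2}, \cref{lemma:free-energy-3}). For claim 2 the paper is even terser: it writes the first partial derivatives via Gaussian integration by parts and \cref{lemma:nishimori} as posterior-overlap quantities and observes they are monotone in the corresponding signal-to-noise ratio; your second-derivative/overlap-variance argument is an equivalent standard route and is fine.

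The substantive issue is claim 3 for $t\in(0,1)$, and there your argument has a genuine gap, which your own ``main obstacle'' paragraph concedes: you reduce existence of $\lim_n\Phi_n^Y(0,s)$ to a replica-symmetric formula for the joint ``symmetric spike plus $O(1)$ scalar side channel'' model, which you neither prove nor can point to precisely (the result the paper cites, \cite[Theorem 13]{lelarge2019fundamental}, is used only for the no-side-channel endpoint $t=1$). Carrying out the ``Guerra upper bound plus matching cavity/ASS lower bound'' you mention amounts to re-proving an RS formula for a new model, which is far more than the lemma requires. The paper avoids this entirely with a one-sided interpolation: it defines $\Psi_n^G(t,x,q)$ interpolating between the decoupled scalar channel at $t=0$, whose free energy is computed exactly and equals $\cF(x,q)+\tfrac{xq^2}{4}$ (independent of $n$), and the pure symmetric model at $t=1$, whose limit is $\sup_{q\ge 0}\cF(x,q)$; Nishimori plus Gaussian integration by parts give $\partial_t\Psi_n^G\ge -\tfrac{xq^2}{4}$. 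Choosing $q=q^{\ast}(x)$ makes the two endpoint limits differ by exactly $\tfrac{x q^{\ast}(x)^2}{4}$, so the derivative bound squeezes $\Psi_n^G(t,x,q^{\ast}(x))$ and forces $\lim_n\Psi_n^G(t,x,q^{\ast}(x))=\cF(x,q^{\ast}(x))+\tfrac{(1-t)xq^{\ast}(x)^2}{4}$ for every $t\in[0,1]$, which is exactly $\lim_n\Phi_n^Y(0,s)$ under the identification $q_\Theta^2=tx$, $s=(1-t)xq^{\ast}(x)$. This also explains why the lemma is stated only on that particular set of pairs $(q_\Theta^2,s)$: the sandwich pins down the limit only when the side-channel SNR is tuned to $q^{\ast}(x)$. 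Your variational expression, evaluated at $q=q^{\ast}(x)$, agrees with this limiting value, but to make your route rigorous you would have to supply the matching lower bound for the joint model; as written, the existence claim for $t\in(0,1)$ rests on an unproven black box.
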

\begin{remark}
	By \cite[Proposition 17]{lelarge2019fundamental}, $q^{\ast}(x)$ exists and is unique for all but countably many $x > 0$. Claims 2 and 3 do not rely on Assumption \ref{assumption:bounded-support}.
\end{remark}
We delay the proof of \cref{lemma:free-energy-4}  to Appendix \ref{sec:proof-free-energy-density4}. We note that \cref{thm:free-energy} is an immediate consequence of \cref{lemma:free-energy-4}.

\subsection{Proof of \cref{thm:lower-bound}}\label{sec:proof-of-thm:lower-bound}

In this section, we will apply \cref{lemma:free-energy-4} to prove \cref{thm:lower-bound}. Using Lemma \ref{lemma:nishimori} and Gaussian integration by parts, for all $h > 0$ we have
\begin{align}\label{eq:C27}
\begin{split}
	& \frac{\partial}{\partial h}\Phi_n(h,0) = \frac{1}{4n^2} \E \left[ \langle \bLambda \bLambda^{\top}, \E[\bLambda \bLambda^{\top} \mid \bA, \bY'(h)] \right] , \\
	&  \frac{\partial}{\partial h}{\Phi_n^{Y }}(h, 0) = \frac{1}{4n^2} \E \left[ \langle \bLambda \bLambda^{\top}, \E[\bLambda \bLambda^{\top} \mid \bY, \bY'(h)]\rangle \right]. 
\end{split}
\end{align} 
%
%
Recall that $\cF$ is defined in \cref{eq:FsQ}. We let 
\begin{align}\label{eq:setD}
	D := \left\{s > 0 \mid \cF(s, \cdot) \mbox{ has a unique maximizer } q^{\ast}(s) \right\}.
\end{align}
 By proposition 17 in \cite{lelarge2019fundamental}, $D$ is equal to $(0, +\infty)$ minus a countable set, and is precisely the set of $s > 0$ at which the function $\phi: s \mapsto \sup_{q \geq 0}\cF(s, q)$ is differentiable. Furthermore, by \cite[Theorem 13]{lelarge2019fundamental}, for all $h \geq 0$, 
 \begin{align}\label{eq:PhiY-cF}
 	\lim_{n \rightarrow \infty}\Phi_n^Y(h, 0) = \sup_{q \geq 0} \cF(q_{\Theta}^2 + h, q).
 \end{align}
%
By the first claim of \cref{lemma:free-energy-4}, $\Phi_n(h,0) = \Phi_n^Y(h,0) + o_n(1)$, thus $\lim_{n \rightarrow \infty}\Phi_n(h, 0) = \sup_{q \geq 0} \cF(q_{\Theta}^2 + h, q)$. By the second claim of \cref{lemma:free-energy-4}, the mappings $h \mapsto \Phi_n(h,0)$, $h \mapsto \Phi_n^Y(h,0)$ are convex and differentiable on $(0, \infty)$. Next, we apply \cref{lemma:convex-derivative} to function sequences $\{h\mapsto \Phi_n(h,0)\}_{n \geq 1}$, $\{h\mapsto \Phi_n^Y(h,0)\}_{n \geq 1}$, and conclude that  for all but countably many values of $h > 0$, 
\begin{align}\label{eq:Phi'-phi'}
	\lim_{n \rightarrow \infty}\frac{\partial}{\partial h}\Phi_n(h, 0) = \lim_{n \rightarrow \infty} \frac{\partial}{\partial h}{\Phi_n^Y}(h, 0) = \phi'(h + q_{\Theta}^2).
\end{align}
From the above equation we see that the mapping $\lambda \mapsto \phi'(\lambda)$ is non-decreasing on $D$. Therefore, for all but countably many $q_{\Theta} > 0$, $\phi'$ is continuous at $q_{\Theta}^2 \in D$. For these $q_{\Theta}$, we immediately see that for any $\ep > 0$, there exists $h_{\ep} > 0$ depending uniquely on $(q_{\Theta}, \ep, \mu_{\Lambda})$, such that $\phi'(q_{\Theta}^2 + h_{\eps}) \leq \phi'(q_{\Theta}^2) + \eps$, and $\phi$ is differentiable at $q_{\Theta}^2 + h_{\eps}$. According to \cref{eq:Phi'-phi'}, there exists $n_{\eps} \in \NN_+$, such that for all $n \geq n_{\eps}$, 
$$\Big|\frac{\partial}{\partial h}\Phi_n(h_{\eps}, 0) - \phi'(h_{\eps} + q_{\Theta}^2)\Big| \leq \eps. $$
According to \cref{eq:C27},
\begin{align*}
	\frac{\partial}{\partial h}\Phi_n(h, 0) \geq & \frac{1}{4n^2} \E \left[ \langle \bLambda \bLambda^{\top}, \E[\bLambda \bLambda^{\top} \mid \bA] \rangle\right] \\
	 = & \frac{1}{4n^2}\left( n\E_{\bLambdas \sim \mu_{\Lambda}}[\bLambdas^4] + n(n - 1) \E_{\bLambdas \sim \mu_{\Lambda}}[\bLambdas^2]^2 - n^2\MMSEas_n(\mu_{\Lambda}, \mu_{\Theta}) \right).
\end{align*}
Invoking Proposition 17 and Corollary 18 from \cite{lelarge2019fundamental}, for all $q_{\Theta}^2 \in D$
\begin{align*}
	\phi'(q_{\Theta}^2) = \frac{1}{4} q^{\ast}(q_{\Theta}^2)^2 =  \frac{1}{4}\E_{\bLambdas \sim \mu_{\Lambda}}[\bLambdas^2]^2 - \frac{1}{4} \lim_{n \rightarrow \infty} \MMSEsy_n(\mu_{\Lambda}; q_{\Theta}) + o_n(1).
\end{align*}
Combining all arguments above, we obtain that
\begin{align*}
	\liminf_{n,d \rightarrow \infty}\MMSEas_n(\mu_{\Lambda}, \mu_{\Theta}) \geq \lim_{n \rightarrow \infty} \MMSEsy_n(\mu_{\Lambda}; q_{\Theta}) - 8\eps.
\end{align*}
Since $\eps$ is arbitrary, we then complete the proof of the first claim of the theorem.

We then proceed to prove the second claim. For $q_{\Theta}^2, q_{\Theta}^2 + \eta \in D$ satisfying $0 < \eta < \eps$, by \cref{eq:C27,eq:Phi'-phi'} we have
\begin{align*}
	\lim_{n,d \rightarrow \infty}\MMSEas_n(\mu_{\Lambda}, \mu_{\Theta}; \eta) =&  \E_{\bLambdas \sim \mu_{\Lambda}}[\bLambdas^2]^2 - 4\phi'(q_{\Theta}^2 + \eta) \\
	\leq &\E_{\bLambdas \sim \mu_{\Lambda}}[\bLambdas^2]^2 - 4\phi'(q_{\Theta}^2) \\
	= & \lim_{n \rightarrow \infty} \MMSEsy_n(\mu_{\Lambda}; q_{\Theta}).
\end{align*}
Note that $\lim_{n,d \rightarrow \infty}\MMSEas_n(\mu_{\Lambda}, \mu_{\Theta}; \eta) \geq \limsup_{n,d \rightarrow \infty}\MMSEas_n(\mu_{\Lambda},\mu_{\Theta};\eps)$, the proof of the second claim immediately follows.  


\subsection{Reduction to bounded prior}\label{section:reduction-to-bdd-support}
In this section, we show that in order to prove \cref{thm:free-energy} and \cref{thm:lower-bound}, it suffices to prove the theorems under Assumption \ref{assumption:bounded-support}.  

Since $\mu_{\Lambda}$ is sub-Gaussian, for any $\eps > 0$, there exists $K_{\ep} > 0$, such that if we let $\bar\bLambda_0 := \bLambdas \mathbbm{1}\{|\bLambdas| \leq K_{\eps}\}$, then $\E_{\bLambdas \sim \mu_{\Lambda}}[(\bLambdas - \bar\bLambda_0)^4] < \eps$ and $\mu_{\Lambda}([-K_{\eps}, K_{\eps}]) > 1 - \eps^2$. For all $i \in [n]$, we define $\bar\bLambda_i := \bLambda_i \mathbbm{1}\{|\bLambda_i| \leq K_{\eps}\}$ and $\bar\vlambda_i := \vlambda_i \mathbbm{1}\{|\vlambda_i| \leq K_{\eps}\}$. Let $\bar\bLambda = (\bar\bLambda_i)_{i \leq n} \in \RR^n$ and $\bar\vlambda = (\bar\vlambda_i)_{i \leq n} \in \RR^n$. We introduce the truncated Hamiltonians:
\begin{align*}
	& \bar{H}_n^{\eps}(\bar\vlambda, \vtheta) := \frac{1}{\sqrt{nd}}\langle \barbLambda, \bar\vlambda \rangle \langle \bTheta, \vtheta \rangle + \frac{1}{\sqrt[4]{nd}} \bar\vlambda^{\sT} \bZ \vtheta - \frac{1}{2\sqrt{nd}}\|\bar\vlambda\|^2\|\vtheta\|^2, \\
	& \bar H_n^{Y, \eps}(\barblambda) := \frac{q_{\Theta}^2}{2n}\langle \bar\bLambda, \bar\vlambda \rangle^2 + \frac{q_{\Theta}}{2}\bar\vlambda^\sT\bW \bar\vlambda - \frac{q_{\Theta}^2}{4n}\|\bar\vlambda\|^4.
\end{align*}
Recall that $\bW' \overset{d}{=} \GOE(n)$ and is independent of $(\bW, \bZ)$. For $s,q, h \geq 0$, we define the truncated versions of $\Phi_n^Y$, $\Phi_n$ and $\cF$ as
\begin{align*}
	& \bar{\Phi}^{Y, \eps}_n(h) := \frac{1}{n} \E\left[ \log \left( \int \exp\Big(\bar H_n^{Y, \eps}(\barblambda) + \frac{h}{2n}\langle \bar\bLambda, \bar\vlambda \rangle^2 + \frac{\sqrt{h}}{2}\bar\vlambda^\sT\bW' \bar\vlambda - \frac{h}{4n}\|\bar\vlambda\|^4\Big) \tensorlb  \right) \right], \\
	& \bar{\Phi}^{\eps}_n(h) := \frac{1}{n} \E\left[ \log \left( \int \exp\Big(\bar{H}^{\eps}_n(\bar\vlambda, \vtheta) + \frac{h}{2n}\langle \bar\bLambda, \bar\vlambda \rangle^2 + \frac{\sqrt{h}}{2}\bar\vlambda^\sT\bW' \bar\vlambda - \frac{h}{4n}\|\bar\vlambda\|^4 \Big) \tensorlb \tensort \right) \right], \\
	& \bar{\cF}^{\eps}(s, q) := - \frac{s}{4}q^2 + \E_{Z \sim \normal(0,1), \bLambdas \sim \mu_{\Lambda}} \left[ \log \left(\int \exp\left( \sqrt{sq}Z\bar\lambda + sq\bar\lambda \bar\bLambda_0 - \frac{s}{2}q\bar\lambda^2 \right) \mu_{\bar\Lambda}(\dd\bar\lambda) \right) \right].
\end{align*}
In the above display, $\mu_{\bar\Lambda}$ stands for the law of $\bar\bLambda_0$. The following lemma states that $\bar\Phi_n^{\eps}(h)$ is close to $\Phi_n(h,0)$ for small $\eps$. 
\begin{lemma}\label{lemma:bounded-approx-free-energy}
	Under the conditions of \cref{thm:free-energy}, there exists a constant $C_0 > 0$, which is a function of $(\mu_{\Lambda}, \mu_{\Theta})$ only, such that for $n,d$ large enough, the following inequality holds for all $h \in [0,1]$:
	\begin{align*}
		\left| \Phi_n(h, 0) - \bar{\Phi}^{\eps}_n(h)  \right| \leq C_0\sqrt[4]{\eps}.
	\end{align*}
\end{lemma}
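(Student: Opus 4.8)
The plan is a coupling argument. Generate $(\bLambda,\bTheta,\bZ,\bW')$ once, set $\barbLambda:=(\bLambda_i\mathbbm{1}\{|\bLambda_i|\le K_\eps\})_{i\le n}$, and use the pushforward identity $\int f(\vlambda\,\mathbbm{1}\{|\vlambda|\le K_\eps\})\,\tensorl=\int f(\bar\vlambda)\,\tensorlb$ to rewrite $\bar\Phi_n^\eps(h)=\frac1n\E\log\int e^{\bar\calH(\vlambda,\vtheta)}\tensorl\tensort$, where $\bar\calH$ is the $h$-perturbed asymmetric Hamiltonian built from the truncated signal $\barbLambda$ and the truncated integration variable $\bar\vlambda:=\vlambda\,\mathbbm{1}\{|\vlambda|\le K_\eps\}$. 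Writing $\calH$ for the analogous untruncated Hamiltonian, and $\langle\cdot\rangle$, $\langle\cdot\rangle_{\mathrm{tr}}$ for the Gibbs measures attached to $e^{\calH}$ and $e^{\bar\calH}$ over the common base measure $\tensorl\tensort$, two applications of Jensen's inequality give
\begin{align*}
\E\langle\calH-\bar\calH\rangle_{\mathrm{tr}}\ \le\ n\big(\Phi_n(h,0)-\bar\Phi_n^\eps(h)\big)\ \le\ \E\langle\calH-\bar\calH\rangle,
\end{align*}
so it suffices to bound $|\E\langle\calH-\bar\calH\rangle|$ and $|\E\langle\calH-\bar\calH\rangle_{\mathrm{tr}}|$ by $C_0\,n\,\eps^{1/4}$. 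Both Gibbs measures are Bayes-optimal posteriors in the variables in which the Nishimori identity (\cref{lemma:nishimori}) is natural --- for $\langle\cdot\rangle_{\mathrm{tr}}$ this is clearest after conditioning on the set of truncated coordinates, whose conditional law is an independent draw from the tail of $\mu_\Lambda$ --- and in particular a posterior sample has the prior as its marginal, so these tools are available throughout.

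Then I would expand $\calH-\bar\calH$ into its six pieces: a bilinear signal term $\tfrac1{\sqrt{nd}}\langle\bTheta,\vtheta\rangle\big(\langle\bLambda,\vlambda\rangle-\langle\barbLambda,\bar\vlambda\rangle\big)$, a perturbation signal term $\tfrac{h}{2n}\big(\langle\bLambda,\vlambda\rangle^2-\langle\barbLambda,\bar\vlambda\rangle^2\big)$, two deterministic terms $-\tfrac{\|\vtheta\|^2}{2\sqrt{nd}}\big(\|\vlambda\|^2-\|\bar\vlambda\|^2\big)$ and $-\tfrac{h}{4n}\big(\|\vlambda\|^4-\|\bar\vlambda\|^4\big)$, and two Gaussian-noise terms $\tfrac1{\sqrt[4]{nd}}(\vlambda-\bar\vlambda)^\sT\bZ\vtheta$ and $\tfrac{\sqrt h}{2}\big(\vlambda^\sT\bW'\vlambda-\bar\vlambda^\sT\bW'\bar\vlambda\big)$. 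Each piece carries a factor supported on coordinates where truncation actually occurs: $\langle\bLambda,\vlambda\rangle-\langle\barbLambda,\bar\vlambda\rangle=\sum_{i\in S}(\bLambda_i\vlambda_i-\barbLambda_i\bar\vlambda_i)$ with $S=\{i:|\bLambda_i|>K_\eps\text{ or }|\vlambda_i|>K_\eps\}$, and $\|\vlambda\|^2-\|\bar\vlambda\|^2=\|\vlambda-\bar\vlambda\|^2$. For the four non-Gaussian pieces I would apply Cauchy--Schwarz, replace Gibbs averages of the ``large'' factors by prior expectations via Nishimori --- e.g.\ $\E\langle\|\vlambda-\bar\vlambda\|^2\rangle=n\,\E[\bLambdas^2\mathbbm{1}\{|\bLambdas|>K_\eps\}]=n\,\E[(\bLambdas-\bar\bLambda_0)^2]\le n\sqrt\eps$ --- and then use sub-Gaussianity of $\mu_\Lambda,\mu_\Theta$ together with the two estimates $\E[(\bLambdas-\bar\bLambda_0)^4]<\eps$ and $\mu_\Lambda([-K_\eps,K_\eps]^c)<\eps^2$ that define $K_\eps$; after division by $n$ these pieces are all $O(\sqrt\eps)$, uniformly over $h\in[0,1]$. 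The $\bW'$-term is handled the same way, using that $\|\bW'\|_{\op}$ has bounded moments; it produces the exponent $\tfrac14$, since after Cauchy--Schwarz the factor $\|\vlambda-\bar\vlambda\|$ enters to the first power with $\E\langle\|\vlambda-\bar\vlambda\|\rangle\le\sqrt{n\sqrt\eps}=n^{1/2}\eps^{1/4}$.

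The genuinely delicate piece is the $\bZ$-term $\tfrac1{\sqrt[4]{nd}}(\vlambda-\bar\vlambda)^\sT\bZ\vtheta$, where $\bZ$ (an $n\times d$ matrix, so $\|\bZ\|_{\op}\asymp\sqrt d$) is entangled with the Gibbs measure through $\bA$: a crude Cauchy--Schwarz split produces a factor that diverges with $d/n\to\infty$ and is therefore useless. For this term I would integrate by parts in the entries of $\bZ$ and use the Nishimori identity to rewrite $\E\langle(\vlambda-\bar\vlambda)^\sT\bZ\vtheta\rangle$ as a connected Gibbs correlation carrying an extra factor $\tfrac1{\sqrt[4]{nd}}$, which cancels the bad $d$-dependence and leaves, after the same tail/sub-Gaussian bookkeeping, an $O(\sqrt\eps)$ contribution. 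Making this integration-by-parts argument close cleanly --- tracking the $d$-dependence so that the $\bZ$-contribution is genuinely negligible --- is the main obstacle. Collecting the six bounds, and running the identical argument for $\langle\cdot\rangle_{\mathrm{tr}}$, yields $|\E\langle\calH-\bar\calH\rangle|\vee|\E\langle\calH-\bar\calH\rangle_{\mathrm{tr}}|\le C_0\,n\,\eps^{1/4}$ with $C_0$ a function of $(\mu_\Lambda,\mu_\Theta)$ only, which is the claim.
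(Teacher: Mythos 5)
Your opening reduction is where the argument breaks. The two-sided Jensen bound
\begin{align*}
\E\langle\calH-\bar\calH\rangle_{\mathrm{tr}}\;\le\; n\big(\Phi_n(h,0)-\bar\Phi_n^\eps(h)\big)\;\le\;\E\langle\calH-\bar\calH\rangle
\end{align*}
is a correct inequality, but its endpoints are \emph{not} of size $n\,\eps^{1/4}$ in the regime $d/n\to\infty$; they diverge. Compute the right endpoint exactly with the tools you invoke: Gaussian integration by parts on the noise piece gives
\begin{align*}
\frac{1}{\sqrt[4]{nd}}\,\E\big\langle(\vlambda-\bar\vlambda)^{\sT}\bZ\vtheta\big\rangle
=\frac{1}{\sqrt{nd}}\sum_{i,j}\E\big[\big\langle(\vlambda_i-\bar\vlambda_i)\vlambda_i\vtheta_j^2\big\rangle
-\big\langle(\vlambda_i-\bar\vlambda_i)\vtheta_j\big\rangle\big\langle\vlambda_i\vtheta_j\big\rangle\big],
\end{align*}
and by the single-replica Nishimori identity the first (diagonal) part equals $\sqrt{nd}\,q_{\Theta}\,\E[\bLambdas^2\mathbbm{1}\{|\bLambdas|>K_\eps\}]$, while your ``deterministic'' self-interaction piece contributes exactly $-\tfrac12\sqrt{nd}\,q_{\Theta}\,\E[\bLambdas^2\mathbbm{1}\{|\bLambdas|>K_\eps\}]$. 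The coefficients ($1$ versus $\tfrac12$) do not match, the connected correction is of lower order when $d\gg n$, and so $\tfrac1n\E\langle\calH-\bar\calH\rangle\asymp \tfrac{q_{\Theta}}{2}\sqrt{d/n}\;\E[\bLambdas^2\mathbbm{1}\{|\bLambdas|>K_\eps\}]\to\infty$ for fixed $\eps$ (the truncated tail mass is strictly positive precisely in the unbounded-support case for which the lemma is needed); the left endpoint diverges to $-\infty$ symmetrically. So no bookkeeping of the six pieces can close the argument: the one-shot comparison throws away the cancellation between the linear (energy) and quadratic (variance) terms. This is exactly why the paper interpolates between the two models with \emph{independent} noise copies $\bZ,\bZ'$ (and $\bW,\bW'$): along the path, integration by parts plus Nishimori cancels these diagonal terms at every $t$, and $\partial_t\Phi^{\eps}_{n,t}(h)$ reduces to the overlap difference $\tfrac{1}{2\sqrt{nd}}\E\langle(\bLambda_1\vlambda_1-\bar\bLambda_1\bar\vlambda_1)\langle\bTheta,\vtheta\rangle\rangle$ plus an $h$-term, with no $\sqrt{d/n}$ blow-up.

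Even granting the reduction to that overlap term, your sketch is missing the second key ingredient. After Cauchy--Schwarz you must control $\E\langle\langle\bTheta,\vtheta\rangle^2\rangle$, and ``replacing the large factor by a prior expectation via Nishimori'' is not available: Nishimori turns it into the two-replica quantity $\E\langle\langle\vtheta^{(1)},\vtheta^{(2)}\rangle^2\rangle$, not a prior moment, and the trivial bound is $O(d^2)$, which again loses a factor $\sqrt{d/n}$. The paper's proof shows $\E\langle\langle\bTheta,\vtheta\rangle^2\rangle=O(nd)$ by conditioning on $(\bLambda,\barbLambda)$, exploiting that each $\bTheta_j$ is observed at signal-to-noise ratio of order $\sqrt{n/d}$ (the estimate $F_{\Theta}(\delta)\le C_{\mu_\Theta}(1+\delta^4)$ in Eqs.~(232)--(234)), together with Bernstein-type control of $\|\bLambda\|^2/n$; combined with $\E\langle(\bLambda_1\vlambda_1-\bar\bLambda_1\bar\vlambda_1)^2\rangle\le C\sqrt\eps$ this is what produces the $\eps^{1/4}$ rate. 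Your proposal contains neither the interpolation step nor this weak-signal estimate for $\bTheta$, and the delicacy you correctly sensed in the $\bZ$-term is in fact the symptom of both gaps.
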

The proof of \cref{lemma:bounded-approx-free-energy} is deferred to \cref{proof:lemma:bounded-approx-free-energy}. Furthermore, according to Lemma 46 from \cite{lelarge2019fundamental}, $\cF$ is also close to $\bar\cF^{\ep}$ for $\ep$ small. 
\begin{lemma}[Lemma 46 from \cite{lelarge2019fundamental}]\label{lemma:from-LM19}
Under the conditions of \cref{thm:free-energy}, there exists a constant $K' > 0$ that depends only on $\mu_{\Lambda}$, such that
\begin{align*}
		\left| \sup\limits_{q \geq 0} \cF(s, q) - \sup\limits_{q \geq 0} \bar{\cF}^{\eps}(s,q) \right| \leq sK'\eps.
\end{align*}
\end{lemma}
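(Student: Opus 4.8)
This is Lemma~46 of \cite{lelarge2019fundamental} specialized to $r=1$, so I only sketch the argument. The starting point is that the quadratic terms cancel, giving for every $q\ge 0$ the exact identity
\[
\cF(s,q)-\bar{\cF}^{\eps}(s,q)=\psi_{\mu_{\Lambda}}(sq)-\psi_{\mu_{\bar\Lambda}}(sq),\qquad
\psi_{\nu}(\gamma):=\E\Big[\log\!\int e^{\sqrt{\gamma}\,Zx+\gamma Xx-\frac{\gamma}{2}x^{2}}\,\nu(\dd x)\Big],
\]
where $X\sim\nu$ and $Z\sim\normal(0,1)$ are independent. Since $|\sup f-\sup g|\le\sup|f-g|$, it suffices to control $|\psi_{\mu_{\Lambda}}(\gamma)-\psi_{\mu_{\bar\Lambda}}(\gamma)|$ over the $\gamma$-range that matters for the two suprema, and that range is bounded: using $\psi_{\nu}'(\gamma)=\frac12\E[\,\E[X\mid\sqrt{\gamma}X+Z]^{2}\,]\le\frac12\E_{\nu}[X^{2}]$ one sees that $\partial_{q}\cF(s,q)=-\frac{s}{2}q+s\,\psi_{\mu_{\Lambda}}'(sq)<0$ for $q>M:=\E_{\mu_{\Lambda}}[\bLambdas^{2}]$, and likewise for $\bar{\cF}^{\eps}$ (whose integrating prior $\mu_{\bar\Lambda}$ has second moment $\le M$). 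Hence both suprema are attained in $[0,M]$, and it is enough to prove $\sup_{0\le\gamma\le sM}|\psi_{\mu_{\Lambda}}(\gamma)-\psi_{\mu_{\bar\Lambda}}(\gamma)|\le sK'\eps$.

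I would bound this difference by interpolating through the hybrid quantity $\psi_{\mathrm{mix}}(\gamma):=\E[\log\int e^{\sqrt{\gamma}Zx+\gamma\bLambdas x-\frac{\gamma}{2}x^{2}}\mu_{\bar\Lambda}(\dd x)]$, which keeps the full planted field $\bLambdas$ but already uses the truncated integrating prior $\mu_{\bar\Lambda}$, so that $\psi_{\mu_{\Lambda}}-\psi_{\mu_{\bar\Lambda}}=(\psi_{\mu_{\Lambda}}-\psi_{\mathrm{mix}})+(\psi_{\mathrm{mix}}-\psi_{\mu_{\bar\Lambda}})$. For the second difference I would couple $\bLambdas\sim\mu_{\Lambda}$ with $\bar\bLambda_{0}=\bLambdas\,\ind\{|\bLambdas|\le K_{\eps}\}$ on the same space: since $\mu_{\bar\Lambda}$ is supported in $[-K_{\eps},K_{\eps}]$, a one-line tilting estimate shows the two log-partition functions differ by at most $\gamma K_{\eps}|\bLambdas|\,\ind\{|\bLambdas|>K_{\eps}\}$, and $K_{\eps}|\bLambdas|\le\bLambdas^{2}$ on that event, so $|\psi_{\mathrm{mix}}(\gamma)-\psi_{\mu_{\bar\Lambda}}(\gamma)|\le\gamma\,\E[\bLambdas^{2}\ind\{|\bLambdas|>K_{\eps}\}]\le\gamma\,\eps^{3/2}$, where the last step is Cauchy--Schwarz combined with $\E[(\bLambdas-\bar\bLambda_{0})^{4}]<\eps$ and $\P(|\bLambdas|>K_{\eps})<\eps^{2}$. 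Since $\gamma\le sM$, this contributes at most $sM\eps$.

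The main obstacle is the first difference, where the \emph{integrating} measure is changed from $\mu_{\Lambda}$ to the truncated law $\mu_{\bar\Lambda}$ (which carries an atom of mass $\P(|\bLambdas|>K_{\eps})$ at the origin). A short computation, completing the square in $x$, rewrites it as a relative entropy: $\psi_{\mu_{\Lambda}}(\gamma)-\psi_{\mathrm{mix}}(\gamma)=D_{\mathrm{KL}}\big(p^{(\gamma)}_{\mu_{\Lambda}}\,\big\|\,p^{(\gamma)}_{\mu_{\bar\Lambda}}\big)$, where $p^{(\gamma)}_{\nu}$ is the law of $\sqrt{\gamma}X+Z'$ with $X\sim\nu$, $Z'\sim\normal(0,1)$. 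This KL cannot be bounded crudely: the two input laws agree off an event of probability only $\eps^{2}$, but the point mass makes the naive likelihood ratio explode, and $p^{(\gamma)}_{\mu_{\bar\Lambda}}$ has no deterministic lower bound away from a Gaussian-tail term. The resolution, which is the technical core of \cite[Lemma~46]{lelarge2019fundamental}, is to truncate the output variable as well: split the defining integral over $\{|y|\le t\}$ and its complement; on the bulk the likelihood ratio is controlled by an $e^{Ct^{2}}$-type factor times the second-moment estimate $\E[\bLambdas^{2}\ind\{|\bLambdas|>K_{\eps}\}]\le\eps^{3/2}$, and on the tail the Gaussian decay of $p^{(\gamma)}_{\mu_{\Lambda}}$ together with the sub-Gaussianity of $\mu_{\Lambda}$ absorbs the growth. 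Taking $t\asymp\sqrt{\log(1/\eps)}$ with a small enough constant (and, if needed, enlarging $K_{\eps}$ to stay comparable to $t$, which only tightens the moment bounds) kills the sub-polynomial factor, since $e^{Ct^{2}}\,\eps^{3/2}\le\eps$ for $\eps$ small. This gives $\psi_{\mu_{\Lambda}}(\gamma)-\psi_{\mathrm{mix}}(\gamma)\le C(\mu_{\Lambda})\,\gamma\,\eps\le C(\mu_{\Lambda})\,sM\,\eps$, and adding the two contributions yields the claim with $K'$ depending only on $\mu_{\Lambda}$.
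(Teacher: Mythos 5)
The paper itself never proves this statement: it is imported verbatim as Lemma~46 of \cite{lelarge2019fundamental}, so there is no in-paper argument to compare yours against, and your write-up has to stand as a self-contained proof (or an honest reduction to a reference \emph{other} than the lemma being proved). Within that standard, your skeleton is sound and two of the three steps are correct and cleanly done: the cancellation of the quadratic terms, the localization of both suprema to $q\in[0,M]$ with $M=\E_{\mu_\Lambda}[\bLambdas^2]$ via $\partial_q\cF(s,q)=-\tfrac{s}{2}q+s\psi_{\mu_\Lambda}'(sq)$ and $\psi_\nu'\le\tfrac12\E_\nu[X^2]$, and the planted-signal swap against the truncated integrating prior, which correctly yields $|\psi_{\mathrm{mix}}-\psi_{\mu_{\bar\Lambda}}|\le\gamma\,\E[\bLambdas^2\ind\{|\bLambdas|>K_\eps\}]\le\gamma\,\eps^{3/2}$. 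The identity $\psi_{\mu_\Lambda}(\gamma)-\psi_{\mathrm{mix}}(\gamma)=D_{\mathrm{KL}}\big(p^{(\gamma)}_{\mu_\Lambda}\,\|\,p^{(\gamma)}_{\mu_{\bar\Lambda}}\big)$ is also right.

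The gap is the treatment of that KL term, which is exactly the hard part. Your bulk/tail split in the \emph{observation} $y$ at $t\asymp\sqrt{\log(1/\eps)}$ cannot close: to keep the bulk factor $e^{Ct^2}\eps^{3/2}\le\eps$ you need $t=c\sqrt{\log(1/\eps)}$ with $c$ small, but then $\P(|Y|>t)\approx\eps^{c^2/(2(1+\gamma\sigma^2))}$ is only a small fractional power of $\eps$, while the log-likelihood ratio on the tail is at least of order $\log(1/\eps)$ (because of the atom at $0$ in $\mu_{\bar\Lambda}$) or $\gamma y^2$; so the tail contribution is a fractional power of $\eps$, not $O(\eps)$, and enlarging $t$ to fix the tail re-inflates the bulk. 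Moreover, at this step you explicitly defer to ``the technical core of [LM19, Lemma~46]'', i.e.\ to the very statement being proved, so as written the key estimate is missing. A split on the \emph{signal} rather than the observation does work: on $\{|\bLambdas|\le K_\eps\}$ the conditional law of $Y$ is dominated by $(1-\delta)^{-1}p^{(\gamma)}_{\mu_{\bar\Lambda}}$ with $\delta=\P(|\bLambdas|>K_\eps)\le\eps^2$, and $\log(p/q)\le N/q$ with $N(y)=\E[\phi(y-\sqrt\gamma\bLambdas)\ind\{|\bLambdas|>K_\eps\}]$ gives, after Fubini, a contribution at most $\delta/(1-\delta)$; on $\{|\bLambdas|>K_\eps\}$ use Jensen on the mixture density, $q(y)\ge\phi(0)\exp\{-\tfrac12\E[(y-\sqrt\gamma\bar\Lambda_0)^2]\}$, to get a contribution of order $(1+\gamma)\E[(1+\bLambdas^2)\ind\{|\bLambdas|>K_\eps\}]\lesssim(1+\gamma)\eps^{3/2}$. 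Note this fixed argument (like your sketch) does not recover the factor $s$ in the stated bound $sK'\eps$ as $s\to0$ for the KL piece, only a bound of the form $K'(1+s)\eps$; that is harmless for the paper's use in Appendix~\ref{section:reduction-to-bdd-support}, where $s=q_\Theta^2+h$ is bounded above and below, but it means your proposal does not prove the lemma exactly as stated.
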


Invoking the convergence results of free energy density for the symmetric spiked model \cite{lelarge2019fundamental}, we have 
\begin{align*}
	\left|\bar{\Phi}^{Y, \eps}_n(h) -\sup_{q \geq 0} \bar\cF^{\eps}(q_{\Theta}^2 + h, q) \right| = o_n(1).
\end{align*}
Applying \cref{lemma:free-energy-4} to the truncated distribution $\mu_{\bar\Lambda}$, we obtain that $|\bar{\Phi}^{Y, \eps}_n(h) - \bar\Phi_n^{\eps}(h)| = o_n(1)$ for all $h \geq 0$. Using this result and \cref{lemma:bounded-approx-free-energy}, \ref{lemma:from-LM19}, we derive that for all $h \in [0,1]$,
%
%
\begin{align*}
	& \big|\Phi_n(h, 0) - \Phi_n^Y(h, 0)\big| \\
	\leq & \big|\Phi_n(h,0) - \bar{\Phi}_n^{\eps}(h)\big| + \big|\bar{\Phi}_n^{\eps}(h) - \bar{\Phi}_n^{Y, \eps}(h)\big| + \big|\bar{\Phi}_n^{Y, \eps}(h) -  \sup\limits_{q \geq 0} \bar{\cF}^{\eps}(q_{\Theta}^2 + h,q)\big|  \\
	& + \big|\sup\limits_{q \geq 0} \bar{\cF}^{\eps}(q_{\Theta}^2 + h,q) - \sup\limits_{q \geq 0} {\cF}(q_{\Theta}^2 + h,q)\big| + \big|\sup\limits_{q \geq 0} {\cF}(q_{\Theta}^2 + h,q) - \Phi_n^Y(h,0)\big| \\
	\leq & C_0\sqrt[4]{\eps} + (q_{\Theta}^2 + 1)K'\eps + o_n(1).
\end{align*}
Since $\eps$ is arbitrary, we then have the following lemma:
\begin{lemma}\label{lemma:free-energy-converge}
	Under the conditions of Theorem \ref{thm:free-energy}, for all $h \in [0,1]$, as $n,d \rightarrow \infty$ we have
	\begin{align*}
		\lim_{n,d \rightarrow \infty}|\Phi_n(h, 0) - \Phi_n^Y(h, 0)| = 0.
	\end{align*}
\end{lemma}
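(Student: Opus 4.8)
The plan is to deduce the statement from the estimates already assembled above by a triangle–inequality argument in which the order of limits — first $n,d\to\infty$, then $\eps\downarrow 0$ — is essential. Fix $h\in[0,1]$ and $\eps>0$. Inserting the truncated free energies $\bar\Phi^{\eps}_n(h)$, $\bar\Phi^{Y,\eps}_n(h)$ and the truncated replica–symmetric potential $\sup_{q\ge 0}\bar\cF^{\eps}(q_\Theta^2+h,q)$ as intermediate quantities, one writes
\begin{align*}
	\big|\Phi_n(h,0)-\Phi_n^Y(h,0)\big| \le\ & \big|\Phi_n(h,0)-\bar\Phi^{\eps}_n(h)\big| + \big|\bar\Phi^{\eps}_n(h)-\bar\Phi^{Y,\eps}_n(h)\big| + \big|\bar\Phi^{Y,\eps}_n(h)-\textstyle\sup_{q\ge 0}\bar\cF^{\eps}(q_\Theta^2+h,q)\big| \\
	& + \big|\textstyle\sup_{q\ge 0}\bar\cF^{\eps}(q_\Theta^2+h,q)-\textstyle\sup_{q\ge 0}\cF(q_\Theta^2+h,q)\big| + \big|\textstyle\sup_{q\ge 0}\cF(q_\Theta^2+h,q)-\Phi_n^Y(h,0)\big|.
\end{align*}
The first term is at most $C_0\sqrt[4]{\eps}$ by \cref{lemma:bounded-approx-free-energy}. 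The second term is $o_n(1)$ by applying claim~1 of \cref{lemma:free-energy-4} to the bounded–support prior $\mu_{\bar\Lambda}$ (which is exactly the assertion that the truncated asymmetric and truncated symmetric free energies coincide asymptotically). The third term and the fifth term are $o_n(1)$ by the known convergence of the symmetric–model free energy density, i.e.\ Theorem~13 of \cite{lelarge2019fundamental}, applied with prior $\mu_{\bar\Lambda}$ and with $\mu_{\Lambda}$ respectively, at signal–to–noise parameter $q_\Theta^2+h$; the fifth term is precisely \cref{eq:PhiY-cF}. The fourth term is at most $(q_\Theta^2+h)K'\eps\le(q_\Theta^2+1)K'\eps$ by \cref{lemma:from-LM19}.

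Combining these bounds and taking $\limsup_{n,d\to\infty}$ of both sides, the three $o_n(1)$ contributions vanish, leaving
\begin{align*}
	\limsup_{n,d\to\infty}\big|\Phi_n(h,0)-\Phi_n^Y(h,0)\big| \le C_0\sqrt[4]{\eps}+(q_\Theta^2+1)K'\eps.
\end{align*}
Since $\eps>0$ was arbitrary, letting $\eps\downarrow 0$ gives $\lim_{n,d\to\infty}|\Phi_n(h,0)-\Phi_n^Y(h,0)|=0$, uniformly for $h\in[0,1]$ (the constants $C_0$, $K'$ do not depend on $h$), which is the claim.

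The main point to be careful about — rather than a genuine obstacle — is the non-uniformity in $\eps$: the truncation level $K_\eps$, and hence $\bar\Phi_n^\eps$, depends on $\eps$, so one cannot couple $\eps$ to $n$; the double limit must be taken in the order above. All the real analytic work is upstream, chiefly in \cref{lemma:bounded-approx-free-energy} (a Gaussian-interpolation/Lindeberg estimate showing that replacing the sub-Gaussian prior $\mu_\Lambda$ by its truncation shifts the free energy density by $O(\eps^{1/4})$ uniformly in $n,d$, using smallness of $\E[(\Lambda_0-\bar\Lambda_0)^4]$ and of $\mu_\Lambda([-K_\eps,K_\eps]^c)$) and in \cref{lemma:free-energy-4}; granting those inputs, the present lemma is pure bookkeeping.
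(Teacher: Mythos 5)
Your proposal is correct and is essentially identical to the paper's argument: the same five-term triangle inequality through $\bar\Phi^{\eps}_n(h)$, $\bar\Phi^{Y,\eps}_n(h)$, and the truncated/untruncated replica-symmetric potentials, with the same inputs (\cref{lemma:bounded-approx-free-energy}, \cref{lemma:free-energy-4} applied to $\mu_{\bar\Lambda}$, Theorem~13 of \cite{lelarge2019fundamental}, and \cref{lemma:from-LM19}), followed by $\limsup_{n,d\to\infty}$ and then $\eps\downarrow 0$. Your remark on the necessary order of limits matches the paper's implicit handling, so there is nothing to add.
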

\cref{thm:free-energy} is a direct consequence of \cref{lemma:free-energy-converge}. The remainder proof of \cref{thm:lower-bound} follows exactly the same procedure as stated in \cref{sec:proof-of-thm:lower-bound}, and here we skip it for the sake of simplicity. 


\subsection{Proof outline of \cref{thm:upper-bound}}\label{sec:proof-thm:upper-bound}
We state the proof outline of \cref{thm:upper-bound} in this section. The proofs of supporting lemmas are delayed to Appendix \ref{sec:tec-lemmas-upper-bound}. For the sake of simplicity, here we only consider the rank-one case $r = 1$. We comment that proof for $r \geq 2$ can be conducted analogously.

\subsubsection{Proof outline of \cref{thm:upper-bound} under condition (a)}\label{sec:cond(a)}
Since $\mu_{\Lambda}$ is sub-Gaussian, there exists a constant $K_0 > 0$ depending only on $\mu_{\Lambda}$, such that for all $x > 0$
\begin{align}\label{eq:C4-31}
	 \P(|\bLambdas| \geq x) \leq 2 \exp(-x^2 / K_0^2). 
\end{align}
For all $i \in [n]$, we define $\bar\bLambda_i :=\bLambda_i\mathbbm{1}\{|\bLambda_i| \leq 2K_0 \sqrt{\log n}\} $, $\bar\bLambda := (\bar\bLambda_1, \cdots, \bar\bLambda_n)^{\top} \in \RR^n$ and ${\bar\bA} :=  \bar\bLambda \bTheta^{\top} / \sqrt[4]{nd} + {\bZ} \in \RR^{n \times d}$.
The next lemma says that truncation does not decrease the MMSE too much. 
\begin{lemma}\label{lemma:log-truncate2}
Under the conditions of Theorem \ref{thm:upper-bound} (a) , as $n,d \rightarrow \infty$ we have 
\begin{align*}
	& \frac{1}{n^2} \E\left[\left\| \bLambda \bLambda^{\top} - \E[\bLambda \bLambda^{\top} \mid \bA] \right\|_F^2 \right] \leq \frac{1}{n^2}\E\left[\left\| \barbLambda \barbLambda^{\top} - \E[\barbLambda \barbLambda^{\top} \mid {\bar\bA}] \right\|_F^2 \right] + o_n(1).
\end{align*}
\end{lemma}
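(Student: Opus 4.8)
The plan is to use the Bayes-optimal estimator of the truncated model as a (generally suboptimal) estimator in the original model, and to absorb the resulting discrepancy into the rare event that some coordinate of $\bLambda$ exceeds the truncation level $2K_0\sqrt{\log n}$.

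First I would introduce the event $E := \{\,|\bLambda_i| \leq 2K_0\sqrt{\log n}\ \text{ for all } i \in [n]\,\}$. By the sub-Gaussian tail bound \cref{eq:C4-31} together with a union bound, $\P(E^c) \leq 2n\cdot\exp(-4\log n) = 2n^{-3}$; the factor $2K_0$ in the truncation (rather than $K_0$) is chosen precisely so that $\P(E^c)$ decays fast enough. On $E$ one has $\barbLambda = \bLambda$, and therefore also $\bar{\bA} = \barbLambda\bTheta^{\top}/\sqrt[4]{nd} + \bZ = \bA$ and $\barbLambda\barbLambda^{\top} = \bLambda\bLambda^{\top}$.

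Next, let $\hat{\bM}_{\star}$ denote the posterior-mean map of the truncated model, $\hat{\bM}_{\star}(\bar a) := \E[\barbLambda\barbLambda^{\top}\mid\bar{\bA} = \bar a]$, which I would write out as the explicit ratio of integrals against $\mu_{\bar\Lambda}^{\otimes n}$, so that it is well-defined and finite for every $\bar a \in \RR^{n\times d}$. Using $\hat{\bM}_{\star}(\bA)$ as an estimator of $\bLambda\bLambda^{\top}$ on the basis of $\bA$, optimality of the posterior mean in the original model gives $\MMSEas_n(\mu_{\Lambda},\mu_{\Theta}) \leq n^{-2}\,\E\|\bLambda\bLambda^{\top} - \hat{\bM}_{\star}(\bA)\|_F^2$. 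I would then split this expectation over $E$ and $E^c$. On $E$ the substitutions above turn the integrand into $\|\barbLambda\barbLambda^{\top} - \hat{\bM}_{\star}(\bar{\bA})\|_F^2$; dropping the indicator and using $\hat{\bM}_{\star}(\bar{\bA}) = \E[\barbLambda\barbLambda^{\top}\mid\bar{\bA}]$, the $E$-part is at most $n^{-2}\,\E\|\barbLambda\barbLambda^{\top} - \E[\barbLambda\barbLambda^{\top}\mid\bar{\bA}]\|_F^2$, which is exactly the right-hand side of the claimed inequality.

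The only remaining task is to show the $E^c$-contribution is $o_n(1)$, and this is where a little care is needed. I would apply Cauchy--Schwarz to bound it by $n^{-2}\sqrt{\P(E^c)}\,\big(\E\|\bLambda\bLambda^{\top} - \hat{\bM}_{\star}(\bA)\|_F^4\big)^{1/2}$. Since every coordinate of $\barbLambda$ is bounded by $2K_0\sqrt{\log n}$, Jensen's inequality gives the deterministic bound $\|\hat{\bM}_{\star}(\,\cdot\,)\|_F \leq \E[\|\barbLambda\|^2\mid\bar{\bA}=\,\cdot\,] \leq 4K_0^2\, n\log n$, while sub-Gaussianity of $\mu_{\Lambda}$ (so that $\Lambda_i^2$ is sub-exponential) yields $\E\|\bLambda\|^8 = \E\big(\sum_{i}\Lambda_i^2\big)^4 = O(n^4)$ by Rosenthal's inequality. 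Hence the fourth moment is $O\big(n^4(\log n)^4\big)$, and multiplying by $\sqrt{\P(E^c)} = O(n^{-3/2})$ and by $n^{-2}$ leaves $O\big(n^{-3/2}(\log n)^2\big) \to 0$. I would remark that this argument uses only sub-Gaussianity of $\mu_{\Lambda}$: neither the hypothesis $dn^{-3}(\log n)^{-6}\to\infty$ nor the assumptions on $\mu_{\Theta}$ enter here. The main (and quite mild) obstacle is organizing the deterministic $L^{\infty}$ bound on $\hat{\bM}_{\star}$ alongside the polynomial fourth-moment growth, so that the $n^{-3}$ from the union bound dominates; everything else is bookkeeping.
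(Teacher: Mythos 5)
Your proposal is correct and follows essentially the same route as the paper's proof: the same truncation event with $\P(E^c)\le 2n^{-3}$, the same use of the truncated-model posterior mean $\E[\barbLambda\barbLambda^{\top}\mid\bar\bA=\cdot\,]$ as a suboptimal estimator in the original model, and the same high-probability/Cauchy--Schwarz treatment of the bad event using the deterministic bound on the truncated posterior mean. The only difference is bookkeeping: the paper first swaps $\bLambda\bLambda^{\top}$ for $\barbLambda\barbLambda^{\top}$ by a triangle inequality (with a separate $o(1)$ moment estimate) and then splits over $\Omega$, whereas you split once over $E$ and absorb the untruncated $\bLambda$ into an $\E\|\bLambda\|^{8}=O(n^{4})$ bound, which is an equivalent and equally valid organization.
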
 
We leave the proof of \cref{lemma:log-truncate2} to Appendix \ref{sec:proof-of-lemma:log-truncate2}. By \cref{lemma:log-truncate2}, in order to prove the theorem, it suffices to show that under the current conditions, for all but countably many values of $q_{\Theta} > 0$,
\begin{align}\label{eq:after-truncation}
	\limsup_{n,d \rightarrow \infty}\frac{1}{n^2}\E\left[\left\| \barbLambda \barbLambda^{\top} - \E[\barbLambda \barbLambda^{\top} \mid {\bar\bA}] \right\|_F^2 \right] \leq \lim_{n\rightarrow \infty} \MMSEsy_n(\mu_{\Lambda}; q_{\Theta}).
\end{align}
We let $\bZ\bTheta = \|\bTheta\| \cdot \bg$, where $\bg \sim \normal(0, \id_n)$ is independent of $(\bTheta, \bLambda)$. We denote by $\bZ'$ an independent copy of $\bZ$, such that $\bZ'$ is further independent of $(\bTheta, \bLambda, \bg)$. Furthermore, we can choose $\bZ'$ such that $(\bTheta, \bLambda, \bg, \bZ \Pj_{\bTheta}^{\perp} \bZ^{\top}) = (\bTheta, \bLambda, \bg, \bZ' \Pj_{\bTheta}^{\perp} {\bZ'}^{\top})$, where $\Pj_{\bTheta}^{\perp}$ denotes the projection onto the null space of $\bTheta$. 

We define $\bY_1 := (\bar\bA \bar\bA^{\top} - d\id_n) / \sqrt{d}$. Then $\bY_1$ admits the following decomposition:
\begin{align*}
	\bY_1 
	= & \frac{\|\bTheta\|^2}{\sqrt{n}d} \barbLambda\barbLambda^{\top} + \frac{\|\bTheta\|}{n^{1/4}d^{3/4}} \barbLambda \bg^{\top} + \frac{\|\bTheta\|}{n^{1/4}d^{3/4}} \bg \barbLambda^{\top}   + \frac{1}{\sqrt{d}}(\bZ \Pj_{\bTheta}^{\perp} \bZ^{\top} - d\id_n) + \frac{1}{\sqrt{d}} \bg \bg^{\top} \\
	= & \frac{1}{\sqrt{n}}\left( q_{\Theta}^{1/2} \barbLambda + r_n^{-1} \bg \right)\left( q_{\Theta}^{1/2} \barbLambda + r_n^{-1} \bg \right)^{\top} + \frac{1}{\sqrt{d}}(\bZ'{\bZ'}^{\top} - d\id_n) + \bE. 
\end{align*}
In the above display, the $n \times n$ symmetric matrix $\bE$ is defined as follows:
\begin{align*}
	\bE := &  - \frac{1}{\sqrt{d}\|\bTheta\|^2} {\bZ'} \bTheta \bTheta^{\top} {\bZ'}^{\top} + \frac{1}{\sqrt{n}}\left(\frac{\|\bTheta\|^2}{d}  - q_{\Theta}\right) \barbLambda \barbLambda^{\top} \\
	&+ \frac{1}{n^{1/4}d^{1/4}}\left( \frac{\|\bTheta\|}{\sqrt{d}} - q_{\Theta}^{1/2}\right)\left(\barbLambda \bg^{\top} + \bg \barbLambda^{\top} \right) + \frac{1}{\sqrt{d}} \bg\bg^{\top}.
\end{align*}
We define the set
\begin{align*}
	\Omega_1 := \left\{ \Big| \frac{1}{d}\|\bTheta\|^2 - q_{\Theta}\Big| \leq \frac{C_1\sqrt{\log n}}{\sqrt{d}}, \frac{1}{\sqrt{d}}|(\bZ'\bTheta)_i|  \leq C_1 \sqrt{\log n}, \frac{1}{\sqrt{d}}|g_i| \leq C_1 \sqrt{\log n}\,  \mbox{ for all }i \in [n] \right\},
\end{align*}
where $C_1 > 0$ is a constant depending only on $\mu_{\Theta}$. Since $\mu_{\Theta}$, $\mu_{\Lambda}$ are sub-Gaussian distributions, if we choose $C_1$ large enough, then we have $\P(\Omega_1) = 1 - o_n(1)$. Using the definition of $\Omega_1$, we conclude that there exists a constant $C_2 > 0$ that depends only on $(C_1, K_0, \mu_{\Theta})$, such that on $\Omega_1$ we have $|E_{ij}| \leq {C_2\log n} / {\sqrt{d}}$ for all $i,j \in [n]$. For some absolute constant $C_3 > 0$, we let $\bar\bg \in \RR^n$ such that $\bar g_i := g_i \mathbbm{1}\{|g_i| \leq C_3 \sqrt{\log n}\}$ for all $i \in [n]$. Direct computation reveals that for $C_3$ large enough, we have  $\P(\bar\bg \neq \bg) \rightarrow 0$ as $n,d \rightarrow \infty$.

Define
\begin{align*}
	\bY_2 := \frac{1}{\sqrt{n}}\left( q_{\Theta}^{1/2} \barbLambda + r_n^{-1} \bar\bg \right)\left( q_{\Theta}^{1/2} \barbLambda + r_n^{-1} \bar\bg \right)^{\top} + \bG,
\end{align*}
where $\bG \sim \sqrt{n}\GOE(n)$ and is independent of $(\bLambda, \barbLambda, \bTheta,  \bg, \bar\bg)$.  By \cite[Theorem 4]{bubeck2016testing}, under condition (a), there exists a coupling such that as $n,d \rightarrow \infty$, with probability $1 - o_n(1)$ we have $(\bZ' {\bZ'}^{\top} - d\id_n) / \sqrt{d} = \bG$. We define $\Omega_2 := \Omega_1 \cap \{\bY_2 = \bY_1 - \bE\}$, then we see that $\P(\Omega_2) \rightarrow 1$ as $n,d \to \infty$.

For $\bX \in \RR^{n \times n}$, we define
\begin{align*}
	\bM_n(\bX) := \frac{1}{n}\E\left[\big( q_{\Theta}^{1/2} \barbLambda + r_n^{-1} \bar\bg \big)\big( q_{\Theta}^{1/2} \barbLambda + r_n^{-1} \bar\bg \big)^{\top} \big| \bY_2 = \bX \right].
\end{align*}
Then for any $i,j ,k,s\in [n]$, we have
\begin{align*}
	&\frac{\partial \bM_n(\bX)_{ks}}{\partial X_{ij}}  =  \frac{1}{n^{3/2}}\E\big[(q_{\Theta}^{1/2} \bar\bLambda_i + r_n^{-1}\bar g_i)(q_{\Theta}^{1/2} \bar\bLambda_j +r_n^{-1}\bar g_j)(q_{\Theta}^{1/2} \bar\bLambda_k +r_n^{-1}\bar g_k)(q_{\Theta}^{1/2} \bar\bLambda_s +r_n^{-1}\bar g_s) \mid \bY_2 = \bX\big] \\
	& - \frac{1}{n^{3/2}} \E\big[ (q_{\Theta}^{1/2} \bar\bLambda_i +r_n^{-1}\bar g_i)(q_{\Theta}^{1/2} \bar\bLambda_j +r_n^{-1}\bar g_j) \mid \bY_2 = \bX\big]\E\big[ (q_{\Theta}^{1/2} \bar\bLambda_k +r_n^{-1}\bar g_k)(q_{\Theta}^{1/2} \bar\bLambda_s +r_n^{-1}\bar g_s) \mid \bY_2 = \bX\big].
\end{align*}
Note that on $\Omega_2$ we have $|\bar\bLambda_i| \leq 2K_0\sqrt{\log n}$, $|\bar g_i| \leq C_3\sqrt{\log n}$, and $|E_{ij}| \leq C_2 \log n / \sqrt{d}$ for all $i,j \in [n]$. Therefore, we conclude that there exists a constant $C_4 > 0$ depending only on $(K_0, \mu_{\Theta}, C_1, C_2, C_3)$, such that on $\Omega_2$ we have
\begin{align*}
	|\bM_n(\bY_1)_{ks} - \bM_n(\bY_1 - \bE)_{ks}| \leq \frac{C_4\sqrt{n}(\log n)^{3}}{\sqrt{d}}. 
\end{align*}
Therefore, we obtain that
\begin{align*}
	\E\left[ \| \bM_n(\bY_1 - \bE) - \bM_n(\bY_1)\|_F^2 \mathbbm{1}_{\Omega_2} \right]  \leq  \frac{C_4^2n^3(\log n)^{6}}{d}.
\end{align*}
The right hand side of the above equation vanishes as $n,d \rightarrow \infty$ under condition (a). Therefore, we derive that $\|\bM_n(\bY_2) - \bM_n(\bY_1)\|_F = o_P(1)$. 
Note that $\bY_1$ is a function of $\bar\bA$. Using standard truncation argument, we conclude that in order to prove \cref{eq:after-truncation}, it suffices to show 
\begin{align*}
	\limsup_{n,d \rightarrow \infty} \E\left[ \Big\| q_{\Theta}^{-1}\bM_n(\bY_2) - \frac{1}{n}  \barbLambda \barbLambda^{\top} \Big\|_F^2 \right]  \leq \lim_{n \rightarrow \infty} \MMSEsy_n(\mu_{\Lambda}; q_{\Theta}),
\end{align*}
which we prove in \cref{lemma:toMMSE} below. The proof of this lemma is deferred to Appendix \ref{sec:proof-of-lemma:toMMSE}.
\begin{lemma}\label{lemma:toMMSE}
	Under the conditions of \cref{thm:upper-bound} (a), for all but countably many values of $q_{\Theta} > 0$, we have
	\begin{align*}
		\lim\limits_{n \rightarrow \infty}\E\left[ \Big\| q_{\Theta}^{-1} \bM_n(\bY_2) - \frac{1}{n}\bar\bLambda \bar\bLambda^{\top}  \Big\|_F^2 \right] = \lim_{n \rightarrow \infty} \MMSEsy_n(\mu_{\Lambda}; q_{\Theta}). 
	\end{align*}
\end{lemma}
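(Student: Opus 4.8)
Write $\bv:=q_{\Theta}^{1/2}\barbLambda+r_n^{-1}\bar\bg$, so that $\bM_n(\bY_2)=\tfrac1n\E[\bv\bv^\top\mid\bY_2]$. Rescaling by $n^{-1/2}$, the observation $\tfrac1{\sqrt n}\bY_2=\tfrac1n\bv\bv^\top+\bW$ with $\bW\sim\GOE(n)$ is \emph{exactly} the symmetric spiked model \eqref{model:symmetric} whose per-coordinate prior is $\nu_n:=\mathrm{Law}(q_{\Theta}^{1/2}\bar\Lambda_0+r_n^{-1}\bar G_0)$ and whose signal-to-noise ratio equals $1$; since $\tfrac1{\sqrt n}\bY_2$ and $\bY_2$ generate the same information, $\E\big\|\bM_n(\bY_2)-\tfrac1n\bv\bv^\top\big\|_F^2=\MMSEsy_n(\nu_n;1)$ identically in $n$. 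The plan is to (i) peel off the $\bar\bg$ contribution by an $L^2$ bound, and (ii) identify $\lim_n\MMSEsy_n(\nu_n;1)$ with $q_{\Theta}^2\lim_n\MMSEsy_n(\mu_{\Lambda};q_{\Theta})$ by a continuity-in-prior argument together with \cref{prop:symmetric-spike}. For concreteness I describe $r=1$; the case $r\ge2$ is identical after replacing scalars by $r\times r$ matrices.

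\textbf{Step 1 (algebraic reduction).} Expand $\bv\bv^\top=q_{\Theta}\,\barbLambda\barbLambda^\top+\bDelta_n$ with $\bDelta_n:=q_{\Theta}^{1/2}r_n^{-1}(\barbLambda\bar\bg^\top+\bar\bg\barbLambda^\top)+r_n^{-2}\bar\bg\bar\bg^\top$. Using that $\barbLambda$ and $\bar\bg$ are independent with $O(1)$ second moments per coordinate, $\tfrac1{n^2}\E\|\bDelta_n\|_F^2=O(r_n^{-2}+r_n^{-4})=o_n(1)$ because $r_n\to\infty$. By Jensen this forces $q_{\Theta}^{-1}\bM_n(\bY_2)=\tfrac1n\E[\barbLambda\barbLambda^\top\mid\bY_2]+o_{L^2}(1)$ and $\tfrac1n\bv\bv^\top=q_{\Theta}\tfrac1n\barbLambda\barbLambda^\top+o_{L^2}(1)$, whence (both mean square errors being bounded)
\begin{align*}
\E\big\|q_{\Theta}^{-1}\bM_n(\bY_2)-\tfrac1n\barbLambda\barbLambda^\top\big\|_F^2=q_{\Theta}^{-2}\,\MMSEsy_n(\nu_n;1)+o_n(1),
\end{align*}
and it remains to prove $\MMSEsy_n(\nu_n;1)\to q_{\Theta}^2\lim_{n\to\infty}\MMSEsy_n(\mu_{\Lambda};q_{\Theta})$.

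\textbf{Step 2 (limit of the symmetric MMSE with prior $\nu_n$).} Since $\mu_{\Lambda}$ is sub-Gaussian and the truncation level for $\barbLambda$ diverges while $r_n^{-1}\to0$, the priors $\nu_n$ converge in $W_2$ to $\pi:=\mathrm{Law}(q_{\Theta}^{1/2}\Lambda_0)$. I would first pass from $\nu_n$ to $\pi$. By the de Bruijn identity \cref{eq:Identity-I-MMSE}, $\tfrac14\MMSEsy_n(\mu;\sqrt\beta)=\tfrac{\de}{\de\beta}\Infosy_n(\mu;\sqrt\beta)$, the map $\beta\mapsto\Infosy_n(\mu;\sqrt\beta)$ is concave, and the finite-$n$ free energy (equivalently $\Infosy_n$) is Lipschitz in the prior in $W_2$, uniformly in $n$ and over priors of bounded sub-Gaussian norm (the finite-$n$ version of Lemma~46 of \cite{lelarge2019fundamental}, cf.~\cref{lemma:from-LM19}); chaining through a fixed truncation level $K$ and then letting $K\to\infty$ gives $\Infosy_n(\nu_n;\sqrt\beta)\to\lim_n\Infosy_n(\pi;\sqrt\beta)$ for every $\beta>0$. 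Concavity and \cref{lemma:convex-derivative} transfer this to convergence of the $\beta$-derivatives at every differentiability point of the limit, in particular at $\beta=1$ for all $q_{\Theta}$ outside a countable set, so $\MMSEsy_n(\nu_n;1)\to\lim_n\MMSEsy_n(\pi;1)$. Finally, the change of variables $\bQ\mapsto q_{\Theta}\bQ$ (equivalently $\vlambda\mapsto q_{\Theta}^{1/2}\vlambda$) in \cref{eq:FsQ} yields $\cF_{\pi}(1,\bQ)=\cF_{\mu_{\Lambda}}(q_{\Theta}^2,q_{\Theta}^{-1}\bQ)$, hence $\sup_{\bQ}\cF_{\pi}(1,\bQ)=\sup_{\bQ}\cF_{\mu_{\Lambda}}(q_{\Theta}^2,\bQ)$ with maximizers related by $\bQ_{\pi}^{\ast}(1)=q_{\Theta}\bQ^{\ast}(q_{\Theta}^2)$; inserting this into \cref{prop:symmetric-spike} (for $\pi$ at SNR $1$, and for $\mu_{\Lambda}$ at SNR $q_{\Theta}$) gives $\lim_n\MMSEsy_n(\pi;1)=q_{\Theta}^2\lim_n\MMSEsy_n(\mu_{\Lambda};q_{\Theta})$ for $q_{\Theta}\notin\cD$. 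Combining with Step~1 proves the lemma, the countable exceptional set being that of \cref{prop:symmetric-spike}.

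\textbf{Main obstacle.} The one non-routine ingredient is the passage $\Infosy_n(\nu_n;\sqrt\beta)\to\lim_n\Infosy_n(\pi;\sqrt\beta)$ in Step~2: although $\tfrac1{\sqrt n}\bY_2$ is literally a symmetric spiked model, its prior $\nu_n$ is $n$-dependent, so \cref{prop:symmetric-spike} does not apply off the shelf; one needs a $W_2$-Lipschitz bound on the \emph{finite-$n$} free energy that is uniform in $n$ (plus the diagonal argument in the truncation level), and only afterwards does the concavity/de~Bruijn machinery yield convergence of the mean square error itself rather than merely of the mutual information. Everything else—the $L^2$ expansion of $\bv\bv^\top$, the scaling change of variables in $\cF$, and the invocation of \cref{prop:symmetric-spike}—is routine, and in fact this step uses only the standing assumption $d/n\to\infty$, the polynomial gap in condition $(a)$ of \cref{thm:upper-bound} having been spent earlier, in the reduction that produced $\bY_2$.
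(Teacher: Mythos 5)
Your overall architecture matches the paper's: first an $L^2$ reduction stripping off the $r_n^{-1}\bar\bg$ contribution (your Step 1 is exactly the paper's closing triangle-inequality step, just performed first), then a continuity-in-prior statement for the symmetric model, then the convexity-in-SNR / \cref{lemma:convex-derivative} / Nishimori / Gaussian-integration-by-parts machinery to convert free-energy convergence into MMSE convergence at all but countably many $q_{\Theta}$. The genuine gap is the ingredient you yourself flag: the asserted uniform-in-$n$ $W_2$-Lipschitz continuity of the \emph{finite-$n$} free energy (or $\Infosy_n$) in the prior is not a citable fact. \cref{lemma:from-LM19} (Lemma~46 of \cite{lelarge2019fundamental}) concerns the limiting variational functional $\sup_{q}\cF(s,q)$, not $\Infosy_n$, so "the finite-$n$ version of Lemma 46" does not exist off the shelf; establishing it is precisely the content of the paper's proof of this lemma. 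Moreover, the natural estimate one gets is not plain $W_2$-Lipschitz: interpolating between two priors through independent $\GOE$ channels and bounding the $t$-derivative via Nishimori and H\"older produces a bound involving fourth moments of the coupling difference (an $L^4$-coupling distance), which is why the paper's analogous estimate \cref{lemma:bounded-approx-free-energy} yields $C\sqrt[4]{\eps}$ from $\E[(\bLambdas-\bar\bLambda_0)^4]\le\eps$. With uniformly sub-Gaussian priors you can pass from $W_2$ to $W_4$ by truncation, but as stated your Step 2 rests on an unproven (and slightly misstated) continuity lemma, and proving it correctly would amount to reproducing the paper's interpolation.

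Concretely, the paper fills this hole by a Guerra interpolation between the two specific symmetric models: $\bY_{a,t}^{(s)}$ with signal $q_{\Theta}\bLambda\bLambda^{\top}/n$ and $\bY_{b,t}^{(s)}$ with signal built from $q_{\Theta}^{1/2}\barbLambda+r_n^{-1}\bar\bg$, with SNR weights $\sqrt{(1-t)s}$ and $\sqrt{ts}$ and independent noises. The $t$-derivative of the interpolated free energy $G_n(t,s)$ is bounded, after Nishimori and H\"older, by $\E[(q_{\Theta}\bar\bLambda_1\bar\bLambda_2+r_n^{-1}q_{\Theta}^{1/2}(\bar\bLambda_1\bar g_2+\bar\bLambda_2\bar g_1)+r_n^{-2}\bar g_1\bar g_2-q_{\Theta}\bLambda_1\bLambda_2)^2]^{1/2}$ times a bounded factor, which vanishes uniformly in $t$ precisely because the two priors come with an explicit coupling ($\barbLambda$ is the $2K_0\sqrt{\log n}$-truncation of $\bLambda$ and $r_n^{-1}\to0$); this is the quantitative continuity you need, exploited through the coupling rather than through an abstract Wasserstein bound. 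After that, the paper uses \cite[Theorem 13]{lelarge2019fundamental} at the $t=0$ endpoint and convexity in $s$ at $s=1$, which is the same endgame as your de Bruijn/concavity route (your scaling identity $\cF_{\pi}(1,\bQ)=\cF(q_{\Theta}^2,q_{\Theta}^{-1}\bQ)$ is a harmless repackaging of this). So nothing in your plan would fail, but the one step you defer is the entire mathematical content of the lemma; to complete the argument you should either prove the finite-$n$ prior-continuity estimate in the $L^4$-coupling form sketched above, or interpolate directly between the two coupled models as the paper does.
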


\subsubsection{Proof outline of \cref{thm:upper-bound} under condition (b)}\label{sec:cond(b)}

\subsubsection*{Truncation}
By assumption, there exists $0 < K_1 < \infty$ such that support$(\mu_\Lambda) \subseteq [-K_1, K_1]$.  Since $\mu_{\Theta}$ is sub-Gaussian, there exists $K_2 > 0$ which depends only on $ \mu_{\Theta}$, such that for all $x > 0$
\begin{align}\label{eq:37}
	 \P_{\bThetas \sim \mu_{\Theta}}(|\bThetas| \geq x) \leq 2 \exp(-x^2 / K_2^2). 
\end{align}
For $j \in \{0\} \cup [d]$, we define $\bar\bTheta_j := \bTheta_j\mathbbm{1}\{|\bTheta_j| \leq 2K_2 \sqrt{\log d}\} $, $\bar\bTheta := (\bar\bTheta_1, \cdots, \bar\bTheta_d)^{\top} \in \RR^d$ and $\bar\bA := \bLambda \barbTheta^{\top} / \sqrt[4]{nd} + {\bZ}$. Note that $\bar\bA$ defined here is not to be confused with $\bar\bA$ defined in Appendix \ref{sec:cond(a)}. The following lemma states that truncation does not decrease the asymptotic matrix MMSE. 
\begin{lemma}\label{lemma:log-truncate}
Under the conditions of Theorem \ref{thm:upper-bound} (b), as $n,d \rightarrow \infty$ we have 
\begin{align*}
	\frac{1}{n^2} \E\left[\left\| \bLambda \bLambda^{\top} - \E[\bLambda \bLambda^{\top} \mid \bA] \right\|_F^2 \right] \leq \frac{1}{n^2}\E\left[\left\| \bLambda \bLambda^{\top} - \E[\bLambda \bLambda^{\top} \mid \bar\bA] \right\|_F^2 \right] + o_n(1).
\end{align*}
\end{lemma}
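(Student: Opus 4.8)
This statement, Lemma~\ref{lemma:log-truncate}, asserts that replacing $\bTheta$ by its truncated version $\bar\bTheta$ (which is bounded, with bound growing only like $\sqrt{\log d}$) cannot increase the asymptotic matrix MMSE by more than $o_n(1)$. The plan is to compare the posterior mean estimators of $\bLambda\bLambda^\sT$ in the two models and control the discrepancy via a data-processing / information-theoretic argument combined with a low-probability event bound. This is the same truncation philosophy already used elsewhere in the paper (e.g.\ Lemma~\ref{lemma:log-truncate2} in the condition $(a)$ case), so I expect a parallel structure.

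\textbf{Step 1: Control the probability that truncation is active.}
Since $\mu_\Theta$ is sub-Gaussian, \eqref{eq:37} gives $\P(|\bTheta_j|\ge 2K_2\sqrt{\log d})\le 2\exp(-4\log d)=2d^{-4}$, so a union bound over $j\in[d]$ yields $\P(\bar\bTheta\neq\bTheta)\le 2d^{-3}\to 0$. On this rare event, the integrand $\|\bLambda\bLambda^\sT-\E[\bLambda\bLambda^\sT\mid\,\cdot\,]\|_F^2/n^2$ is bounded by a deterministic polynomial in $\log$-factors because $\mu_\Lambda$ has \emph{bounded} support (here is where condition $(b)$'s hypothesis ``$\mu_\Lambda$ has bounded support'' is crucial): $\|\bLambda\bLambda^\sT\|_F^2/n^2\le K_1^4$ and $\|\E[\bLambda\bLambda^\sT\mid \bar\bA]\|_F^2/n^2\le K_1^4$ as well. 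Hence the contribution of the event $\{\bar\bTheta\neq\bTheta\}$ to either expectation is $O(d^{-3})=o_n(1)$, and we may freely work on $\{\bar\bTheta=\bTheta\}$.

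\textbf{Step 2: Compare the two Bayes estimators via an $I$–MMSE / perturbation bound.}
The quantity $\frac{1}{n^2}\E\|\bLambda\bLambda^\sT-\E[\bLambda\bLambda^\sT\mid\bA]\|_F^2$ is the MMSE of estimating $\bLambda\bLambda^\sT$ from $\bA$, and since the Bayes estimator is optimal, \emph{any} estimator built from $\bar\bA$ gives an upper bound on the MMSE-from-$\bA$ \emph{only if} $\bar\bA$ is a function of $\bA$ --- which it is not in general, because truncation uses knowledge of $\bTheta$. The clean way around this: condition on $\bar\bTheta=\bTheta$, on which $\bar\bA=\bA$ \emph{as random variables}, so the two posterior distributions $\P(\bLambda\bLambda^\sT\mid \bA)$ and $\P(\bLambda\bLambda^\sT\mid\bar\bA)$ differ only through the difference in priors/likelihoods on the complementary event. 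Concretely, I would write
\begin{align*}
\E\big[\|\bLambda\bLambda^\sT-\E[\bLambda\bLambda^\sT\mid\bA]\|_F^2\big]
&\le \E\big[\|\bLambda\bLambda^\sT-\E[\bLambda\bLambda^\sT\mid\bar\bA]\|_F^2\big] + R_n,
\end{align*}
where the remainder $R_n$ collects (i) the indicator term from Step~1 and (ii) a term measuring how far $\E[\bLambda\bLambda^\sT\mid\bA]$ is from $\E[\bLambda\bLambda^\sT\mid\bar\bA]$. For (ii), note that $\E[\bLambda\bLambda^\sT\mid\bA]$ is the minimizer of $\bX\mapsto\E\|\bLambda\bLambda^\sT-\bX\|_F^2$ over $\sigma(\bA)$-measurable $\bX$, so replacing it by the (sub-optimal) $\sigma(\bar\bA)$-measurable estimator $\E[\bLambda\bLambda^\sT\mid\bar\bA]$ can only increase the risk when these estimators are fed the \emph{same} observation --- which, again, holds on $\{\bar\bTheta=\bTheta\}$. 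On the complement, use the crude bound of Step~1. This decomposition is exactly the one in the proof of Lemma~\ref{lemma:log-truncate2}; I would cite that argument and adapt it with $\bTheta\leftrightarrow\bar\bTheta$ and $d\leftrightarrow n$ roles appropriately (here the truncation is on the $\bTheta$ side rather than the $\bLambda$ side, but the Wishart-type cross-term estimates and the boundedness of $\mu_\Lambda$ make the adaptation routine).

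\textbf{Main obstacle.}
The delicate point is (ii): since $\bar\bA$ is \emph{not} $\sigma(\bA)$-measurable, one cannot immediately invoke ``the Bayes estimator from more information is better.'' The fix is to observe that on the overwhelmingly-likely event $E:=\{\bar\bTheta=\bTheta\}$ we have the \emph{identity} $\bar\bA=\bA$ of random matrices, and then to bound $\big|\E[\bLambda\bLambda^\sT\mid\bA]-\E[\bLambda\bLambda^\sT\mid\bar\bA]\big|$ on $E$ by a total-variation-type quantity between the two posterior laws, which in turn is controlled by $\P(E^c)$ via a coupling argument (both posteriors are obtained from the \emph{same} likelihood $\P(\bA\mid\bLambda,\bTheta)$ integrated against priors that agree except on an event of probability $O(d^{-3})$). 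Since $\mu_\Lambda$ is bounded, $\|\bLambda\bLambda^\sT\|_F/n\le K_1^2$, so the posterior-mean discrepancy is at most $2K_1^2\cdot\P(E^c\mid\bA)$ pointwise, and $\E[\P(E^c\mid\bA)]=\P(E^c)=O(d^{-3})$. Plugging this into the squared-Frobenius risk (using $\|u\|_F^2\le 2\|v\|_F^2+2\|u-v\|_F^2$) yields $R_n=o_n(1)$, completing the proof. I expect the write-up to mirror Appendix~\ref{sec:proof-of-lemma:log-truncate2} almost verbatim, so the bulk of the work is bookkeeping the constants $K_1,K_2$ and the polynomial-in-$\log$ factors rather than any genuinely new estimate.
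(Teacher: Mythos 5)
Your overall truncation scaffolding (the event $\Omega=\{|\bTheta_j|\le 2K_2\sqrt{\log d}\ \forall j\}$ with $\P(\Omega^c)\le 2d^{-3}$, a crude bound off $\Omega$ using the bounded support of $\mu_\Lambda$, and the identification $\bA=\bar\bA$ on $\Omega$) is exactly the paper's, but your handling of the central measurability issue is different from, and more complicated than, what the paper actually does. The paper never compares the two posteriors at all: it defines the measurable map $\bM(\bX):=\E[\bLambda\bLambda^{\top}\mid\bar\bA=\bX]$ and plugs in the \emph{true} data, so that $\bM(\bA)$ is a legitimate $\sigma(\bA)$-measurable (mis-specified) estimator. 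Bayes optimality in average risk then gives $\frac{1}{n^2}\E\|\bLambda\bLambda^{\top}-\E[\bLambda\bLambda^{\top}\mid\bA]\|_F^2\le \frac{1}{n^2}\E\|\bLambda\bLambda^{\top}-\bM(\bA)\|_F^2$, and the right-hand side is split over $\Omega$: on $\Omega$ one has $\bM(\bA)=\bM(\bar\bA)=\E[\bLambda\bLambda^{\top}\mid\bar\bA]$, while on $\Omega^c$ H\"older together with $\|\bLambda\bLambda^{\top}-\bM(\bA)\|_F\le 2nK_1^2$ yields a contribution of order $\P(\Omega^c)^{1/4}=o_n(1)$. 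This is precisely the structure of Lemma \ref{lemma:log-truncate2} that you say you intend to mirror; note that it makes the posterior-comparison step occupying your ``main obstacle'' paragraph unnecessary, because the inequality from optimality is an average-risk statement about the function $\bX\mapsto\bM(\bX)$ composed with $\bA$, not a statement ``on the event $\{\bar\bTheta=\bTheta\}$.''

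Your alternative route (bounding $\|\E[\bLambda\bLambda^{\top}\mid\bA]-\E[\bLambda\bLambda^{\top}\mid\bar\bA]\|_F$ on $E=\{\bar\bTheta=\bTheta\}$ by a total-variation distance between the two posteriors) can be made to work, but not with the estimate you state. The pointwise bound ``discrepancy $\le 2K_1^2\,\P(E^c\mid\bA)$'' is not correct as written: the two posteriors at the same data point have unnormalized densities agreeing only on the region $G$ where $\vtheta$ lies in the truncation box, and closeness of the \emph{priors} in total variation does not transfer pointwise to the posteriors, since the likelihood can reweight the rare region arbitrarily. What is true is $\mathrm{TV}\big(\pi(\cdot\mid\bA),\bar\pi(\cdot\mid\bA)\big)\le \pi(G^c\mid\bA)+\bar\pi(G^c\mid\bA)$, where $\pi$ and $\bar\pi$ denote the posteriors under the original and truncated models; the first term averages to $\P(E^c)$ by the tower property, and the second must be handled separately by restricting to $E$ (where $\bA=\bar\bA$) and applying the tower property under the \emph{truncated} model, giving again an expectation at most $\P(E^c)$. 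With that patch, and the cross-term bookkeeping you sketch, your argument closes and proves the lemma; it is simply strictly more work than the paper's plug-in-estimator trick, which requires no control of posterior distances whatsoever.
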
 
We postpone the proof of the lemma to Appendix \ref{sec:proof-of-lemma:log-truncate}. By \cref{lemma:log-truncate}, in order to prove \cref{thm:upper-bound} under condition (b), we only need to show for all but countably many values of $q_{\Theta} > 0$,
\begin{align*}
	\limsup_{n,d \rightarrow \infty}\frac{1}{n^2} \E\left[\left\| \bLambda \bLambda^{\top} - \E[\bLambda \bLambda^{\top} \mid \bar\bA]\right\|_F^2 \right] \leq \lim_{n \rightarrow \infty} \MMSEsy_n(\mu_{\Lambda}; q_{\Theta}). 
\end{align*}
\subsubsection*{Model with extra perturbation}
For $s,h, a,a' \geq 0$ and $\{\eps_n\}_{n \geq 1}, \{\eps'_n\}_{n \geq 1} \subseteq \RR_+$, we introduce a perturbed model sequence, such that for each $n$, we observe $(\bar\bA(s), \bx'(a'), \bar\bx(a), \bY'(h))$ defined as follows:
\begin{align}
	& \bar\bA(s) := \frac{\sqrt{s}}{\sqrt[4]{nd}} \bLambda \barbTheta^{\top} + \bZ, \label{model:As} \\
	& \bx'(a') := a'\sqrt{\ep_n'} \bLambda + \bg', \label{model:perturb-lambda} \\
	& \bar\bx(a) := a\sqrt{\frac{n\ep_n}{d}} \barbTheta + \bg\label{model:perturb-theta}, \\
	& \bY'(h) := \frac{\sqrt{h}}{n} \bLambda \bLambda^{\top} + \bW',
\end{align}
where $\bg \sim \normal(0, \id_d)$, $\bg' \sim \normal(0, \id_n)$, $\bW' \sim  \GOE(n)$, mutually independent and are independent of everything else. Note that $\bar\bA(1) = \bar\bA$. Furthermore, we assume that $\eps_n, \eps_n' \rightarrow 0^+$ as $n,d \rightarrow \infty$. 
We can associate to the observation \eqref{model:As} the Hamiltonian 
\begin{align}
	\bar H_n^{[s]}(\vlambda, \barbtheta) = \sum\limits_{i \in [n], j \in [d]} \left\{ \frac{s}{\sqrt{nd}} \bLambda_i \vlambda_i \bar\bTheta_j \bar\vtheta_j + \frac{\sqrt{s}}{\sqrt[4]{nd}} Z_{ij} \vlambda_i \bar\vtheta_j - \frac{s}{2\sqrt{nd}} \vlambda_i^2 \bar\vtheta_j^2 \right\}, \label{eq:Hamiltonian-barHs}
\end{align}
where $\bar\vtheta_j = \vtheta_j \mathbbm{1}\{ |\vtheta_j| \leq 2K_2 \sqrt{\log d} \}$. For the sake of simplicity, we let $\bar H_n(\vlambda, \barbtheta) = \bar H_n^{[1]}(\vlambda, \barbtheta) $. Similarly, we can associate to the observations \eqref{model:perturb-lambda} and \eqref{model:perturb-theta} the following Hamiltonians, respectively:
\begin{align*}
	& \pertl(\vlambda) = \sum\limits_{i = 1}^n \left\{\sqrt{\ep_n'} a' \vlambda_i g_i' + {a'}^2 \ep_n' \bLambda_i \vlambda_i - \frac{{a'}^2 \ep_n'}{2} \vlambda_i^2 \right\}, \\
	& \pertt(\barbtheta) = \sum\limits_{j = 1}^d \left\{\sqrt{\frac{n\ep_n}{d}}a\bar\vtheta_j g_j + \frac{ a^2n\ep_n}{d} \bar\bTheta_j \bar\vtheta_j - \frac{a^2n\ep_n}{2d} \bar\vtheta_j^2 \right\}. 
\end{align*}
We then define the ``total'' Hamiltonian, which corresponds to all observations in the perturbed model as
\begin{align*}
	\perttot(\vlambda, \barbtheta) := \bar H_n^{[s]}(\vlambda, \barbtheta) + \pertl(\vlambda) + \pertt(\barbtheta) + H_n(\vlambda; \bY'(h) ),
\end{align*}
where we recall that $H_n(\vlambda; \bY'(h) )$ is defined in \cref{eq:Hamiltonian-Yp}. The posterior distribution of $(\blambda, \bar\bTheta)$ given observations $(\bar\bA(s), \bx'(a'), \bar\bx(a), \bY'(h))$ can be expressed as 
\begin{align*}
	\mu(\dd \vlambda, \dd \bar\vtheta \mid \bar\bA(s), \bx'(a'), \bar\bx(a), \bY'(h)) \propto \exp(\bar H_n^{(tot)}(\vlambda, \barbtheta))\tensorl \tensortb.
\end{align*}
We define the free energy functionals corresponding to the total Hamiltonian as
\begin{align}\label{eq:40}
\begin{split}
	 & \bar\phi_n(s, a, a',h) := \frac{1}{n}\log \int \exp(\perttot(\vlambda, \barbtheta) )\tensorl \tensortb, \\
	 &  \bar\Phi_n(s,a, a',h) := \E\left[ \bar\phi_n(s, a, a',h) \right],
\end{split}
\end{align}
where $\mu_{\bar\Theta}^{\otimes d}$ is the product distribution over $\RR^d$ with each coordinate distributed as $\bar{\bTheta}_0$. 
\subsubsection*{Truncation does not change the asymptotic free energy density}
Next, we show that truncation does not change the asymptotic free energy density. 
\begin{lemma}\label{lemma:C9}
For $s,h \geq 0$, we define the following unperturbed Hamiltonian and free energy density 
\begin{align*}
	&  H_n^{[s]}(\vlambda, \vtheta) := \sum\limits_{i \in [n], j \in [d]} \left\{ \frac{s}{\sqrt{nd}} \bLambda_i \vlambda_i \bTheta_j \vtheta_j + \frac{\sqrt{s}}{\sqrt[4]{nd}} Z_{ij} \vlambda_i \vtheta_j - \frac{s}{2\sqrt{nd}} \vlambda_i^2 \vtheta_j^2 \right\}, \\
	& {\Phi}_n(s, 0, 0, h) := \frac{1}{n}\E\left[ \log \int \exp( H_n^{[s]}(\vlambda, \vtheta) + H_n(\vlambda; \bY'(h) )) \tensorl \tensort\right]. 
\end{align*}
Then for all fixed $S_0 > 0$, under the conditions of Theorem \ref{thm:upper-bound} (b), as $n,d \rightarrow \infty$ we have 
	$$\sup_{h \geq 0, S_0 \geq s \geq 0}|{\Phi}_n(s, 0, 0, h)  - {\bar\Phi}_n(s, 0, 0, h) | = o_n(1).$$
\end{lemma}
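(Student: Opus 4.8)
The plan is to control the difference at the level of the random log--partition functions. Write $Z_n$ for the partition function inside $\Phi_n(s,0,0,h)$, set $\bar\bTheta:=\big(\bTheta_j\mathbbm{1}\{|\bTheta_j|\le 2K_2\sqrt{\log d}\}\big)_{j\le d}$, and introduce the intermediate $\widetilde Z_n$ obtained from $Z_n$ by replacing $\bTheta_j$ with $\bar\bTheta_j$ only inside the signal term of $H_n^{[s]}$ (the $\vtheta$--prior is still $\mu_\Theta$), and $\bar Z_n$ the fully truncated partition function of $\bar\Phi_n(s,0,0,h)$, so that $\Phi_n(s,0,0,h)-\bar\Phi_n(s,0,0,h)=\tfrac1n\E\log\tfrac{Z_n}{\widetilde Z_n}+\tfrac1n\E\log\tfrac{\widetilde Z_n}{\bar Z_n}$. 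For fixed $\vlambda$ the $\vtheta$--integral factors over $j\in[d]$, and integrating it out turns $\widetilde Z_n$ and $\bar Z_n$ into $\int e^{H_n(\vlambda;\bY'(h))}\prod_{j\le d}g_\zeta(a_j(\vlambda),b(\vlambda))\,\mu_\Lambda^{\otimes n}(\dd\vlambda)$ with $\zeta=\mu_\Theta$ resp. $\zeta$ equal to the law of $\bar\bTheta_0$, where $g_\zeta(a,b):=\int e^{at-bt^2}\zeta(\dd t)$, $a_j(\vlambda):=\tfrac{s}{\sqrt{nd}}\langle\bLambda,\vlambda\rangle\bar\bTheta_j+\tfrac{\sqrt s}{\sqrt[4]{nd}}\langle\bZ_{\cdot j},\vlambda\rangle$ and $b(\vlambda):=\tfrac{s}{2\sqrt{nd}}\|\vlambda\|^2$ (using $\bar\vtheta_j=\vtheta_j$ a.s. under the truncated prior). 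The point is that $a_j(\vlambda)=\tfrac{\sqrt s}{\sqrt[4]{nd}}\langle\bar\bA(s)_{\cdot j},\vlambda\rangle$ is a function of $\vlambda$ and the observation $\bar\bA(s)$, and that $b(\vlambda)=O(s\,r_n^{-2})\to 0$ uniformly over the (bounded) support of $\mu_\Lambda^{\otimes n}$.

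\emph{Step 1 (truncating the signal inside the Hamiltonian).} On $E_n:=\{\max_{j\le d}|\bTheta_j|\le 2K_2\sqrt{\log d}\}$, which by \eqref{eq:37} has probability $\ge 1-2d^{-3}$, one has $\bar\bTheta=\bTheta$, so $Z_n=\widetilde Z_n$ and $\log(Z_n/\widetilde Z_n)=0$, with no $h$-- or $s$--dependence. On $E_n^c$, integrating out $\vtheta$ writes $Z_n/\widetilde Z_n$ as a Gibbs average (against an $h$--dependent weight) of the $h$--free ratio $\prod_j g_{\mu_\Theta}(a_j^{\mathrm{full}}(\vlambda),b(\vlambda))/g_{\mu_\Theta}(a_j(\vlambda),b(\vlambda))$, which differs from $1$ only in the finitely many factors with $|\bTheta_j|>2K_2\sqrt{\log d}$; using bounded support of $\mu_\Lambda$, sub--Gaussianity of $\mu_\Theta$ and $\tfrac1{\sqrt{nd}}|\langle\bLambda,\vlambda\rangle|\le C r_n^{-2}$, each such factor is $e^{o(n)}$ on a high--probability event, giving $|\log(Z_n/\widetilde Z_n)|=o(n)$ there (and a crude polynomial bound elsewhere). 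Cauchy--Schwarz against $\sqrt{\P(E_n^c)}=O(d^{-3/2})$ then makes $\tfrac1n\E|\log(Z_n/\widetilde Z_n)|\to 0$, uniformly in $h\ge 0$ and $s\in[0,S_0]$. This step is routine and does not invoke the Nishimori identity.

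\emph{Step 2 (truncating the prior).} Here $\widetilde Z_n/\bar Z_n=\big\langle\prod_{j\le d}\rho_j(\vlambda)\big\rangle$ with $\rho_j(\vlambda):=g_{\mu_\Theta}(a_j(\vlambda),b(\vlambda))/g_{\bar\Theta}(a_j(\vlambda),b(\vlambda))$ and $\langle\cdot\rangle$ the posterior of $\bLambda$ given $(\bar\bA(s),\bY'(h))$ in the truncated model, which is \emph{matched} (true signal and prior both the truncated law), so \cref{lemma:nishimori} applies. The basic estimate is the identity $g_{\mu_\Theta}(a,b)-g_{\bar\Theta}(a,b)=\int_{|t|>2K_2\sqrt{\log d}}(e^{at-bt^2}-1)\,\mu_\Theta(\dd t)$, valid because the truncated law is the pushforward of $\mu_\Theta$ under $t\mapsto t\mathbbm{1}\{|t|\le 2K_2\sqrt{\log d}\}$; for $b\ge 0$ and sub--Gaussian $\mu_\Theta$ this is $\le d^{-4}e^{O(|a|\sqrt{\log d})}+2d^{-4}$, and $g_{\bar\Theta}(a,b)$ is bounded below by a positive constant whenever $|a|$ and $b$ are bounded. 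I would then split the $\vlambda$--integral on $B_n:=\{\vlambda:\max_{j\le d}|a_j(\vlambda)|\le\eta_d\}$ with $\eta_d:=\sqrt{\log d}/\sqrt{r_n}$ (so $\eta_d=o(\sqrt{\log d})$). On $B_n$ the above gives $\sum_j|\log\rho_j(\vlambda)|=O\big(d\cdot d^{-4+o(1)}\big)=o(1)$, hence $\prod_j\rho_j(\vlambda)=1+o(1)$ there. For $B_n^c$, single--replica Nishimori yields $\E\langle\prod_j\rho_j(\vlambda)\,\mathbbm{1}_{B_n^c}(\vlambda)\rangle=\E[\prod_j\rho_j(\bLambda)\,\mathbbm{1}_{B_n^c}(\bLambda)]$, where now $a_j(\bLambda)=\tfrac{s}{\sqrt{nd}}\|\bLambda\|^2\bar\bTheta_j+\tfrac{\sqrt s}{\sqrt[4]{nd}}\langle\bZ_{\cdot j},\bLambda\rangle$ with $\bLambda$ bounded and independent of $\bZ$, so $\langle\bZ_{\cdot j},\bLambda\rangle$ are i.i.d.\ Gaussians of variance $\|\bLambda\|^2=O(n)$; then $\P(\bLambda\in B_n^c)\le 2d\exp(-\eta_d^2\sqrt{nd}/(2s\|\bLambda\|^2))\le 2d^{\,1-r_n/(2S_0K_1^2)}=o(1)$ uniformly in $s\le S_0$, and a Binomial--exponential--moment bound (the number of columns with $|a_j(\bLambda)|>\eta_d$ is $O(1)$ with overwhelming probability, each contributing a factor with a finite exponential moment because $\operatorname{Var}(a_j(\bLambda))=O(r_n^{-2})\to0$) gives $\E[\prod_j\rho_j(\bLambda)\mathbbm{1}_{B_n^c}(\bLambda)]=o(1)$. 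Combining, $\E\log\langle\prod_j\rho_j\rangle\le o(1)+\E\langle\prod_j\rho_j\mathbbm{1}_{B_n^c}\rangle=o(1)$ for the upper bound; for the lower bound one uses that $\langle\prod_j\rho_j\rangle$ is a (weight--independent) average of the $h$--free function $\prod_j\rho_j(\vlambda)$, which lies in $[e^{-O(s\sqrt{nd})},e^{O(s\sqrt{nd})}]$ on a high--probability event, so the rare event $\{\langle\mathbbm{1}_{B_n^c}\rangle>\tfrac12\}$ (probability $\le 2\,\P(\bLambda\in B_n^c)$) contributes at most $O(r_n^2)\cdot d^{-\omega(1)}=o(1)$ after dividing by $n$. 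All bounds are uniform in $h\ge0$ because $e^{H_n(\vlambda;\bY'(h))}$ enters only as a common $\vlambda$--weight, and uniform in $s\in[0,S_0]$ through the exponent $r_n/(2sK_1^2)\ge r_n/(2S_0K_1^2)$.

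The main obstacle is the $B_n^c$ contribution in Step 2. A priori the Hamiltonian can re--weight atypical $\vlambda$ enormously: the term $\tfrac{q_\Theta}{2}\sum_j a_j(\vlambda)^2$ is of order $s\sqrt{nd}\gg n$ for typical $\vlambda$, so $\prod_j\rho_j$ is only bounded pointwise by $e^{\Theta(s\sqrt{nd})}$ and a naive ``$\sup_\vlambda\cdot\langle\mathbbm{1}_{B_n^c}\rangle$'' bound fails. What saves the argument is that single--replica Nishimori transports the Gibbs average over to the true signal $\bLambda$, which is bounded and independent of the noise, so the ``Hamiltonian boost'' disappears and one is left with a pure Gaussian--tail computation; this closes precisely because $r_n=\sqrt[4]{d/n}\to\infty$ forces super--polynomial (in $d$) decay of $\P(\max_j|a_j(\bLambda)|>\eta_d)$, and because the bounded support of $\mu_\Lambda$ keeps $\operatorname{Var}(a_j(\bLambda))=O(r_n^{-2})$ small enough for the exponential moments $\E[e^{K_2^2 a_j(\bLambda)^2}\mathbbm{1}\{|a_j(\bLambda)|>\eta_d\}]$ to be finite and of size $d^{-\omega(1)}$. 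The delicate bookkeeping lies in choosing $\eta_d$ so that both the per--column estimate on $B_n$ and the tail estimate off $B_n$ hold for every sequence with $d/n\to\infty$ (in particular when $r_n\to\infty$ arbitrarily slowly), and in handling the several exponential factors appearing in $\prod_j\rho_j$ simultaneously (via Hölder).
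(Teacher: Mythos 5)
Your route is correct but genuinely different from the paper's. The paper proves this lemma by a one-step Guerra interpolation: it mixes the truncated and untruncated models through an interpolated Hamiltonian $H_{n,t}^{[s]}$ built with an independent noise copy $\bZ'$, computes $\partial_t\Phi^{[s]}_{n,t}(h)$ by Gaussian integration by parts and the Nishimori identity, and bounds it by H\"older, so that the whole statement reduces to the single estimate $\E[(\bTheta_1-\bar\bTheta_1)^4]=O(\log d\, /d^4)$; uniformity in $h\ge 0$ and $s\in[0,S_0]$ is immediate because the derivative bound is $h$-free and linear in $s$. You instead compare the partition functions directly in two stages (truncate the planted signal, then truncate the prior), integrate out $\vtheta$ column by column, and use the single-replica Nishimori identity to transport the problematic Gibbs average $\langle\prod_j\rho_j\,\mathbbm{1}_{B_n^c}\rangle$ onto the planted $\bLambda$, where bounded support of $\mu_\Lambda$ and $r_n\to\infty$ give super-polynomial Gaussian tails; the $h$-uniformity comes from the sandwich/Nishimori structure making all key quantities $h$-free. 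What each approach buys: the paper's argument is much shorter, needs no tail surgery, and only ever touches a fourth moment; yours avoids the interpolation and the extra noise copy, and gives explicit pathwise, per-column control of the likelihood ratios (the $d^{-4+o(1)}$ estimate on $B_n$), at the price of the $B_n/B_n^c$ splitting and several steps you label routine that do carry real bookkeeping — in Step 1 the off-$E_n$ contribution needs an actual moment bound (easiest by noting the log-ratio is already weighted by the indicators $\mathbbm{1}\{|\bTheta_j|>2K_2\sqrt{\log d}\}$, so a first-moment bound suffices without Cauchy--Schwarz), and in Step 2 the lower bound on the rare event $\{\langle\mathbbm{1}_{B_n^c}\rangle>1/2\}$ needs the worst-case estimate $\log\prod_j\rho_j\ge -C\sum_j a_j(\vlambda)^2-o(1)$ together with a moment bound on $\sup_{\vlambda}\sum_j a_j(\vlambda)^2$ (via $\|\bZ\|_{\op}$ or Gaussian moments) to cover the residual low-probability set. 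With those details written out, your argument closes and is uniform in $(h,s)$ as claimed.
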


The proof of \cref{lemma:C9} is given in Appendix \ref{sec:proof-of-lemma:C9}. We further characterize the convergence of free energy density in \cref{lemma:G1}, again postponing the proof to Appendix \ref{sec:proof-of-lemma:G1}. 

\begin{lemma}\label{lemma:G1}
	Recall that $\cF(\cdot,\cdot)$ is defined in \cref{eq:FsQ}. Under the conditions of \cref{thm:upper-bound} (b), if we further assume that $\ep_n, \ep_n' \rightarrow 0^+$, then for all fixed $s,h \geq 0$, as $n,d \to \infty$ 
	\begin{align*}
		\lim\limits_{n,d \rightarrow \infty}\sup\limits_{a,a' \in [0,10]} \left|\bar\Phi_n(s,a,a', h) - \sup_{q \geq 0} \cF(q_{\Theta}^2s^2 + h, q) \right| = 0.
	\end{align*} 
\end{lemma}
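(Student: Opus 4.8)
The plan is to write the difference $\bar\Phi_n(s,a,a',h) - \sup_{q\ge 0}\cF(q_{\Theta}^2 s^2 + h, q)$ as a sum of three contributions, each of which is $o_n(1)$: (i) the effect of the two low-signal perturbations $\bx'(a')$ and $\bar\bx(a)$, which I show vanishes \emph{uniformly} over $a,a'\in[0,10]$; (ii) the effect of truncating $\mu_{\Theta}$, which is $o_n(1)$ by \cref{lemma:C9}; and (iii) the convergence of the untruncated, unperturbed free energy, $\Phi_n(s,0,0,h)\to \sup_{q\ge 0}\cF(q_{\Theta}^2 s^2 + h, q)$. Concretely, $\bar\Phi_n(s,a,a',h) = \bar\Phi_n(s,0,0,h) + o_n(1)$ uniformly in $a,a'$ by (i); $\bar\Phi_n(s,0,0,h) = \Phi_n(s,0,0,h) + o_n(1)$ by (ii); and (iii) then gives the claim for every fixed $s,h\ge 0$.

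For step (i) I interpolate in $a$ and $a'$. The Hamiltonians in \cref{eq:40} are polynomials in the almost surely bounded variables $\vlambda_i$ and $\bar\vtheta_j$, so $a\mapsto\bar\Phi_n(s,a,a',h)$ and $a'\mapsto\bar\Phi_n(s,a,a',h)$ are differentiable with
\[
\frac{\partial}{\partial a}\bar\Phi_n = \frac1n\,\E\big\langle \partial_a\pertt(\barbtheta)\big\rangle,\qquad
\frac{\partial}{\partial a'}\bar\Phi_n = \frac1n\,\E\big\langle \partial_{a'}\pertl(\vlambda)\big\rangle,
\]
where $\langle\,\cdot\,\rangle$ is the Gibbs average under the posterior $\mu(\cdot\mid\bar\bA(s),\bx'(a'),\bar\bx(a),\bY'(h))$. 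Applying Gaussian integration by parts in the noise vectors $\bg,\bg'$, Jensen's inequality, and the Nishimori identity (\cref{lemma:nishimori}) rewrites these derivatives as nonnegative overlap quantities bounded by $C\,\frac{\ep_n}{d}\,\E[\|\barbTheta\|^2]\le C q_{\Theta}\,\ep_n$ (using $\E[\|\barbTheta\|^2]\le d q_{\Theta}$) and by $C\,\frac{\ep_n'}{n}\E[\|\bLambda\|^2]\le C\ep_n'\,\E[\bLambda_0^2]$ (finite since $\mu_{\Lambda}$ has bounded support under condition $(b)$), respectively. Integrating over $a,a'\in[0,10]$ and using $\ep_n,\ep_n'\to 0^+$ gives $\sup_{a,a'\in[0,10]}\big|\bar\Phi_n(s,a,a',h)-\bar\Phi_n(s,0,0,h)\big|=o_n(1)$.

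For step (iii) I observe that $\bar\bA(s)$ with untruncated $\mu_{\Theta}$ is exactly the weak-signal model \eqref{model:weak-signal} with $\mu_{\Theta}$ replaced by the law $\mu_{\Theta}'$ of $\sqrt s\,\bThetas$, which still has vanishing first and third moments, is sub-Gaussian, and has second moment $s q_{\Theta}$. Hence $\Phi_n(s,0,0,h)$ is the free energy of that reparametrized asymmetric model additionally observed through the independent channel $\bY'(h)$, i.e.\ $\Phi_n(s,0,0,h)$ equals (in law) the quantity $\Phi_n(h,0)$ of \cref{sec:proof-of-thm-lower-bound-lower-bound} for the prior $\mu_{\Theta}'$. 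Applying \cref{lemma:free-energy-4}(1) to $\mu_{\Theta}'$ (its hypotheses, namely the conditions of \cref{thm:free-energy} and Assumption \ref{assumption:bounded-support}, hold since $\mu_{\Lambda}$ has bounded support and $\mu_{\Theta}'$ inherits the moment and sub-Gaussianity conditions) together with the symmetric-model limit \cref{eq:PhiY-cF} (with signal-to-noise parameter $s q_{\Theta}$ in place of $q_{\Theta}$) gives
\[
\Phi_n(s,0,0,h) = \Phi_n^Y(h,0)\big|_{q_{\Theta}\mapsto s q_{\Theta}} + o_n(1)\;\longrightarrow\; \sup_{q\ge 0}\cF\big((s q_{\Theta})^2 + h,\,q\big) = \sup_{q\ge 0}\cF\big(q_{\Theta}^2 s^2 + h,\,q\big).
\]
The underlying machinery (\cref{lemma:free-energy-1,lemma:free-energy-2,lemma:free-energy-3}) only requires $d/n\to\infty$, and under condition $(b)$ Assumption \ref{assumption:bounded-support} is in force, so the reduction of \cref{section:reduction-to-bdd-support} is not needed.

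The main obstacle is not conceptual but careful bookkeeping: adapting \cref{lemma:free-energy-1,lemma:free-energy-2,lemma:free-energy-3} to carry the scalar $s$ (equivalently, the rescaled prior $\mu_{\Theta}'$) together with the extra independent observation $\bY'(h)$, and checking that every Lindeberg and Wishart-to-GOE error term remains $o_n(1)$. Since $\bY'(h)$ is an independent Gaussian channel on $\bLambda\bLambda^{\sT}$, it only shifts the effective symmetric signal-to-noise ratio and does not couple to the asymmetric-to-symmetric reduction, so step (iii) is essentially a repackaging of already established lemmas; the genuinely new input is the short derivative estimate of step (i), which is also where the only nontrivial uniformity in the statement — the uniformity over $a,a'\in[0,10]$ — is obtained.
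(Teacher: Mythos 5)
Your proposal is correct and follows essentially the same route as the paper: the same three-step decomposition (perturbation effect, truncation via \cref{lemma:C9}, and convergence of the unperturbed free energy via the machinery of \cref{lemma:free-energy-1,lemma:free-energy-2,lemma:free-energy-3,lemma:free-energy-4} with the rescaled prior), with the only cosmetic difference that you differentiate in $a,a'$ and integrate, whereas the paper differentiates in $\ep_n,\ep_n'$ (noting $\bar\Phi_n$ is independent of $a,a'$ at $\ep_n=\ep_n'=0$); both yield the same $O(\ep_n+\ep_n')$ bound uniform over $a,a'\in[0,10]$.
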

 Recall that $D$ is defined in \cref{eq:setD}. For all $h + q_{\Theta}^2 \in D$, the mapping 
$s \mapsto \sup_{q \geq 0}\cF(q_{\Theta}^2s^2 + h, q)$ is differentiable at $s = 1$. Furthermore, for all fixed $a,a',h \geq 0$, the mappings $s \mapsto \bar\Phi_n(s,a,a', h)$,  $s \mapsto \bar\Phi_n(s,0,0, h)$ are convex differentiable on $(0, \infty)$. By \cref{lemma:C9,lemma:G1}, as $n, d \rightarrow \infty$, $\bar\Phi_n(s,a,a',h) \rightarrow \sup_{q \geq 0}\cF(q_{\Theta}^2s^2 + h, q)$ and $\bar\Phi_n(s,0,0,h) \rightarrow \sup_{q \geq 0}\cF(q_{\Theta}^2s^2 + h, q)$. Then we apply \cref{lemma:convex-derivative} and conclude that for $h + q_{\Theta}^2 \in D$, we have
\begin{align*}
	\left|\frac{\partial}{\partial s}\bar\Phi_n(s,a,a',h) \Big\vert_{s = 1} - \frac{\partial}{\partial s}\bar\Phi_n(s,0,0,h)\Big\vert_{s = 1}\right| = o_n(1).
\end{align*}
Using Gaussian integration by parts and Nishimori identity, we further derive that for $h + q_{\Theta}^2 \in D$
\begin{align}\label{eq:G187}
	\lim\limits_{n,d \rightarrow \infty}\frac{1}{2n\sqrt{nd}}\E\left[ \| \E[\bLambda \barbTheta^{\top} \mid \bar\bA(1),  \bar\bx(a), \bx'(a'),\bY'(h)] - \E[\bLambda \barbTheta^{\top} \mid \bar\bA(1), \bY'(h)]  \|_F^2   \right] = 0. 
\end{align}
%
By Jensen's inequality, for all $a,a' \in [1,2]$ 
\begin{align*}
	&\frac{1}{2n\sqrt{nd}}\E\left[ \| \E[\bLambda \barbTheta^{\top} \mid \bar\bA(1), \bar\bx(a), \bx'(a'), \bY'(h)] - \E[\bLambda \barbTheta^{\top} \mid \bar \bA(1), \bY'(h)]  \|_F^2   \right] \\
	\leq & \frac{1}{2n\sqrt{nd}}\E\left[ \| \E[\bLambda \barbTheta^{\top} \mid \bar\bA(1), \bar\bx(2), \bx'(2), \bY'(h)] - \E[\bLambda \barbTheta^{\top} \mid \bar \bA(1), \bY'(h)]  \|_F^2   \right].
\end{align*}
By \cref{eq:G187}, the last line above converges to zero as $n,d \to \infty$ for all $q_{\Theta}^2 + h \in D$. In this case, we have
\begin{align}\label{eq:C34}
	\lim\limits_{n,d \rightarrow \infty}\int_1^2\int_1^2 \frac{1}{2n\sqrt{nd}}\E\left[ \| \E[\bLambda \barbTheta^{\top} \mid \bar\bA(1), \bar\bx(a), \bx'(a'), \bY'(h)] - \E[\bLambda \barbTheta^{\top} \mid \bar\bA(1), \bY'(h)]  \|_F^2   \right] \dd a \dd a' = 0. 
\end{align}
We define 
\begin{align*}
	& \bLambda_{s, a, a', h} := \E[\bLambda \mid \bar\bA(1), \bar\bx(a), \bx'(a'), \bY'(h)] \in \RR^n, \\
	& \barbTheta_{s, a, a', h} := \E[\barbTheta \mid \bar\bA(1), \bar\bx(a), \bx'(a'), \bY'(h)] \in \RR^d, \\
	& \bM_{s, a, a', h} := \E[ \bLambda \bar\bTheta^{\top} \mid \bar\bA(1), \bar\bx(a), \bx'(a'), \bY'(h) ] \in \RR^{n \times d}.
\end{align*}
Invoking Stein's lemma, we see that the following equation holds:
\begin{align}
	 \frac{\partial}{\partial a}\E\left[ \|\bLambda_{1, a, a', h}\|^2 \right] =  \frac{2an\eps_n}{d} \E\left[ \| \bM_{1, a, a', h}- \bLambda_{1, a, a', h}\barbTheta_{1, a, a', h}^{\top}\|_F^2 \right]. \nonumber 
\end{align}
%
Using the above equation, we obtain that
%
%
%
\begin{align}
	& \int_1^2\int_1^2 \frac{1}{2n\sqrt{nd}} \E\left[ \|\bM_{1, a, a', h} - \bLambda_{1, a, a', h}\barbTheta_{1, a, a', h}\|_F^2 \right] \dd a \dd a' \nonumber\\
	 \leq & \frac{d^{1/2}}{2\ep_n n^{5/2} } \int_1^2  \left\{ \E[\|\bLambda_{1, 2, a', h}\|^2] - \E[\|\bLambda_{1, 1, a', h} \|^2]  \right\} \dd a' \nonumber\\
	 \leq & \frac{d^{1/2}\E_{\bLambdas \sim \mu_{\Lambda}}[\bLambdas^2]}{\ep_n n^{3/2}}  .\label{eq:C35}
\end{align}
%
%
%
\subsubsection*{Overlap concentration}
The next lemmas show that if we draw two independent samples from the posterior distribution of $\barbTheta$ ($\bLambda$) given $(\bar\bA(1), \bar\bx(a), \bx'(a'), \bY'(h))$, then their normalized inner product concentrates. This phenomenon is referred to as \emph{overlap concentration} in the literature of statistical mechanics. In what follows, we prove overlap concentration for $\bar\bTheta$. Since the proof is similar, in order to avoid redundancy, we skip the counterpart proof for $\bLambda$. For the sake of simplicity, we denote by $\langle \cdot\rangle_{s,a,a',h}$ the expectation with respect to the posterior distribution $\P(\cdot \mid \bar\bA(s), \bx'(a'), \bar\bx(a), \bY'(h))$.
\begin{lemma}\label{lemma:C11}
For $\barbtheta \in \RR^d$, we define
\begin{align*}
	U(\barbtheta) := \sum\limits_{j = 1}^d \left\{ \frac{1}{\sqrt{\ep_n nd}}\bar\vtheta_j g_j + \frac{2a}{d} \bar\vtheta_j \bar\bTheta_j - \frac{a}{d} \bar\vtheta_j^2 \right\}.
\end{align*}
Let $\barbtheta^{(1)}$, $\barbtheta^{(2)}, \barbtheta \in \RR^d$ be independent samples drawn from the posterior distribution $\P(\barbTheta = \cdot \mid \bar\bA(1), \bar\bx(a), \bx'(a'), \bY'(h))$. Then under the conditions of \cref{thm:upper-bound} (b), for all $a,a' \in [0.1,10]$ and $h \geq 0$, we have
\begin{align}\label{eq:K2log}
	& \E[\langle ((\barbtheta^{(1)})^{\top} \barbtheta^{(2)} / d - \E[\langle (\barbtheta^{(1)})^{\top} \barbtheta^{(2)} / d\rangle_{1,a,a',h}])^2 \rangle_{1,a,a',h}] \nonumber \\
	\leq & 40K_2^2 \log d\E[\langle | U(\barbtheta) - \E[\langle U(\barbtheta) \rangle_{1,a,a',h}]|\rangle_{1,a,a',h}]. 
\end{align}
\end{lemma}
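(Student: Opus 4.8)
The plan is to treat \eqref{eq:K2log} as a standard overlap–concentration estimate in the Nishimori (Bayes–optimal) setting. The starting remark is that $U(\barbtheta)$ is, up to the scalar factor $n\ep_n$, the derivative in the perturbation strength $a$ of the Hamiltonian $\pertt$ attached to the side channel $\bar\bx(a)$ of \eqref{model:perturb-theta}; hence $\langle U(\barbtheta)\rangle_{1,a,a',h}$ is proportional to $\partial_a$ of the free–energy density, and the thermal fluctuations of $U$ are tied to $\partial_a\langle U\rangle_{1,a,a',h}$. Throughout I abbreviate $\langle\cdot\rangle:=\langle\cdot\rangle_{1,a,a',h}$.

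First I would use the Nishimori identity (\cref{lemma:nishimori}) to pass from the replica–replica overlap to the overlap with the planted signal: writing $R_{1\ast}(\barbtheta):=\tfrac1d\langle\barbtheta,\barbTheta\rangle$, the identity (applied with two replicas) gives $\E\langle R_{12}\rangle=\E\langle R_{1\ast}\rangle$ and $\E\langle R_{12}^2\rangle=\E\langle R_{1\ast}^2\rangle$, so the left–hand side of \eqref{eq:K2log} equals $\E\langle(R_{1\ast}-\E\langle R_{1\ast}\rangle)^2\rangle$. Because both $\bar\vtheta_j$ and $\bar\bTheta_j$ are truncated at level $2K_2\sqrt{\log d}$, one has the deterministic bound $|R_{1\ast}|\le 4K_2^2\log d$, whence
\begin{align*}
\E\langle(R_{1\ast}-\E\langle R_{1\ast}\rangle)^2\rangle=\E\langle R_{1\ast}(R_{1\ast}-\E\langle R_{1\ast}\rangle)\rangle\le 4K_2^2\log d\cdot\E\langle|R_{1\ast}-\E\langle R_{1\ast}\rangle|\rangle .
\end{align*}
This is the origin of the $\log d$ factor in the statement, and it reduces \eqref{eq:K2log} to the $L^1$ comparison $\E\langle|R_{1\ast}-\E\langle R_{1\ast}\rangle|\rangle\le 10\,\E\langle|U-\E\langle U\rangle|\rangle$.

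For this comparison I would expand $U$ explicitly. Substituting $\bg=\bar\bx(a)-a\sqrt{n\ep_n/d}\,\barbTheta$ one finds $U(\barbtheta)=2a\,R_{1\ast}(\barbtheta)-\tfrac ad\|\barbtheta\|^2+(\ep_n nd)^{-1/2}\langle\bg,\barbtheta\rangle$, and a Gaussian integration by parts in $\bg$ combined with Nishimori yields the two identities $\E\langle U\rangle=a\,\E\langle R_{1\ast}\rangle$ and $\E\langle\tfrac ad\|\barbtheta\|^2-(\ep_n nd)^{-1/2}\langle\bg,\barbtheta\rangle\rangle=a\,\E\langle R_{1\ast}\rangle$. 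These combine into the pointwise decomposition
\begin{align*}
2a\bigl(R_{1\ast}-\E\langle R_{1\ast}\rangle\bigr)=\bigl(U-\E\langle U\rangle\bigr)+W,\qquad W:=\tfrac ad\|\barbtheta\|^2-\tfrac{1}{\sqrt{\ep_n nd}}\langle\bg,\barbtheta\rangle-a\,\E\langle R_{1\ast}\rangle,
\end{align*}
with $\E\langle W\rangle=0$. Since $a\ge 0.1$ on the relevant range, $\tfrac1{2a}\le 5$, so it only remains to show that $\E\langle|W|\rangle$ is dominated by $\E\langle|U-\E\langle U\rangle|\rangle$ (up to the numerical constant of the statement); I would obtain this by a second Gaussian integration by parts, which converts the noise–overlap piece of $W$ into thermal variances $\langle\bar\vtheta_j^2\rangle-\langle\bar\vtheta_j\rangle^2$ (each bounded by $4K_2^2\log d$ thanks to truncation) and ties the residual self–overlap piece back to $U$, the remaining errors being negligible under the hypotheses $\ep_n\to0^{+}$ and $d(\log d)^{8/5}/n^{6/5}\to0$ of \cref{thm:upper-bound}$(b)$; the proof for the overlap of $\bLambda$ is identical after exchanging roles.

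The main obstacle is precisely this last step. In isolation the noise–overlap $(\ep_n nd)^{-1/2}\langle\bg,\barbtheta\rangle$ is of large order (one has $\|\bg\|,\|\barbtheta\|\asymp\sqrt d$), and it is only after the integration by parts—which exposes the cancellation between the self–overlap and the noise–overlap already visible at the level of expectations—that $W$ becomes controllable. Carrying that cancellation through while keeping the absolute values in place, rather than working with the cleaner $L^2$ identities one normally uses for derivatives of the free energy, is the delicate point, and it is here that a bound on the operator norm of the thermal covariance of $\barbtheta$ (hence the truncation and the growth restriction on $d/n$) enters the argument.
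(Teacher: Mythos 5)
Your first two steps are sound (the Nishimori reduction of the left-hand side to the variance of the signal overlap, the truncation bound producing the $\log d$ factor, and the identities $\E[\langle U\rangle_{1,a,a',h}]=a\,\E[\langle R_{1\ast}\rangle_{1,a,a',h}]$ all check out), but the argument has a genuine gap at exactly the point you flag. Your pointwise decomposition $2a\bigl(R_{1\ast}-\E[\langle R_{1\ast}\rangle]\bigr)=\bigl(U-\E[\langle U\rangle]\bigr)+W$ forces you to bound $\E[\langle|W|\rangle]$, where $W$ contains the Gibbs--noise correlation $(\ep_n nd)^{-1/2}\langle\bg,\barbtheta\rangle$ and the self-overlap $\tfrac{a}{d}\|\barbtheta\|^2$. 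The tool you invoke for this, a ``second Gaussian integration by parts,'' only produces identities for expectations of products with the Gaussian variables; it does not give an $L^1$ bound on a quantity sitting inside an absolute value, and $\E[\langle W\rangle]=0$ says nothing about $\E[\langle|W|\rangle]$. Controlling $\E[\langle|W|\rangle]$ would itself require a concentration argument for the self-overlap and for $\langle\bg,\barbtheta\rangle$ under the coupled Gibbs/disorder measure, of a difficulty comparable to the overlap concentration the lemma is serving, and any such control would at best be asymptotic, whereas the lemma is a clean inequality valid for every $n,d$ and every $a,a'\in[0.1,10]$, $h\ge 0$; your route would at most yield the statement with an additional additive error $20K_2^2\log d\,\E[\langle|W|\rangle]$, which is a different (and unquantified) statement.

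The paper avoids this entirely by never decomposing pointwise. Writing $R_{12}:=(\barbtheta^{(1)})^{\top}\barbtheta^{(2)}/d$, Gaussian integration by parts together with the Nishimori identity gives the two \emph{exact} identities $\E[\langle U(\barbtheta^{(1)})\,R_{12}\rangle_{1,a,a',h}]=a\,\E[\langle R_{12}^2\rangle_{1,a,a',h}]$ and $\E[\langle R_{12}\rangle_{1,a,a',h}]\,\E[\langle U(\barbtheta^{(1)})\rangle_{1,a,a',h}]=a\,\E[\langle R_{12}\rangle_{1,a,a',h}]^2$ (in the first identity the noise term generated by differentiating in $\bg$ exactly cancels the $-\tfrac{a}{d}\|\barbtheta^{(1)}\|^2$ and the third-replica contributions). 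Hence $a$ times the variance of $R_{12}$ \emph{equals} the covariance of $U$ with $R_{12}$, which is then bounded by $4K_2^2\log d\cdot\E[\langle|U-\E[\langle U\rangle]|\rangle]$ using only $|R_{12}|\le 4K_2^2\log d$; dividing by $a\ge 1/10$ gives the constant $40$. In other words, the cancellation you were trying to exploit through $W$ already happens inside the covariance identity, so no separate control of the noise-overlap or self-overlap fluctuations is needed, and no asymptotic hypotheses enter the proof of this lemma.
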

Using \cref{eq:K2log}, we see that in order to prove overlap concentration, we only need to show that the right hand side of \cref{eq:K2log} is sufficiently small, which is accomplished via the following lemmas:
\begin{lemma}\label{lemma:G3}
	We let $\barbtheta \in \RR^d$ be a sample drawn from the posterior distribution $\P(\barbTheta = \cdot \mid \bar\bA(1), \bar\bx(a), \bx'(a'), \bY'(h))$. For $h \geq 0$, we define 
	$$v_n(h) := \sup_{1/2 \leq a, a' \leq 3}\E[|\bar\phi_n(1,a,a',h) - \E[\bar\phi_n(1,a,a',h)]|],$$
	where we recall that $\bar\phi_n$ is defined in \cref{eq:40}. Then under the conditions of \cref{thm:upper-bound} (b), if we further assume that $\ep_n \rightarrow 0^+$ and $nd^{-1/2}\eps_n \rightarrow \infty$ as $n,d \to \infty$, then there exists a numerical constant $C > 0$, such that for $n,d$ large enough
	\begin{align*}
		\int_1^2\int_1^2 \E[\langle |U(\barbtheta) - \E[\langle U(\barbtheta)\rangle_{1,a,a',h}]| \rangle_{1,a,a',h}]\dd a \dd a' \leq CK_2 \sqrt{(v_n(h) + n^{-1})\ep_n^{-1} \log d}.
	\end{align*}
\end{lemma}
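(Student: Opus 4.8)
The plan is to recognize $U(\barbtheta)$ as a multiple of the $a$-derivative of the perturbation Hamiltonian $\pertt$. A direct computation gives $\partial_a\pertt(\barbtheta)=n\ep_n\,U(\barbtheta)$, hence $\langle U(\barbtheta)\rangle_{1,a,a',h}=\ep_n^{-1}\,\partial_a\bar\phi_n(1,a,a',h)$ and $\E\langle U(\barbtheta)\rangle_{1,a,a',h}=\ep_n^{-1}\,\partial_a\bar\Phi_n(1,a,a',h)$, so the quantity to be bounded is a fluctuation bound for $\partial_a\bar\phi_n$. I split it by the triangle inequality into a \emph{thermal} part $\E[\langle|U-\langle U\rangle|\rangle]$ and a \emph{disorder} part $\E[|\langle U\rangle-\E\langle U\rangle|]$, then pass from $L^1$ to $L^2$ after integrating $(a,a')$ over $[1,2]^2$ via Cauchy--Schwarz (which costs nothing since $[1,2]^2$ has unit area).

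For the thermal part, differentiating twice in $a$ and using $\partial_a\pertt=n\ep_n U$ yields $\langle(U-\langle U\rangle)^2\rangle=(n\ep_n)^{-2}\big(n\,\partial_a^2\bar\phi_n-\langle\partial_a^2\pertt\rangle\big)$. Integrating over $a\in[1,2]$ telescopes, $\int_1^2 n\,\partial_a^2\bar\phi_n\,\de a=n\ep_n\big(\langle U\rangle_{a=2}-\langle U\rangle_{a=1}\big)$, and the Nishimori identity together with Gaussian integration by parts gives $\E\langle U\rangle_a=\tfrac{a}{d}\,\E\|\langle\barbtheta\rangle\|^2\in[0,a\,q_{\Theta}]$, so this contribution is $O(n\ep_n)$. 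The remaining term $\langle\partial_a^2\pertt\rangle$ is bounded pointwise by $O(n\ep_n K_2^2\log d)$ via the truncation $|\bar\bTheta_j|,|\bar\vtheta_j|\le 2K_2\sqrt{\log d}$. Altogether $\int_{[1,2]^2}\E[\langle(U-\langle U\rangle)^2\rangle]\,\de a\,\de a'\lesssim K_2^2(\log d)/(n\ep_n)$, and since $n^{-1}\le v_n(h)+n^{-1}$ the thermal contribution is $\lesssim K_2\sqrt{(v_n(h)+n^{-1})\,\ep_n^{-1}\log d}$.

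For the disorder part I use near-convexity of $a\mapsto\bar\phi_n(1,a,a',h)$. Since $\partial_a^2(n\bar\phi_n)=\langle\partial_a^2\pertt\rangle+\Var_{\langle\cdot\rangle}(\partial_a\pertt)\ge -Cn\ep_n K_2^2\log d$, adding the explicit quadratic $C\ep_n K_2^2(\log d)\,a^2$ makes both $\bar\phi_n$ and $\bar\Phi_n$ convex in $a$ without altering $\partial_a\bar\phi_n-\partial_a\bar\Phi_n$ or $\bar\phi_n-\bar\Phi_n$. Applying \cref{lemma:A10} to this convexified pair, taking expectations (using $\E|\bar\phi_n-\bar\Phi_n|\le v_n(h)$ on $a,a'\in[1/2,3]$), integrating over $a\in[1,2]$ so that the $\partial_a$-difference terms telescope and are controlled by the uniform bound $\lesssim\ep_n K_2^2\log d$ for the convexified derivative over $[1/2,3]$ (itself following from $\E\langle U\rangle_a\le a q_{\Theta}$), and optimizing the free parameter $b$ in \cref{lemma:A10}, I obtain $\int_{[1,2]^2}\E|\partial_a\bar\phi_n-\partial_a\bar\Phi_n|\,\de a\,\de a'\lesssim K_2\sqrt{(v_n(h)+n^{-1})\,\ep_n\log d}$; dividing by $\ep_n$ gives a disorder contribution $\lesssim K_2\sqrt{(v_n(h)+n^{-1})\,\ep_n^{-1}\log d}$. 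Summing and absorbing numerical factors into $C$ proves the lemma, with $\ep_n\to0^+$ and $nd^{-1/2}\ep_n\to\infty$ (the latter forcing $n\ep_n\to\infty$, which makes the $\frac{1}{\sqrt{\ep_n nd}}$-noise term in $U$ negligible) entering to justify the ``$n,d$ large enough'' qualifier.

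The main obstacle is that $\bar\phi_n$ is \emph{not} genuinely convex in $a$ — the perturbation enters quadratically — so the whole argument hinges on verifying that convexification costs only an $O(\ep_n K_2^2\log d)$ curvature correction, which is exactly of the order already present in the target bound and therefore harmless. The companion difficulty is obtaining the $a$-uniform control of $\partial_a\bar\Phi_n$ (and of its convexified version) on the enlarged interval $[1/2,3]$, which requires carefully combining the Nishimori identity, Gaussian integration by parts, and the logarithmic truncation estimates while keeping everything uniform in $a'$ and $h$; this bookkeeping is the technical heart of the proof.
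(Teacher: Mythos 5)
Your proposal is correct and follows essentially the same route as the paper: the identity $\partial_a\bar\phi_n=\ep_n\langle U\rangle_{1,a,a',h}$, the second-derivative/variance decomposition with telescoping in $a$ for the thermal fluctuation, and the quadratic convexification in $a$ combined with \cref{lemma:A10} and an optimized $b$ for the disorder fluctuation, finished by Cauchy--Schwarz. The only small correction is where the hypothesis $nd^{-1/2}\ep_n\to\infty$ actually enters: not to make the $\tfrac{1}{\sqrt{\ep_n nd}}$-noise term in $U$ negligible, but (via the bound $v_n(h)\lesssim d^{1/2}n^{-1}\log d$ from \cref{lemma:concentration-of-free-energy}) to guarantee $v_n(h)\ll \ep_n K_2^2\log d$, so that the optimized $b=\sqrt{v_n(h)/(\ep_n K_2^2\log d)}$ is admissible (i.e.\ $b<1/2$, keeping $a\pm b$ inside $[1/2,3]$) in the \cref{lemma:A10} step.
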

\begin{lemma}\label{lemma:concentration-of-free-energy}
	Under the conditions of Theorem \ref{thm:upper-bound} (b), if we further assume $\eps_n, \eps_n' \rightarrow 0^+$ as $n,d \to \infty$, then there exists a numerical constant $C_1 > 0$ such that for all $n,d$ large enough and $0 \leq h \leq 1$ 
	\begin{align*}
		v_n(h) \leq C_1 K_1^2 K_2^2 d^{1/2} n^{-1} \log d . 
	\end{align*}
\end{lemma}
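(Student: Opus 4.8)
Fix $h\in[0,1]$ and $a,a'\in[1/2,3]$; every estimate below will be uniform over these ranges, so it suffices to bound $\E[\,|\bar\phi_n(1,a,a',h)-\E\bar\phi_n(1,a,a',h)|\,]$ for one such triple, and since $\E|\bar\phi_n-\E\bar\phi_n|\le\Var(\bar\phi_n)^{1/2}$ it is enough to show $\Var(\bar\phi_n(1,a,a',h))\le C\,K_1^4K_2^4\,d\,n^{-2}(\log d)^2$ for $n,d$ large; this gives the lemma with $C_1=\sqrt C$. Throughout I use that the integration variable $\vlambda_i$ lies in $[-K_1,K_1]$ (support of $\mu_\Lambda$) and $\bar\vtheta_j$ in $[-2K_2\sqrt{\log d},2K_2\sqrt{\log d}]$ (the truncated prior $\mu_{\bar\Theta}$), and likewise $|\bLambda_i|\le K_1$, $|\bar\bTheta_j|\le 2K_2\sqrt{\log d}$; WLOG $K_1,K_2\ge1$ and $\eps_n,\eps_n'\le1$.

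The variable $\bar\phi_n:=\bar\phi_n(1,a,a',h)$ is a function of the independent objects $\bLambda,\bTheta,\bZ,\bW',\bg,\bg'$, so the law of total variance gives $\Var(\bar\phi_n)=\E[\Var(\bar\phi_n\mid\bLambda,\bTheta)]+\Var(\psi_n)$ with $\psi_n:=\E[\bar\phi_n\mid\bLambda,\bTheta]$. For the first term, condition on $(\bLambda,\bTheta)$: then $\bar\phi_n$ is a smooth function of the product Gaussian $(\bZ,\bW',\bg,\bg')$ and I apply the Gaussian Poincar\'e inequality. Gaussian integration by parts expresses each partial derivative of $\bar\phi_n$ as a Gibbs average of the matching monomial, e.g.\ $\partial_{Z_{ij}}\bar\phi_n=n^{-1}(nd)^{-1/4}\langle\vlambda_i\bar\vtheta_j\rangle$, $\partial_{W'_{kl}}\bar\phi_n=\sqrt h\,n^{-1}\langle\vlambda_k\vlambda_l\rangle$ (off the diagonal), $\partial_{g_j}\bar\phi_n=a\,n^{-1}\sqrt{n\eps_n/d}\,\langle\bar\vtheta_j\rangle$, $\partial_{g_i'}\bar\phi_n=a'n^{-1}\sqrt{\eps_n'}\,\langle\vlambda_i\rangle$, each bounded via the a priori bounds together with $h,a,a'\le3$. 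Summing the squared derivatives against the respective coordinate variances ($1$ for entries of $\bZ,\bg,\bg'$, and $1/n$ resp.\ $2/n$ for off-diagonal resp.\ diagonal entries of $\bW'$) yields
\begin{align*}
\Var(\bar\phi_n\mid\bLambda,\bTheta)\ \le\ C\Big(\tfrac{K_1^2K_2^2(\log d)\sqrt d}{n^{3/2}}+\tfrac{K_1^4}{n}+\tfrac{K_2^2\eps_n\log d}{n}+\tfrac{K_1^2\eps_n'}{n}\Big),
\end{align*}
and since $d/n\to\infty$ and $\eps_n,\eps_n'\le1$ the right-hand side is $o\big(K_1^4K_2^4\,d\,n^{-2}(\log d)^2\big)$.

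For $\Var(\psi_n)$ I use a bounded-differences (Efron--Stein/McDiarmid) inequality: $\psi_n$ is a function of the $n+d$ independent coordinates $(\bLambda_i)_{i\le n},(\bTheta_j)_{j\le d}$, and since $f\mapsto\log\int e^f\dd\mu$ is $1$-Lipschitz for the sup-norm, resampling one coordinate changes $\psi_n$ by at most $n^{-1}$ times the sup-norm of the induced change in $\perttot$. Resampling $\bLambda_i$ changes $\perttot$ mainly through $\tfrac1{\sqrt{nd}}\sum_j\bLambda_i\vlambda_i\bar\bTheta_j\bar\vtheta_j$, of size $\le 8K_1^2K_2^2 d(\log d)/\sqrt{nd}$, the contributions of $\pertl$ ($\lesssim\eps_n'K_1^2$) and of $\tfrac h{2n}\langle\bLambda,\vlambda\rangle^2$ ($\lesssim K_1^4$) being bounded in $d$ and hence of lower order; thus $c_i\le 9K_1^2K_2^2\sqrt d(\log d)n^{-3/2}$ for $n,d$ large. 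Resampling $\bTheta_j$ changes $\perttot$ through $\tfrac1{\sqrt{nd}}\sum_i\bLambda_i\vlambda_i\bar\bTheta_j\bar\vtheta_j$ and the perturbation term $\tfrac{a^2n\eps_n}{d}\bar\bTheta_j\bar\vtheta_j$, using that $\bTheta_j\mapsto\bar\bTheta_j$ has oscillation $\le4K_2\sqrt{\log d}$, giving $c_j'\le 8K_1^2K_2^2(\log d)/\sqrt{nd}$. Hence $\Var(\psi_n)\le\tfrac14\big(\sum_{i\le n}c_i^2+\sum_{j\le d}(c_j')^2\big)\le C K_1^4K_2^4\big(d(\log d)^2 n^{-2}+(\log d)^2 n^{-1}\big)\le C K_1^4K_2^4 d(\log d)^2 n^{-2}$, the last step because $d\gg n$. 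Adding this to the previous bound gives $\Var(\bar\phi_n)\le C K_1^4K_2^4 d(\log d)^2 n^{-2}$ uniformly in $(h,a,a')$, which is the claim.

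The step needing the most care is the bookkeeping: among the six sources of randomness and the several terms of $\perttot$, one must isolate the one producing the leading fluctuation. It is the $\bLambda$-dependence of the rank-one cross term of $\bar H_n^{[1]}$, handled by bounded differences; everything else is subleading precisely because $d/n\to\infty$ while $h,\eps_n,\eps_n'$ stay bounded --- the GOE perturbation $\bW'$ is controlled by $h\le1$, the ambient noise $\bZ$ by its $(nd)^{-1/4}$ prefactor combined with the $\bTheta$-truncation, and $\bg,\bg'$ by $\eps_n,\eps_n'\le1$. A minor technical wrinkle is the diagonal-versus-off-diagonal variance convention for $\bW'$ in the Poincar\'e sum, which only affects absolute constants.
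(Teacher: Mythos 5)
Your proposal is correct and follows essentially the same route as the paper's proof: a total-variance decomposition with the Gaussian Poincar\'e inequality applied conditionally on $(\bLambda,\bar\bTheta)$ for the noise variables $(\bZ,\bg,\bg',\bW')$, and an Efron--Stein/bounded-differences bound over the i.i.d.\ coordinates of $(\bLambda,\bTheta)$ for the conditional mean, finishing with Cauchy--Schwarz; your leading terms ($K_1^2K_2^2\sqrt d\,(\log d)\,n^{-3/2}$ from Poincar\'e and $K_1^4K_2^4 d(\log d)^2 n^{-2}$ from the $\bLambda_i$-differences) match the paper's. The only cosmetic difference is that you bound oscillations directly where the paper bounds partial derivatives, which is equivalent here.
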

We defer the proofs of \cref{lemma:C11,lemma:G3,lemma:concentration-of-free-energy} to \cref{sec:proof-of-lemma:C11,sec:proof-of-lemma:G3,sec:proof-of-lemma:concentration-of-free-energy}, respectively. Combining \cref{lemma:C11,lemma:G3,lemma:concentration-of-free-energy}, we deduce that under the conditions of these lemmas, for $\bar\vtheta^{(1)}, \bar\vtheta^{(2)} \in \RR^d$ that are two independent samples from the posterior distribution $\P(\barbTheta = \cdot \mid \bar\bA(1), \bx'(a'), \bar\bx(a), \bY'(h))$, there exists a numerical constant $C > 0$, such that for $n,d$ large enough
\begin{align*}
	\frac{d}{n}\int_1^2 \int_1^2 \E[\langle (\barbtheta_1^{\top} \barbtheta_2 / d - \E[\langle \barbtheta_1^{\top} \barbtheta_2 / d \rangle_{1,a,a',h}])^2  \rangle_{1,a,a',h}] \dd a \dd a' \leq CK_1K_2^4\ep_n^{-1/2}(\log d)^2d^{5/4}n^{-3/2}.
\end{align*}
Under condition (b), we see that there exists $\ep_n \rightarrow 0^+$, such that $\ep_n^{-1/2}(\log d)^2d^{5/4}n^{-3/2} \rightarrow 0$ and $nd^{-1/2} \eps_n \rightarrow \infty$ as $n,d \rightarrow \infty$. We summarize the overlap concentration results in \cref{thm:overlap-concentration} below, which also contains concentration argument for $\bLambda$ (that we skip the proof).

%
\begin{theorem}[Overlap concentration]\label{thm:overlap-concentration}

Let $\bar\vtheta^{(1)}, \bar\vtheta^{(2)} \in \RR^d$ be two independent samples drawn from the posterior distribution $\P(\barbTheta = \cdot \mid \bar\bA(1), \bx'(a'), \bar\bx(a), \bY'(h))$, and $\vlambda^{(1)}, \vlambda^{(2)} \in \RR^n$ be two independent samples drawn from the posterior distribution $\P(\bLambda = \cdot \mid \bar\bA(1), \bx'(a'), \bar\bx(a), \bY'(h))$. Under the conditions of Theorem \ref{thm:upper-bound} (b), there exist $\ep_n, \ep_n' \rightarrow 0^+$ such that for all $h \in [0,1]$, as $n,d \rightarrow \infty$ 
	\begin{align*}
		& \frac{(\log d)^2d^{5/4}}{n^{3/2}\ep_n^{1/2}} \rightarrow 0, \qquad \frac{n \eps_n}{d^{1/2}} \rightarrow \infty , \qquad \frac{(\log d)^{1/2}d^{1/4}}{n^{1/2}{\ep_n'}^{1/2}} \rightarrow 0, \qquad \frac{n\eps_n'}{d^{1/2}\log d}  \rightarrow \infty, \\
		& \frac{d}{n}\int_1^2 \int_1^2 \E[\langle (\barbtheta_1^{\top} \barbtheta_2 / d - \E[\langle \barbtheta_1^{\top} \barbtheta_2 / d \rangle_{1,a,a',h}])^2  \rangle_{1,a,a',h}] \dd a \dd a' \rightarrow 0, \\
		& \int_1^2 \int_1^2 \E[\langle (\vlambda_1^{\top} \vlambda_2 / n - \E[\langle \vlambda_1^{\top} \vlambda_2 / n \rangle_{1,a,a',h}])^2  \rangle_{1,a,a',h} ]\dd a \dd a' \rightarrow 0.
	\end{align*}
\end{theorem}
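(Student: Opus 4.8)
The statement is essentially a repackaging of \cref{lemma:C11,lemma:G3,lemma:concentration-of-free-energy} (together with their unstated $\bLambda$-analogues), so my plan is to chain these estimates into an explicit bound and then exhibit admissible perturbation sequences $\ep_n,\ep_n'$.

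First I would chain the three lemmas for the $\barbTheta$-overlap. \cref{lemma:C11} bounds the posterior variance of $(\barbtheta^{(1)})^{\top}\barbtheta^{(2)}/d$ by $40K_2^2\log d$ times the posterior mean absolute deviation of $U(\barbtheta)$; \cref{lemma:G3} integrates the latter over $a,a'\in[1,2]$ and bounds it by $CK_2\sqrt{(v_n(h)+n^{-1})\ep_n^{-1}\log d}$, valid once $\ep_n\to0^+$ and $nd^{-1/2}\ep_n\to\infty$; and \cref{lemma:concentration-of-free-energy} gives $v_n(h)\le C_1K_1^2K_2^2\,d^{1/2}n^{-1}\log d$ uniformly over $h\in[0,1]$. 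Since $d/n\to\infty$ forces $d^{1/2}\log d\to\infty$, we have $v_n(h)+n^{-1}\le (C_1K_1^2K_2^2+1)d^{1/2}n^{-1}\log d$, so composing the three bounds and multiplying by $d/n$ yields
\begin{align*}
\frac{d}{n}\int_1^2\!\!\int_1^2\E\big[\langle(\barbtheta_1^{\top}\barbtheta_2/d-\E[\langle\,\cdot\,\rangle_{1,a,a',h}])^2\rangle_{1,a,a',h}\big]\,\dd a\,\dd a'\ \le\ C'K_1K_2^4\,\ep_n^{-1/2}(\log d)^2\,d^{5/4}n^{-3/2}
\end{align*}
for $n,d$ large, uniformly in $h\in[0,1]$. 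Running the same argument with $\bLambda$, $\ep_n'$ in place of $\barbTheta$, $\ep_n$ (and swapping the roles of the two perturbation terms) produces an analogous bound on $\int_1^2\!\int_1^2\E[\langle(\vlambda_1^{\top}\vlambda_2/n-\E[\langle\,\cdot\,\rangle])^2\rangle]\,\dd a\,\dd a'$ that tends to $0$ provided $\ep_n'\to0^+$, $n\ep_n'/(d^{1/2}\log d)\to\infty$, and $(\log d)^{1/2}d^{1/4}n^{-1/2}{\ep_n'}^{-1/2}\to0$; these proofs being entirely parallel, I would omit them as in \cref{lemma:C11,lemma:G3,lemma:concentration-of-free-energy}.

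It then remains to choose $\ep_n,\ep_n'$ simultaneously satisfying all the listed constraints. For $\ep_n$ the requirements amount to $\ep_n\to0$, $\ep_n\gg d^{1/2}/n$, and $\ep_n\gg(\log d)^4 d^{5/2}/n^3$; one can take $\ep_n:=\big(\max\{d^{1/2}/n,\,(\log d)^4 d^{5/2}/n^3\}\big)^{1/2}$, which dominates the maximum (hence each term) yet still tends to $0$ as long as that maximum does. This is exactly where hypothesis (b) enters: raising $d(\log d)^{8/5}/n^{6/5}\to0$ to the power $5/2$ gives $d^{5/2}(\log d)^4/n^3\to0$, while $d(\log d)^{8/5}/n^{6/5}\to0$ also forces $d\le n^{6/5}$ eventually, hence $d^{1/2}/n\le n^{-2/5}\to0$. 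For $\ep_n'$ the constraints reduce to $\ep_n'\to0$ and $\ep_n'\gg d^{1/2}\log d/n$, and since $d^{1/2}\log d/n\to0$ under (b) one may take $\ep_n':=(d^{1/2}\log d/n)^{1/2}$. Feeding these choices into the two bounds above makes both right-hand sides vanish as $n,d\to\infty$, which is precisely the content of the last two overlap-concentration statements; the four rate statements in the first line hold by construction.

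The genuinely technical content of this argument sits not in the present statement but in the three cited lemmas — the cavity/perturbation-type overlap concentration estimate and the $L^1$ concentration of the free energy density — all of which are available to us here. At the level of the theorem itself the only real point is the bookkeeping, and the crux is verifying that the four competing polynomial constraints on $(\ep_n,\ep_n')$ are jointly feasible; this feasibility is equivalent to $d^{5/2}(\log d)^4/n^3\to0$, which is why condition (b) is stated with the exponent $6/5$ rather than something weaker. I do not anticipate any other obstacle.
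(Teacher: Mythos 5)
Your proposal is correct and follows essentially the same route as the paper: the paper likewise chains \cref{lemma:C11}, \cref{lemma:G3} and \cref{lemma:concentration-of-free-energy} to get the bound $CK_1K_2^4\ep_n^{-1/2}(\log d)^2 d^{5/4}n^{-3/2}$, skips the parallel $\bLambda$ argument, and invokes condition (b) to assert the existence of admissible $\ep_n,\ep_n'$. Your only addition is making the feasibility check explicit with concrete choices of $\ep_n,\ep_n'$, which is a harmless (and correct) elaboration of the paper's existence claim.
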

\begin{corollary}\label{cor:overlap-concentration}
	Under the conditions of \cref{thm:overlap-concentration}, for all $h \in [0,1]$, as $n,d \rightarrow \infty$ we have
	\begin{align*}
		& \frac{d}{n}\int_1^2 \int_1^2 \E[\langle (\langle\barbtheta_1^{\top} \barbtheta_2 / d \rangle_{1,a,a',h} - \E[\langle \barbtheta_1^{\top} \barbtheta_2 / d \rangle_{1,a,a',h}])^2  \rangle_{1,a,a',h}] \dd a \dd a' \rightarrow 0, \\
		& \int_1^2 \int_1^2 \E[\langle (\langle \vlambda_1^{\top} \vlambda_2 / n \rangle_{1,a,a',h} - \E[\langle \vlambda_1^{\top} \vlambda_2 / n \rangle_{1,a,a',h}])^2  \rangle_{1,a,a',h}] \dd a \dd a' \rightarrow 0.
	\end{align*}
\end{corollary}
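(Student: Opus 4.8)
The plan is to derive Corollary \ref{cor:overlap-concentration} from Theorem \ref{thm:overlap-concentration} by a one-line application of Jensen's inequality to the posterior (Gibbs) average $\langle\,\cdot\,\rangle_{1,a,a',h}$. The key observation is that the ``quenched'' overlap $\langle \barbtheta_1^\top\barbtheta_2/d\rangle_{1,a,a',h}$ is measurable with respect to the observations $(\bar\bA(1),\bx'(a'),\bar\bx(a),\bY'(h))$ alone: since $\langle\,\cdot\,\rangle_{1,a,a',h}$ averages out the replicas, and in fact, by conditional independence of the replicas, $\langle \barbtheta_1^\top\barbtheta_2/d\rangle_{1,a,a',h} = \|\barbTheta_{1,a,a',h}\|^2/d$. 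Consequently the outermost bracket appearing in the statement of the corollary acts trivially on its (replica-independent) argument, and it suffices to bound $\E\big[(\langle \barbtheta_1^\top\barbtheta_2/d\rangle_{1,a,a',h} - \E[\langle\barbtheta_1^\top\barbtheta_2/d\rangle_{1,a,a',h}])^2\big]$.

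Writing $R := \barbtheta_1^\top\barbtheta_2/d$ and letting $m := \E[\langle R\rangle_{1,a,a',h}]$ be the deterministic annealed overlap, I would use the identity $\langle R\rangle_{1,a,a',h} - m = \langle R - m\rangle_{1,a,a',h}$ together with convexity of $x\mapsto x^2$ to get, pointwise in the observations, $(\langle R\rangle_{1,a,a',h} - m)^2 \le \langle (R-m)^2\rangle_{1,a,a',h}$. Taking expectations over the observations, multiplying by $d/n$, and integrating over $(a,a')\in[1,2]^2$ gives
\begin{align*}
\frac{d}{n}\int_1^2\!\!\int_1^2 \E\big[(\langle R\rangle_{1,a,a',h} - m)^2\big]\,\dd a\,\dd a'
\le \frac{d}{n}\int_1^2\!\!\int_1^2 \E\big[\langle (R-m)^2\rangle_{1,a,a',h}\big]\,\dd a\,\dd a',
\end{align*}
and the right-hand side is exactly the quantity shown to vanish in Theorem \ref{thm:overlap-concentration}. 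The argument for the $\bLambda$-overlap $\vlambda_1^\top\vlambda_2/n$ is identical, with $n$ in place of $d$ and without the prefactor $d/n$, invoking the second display of Theorem \ref{thm:overlap-concentration}.

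Since every inequality used is an equality up to one invocation of Jensen, there is essentially no obstacle here; the only point requiring (minor) care is that the centering constant must be the annealed quantity $\E[\langle R\rangle]$ rather than $\langle R\rangle$ itself, so that $m$ can be pulled inside the bracket — but this matches exactly how both the theorem and the corollary are stated. This is the standard ``quenched concentration follows from replica concentration'' step used throughout the spin-glass literature (cf.\ \cite{lelarge2019fundamental,panchenko2013sherrington}), so it could alternatively be stated with only a brief remark.
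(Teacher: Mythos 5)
Your proposal is correct and is exactly the standard argument the paper implicitly relies on (the corollary is stated without a written proof): since $\langle \barbtheta_1^{\top}\barbtheta_2/d\rangle_{1,a,a',h}$ is a function of the observations alone, the outer bracket is trivial, and Jensen's inequality applied to the Gibbs average gives $(\langle R\rangle_{1,a,a',h}-\E[\langle R\rangle_{1,a,a',h}])^2\le \langle (R-\E[\langle R\rangle_{1,a,a',h}])^2\rangle_{1,a,a',h}$, so the corollary follows directly from Theorem \ref{thm:overlap-concentration} after taking expectations and integrating over $(a,a')$. No gaps; the same reasoning handles the $\bLambda$-overlap verbatim.
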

\begin{remark}\label{remark:overlap-concentration}
In \cref{thm:overlap-concentration} and \cref{cor:overlap-concentration}, we can replace the interval $[1,2]$ with $[a,b]$ for any fixed $0 < a < b < \infty$. 
\end{remark}

\subsubsection*{Proof of the theorem}

In the rest parts of the proof, we always assume that $\{\ep_n\}_{n \in \NN_+}$ and $\{\ep_n'\}_{n \in \NN_+}$ are chosen as in \cref{thm:overlap-concentration}. Under this assumption we have $\ep_n^{-1}(\log d)^4d^{5/2}n^{-3} \rightarrow 0$ and $d^{-2}n^{3/2}(\log d)^{-4} \rightarrow 0$, thus $d^{1/2}n^{-3/2}\ep_n^{-1} \rightarrow 0$ as $n,d \to \infty$. Plugging this result into \cref{eq:C34,eq:C35}, we obtain that 

%
\begin{align}\label{eq:44}
	 \lim\limits_{n,d \rightarrow \infty}\int_1^2\int_1^2 \frac{1}{2n\sqrt{nd}}\E\left[ \| \bLambda_{1,a,a',h} \barbTheta_{1,a,a',h} - \bM_{1,0,0,h}  \|_F^2   \right] \dd a \dd a' = 0. 
\end{align} 
%

By \cref{lemma:C9} and \cref{lemma:G1} we see that $|\Phi_n(s, 0, 0, h) - \sup_{q \geq 0} \cF(q_{\Theta}^2s^2 + h, q)| = o_n(1)$ for all fixed $s, h \geq 0$. Notice that $s \mapsto \Phi_n(s, 0, 0, h)$ is convex and differentiable. Furthermore, for all but countably many values of $q_{\Theta}^2 + h > 0$, the mapping $s \mapsto \sup_{q \geq 0} \cF(q_{\Theta}^2s^2 + h, q)$ is differentiable at $s = 1$. Using Gaussian integration by parts, \cref{lemma:convex-derivative,lemma:nishimori} we conclude that for these $q_{\Theta}^2 + h$ we have 
%
%
\begin{align}\label{eq:C45}
	\lim\limits_{n,d \rightarrow \infty} \frac{1}{2n\sqrt{nd}} \E\left[ \| \E[\bLambda \barbTheta^{\top} \mid \bar\bA(1), \bY'(h)] \|_F^2 \right] = \frac{\partial}{\partial s} \sup_{q \geq 0} \cF(s^2q_{\Theta}^2 + h, q) \Big\vert_{s = 1}.
\end{align}
Define $D_{\Theta}(h) := \frac{\partial}{\partial s} \sup_{q \geq 0} \cF(s^2q_{\Theta}^2 + h, q)\Big\vert_{s = 1}$. For all but countably many $q_{\Theta} > 0$ the mapping $s \mapsto \sup_{q \geq 0} \cF(q_{\Theta}^2s^2, q)$ is differentiable at $s = 1$, thus $D_{\Theta}(0)$ is well-defined. In this case, if $D_{\Theta}(0) = 0$, then by \cite{lelarge2019fundamental} we have $\lim_{n \rightarrow \infty} \MMSEsy_n(\mu_{\Lambda}, q_{\Theta}) = \E_{\bLambdas \sim \mu_{\Lambda}}[\bLambdas^2]^2$, which is achieved by $\mathbf{0}_{n \times n}$. Using \cref{thm:lower-bound} we deduce that $\lim_{n,d \rightarrow \infty} \MMSEas_n(\mu_{\Lambda}, \mu_{\Theta}) = \E_{\bLambdas \sim \mu_{\Lambda}}[\bLambdas^2]^2$, which concludes the proof of the theorem.


In the following parts of the proof we will assume $D_{\Theta}(0) > 0$. Let $a_1 < a_2$, $a_1' < a_2'$, and $a \sim \Unif[a_1,a_2]$ and $a' \sim \Unif[a_1',a_2']$. Similar to the derivation of \cref{eq:44} we have
%
%
\begin{align}\label{eq:C47}
	\lim\limits_{n,d \rightarrow \infty} \frac{1}{2n\sqrt{nd}}\E\left[ \| \bLambda_{1,a,a',h}\barbTheta_{1,a,a',h} - \bM_{1,0,0,h}  \|_F^2   \right]  = 0, 
\end{align}
where the expectation is taken over $a \sim \Unif[a_1,a_2]$ and $a' \sim \Unif[a_1',a_2']$. By \cref{eq:C45}, \cref{eq:C47} and triangle inequality, for all but countably many $q_{\Theta}^2$, $q_{\Theta}^2 + h$ we have 
\begin{align}
	& \limsup\limits_{n,d \rightarrow \infty} \left\{ \frac{1}{2\sqrt{n^3d}}\E\left[ \|\bLambda_{1,a,a',h}\barbTheta_{1,a,a',h} - \bM_{1,0,0,0}\|_F^2 \right]\right\}^{1/2} \nonumber\\
	\leq & \frac{1}{2\sqrt{n^3d}} \Big(\limsup\limits_{n,d \rightarrow \infty} \left\{ \E\left[ \| \bLambda_{1,a,a',h}\barbTheta_{1,a,a',h} - \bM_{1,0,0,h} \|_F^2   \right]\right\}^{1/2}  + \limsup\limits_{n,d \rightarrow \infty} \left\{ \E\left[\| \bM_{1,0,0,h}  - \bM_{1,0,0,0}  \|_F^2  \right]  \right\}^{1/2} \Big) \nonumber \\
	= & (D_{\Theta}(h) - D_{\Theta}(0))^{1/2}. \label{eq:C64}
\end{align} 
The following lemmas establish several useful properties of $\bLambda_{1,a,a',h}$ and $\barbTheta_{1,a,a',h}$.
\begin{lemma}\label{lemma:C14}
	Recall that $D$ is defined in \cref{eq:setD}. For all fixed $a_{\ast}, a_{\ast}' \in (0,5]$, $h \in [0,1]$, under the assumptions of \cref{thm:upper-bound} (b), if we further assume that $h + q_{\Theta}^2 \in D$, then as $n,d \to \infty$ we have
\begin{align*}
	& \frac{1}{n}\E\left[ \big\|\E[\bLambda \mid \bar\bA(1), \bx'(a_{\ast}'), \bar\bx(a_{\ast}), \bY'(h)]\big\|^2 \right] = 2\left(\frac{\partial}{\partial h}\sup_{q \geq 0} \cF(q_{\Theta}^2 + h, q)\right)^{1/2} + o_n(1), \\
	& \frac{1}{\sqrt{nd}} \E\left[ \|\E[\barbTheta \mid \bar\bA(1), \bx'(a'_{\ast}), \bar\bx(a_{\ast}), \bY'(h)]\|^2 \right] = 2q_{\Theta}^2\left(\frac{\partial}{\partial h}\sup_{q \geq 0} \cF(q_{\Theta}^2 + h, q)\right)^{1/2} + o_n(1).
\end{align*}

\end{lemma}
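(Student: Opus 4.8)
The plan is to compute the two normalized norms by differentiating the free energy $\bar\Phi_n(1,a_*,a_*',h)$ with respect to the perturbation parameter $h$ (for the first identity) and to leverage the identity $\bar\Phi_n(s,0,0,h)\to\sup_{q\ge0}\cF(q_\Theta^2 s^2+h,q)$ together with overlap concentration (for the second). First I would recall that, by Gaussian integration by parts applied to $\bW'$ in $H_n(\vlambda;\bY'(h))$, and the Nishimori identity (\cref{lemma:nishimori}), one has
\begin{align*}
	\frac{\partial}{\partial h}\bar\Phi_n(1,a_*,a_*',h) = \frac{1}{4n^2}\E\big[\big\|\E[\bLambda\bLambda^\top\mid \bar\bA(1),\bx'(a_*'),\bar\bx(a_*),\bY'(h)]\big\|_F^2\big]\,.
\end{align*}
Next, by \cref{lemma:G1} (which gives $\bar\Phi_n(s,a,a',h)\to\sup_{q\ge0}\cF(q_\Theta^2 s^2+h,q)$ uniformly over $a,a'$ in a compact set) together with convexity and differentiability of $h\mapsto\bar\Phi_n(1,a_*,a_*',h)$ and $h\mapsto\sup_{q\ge0}\cF(q_\Theta^2+h,q)$ on the differentiability set $D$ of the latter, I would apply \cref{lemma:convex-derivative} to pass the limit through the derivative: for $h+q_\Theta^2\in D$,
\begin{align*}
	\lim_{n,d\to\infty}\frac{1}{4n^2}\E\big[\big\|\E[\bLambda\bLambda^\top\mid \bar\bA(1),\bx'(a_*'),\bar\bx(a_*),\bY'(h)]\big\|_F^2\big] = \frac{\partial}{\partial h}\sup_{q\ge0}\cF(q_\Theta^2+h,q)\,.
\end{align*}
The first claimed identity then follows because the posterior second moment $\frac1n\E[\|\E[\bLambda\mid\cdots]\|^2]$ is asymptotically the square root of the rank-one rescaled matrix expression — concretely, $\frac{1}{n^2}\E[\|\E[\bLambda\bLambda^\top\mid\cdots]\|_F^2]$ equals $(\frac1n\E[\langle\vlambda^{(1)},\vlambda^{(2)}\rangle])^2$ up to $o_n(1)$ by overlap concentration (\cref{thm:overlap-concentration}, \cref{cor:overlap-concentration}, in the form valid for intervals $[a_*/2,2a_*]$ per \cref{remark:overlap-concentration}, or at a single point by a standard argument), and this last quantity is $\frac1n\E[\|\E[\bLambda\mid\cdots]\|^2]$ again by Nishimori. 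Squaring and taking square roots, $\frac1n\E[\|\E[\bLambda\mid\cdots]\|^2]=2(\partial_h\sup_q\cF(q_\Theta^2+h,q))^{1/2}+o_n(1)$, using that $\frac14 x^2$ is the free-energy derivative and the maximizer is $x = 2(\partial_h\cF_*)^{1/2}$.

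For the second identity I would differentiate instead with respect to $a$: Stein's lemma applied to $\bg$ in $\pertt(\barbtheta)$ gives $\frac{\partial}{\partial a}\E[\|\barbTheta_{1,a,a',h}\|^2]=\frac{2an\eps_n}{d}\E[\|\bM_{1,a,a',h}-\bLambda_{1,a,a',h}\barbTheta_{1,a,a',h}^\top\|_F^2]$ (already recorded in the proof of part (b)), which is $o(n\eps_n/d\cdot n\sqrt{nd})$ small by \cref{eq:C34}; hence $\E[\|\barbTheta_{1,a,a',h}\|^2]$ is essentially constant in $a$ over $[a_1,a_2]$ up to negligible error, so it equals $\E[\|\barbTheta_{1,0,0,h}\|^2]+o(\cdots)$. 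Meanwhile $\frac{1}{\sqrt{nd}}\E[\|\E[\barbTheta\mid\bar\bA(1),\bY'(h)]\|^2]$ can be read off by differentiating $\bar\Phi_n(s,0,0,h)$ in $s$ at $s=1$: Gaussian integration by parts in $\bZ$ inside $\bar H_n^{[s]}$ together with Nishimori yields $\frac{\partial}{\partial s}\bar\Phi_n(s,0,0,h)\big|_{s=1}$ equal (up to the truncation errors controlled by \cref{lemma:C9}) to $\frac{1}{2n\sqrt{nd}}\E[\|\E[\bLambda\barbTheta^\top\mid\bar\bA(1),\bY'(h)]\|_F^2]$; comparing with the relation between $\|\E[\bLambda\barbTheta^\top\mid\cdots]\|_F^2$ and the product $\frac1n\|\E[\bLambda\mid\cdots]\|^2\cdot\frac1d\|\E[\barbTheta\mid\cdots]\|^2$ (valid by overlap concentration for both factors), and using the first identity to substitute the $\bLambda$-norm, one solves for $\frac{1}{\sqrt{nd}}\E[\|\E[\barbTheta\mid\cdots]\|^2]$. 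The scalar bookkeeping here — that $\cF$'s $s$-derivative at $s=1$ is $2q_\Theta^2\,\partial_h\cF_*$ when its $h$-derivative is $\partial_h\cF_*$, which follows from the chain rule since $\cF$ depends on $s$ only through $q_\Theta^2 s^2 + h$ — produces the stated factor $2q_\Theta^2(\partial_h\sup_q\cF(q_\Theta^2+h,q))^{1/2}$.

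The main obstacle I expect is making the overlap-concentration substitutions fully rigorous: one needs to replace $\frac{1}{n^2}\E[\|\E[\bLambda\bLambda^\top\mid\cdots]\|_F^2]$ by $(\frac1n\E[\langle\vlambda^{(1)},\vlambda^{(2)}\rangle])^2$ and the mixed term $\frac{1}{nd}\E[\|\E[\bLambda\barbTheta^\top\mid\cdots]\|_F^2]$ by the product of the two single-factor overlaps, and these require the Frobenius-norm (i.e.\ second-moment) concentration of the overlaps, not just their first moments. This is precisely what \cref{thm:overlap-concentration} and \cref{cor:overlap-concentration} provide, but only after integrating over the auxiliary parameters $a,a'$ over an interval; so the clean pointwise statement of the lemma must be obtained by an averaging-then-continuity argument, using that the relevant quantities are monotone or Lipschitz in $(a,a')$ (as in the derivation of \cref{eq:C47}) to upgrade the integrated bound to the claimed limit at fixed $a_*,a_*'$. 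A secondary technical point is tracking the truncation: all identities are first established for the truncated quantities $\bar\bA$, $\barbTheta$, and \cref{lemma:C9,lemma:log-truncate} are invoked to ensure the truncation does not affect the asymptotic free energy or MMSE, so the $o_n(1)$ errors genuinely close.
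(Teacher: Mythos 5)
Your proposal is correct and follows essentially the same route as the paper's proof: Gaussian integration by parts with the Nishimori identity to express the $h$- and $s$-derivatives of the (perturbed and unperturbed) free energies, \cref{lemma:G1} together with \cref{lemma:convex-derivative} to pass limits through derivatives for $q_{\Theta}^2+h\in D$, overlap concentration to factorize/deterministicize the matrix quantities, and the Jensen/monotonicity sandwich in $(a,a')$ to upgrade the integrated statements to the fixed $(a_\ast,a_\ast')$ claim. One minor slip: the identity recorded in the paper is for $\frac{\partial}{\partial a}\E[\|\bLambda_{1,a,a',h}\|^2]$, not $\frac{\partial}{\partial a}\E[\|\bar{\bTheta}_{1,a,a',h}\|^2]$ (the latter would involve the fluctuation of $\bar{\bTheta}\bar{\bTheta}^{\top}$, which \cref{eq:C34,eq:C35} do not control), but this intermediate step is dispensable since the monotonicity-in-$(a,a')$ sandwich you also invoke is exactly what the paper uses for the second claim.
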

%
%
%
The proof of \cref{lemma:C14} is deferred to \cref{sec:proof-of-lemma:C14,sec:proof-of-lemma:C15}, respectively. 
By \cref{lemma:C14} and \cref{cor:overlap-concentration}, we conclude that for all $0 < a_1 < a_2 < 5$ and $0 < a_1' < a_2' < 5$, if we let $a \sim \Unif[a_1, a_2]$ and $a' \sim \Unif[a_1', a_2']$, then for all $h + q_{\Theta}^2 \in D$ we have
\begin{align}\label{eq:bTheta-conc}
	\frac{1}{\sqrt{nd}} \big\| \E[\barbTheta \mid \bar\bA(1), \bx'(a'), \bar\bx(a), \bY'(h)] \big\|^2 = 2q_{\Theta}^2\left(\frac{\partial}{\partial h}\sup_{q \geq 0} \cF(q_{\Theta}^2 + h, q)\right)^{1/2} + o_P(1). 
\end{align}
Let $C_0(q_{\Theta}, h) := 2q_{\Theta}^2\left(\frac{\partial}{\partial h}\sup_{q \geq 0} \cF(q_{\Theta}^2 + h, q)\right)^{1/2}$. 
We define the mapping $\bM: \RR^{n \times n} \times \RR^+ \mapsto \RR^{n \times n}$, such that $\bM(\bX; b)_{ij} = X_{ij} \mathbbm{1}\{|X_{ij}| \leq b\}$. We further define $\bM_0: \RR^{n \times d} \rightarrow \RR^{n \times n}$ such that for $\bX \in \RR^{n \times d}$
\begin{align*}
	\bM_0(\bX) = \bM\left(\mbox{$\frac{1}{\sqrt{nd}}$} \E[\bLambda \barbTheta^{\top} \mid \bar\bA = \bX]\E[ \barbTheta \barbLambda^{\top} \mid \bar\bA = \bX];\, 2K_1^2 C_0(q_{\Theta}, h)\right). 
\end{align*}
By triangle inequality
\begin{align}
	& \frac{1}{n}\E\left[ \|\bM_0(\bar\bA) - C_0(q_{\Theta}, h)\E[\bLambda \bLambda^{\top} \mid \bar\bA, \bY'(h)]\|_F^2 \right]^{1/2} \nonumber \\
	\leq & \frac{C_0(q_{\Theta}, h)}{n}\E\left[ \|\bLambda_{1,a,a',h}\bLambda_{1,a,a',h}^{\top}  - \E[\bLambda \bLambda^{\top} \mid \bar\bA, \bY'(h)]\|_F^2 \right]^{1/2} \nonumber \\
	& + \frac{1}{n}\E\left[ \|C_0(q_{\Theta}, h)\bLambda_{1,a,a',h}\bLambda_{1,a,a',h}^{\top} - \bM_0(\bar\bA)\|_F^2 \right]^{1/2}, \label{eq:C58}
\end{align}
where the expectation is taken over $(a,a',\bLambda, \barbTheta, \bZ, \bg, \bg', \bW')$. Using Gaussian integration by parts we obtain
\begin{align}
	 \frac{\partial }{\partial {a'}} \E\left[ \|\bLambda_{1,a,a',h}\|^2 \right] =  \ep_n'a' \E\left[ \|\E[\bLambda \bLambda^{\top} \mid \bar\bA, \bar\bx(a), \bar\bx'(a'), \bY'(h)] - \bLambda_{1,a,a',h} \bLambda_{1,a,a',h}^{\top}\|_F^2  \right]. \label{eq:C60}
\end{align}
By \cref{eq:C60} we have
\begin{align}\label{eq:C61}
	& \frac{1}{n^2}\E\left[ \|\bLambda_{1,a,a',h} \bLambda_{1,a,a',h} - \E[\bLambda \bLambda^{\top} \mid \bar\bA, \bx'(a'),\bar\bx(a), \bY'(h)]\|_F^2 \right] \nonumber \\
	= & \frac{1}{n^2(a_2 - a_1)(a_2'-a_1')} \int_{a_1}^{a_2} \int_{a_1'}^{a_2'} (\eps_n'a')^{-1} \frac{\partial }{\partial {a'}} \E\left[ \|\bLambda_{1,a,a',h}\|^2 \right] \dd a'\dd a \nonumber\\
	\leq & \frac{1}{n^2\eps_n'a_1'(a_2 - a_1)(a_2'-a_1')} \int_{a_1}^{a_2} \E\left[ \|\bLambda_{1,a,a_2',h}\|^2 \right]\dd a \nonumber \\
	 \leq & \frac{K_1^2}{n(a_2' - a_1')\eps_n'a_1'}, 
\end{align}
which vanishes as $n,d \to \infty$. For $\bX \in \RR^{n \times n}$, we define $\|\bX\|_1 := \sum_{i,j \in [n]} |X_{ij}|$. 
Then we have
\begin{align}\label{eq:C62}
	& \frac{1}{n^2}\E\left[ \|C_0(q_{\Theta}, h)\bLambda_{1,a,a',h}\bLambda_{1,a,a',h}^{\top} - {\bM_0}(\bar\bA)\|_F^2 \right] \nonumber\\
	\overset{(i)}{\leq} & \frac{3K_1^2 C_0(q_{\Theta}, h)}{n^2}\E\left[ \|C_0(q_{\Theta}, h)\bLambda_{1,a,a',h}\bLambda_{1,a,a',h}^{\top}- \bM_0(\bar\bA)\|_1 \right] \nonumber\\
	\overset{(ii)}{\leq} & \frac{3K_1^2 C_0(q_{\Theta}, h)}{n^2}\E\left[ \Big\|C_0(q_{\Theta}, h)\bLambda_{1,a,a',h}\bLambda_{1,a,a',h}^{\top} - \frac{1}{\sqrt{nd}}\bM_{1,0,0,0} \bM_{1,0,0,0}^{\top} \Big\|_1 \right] \nonumber\\
	\overset{(iii)}{\leq} & \frac{3K_1^2 C_0(q_{\Theta}, h)}{n^2}\E\left[ \Big\|\frac{\|\barbTheta_{1,a,a',h}\|^2}{\sqrt{nd}}\bLambda_{1,a,a',h}\bLambda_{1,a,a',h}^{\top} - \frac{1}{\sqrt{nd}}\bM_{1,0,0,0}\bM_{1,0,0,0}^{\top} \Big\|_1 \right] \\
	& + 3K_1^4 C_0(q_{\Theta}, h)\E\left[ \left| \frac{\|\barbTheta_{1,a,a',h}\|^2}{\sqrt{nd}} - C_0(q_{\Theta}, h) \right| \right],  
\end{align}
where in \emph{(i)} we use the assumption that $\mu_{\Lambda}$ has bounded support, and \emph{(ii)} is by the fact that for all $|x| \leq C_0(q_{\Theta}, h)K_1^2$, $y \in \RR$, 
\begin{align*}
	\big|x - y\mathbbm{1}\{|y| \leq 2K_1^2 C_0(q_{\Theta, h})\}\big| \leq \big|x - y\big|.
\end{align*}
Lastly, \emph{(iii)} is by triangle inequality. Applying \cref{lemma:A14} and \Holder's inequality we see that 
\begin{align}
	& \frac{3K_1^2 C_0(q_{\Theta}, h)}{n^2} \times \E\Big[ \Big\|\frac{\|\barbTheta_{1,a,a',h}\|^2}{\sqrt{nd}}\bLambda_{1,a,a',h} \bLambda_{1,a,a',h}^{\top} - \frac{1}{\sqrt{nd}}\bM_{1,0,0,0} \bM_{1,0,0,0}^{\top} \Big\|_1 \Big] \nonumber \\
	\leq & \frac{6K_1^2 C_0(q_{\Theta}, h)}{n\sqrt{nd}} \times  \E\left[\big\| \bM_{1,0,0,0} - \bLambda_{1,a,a',h} \barbTheta_{1,a,a',h}^{\top} \big\|_F^2\right]^{1/2} \nonumber \\
	&  \times  \Big( \E\left[ \big\| \bM_{1,0,0,0} \big\|_F^2 \right]^{1/2} + \E\left[ \big\| \bLambda_{1,a,a',h}\barbTheta_{1,a,a',h}^{\top} \big\| _F^2\right]^{1/2} \Big). \label{eq:C63}
\end{align}
By \cref{lemma:G1}, for all $a,a' \in [0,5]$, as $n,d \rightarrow \infty$ we have $\bar\Phi_n(1,a,a',h) \rightarrow \sup_{q \geq 0} \cF(q_{\Theta}^2 + h, q)$. Notice that the mapping $h \mapsto \bar\Phi_n(1,a,a',h)$ is convex and differentiable, thus for $q_{\Theta}^2 + h \in D$ we can apply \cref{lemma:convex-derivative} and conclude that 
\begin{align}\label{eq:54}
	\lim_{n,d \rightarrow \infty}\frac{1}{4n^2}\E[\| \E[\bLambda\bLambda^{\top} \mid \bar\bA, \bx'(a'), \bar{\bx}(a), \bY'(h)] \|_F^2] = \frac{\partial}{\partial h} \sup_{q \geq 0}\cF(q_{\Theta}^2 + h, q).
\end{align}
Leveraging triangle inequality, \cref{eq:C45,eq:C64}, we obtain that for all $a,a' \in [0,5]$, $q_{\Theta}^2 + h, q_{\Theta}^2 \in D$
\begin{align}\label{eq:75}
\begin{split}
	 & \limsup_{n,d \rightarrow \infty}\frac{1}{n\sqrt{nd}}\E\big[ \big\| \bM_{1,0,0,0} \big\|_F^2 \big] = 2D_{\Theta}(0), \\
	 & \limsup_{n,d \rightarrow \infty}\frac{1}{n\sqrt{nd}}\E\big[ \big\| \bLambda_{1,a,a',h}\barbTheta_{1,a,a',h}^{\top} \big\| _F^2\big] = 2D_{\Theta}(h), \\
	 &\lim_{n,d \rightarrow \infty} \frac{1}{n\sqrt{nd}}\E\big[\big\| \bM_{1,0,0,0}  - \bLambda_{1,a,a',h}\barbTheta_{1,a,a',h}  \big\|_F^2\big] = 2(D_{\Theta}(h) - D_{\Theta}(0)).
\end{split}
\end{align}
By \cref{thm:overlap-concentration} and \cref{eq:bTheta-conc} we have
\begin{align}\label{eq:76}
	\limsup_{n,d \rightarrow \infty}\E\left[ \left| \frac{1}{\sqrt{nd}}{\|\barbTheta_{1,a,a',h}\|^2} - C_0(q_{\Theta}, h) \right| \right] = 0.
\end{align}
We plug \cref{eq:C63,eq:75,eq:76} into \cref{eq:C62} and obtain that 
\begin{align}\label{eq:77}
	& \limsup_{n,d \rightarrow \infty} \frac{1}{n^2}\E\left[ \|C_0(q_{\Theta}, h)\bLambda_{1,a,a',h}\bLambda_{1,a,a',h}^{\top} - \bM_0(\bar\bA)\|_F^2 \right] \nonumber \\
	 \leq & 24K_1^2 C_0(q_{\Theta}, h)\times \big(D_{\Theta}(h) - D_{\Theta}(0)\big)^{1/2}.
\end{align}
Using triangle inequality
\begin{align*}
	& \frac{1}{n}\E\left[ \|\bLambda\bLambda^{\top} - C_0(q_{\Theta}, h)^{-1} \bM_0(\bar\bA)\|_F^2  \right]^{1/2} \\
	\leq & \frac{1}{n} \E\left[ \| \bLambda \bLambda^{\top} - \E[\bLambda \bLambda^{\top} \mid \bar\bA, \bx'(a'), \bar\bx(a), \bY'(h)] \|_F^2 \right]^{1/2} + \frac{1}{n} \E\left[ \|\bLambda_{1,a,a',h} \bLambda_{1,a,a',h}^{\top} - C_0(q_{\Theta}, h)^{-1} \bM_0(\bar\bA)\|_F^2 \right]^{1/2}  \\
	& + \frac{1}{n} \E\left[ \|\E[\bLambda \bLambda^{\top} \mid \bar\bA, \bx'(a'), \bar\bx(a), \bY'(h)] - \bLambda_{1,a,a',h} \bLambda_{1,a,a',h}^{\top}\|_F^2 \right]^{1/2}.
\end{align*}
We plug \cref{eq:54,eq:C61,eq:77} into the above equation and conclude that
%
\begin{align*}
	& \limsup_{n,d \rightarrow \infty}\frac{1}{n^2}\E\left[ \|\bLambda\bLambda^{\top} - C_0(q_{\Theta}, h)^{-1} \hat{\bM}(\bar\bA)\|_F^2  \right] \\
	\leq & \E_{\bLambdas \sim \mu_{\Lambda}}[\bLambdas^2]^2 - 4 \frac{\partial}{\partial h} \sup_{q \geq 0}\cF(q_{\Theta}^2 + h, q) + 2K_1^2\sqrt{24K_1^2 C_0(q_{\Theta}, h)^{-1}(D_{\Theta}(h) - D_{\Theta}(0))^{1/2}} \\
	& + 24K_1^2 C_0(q_{\Theta}, h)^{-1}(D_{\Theta}(h) - D_{\Theta}(0))^{1/2},  
\end{align*} 
which is an upper bound for $\limsup_{n,d \rightarrow \infty} \E\left[\left\| \bLambda \bLambda^{\top} - \E[\bLambda \bLambda^{\top} \mid \bar\bA]\right\|_F^2 \right] / n^2$. Recall that $D_{\Theta}(0) > 0$, thus $C_0(q_{\Theta}, h)^{-1} < C_0(q_{\Theta}, 0)^{-1} < \infty$. For all but countably many $q_{\Theta} > 0$ the mapping $h \mapsto D_{\Theta}(h)$ is continuous at $0$. For these $q_{\Theta}$, if we take $h \to 0^+$ while maintaining $h + q_{\Theta}^2 \in D$ then we derive that 
%
%
%
\begin{align*}
	\limsup_{n,d \rightarrow \infty}\frac{1}{n^2} \E\left[\left\| \bLambda \bLambda^{\top} - \E[\bLambda \bLambda^{\top} \mid \bar\bA]\right\|_F^2 \right] \leq  \E_{\bLambdas \sim \mu_{\Lambda}}[\bLambdas^2]^2- 4 \frac{\partial}{\partial h} \sup_{q \geq 0}\cF(q_{\Theta}^2 + h, q) \Big|_{h = 0},
\end{align*}
thus concludes the proof of the theorem using \cref{lemma:log-truncate}.

\subsubsection{Proof of Theorem \ref{thm:upper-bound} under condition (c)}

We define $\tilde \bY = (\bA\bA^{\top} - d\id_n) / {\sqrt{nd}}$ and $\tilde{\bY}' = {q_{\Theta}} \bLambda \bLambda^{\top} / n + (\bZ^{\top} \bZ - d \id_n) / {\sqrt{nd}}$. One can verify that as $n,d \rightarrow \infty$ we have $\|\tilde \bY - \tilde{\bY}'\|_{op} = o_P(1)$. We then run rotationally invariant AMP with spectral initialization based on $\tilde{\bY}$. According to \cite{mondelli2021pca}, for large enough number of iterations this algorithm achieves Bayesian MMSE, thus completing the proof of the theorem under condition (c). 



\section{Convergence of free energy density}\label{sec:tec-lemmas-lower-bound}
\subsection{Proof of  \cref{lemma:free-energy-1}}\label{sec:proof-free-energy-density1}

For $k \in [d]$, we define
\begin{align*}
	H_n^{(k)}(\vlambda, \vtheta) := &  \frac{1}{\sqrt{nd}}\sum\limits_{i = 1}^n \sum\limits_{j = 1, j \neq k}^d \bLambda_i\vlambda_i \bTheta_j \vtheta_j + \frac{1}{\sqrt[4]{nd}}\sum\limits_{i = 1}^n \sum\limits_{j = 1, j \neq k}^d Z_{ij} \vlambda_i \vtheta_j- \frac{1}{2\sqrt{nd}}\sum\limits_{i = 1}^n \sum\limits_{j = 1, j \neq k}^d \vlambda_i^2 \vtheta_j^2 \\
	& + H_n(\vlambda ; \bY'(h)) + H_n(\vlambda ; \bx'(s)).
\end{align*}
Furthermore, we introduce the following distributions
\begin{align*}
	& \mu_n^{(k,+)}(\dd\vlambda, \dd\vtheta) := \frac{1}{Z_n^{(k,+)}} \exp\left( H_n^{(k)}(\vlambda, \vtheta) \right) \tensorl P_{\Theta, k}(\dd \vtheta), \\
	& Z_n^{(k, +)} := \int \exp\left( H_n^{(k)}(\vlambda, \vtheta) \right) \tensorl  P_{\Theta, k}(\dd\vtheta), \\
	& \mu_n^{(k,-)}(\dd\vlambda, \dd\vtheta) := \frac{1}{Z_n^{(k,-)}} \exp\left( H_n^{(k)}(\vlambda, \vtheta)\right) \tensorl  P_{\Theta, k - 1}(\dd\vtheta), \\
	& Z_n^{(k, -)} := \int \exp\left( H_n^{(k)}(\vlambda, \vtheta) \right) \tensorl  P_{\Theta, k - 1}(\dd\vtheta).
\end{align*}
Note that $\mu_n^{(k,+)}$, $ \mu_n^{(k,-)}$, $Z_n^{(k, +)}$ and $Z_n^{(k, -)}$ are all random objects. The following lemma is a straightforward consequence of the above definitions. 
\begin{lemma}\label{lemma:some-facts}
	The following statements are true for all $k \in [d]$:
	\begin{enumerate}
	\item $Z_n^{(k, +)} = Z_n^{(k, -)}$.
	\item We denote by $\bZ_{\cdot k} \in \RR^n$ the $k$-th column of $\bZ$, then $(\mu_n^{(k,+)}, \mu_n^{(k,-)}, Z_n^{(k,+)}, Z_n^{(k,-)})$ are independent of $(\bZ_{\cdot k}, \bTheta_k)$.
	\item  We let $\vtheta_{-k} = (\vtheta_1, \vtheta_2, \cdots, \vtheta_{k - 1}, \vtheta_{k + 1}, \cdots, \vtheta_d) \in \RR^{d - 1}$. For $(\vlambda, \vtheta) \sim\mu_n^{(k, +)}$, we have $\vtheta_k \sim \mu_{\Theta}$ and is independent of $(\vtheta_{-k}, \vlambda)$. Similarly, for $(\vlambda, \vtheta) \sim\mu_n^{(k, -)}$, we have $\vtheta_k \sim \normal(0, q_{\Theta})$, and is independent of $(\vtheta_{-k}, \vlambda)$.
\end{enumerate}
\end{lemma}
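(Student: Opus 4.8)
The plan is to derive all three claims from one structural observation about the Hamiltonian $H_n^{(k)}$: by its very definition (the ``bulk'' double sums run over $j\neq k$, and the perturbation pieces $H_n(\vlambda;\bY'(h))$, $H_n(\vlambda;\bx'(s))$ from Eqs.~\eqref{eq:Hamiltonian-Yp}--\eqref{eq:Hamiltonian-xp} involve only $\bLambda,\bW',\bg'$ and $\vlambda$), the quantity $H_n^{(k)}(\vlambda,\vtheta)$ depends on the integration variable only through $\vlambda$ and $\vtheta_{-k}$, and on the random data only through $\mathcal G_k := \sigma\big(\bLambda,\{\bTheta_j\}_{j\neq k},\{\bZ_{\cdot j}\}_{j\neq k},\bW',\bg'\big)$; in particular it is free of $\vtheta_k$, $\bTheta_k$ and $\bZ_{\cdot k}$. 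I would state this once and then read off the conclusions.

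For claim 1, note that $P_{\Theta,k}$ and $P_{\Theta,k-1}$ are product measures with the same marginal on $\vtheta_{-k}$ --- call it $\nu_k$ --- differing only in the $\vtheta_k$-marginal ($\mu_\Theta$ versus $\normal(0,q_\Theta)$). Since $H_n^{(k)}$ does not involve $\vtheta_k$, Fubini allows integrating out $\vtheta_k$ first; as $\mu_\Theta$ and $\normal(0,q_\Theta)$ are both probability measures this contributes a factor $1$ either way, leaving in both cases $\int \exp\big(H_n^{(k)}(\vlambda,\vtheta_{-k})\big)\,\tensorl\,\nu_k(\dd\vtheta_{-k})$. Hence $Z_n^{(k,+)}=Z_n^{(k,-)}$. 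For claim 2, the mutual independence assumptions on $(\bW',\bg',\bLambda,\bTheta,\bZ,\bW)$ --- together with the columns of $\bZ$ being independent and the $\bTheta_j$ being i.i.d. --- give that $(\bZ_{\cdot k},\bTheta_k)$ is independent of $\mathcal G_k$. Since $Z_n^{(k,\pm)}$ and the measures $\mu_n^{(k,\pm)}$ are built from $H_n^{(k)}$ and the fixed, deterministic reference measures $\tensorl$, $P_{\Theta,k}$, $P_{\Theta,k-1}$, they are $\mathcal G_k$-measurable, and the claimed independence follows.

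For claim 3, write $P_{\Theta,k} = \nu_k \otimes \mu_\Theta$, viewing the last factor as the law of the $k$-th coordinate. Then
\[
\mu_n^{(k,+)}(\dd\vlambda,\dd\vtheta) = \Big[\tfrac{1}{Z_n^{(k,+)}}\, e^{H_n^{(k)}(\vlambda,\vtheta_{-k})}\,\tensorl\,\nu_k(\dd\vtheta_{-k})\Big]\otimes \mu_\Theta(\dd\vtheta_k),
\]
a product of a measure in $(\vlambda,\vtheta_{-k})$ and $\mu_\Theta$ in $\vtheta_k$; hence under $\mu_n^{(k,+)}$ the coordinate $\vtheta_k$ has law $\mu_\Theta$ and is independent of $(\vlambda,\vtheta_{-k})$. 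Repeating the computation with $P_{\Theta,k-1}=\nu_k\otimes\normal(0,q_\Theta)$ gives the $\mu_n^{(k,-)}$ statement. There is no genuine difficulty here: the lemma is pure bookkeeping, and the only point requiring care is tracking which random objects appear in which term of $H_n^{(k)}$ and confirming the marginal compatibility of $P_{\Theta,k}$ and $P_{\Theta,k-1}$ on $\vtheta_{-k}$; I would therefore keep the write-up short.
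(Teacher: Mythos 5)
Your proof is correct and matches the paper's intent: the paper simply declares the lemma ``a straightforward consequence of the above definitions,'' and your write-up makes precise exactly the observation it relies on, namely that $H_n^{(k)}$ involves neither $\vtheta_k$ nor $(\bTheta_k,\bZ_{\cdot k})$, so the $k$-th coordinate integrates out trivially, the two partition functions coincide, the random measures are measurable with respect to the data excluding $(\bZ_{\cdot k},\bTheta_k)$, and the posteriors factorize with the stated marginal for $\vtheta_k$. Nothing further is needed.
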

%
%
We define 
\begin{align*}
	h_n^{(k)}(\vlambda, \vtheta) := \frac{1}{\sqrt{nd}} \sum_{i = 1}^n \bLambda_i\vlambda_i \bTheta_k \vtheta_k + \frac{1}{\sqrt[4]{nd}}\sum_{i = 1}^n Z_{ik} \vlambda_i \vtheta_k - \frac{1}{2\sqrt{nd}} \sum_{i = 1}^n \vlambda_i^2 \vtheta_k^2. 
\end{align*}
%
For some random variable $X$, we denote by $ \mu_n^{(k, +)}[X]$, $ \mu_n^{(k, -)}[X]$  the  expectations of $X$ evaluated under distributions $\mu_n^{(k, +)}$ and $\mu_n^{(k, -)}$, respectively. Using \cref{lemma:some-facts}, we obtain that
\begin{align}
	& \Phi_n^{(k)} - \Phi_n^{(k - 1)} \nonumber \\
	= & \frac{1}{n} \E\left[ \log \left( \int \exp(h_n^{(k)}(\vlambda, \vtheta))\mu_n^{(k,+)}(\dd\vlambda, \dd\vtheta) Z_n^{(k,+)} \right) \right] - \frac{1}{n} \E\left[ \log \left( \int \exp(h_n^{(k)}(\vlambda, \vtheta))\mu_n^{(k,-)}(\dd\vlambda, \dd\vtheta) Z_n^{(k,-)} \right) \right] \nonumber  \\
	\overset{(i)}{=}& \frac{1}{n} \E\left[ \log \left( \mu_n^{(k, +)}\left[\exp(h_n^{(k)}(\vlambda, \vtheta)  )\right]\right)  \right] - \frac{1}{n}\E\left[ \log \left( \mu_n^{(k, -)}\left[\exp(h_n^{(k)}(\vlambda, \vtheta)  ) \right] \right)\right],\label{eq:diff-of-Phi}
\end{align}
where \emph{(i)} is by result 1 in \cref{lemma:some-facts}. We consider the following Taylor expansion:
\begin{align}\label{eq:exp}
\exp\left(h_n^{(k)}(\vlambda, \vtheta) \right) = \sum\limits_{l = 0}^{\infty} \frac{1}{l!}h_n^{(k)}(\vlambda, \vtheta)^l = 1 + \sum\limits_{l = 1}^4 c_l^{(k)} \vtheta_k^l + R^{(k)} + \sum\limits_{l = 5}^{\infty} \frac{1}{l!}h_n^{(k)}(\vlambda, \vtheta)^l,
\end{align}
where 
\begin{align*}
	 c_1^{(k)} =& \frac{1}{\sqrt{nd}} \langle \bLambda, \vlambda \rangle \bTheta_k + \frac{1}{\sqrt[4]{nd}} \langle \bZ_{\cdot k}, \vlambda \rangle, \\
	 c_2^{(k)} =& \frac{1}{2}\left( \frac{1}{\sqrt{nd}} \langle \bLambda, \vlambda \rangle \bTheta_k + \frac{1}{\sqrt[4]{nd}} \langle \bZ_{\cdot k}, \vlambda\rangle \right)^2 - \frac{1}{2\sqrt{nd}} \langle \vlambda, \vlambda \rangle, \\
	 c_3^{(k)} =& -\frac{1}{2nd}  \langle \bLambda, \vlambda \rangle  \langle \vlambda, \vlambda \rangle \bTheta_k  - \frac{1}{2n^{3/4}d^{3/4}}  \langle  \bZ_{\cdot k} , \vlambda \rangle \langle  \vlambda, \vlambda \rangle  + \frac{1}{6} \left( \frac{1}{\sqrt{nd}} \langle \bLambda, \vlambda \rangle \bTheta_k + \frac{1}{\sqrt[4]{nd}} \langle \bZ_{\cdot k}, \vlambda \rangle \right)^3, \\
	c_4^{(k)} = & \frac{1}{8nd}\langle \vlambda, \vlambda \rangle^2 - \frac{1}{4\sqrt{nd}} \langle \vlambda, \vlambda \rangle\left( \frac{1}{\sqrt{nd}} \langle \bLambda, \vlambda \rangle \bTheta_k + \frac{1}{\sqrt[4]{nd}} \langle \bZ_{\cdot k}, \vlambda  \rangle \right)^2  \\
	& + \frac{1}{24}\left(\frac{1}{\sqrt{nd}} \langle \bLambda, \vlambda \rangle \bTheta_k + \frac{1}{\sqrt[4]{nd}} \langle \bZ_{\cdot k}, \vlambda \rangle \right)^4, \\
	R^{(k)}  = & - \frac{\langle \vlambda, \vlambda \rangle^3}{48(nd)^{3/2}}  \vtheta_k^6 + \frac{1}{8nd} \langle \vlambda, \vlambda \rangle^2 \left(\frac{1}{\sqrt{nd}} \langle \bLambda, \vlambda \rangle \bTheta_k + \frac{1}{\sqrt[4]{nd}} \langle \bZ_{\cdot k}, \vlambda \rangle \right) \vtheta_k^5 + \frac{1}{384n^2d^2} \langle \vlambda, \vlambda \rangle^4 \vtheta_k^8\\
	&  - \frac{1}{12\sqrt{nd}} \langle \vlambda, \vlambda \rangle   \left( \frac{1}{\sqrt{nd}} \langle \bLambda, \vlambda \rangle \bTheta_k + \frac{1}{\sqrt[4]{nd}} \langle \bZ_{\cdot k}, \vlambda \rangle \right)^3 \vtheta_k^5 \\
	& + \frac{1}{16nd} \langle \vlambda, \vlambda \rangle^2 \left( \frac{1}{\sqrt{nd}} \langle \bLambda, \vlambda \rangle \bTheta_k + \frac{1}{\sqrt[4]{nd}} \langle \bZ_{\cdot k}, \vlambda \rangle \right)^2 \vtheta_k^6\\
	&   -\frac{1}{48(nd)^{3/2}}  \langle \vlambda, \vlambda \rangle^3 \left( \frac{1}{\sqrt{nd}} \langle \bLambda, \vlambda \rangle \bTheta_k + \frac{1}{\sqrt[4]{nd}} \langle \bZ_{\cdot k} , \vlambda \rangle \right) \vtheta_k^7.
\end{align*}
%
The next lemma characterizes convergence of power series:
\begin{lemma}\label{lemma:finite-sum}
	For $n,d$ large enough, the following quantities almost surely exist and are finite:
	\begin{align*}
		\sum\limits_{l = 5}^{\infty} \frac{1}{l!}\mu_n^{(k,+)}\left[ |h_n^{(k)}(\vlambda, \vtheta)|^l\right], \qquad \sum\limits_{l = 5}^{\infty} \frac{1}{l!}\mu_n^{(k,-)}\left[ |h_n^{(k)}(\vlambda, \vtheta)|^l\right].
	\end{align*}
\end{lemma}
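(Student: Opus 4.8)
The plan is to reduce each of the two series to a one--dimensional exponential moment of the coordinate $\vtheta_k$, and then to exploit the fact that, because $d/n\to\infty$, the coefficient of the quadratic term in $h_n^{(k)}$ becomes arbitrarily small, so that the relevant moment generating function of $\vtheta_k$ converges by sub-Gaussianity.

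First I would record the structural facts. Under Assumption \ref{assumption:bounded-support}, the measures $\mu_n^{(k,+)}$ and $\mu_n^{(k,-)}$ are absolutely continuous with respect to $\mu_{\Lambda}^{\otimes n}\otimes P_{\Theta,k}$ and $\mu_{\Lambda}^{\otimes n}\otimes P_{\Theta,k-1}$ respectively, hence are supported on $[-K,K]^n\times\RR^d$; in particular $|\vlambda_i|\le K$ almost surely under either measure. Writing $h_n^{(k)}(\vlambda,\vtheta)=a(\vlambda)\,\vtheta_k-c(\vlambda)\,\vtheta_k^2$ with $a(\vlambda)=\frac{1}{\sqrt{nd}}\langle\bLambda,\vlambda\rangle\bTheta_k+\frac{1}{\sqrt[4]{nd}}\langle\bZ_{\cdot k},\vlambda\rangle$ and $c(\vlambda)=\frac{1}{2\sqrt{nd}}\|\vlambda\|^2\ge 0$, the bound $|\vlambda_i|\le K$ yields deterministic (data-dependent but almost surely finite) constants $|a(\vlambda)|\le A:=\frac{K^2 n\,|\bTheta_k|}{\sqrt{nd}}+\frac{K\sqrt n\,\|\bZ_{\cdot k}\|}{\sqrt[4]{nd}}$ and $0\le c(\vlambda)\le C:=\frac{K^2\sqrt n}{2\sqrt d}$, so that $|h_n^{(k)}(\vlambda,\vtheta)|\le A|\vtheta_k|+C\vtheta_k^2$ pointwise.

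Next, since every term is nonnegative, Tonelli lets me interchange sum and integral:
\[
\sum_{l\ge 5}\frac{1}{l!}\,\mu_n^{(k,\pm)}\!\big[|h_n^{(k)}|^l\big]
\;\le\; \sum_{l\ge 0}\frac{1}{l!}\,\mu_n^{(k,\pm)}\!\big[(A|\vtheta_k|+C\vtheta_k^2)^l\big]
\;=\; \mu_n^{(k,\pm)}\!\big[\exp\!\big(A|\vtheta_k|+C\vtheta_k^2\big)\big].
\]
By the third statement of \cref{lemma:some-facts}, under $\mu_n^{(k,+)}$ the coordinate $\vtheta_k$ has marginal law $\mu_{\Theta}$, and under $\mu_n^{(k,-)}$ it has marginal law $\normal(0,q_{\Theta})$; since $A$ and $C$ do not depend on $\vtheta_k$, the last expression equals $\E_{V}\!\big[\exp(A|V|+CV^2)\big]$ with $V\sim\mu_{\Theta}$ (resp. $V\sim\normal(0,q_{\Theta})$).

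Finally I would invoke sub-Gaussianity of $\mu_{\Theta}$ together with Gaussianity of $\normal(0,q_{\Theta})$ to fix a constant $c_{\Theta}>0$ with $\E_V[\exp(tV^2)]<\infty$ for every $t<c_{\Theta}$ and for both choices of $V$. Using Young's inequality $A|V|\le \frac{c_{\Theta}}{4}V^2+\frac{A^2}{c_{\Theta}}$ gives $\exp(A|V|+CV^2)\le e^{A^2/c_{\Theta}}\exp\!\big((C+\tfrac{c_{\Theta}}{4})V^2\big)$, and since $C=\frac{K^2\sqrt n}{2\sqrt d}\to 0$ as $d/n\to\infty$, for all $n,d$ large enough we have $C+\frac{c_{\Theta}}{4}<c_{\Theta}$, whence $\E_V[\exp((C+\tfrac{c_{\Theta}}{4})V^2)]<\infty$ and both series are almost surely bounded by $e^{A^2/c_{\Theta}}\,\E_V[\exp((C+\tfrac{c_{\Theta}}{4})V^2)]<\infty$. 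The only delicate point is guaranteeing that the quadratic coefficient $C$ sits strictly below the sub-Gaussian threshold $c_{\Theta}$, which is precisely where the hypotheses $d/n\to\infty$ and ``$n,d$ large enough'' are used; the linear coefficient $A$, although possibly large, is almost surely finite for each fixed $(n,d)$ and contributes only a finite multiplicative factor.
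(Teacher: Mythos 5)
Your proof is correct, and it reaches the conclusion by a cleaner route than the paper's. The paper proceeds term by term: it splits $|h_n^{(k)}|^l$ into three pieces via the power mean inequality, bounds the $l$-th moment of each piece under $\mu_n^{(k,\pm)}$ using the bounded support of $\mu_{\Lambda}$, the sub-Gaussian/Gaussian moments of $\vtheta_k$ (Lemma \ref{lemma:some-facts}, claim 3) and the a.s. finite data-dependent quantities $|\bTheta_k|$, $\|\bZ_{\cdot k}\|_{\infty}$, and then checks that the resulting numerical series converges once $6q_{\Theta}K^2\sqrt{n/d}<1$. You instead dominate the entire tail sum at once by the exponential moment $\mu_n^{(k,\pm)}[\exp(A|\vtheta_k|+C\vtheta_k^2)]$, reduce it to a one-dimensional moment generating function via the marginal law of $\vtheta_k$, and close with Young's inequality plus the sub-Gaussian threshold, using $C=\tfrac{K^2}{2}\sqrt{n/d}\to 0$ exactly where the paper uses its smallness condition. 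Both arguments rest on the same structural inputs (bounded support of $\mu_{\Lambda}$, Lemma \ref{lemma:some-facts}, and $d/n\to\infty$); yours avoids the explicit factorial bookkeeping and handles the Gaussian column through the a.s. finite constant $A$ rather than through $\|\bZ_{\cdot k}\|_{\infty}^l$ moments, at the modest cost of invoking the sub-Gaussian square-exponential moment rather than only polynomial moment growth. The one point worth stating explicitly in a final write-up is that $A$ and $C$ are constants with respect to the Gibbs measure (they depend on the data and on $(n,d)$ but not on the integration variables), which is what legitimizes pulling them out and using only the marginal of $\vtheta_k$; you do note this, so no gap remains.
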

\begin{proof}
	We will only prove the lemma for $\mu_n^{(k,-)}$. Proof for $\mu_n^{(k,+)}$ is analogous and we skip it for simplicity. By the power mean inequality we have
	\begin{align*}
		 & \sum\limits_{l = 5}^{\infty} \frac{1}{l!} |h_n^{(k)}(\vlambda, \vtheta)|^l \\
		 \leq & \sum\limits_{l = 5}^{\infty} \frac{3^l}{l!} \left\{ \Big|\frac{1}{\sqrt{nd}}\langle \bLambda, \vlambda \rangle \bTheta_k \vtheta_k \Big|^l + \Big| \frac{1}{\sqrt[4]{nd}}\langle \bZ_{\cdot k}, \vlambda \rangle \vtheta_k\Big|^l + \Big| \frac{1}{2\sqrt{nd}} \langle \vlambda, \vlambda \rangle \vtheta_k^2 \Big|^l \right\}.
	\end{align*}
	Next, we take the expectation of the last line above with respect to $\mu_n^{(k,-)}$, which gives
	\begin{align*}
		& \sum\limits_{l = 5}^{\infty} \frac{1}{l!}\mu_n^{(k,-)}\left[ |h_n^{(k)}(\vlambda, \vtheta)|^l\right] \\
		\overset{(i)}{\leq} & \sum_{l = 5}^{\infty}\left\{ \frac{3^ln^{l / 2}K^{2l}|\bTheta_k|^l l!!q_{\Theta}^{l / 2}}{l!d^{l / 2}} + \frac{3^ln^{3l/4}\|\bZ_{\cdot k}\|_{\infty}^lK^l l!!q_{\Theta}^{l / 2}}{l!d^{l/4}} + \frac{3^ln^{l / 2}K^{2l}(2l)!!q_{\Theta}^{l}}{l!d^{l / 2}}\right\} \overset{(ii)}{<}   \infty,
	\end{align*}
where \emph{(i)} is by Assumption \ref{assumption:bounded-support} and the third result of \cref{lemma:some-facts}. In order to prove \emph{(ii)}, we only need to use the following fact: 
For $n,d$ large enough we have
	\begin{align*}
		\frac{6q_{\Theta}K^{2}n^{1/2}}{d^{1/2}} < 1.
	\end{align*}
\end{proof}
According to \cref{lemma:finite-sum}, we can take the expectation of  \cref{eq:exp} with respect to $\mu_n^{(k,+)}$, which gives 
\begin{align}\label{eq:17}
	& \mu_n^{(k, +)}\left[ \exp\left(h_n^{(k)}(\vlambda, \vtheta) \right) \right] \nonumber \\
	 = & 1 + \mu_n^{(k, +)}[c_2^{(k)}]q_{\Theta} + \mu_n^{(k, +)}[c_4^{(k)}]\E[\bThetas^4] + \mu_n^{(k, +)}[R^{(k)}] +\mu_n^{(k, +)} \Big[ \sum\limits_{l = 5}^{\infty} \frac{1}{l!}h_n^{(k)}(\vlambda, \vtheta)^l\Big].
\end{align}
In the above derivation, we use the fact that under $\mu_n^{(k,+)}$, we have $\vtheta_k \overset{d}{=} \mu_{\Theta}$, which has zero first and third moments.  Using Assumption \ref{assumption:bounded-support}, we conclude that  $c_2^{(k)} \geq -\frac{1}{2} \sqrt{\frac{n}{d}}K^2$. Furthermore, notice that
\begin{align*}
	 \frac{3}{8nd} \langle \vlambda, \vlambda \rangle^2- \frac{1}{4\sqrt{nd}}\langle \vlambda, \vlambda \rangle \left( \frac{1}{\sqrt{nd}} \langle \bLambda, \vlambda  \rangle \bTheta_k + \frac{1}{\sqrt[4]{nd}} \langle \bZ_{\cdot k}, \vlambda \rangle \right)^2 +\frac{1}{24}\left(\frac{1}{\sqrt{nd}} \langle \bLambda, \vlambda \rangle \bTheta_k + \frac{1}{\sqrt[4]{nd}} \langle \bZ_{k \cdot}, \vlambda \rangle \right)^4 
\end{align*}
is non-negative, thus we have $c_4^{(k)} \geq -\frac{1}{4nd} \langle \vlambda, \vlambda \rangle^2 \geq -\frac{n}{4d} K^4$. Since $\vtheta_k$ has zero expectation under $\mu_n^{(k,+)}$, we then conclude that $\mu_n^{(k,+)}[h_n^{(k)}(\vlambda, \vtheta)] = -\frac{1}{2\sqrt{nd}}\mu_n^{(k,+)}[\langle \vlambda, \vlambda \rangle \vtheta_k^2] \geq - \frac{1}{2}\sqrt{\frac{n}{d}}K^2 q_{\Theta}$. By Jensen's inequality
\begin{align*}
	\mu_n^{(k, +)}\left[ \exp\left(h_n^{(k)}(\vlambda, \vtheta) \right) \right] \geq \exp\left( \mu_n^{(k, +)}\left[ h_n^{(k)}(\vlambda, \vtheta) \right] \right) \geq \exp\left( - \frac{1}{2}\sqrt{\frac{n}{d}}K^2 q_{\Theta} \right).
\end{align*}
Note that the function $x \mapsto \log(x)$ is concave. We next plug the lower bounds derived above into \cref{eq:17}, and obtain 
\begin{align}
	& \left|\log\left( \mu_n^{(k, +)}\left[ \exp\left(h_n^{(k)}(\vlambda, \vtheta) \right) \right] \right) - \log\left( 1 + \mu_n^{(k, +)}[c_2^{(k)}]q_{\Theta} + \mu_n^{(k, +)}[c_4^{(k)}]\E[\bThetas^4] \right)\right|\nonumber \\
	\leq & \left| \sum\limits_{l = 5}^{\infty} \mu_n^{(k, +)} \Big[\frac{1}{l!}h_n^{(k)}(\vlambda, \vtheta)^l\Big] + \mu_n^{(k, +)}[R^{(k)}] \right| \times \nonumber \\
	& \max\left\{ {\mu_n^{(k, +)}\left[ \exp\left(h_n^{(k)}(\vlambda, \vtheta) \right) \right]}^{-1},\, \left( 1 + \mu_n^{(k, +)}[c_2^{(k)}]q_{\Theta} + \mu_n^{(k, +)}[c_4^{(k)}]\E[\bThetas^4] \right)^{-1} \right\} \nonumber\\
	\leq & \underbrace{\left|\sum\limits_{l = 5}^{\infty} \mu_n^{(k, +)} \Big[ \frac{1}{l!}h_n^{(k)}(\vlambda, \vtheta)^l\Big] + \mu_n^{(k, +)}[R^{(k)}] \right|}_{I} \times \nonumber  \\
	& \underbrace{\max\left\{ \exp\left(  \frac{1}{2}\sqrt{\frac{n}{d}}K^2 q_{\Theta} \right), \left( 1  -\frac{1}{2} \sqrt{\frac{n}{d}}K^2q_{\Theta} -\frac{n}{4d}K^4\E[\bThetas^4] \right)^{-1}    \right\}}_{II}. \label{eq:19}
\end{align}
Since $d \gg n$ and $(K, q_{\Theta}, \E[\bThetas^4])$ are independent of $(n,d)$, we obtain that term II above converges to 1 as $n,d \rightarrow \infty$. Next, we will provide an upper bound for term I. To this end, we upper bound $\E[ |\mu_n^{(k, +)} [ \sum_{l = 5}^{\infty} \frac{1}{l!}h_n^{(k)}(\vlambda, \vtheta)^l ] |  ]$ and $\E[|\mu_n^{(k, +)}[R^{(k)}]|]$ in \cref{sec:upper-bound1} and \cref{sec:upper-bound2}, respectively, and combine them to finish the proof in \cref{sec:upper-bound3}. 

\subsubsection{Upper bounding $\E[ |\mu_n^{(k, +)} [ \sum_{l = 5}^{\infty} \frac{1}{l!}h_n^{(k)}(\vlambda, \vtheta)^l ] |  ]$}\label{sec:upper-bound1}

Since $\mu_{\Theta}$ is sub-Gaussian, there exists a constant $C > 0$ depending only on $\mu_{\Theta}$, such that for all $p \in \NN_+$, $\E_{\bThetas \sim \mu_{\Theta}}[|\bThetas|^p] \leq C^p p^{p/2}$ and $\E_{G \sim \normal(0, 1)}[|G|^p] \leq C^p p^{p/2}$. Then for $n,d$ large enough, we have
\begin{align}
	& \E \left[ \left|\sum\limits_{l = 5}^{\infty} \frac{1}{l!}\mu_n^{(k, +)} \Big[ h_n^{(k)}(\vlambda, \vtheta)^l\Big] \right|  \right] \nonumber \\
	\overset{(i)}{\leq} & \sum\limits_{l = 5}^{\infty} \frac{1}{l!} \E\left[ \mu_n^{(k, +)}\left[ \left|  \frac{1}{\sqrt{nd}} \langle \bLambda, \vlambda \rangle \bTheta_k \vtheta_k + \frac{1}{\sqrt[4]{nd}}\langle \bZ_{\cdot k}, \vlambda \rangle \vtheta_k - \frac{1}{2\sqrt{nd}}\langle \vlambda, \vlambda \rangle \vtheta_k^2 \right|^l \right]   \right]  \nonumber\\
	\overset{(ii)}{\leq} & \sum\limits_{l = 5}^{\infty} \frac{1}{l!}\E\left[ \mu_n^{(k, +)}\left[ \left|  \frac{1}{\sqrt{nd}} \langle \bLambda, \vlambda \rangle \bTheta_k \vtheta_k + \frac{1}{\sqrt[4]{nd}} \langle \bZ_{\cdot k}, \vlambda \rangle \vtheta_k - \frac{1}{2\sqrt{nd}}\langle \vlambda, \vlambda \rangle \vtheta_k^2 \right|^{2l}\right]   \right]^{1/2} \nonumber \\
	\overset{(iii)}{\leq} & \sum\limits_{l = 5}^{\infty} \frac{3^l}{l!}\times \left\{ \E\left[\mu_n^{(k, +)}\left[\left| \frac{1}{\sqrt{nd}} \langle \bLambda, \vlambda \rangle\bTheta_k \vtheta_k \right|^{2l} \right] \right]^{1/2} + \E\left[\mu_n^{(k, +)}\left[\left| \frac{1}{\sqrt[4]{nd}} \langle \bZ_{\cdot k}, \vlambda \rangle \vtheta_k \right|^{2l} \right] \right]^{1/2} + \right.  \nonumber\\
	& \left. \E\left[\mu_n^{(k, +)}\left[\left| \frac{1}{2\sqrt{nd}}\langle \vlambda, \vlambda \rangle \vtheta_k^2 \right|^{2l} \right] \right]^{1/2}  \right\} \nonumber \\
	\overset{(iv)}{\leq} & \sum\limits_{l = 5}^{\infty} \frac{3^l}{l!}\times \left\{\frac{K^{2l}C^{2l}n^{l/2}}{d^{l/2}} \times (2l)^l + \frac{K^lC^{2l}n^{l/4}}{d^{l/4}} \times (2l)^l + \frac{K^{2l}C^{2l}n^{l/2}}{d^{l/2}}\times (2l)^l \right\} \nonumber \\
	\overset{(v)}{\leq} & F_{K,C} \times \frac{n^{5/4}}{d^{5/4}}, \label{eq:20}
\end{align}
where $F_{K,C}>0$ is a constant that depends only on $K$ and $C$. In the above inequalities, \emph{(i)} is by triangle inequality, \emph{(ii)} is by \Holder's inequality and \emph{(iii)} is by power mean inequality. Argument \emph{(iv)} is via a combination of the following facts: (1) Support$(\Lambda) \subseteq [-K, K]$, (2) $\mu_{\Theta}$ is sub-Gaussian, (3) the random distribution $\mu_n^{(k,+)}$ is independent of  $(\bZ_{\cdot k}, \bTheta_k)$. 

For illustration, in the following parts of the proof we upper bound the second summand in the second to last line of \cref{eq:20}. The proofs for the first and third summands follow analogously. 

By \cref{lemma:some-facts}, we see that $\mu_n^{(k,+)}$ is independent of $\bZ_{\cdot k}$, and $\vtheta_k$ is independent of $\vlambda$ under $\mu_n^{(k,+)}$. Therefore, we have
\begin{align}\label{eq:18}
	\E\left[\mu_n^{(k, +)}\left[\left| \frac{1}{\sqrt[4]{nd}} \langle \bZ_{\cdot k}, \vlambda \rangle \vtheta_k \right|^{2l} \right] \right] = \sum\limits_{s_1 = 1}^n \cdots \sum\limits_{s_{2l} = 1}^n  \E\left[Z_{s_1k}Z_{s_2k} \cdots Z_{s_{2l}k}\right] C_{s_1,s_2,\cdots,s_{2l}} , 
\end{align}
where $C_{s_1,s_2, \cdots, s_{2l}} = {n^{-l/2}d^{-l/2}} \E[ \mu_n^{(k,+)}[\lambda_{s_1}\lambda_{s_2} \cdots \lambda_{s_{2l}}]]\E[\bThetas^{2l}]$. By sub-Gaussian property we have 
$$|C_{s_1,s_2, \cdots, s_{2l}}| \leq K^{2l}C^{2l}(2l)^l n^{-l/2}d^{-l/2}.$$
Consider all terms that take the form of $\E\left[Z_{s_1k}Z_{s_2k} \cdots Z_{s_{2l}k}\right]$. If such term is non-zero, then it must be positive. Using property of Gaussian distribution, we have
\begin{align*}
	 \sum\limits_{s_1 = 1}^n \cdots \sum\limits_{s_{2l} = 1}^n  \E\left[Z_{s_1k}Z_{s_2k} \cdots Z_{s_{2l}k}\right]= \E[(Z_{1k} + \cdots + Z_{nk})^{2l}] = n^l(2l - 1)!! \leq n^lC^{2l}(2l)^l.
\end{align*}
Therefore, the right hand side of \cref{eq:18} has value no larger than 
$$n^lC^{2l}(2l)^l \times K^{2l}C^{2l}(2l)^l n^{-l/2}d^{-l/2} = K^{2l}C^{4l}(2l)^{2l}n^{l/2}d^{-l/2},$$ 
which leads to the desired upper bound for the second summand. Finally, we use Stirling formula and the assumption that $d \gg n$ to prove argument \emph{(v)}.


\subsubsection{Upper bounding $\E[|\mu_n^{(k, +)}[R^{(k)}]|]$}\label{sec:upper-bound2}

Similar to the proof in \cref{sec:upper-bound1}, we conclude that for $n,d$ large enough
\begin{align}\label{eq:21}
	\E[|\mu_n^{(k,+)}[R^{(k)}]|] \leq F'_{K,C} \times \frac{n^{5/4}}{d^{5/4}},
\end{align}
where $F_{K,C}' > 0$ is a constant depending only on $K$ and $C$. The derivation of the above upper bound is similar to the derivation of the upper bound for $\E[ |\mu_n^{(k, +)} [ \sum_{l = 5}^{\infty} \frac{1}{l!}h_n^{(k)}(\vlambda, \vtheta)^l ] |  ]$ given in \cref{eq:20}, and we skip the details here for the sake of simplicity.
\subsubsection{Combining the upper bounds}\label{sec:upper-bound3}
%
Combining \cref{eq:19,eq:20,eq:21}, we obtain that for $n,d$ large enough
\begin{align}\label{eq:219}
	& \left|\sum\limits_{k = 1}^d \frac{1}{n} \E\left[ \log\left( \mu_n^{(k, +)}\left[ \exp\left(h_n^{(k)}(\vlambda, \vtheta) \right) \right] \right) - \log\left( 1 + \mu_n^{(k, +)}[c_2^{(k)}]q_{\Theta} + \mu_n^{(k, +)}[c_4^{(k)}]\E[\bThetas^4] \right) \right]\right| \nonumber \\
	\leq & 2(F_{K,C} + F_{K,C}') \times \frac{n^{1/4}}{d^{1/4}}.
\end{align}
In what follows, we show that the following quantity is small:
\begin{align}\label{eq:log-diff}
	\left|\sum\limits_{k = 1}^d \frac{1}{n} \E\left[ \log \left(1 + \mu_n^{(k, +)}[c_2^{(k)}]q_{\Theta} + \mu_n^{(k, +)}[c_4^{(k)}] \E[\bThetas^4] \right) \right] - \sum\limits_{k = 1}^d \frac{1}{n} \E\left[ \log \left(1 + \mu_n^{(k, +)}[c_2^{(k)}]q_{\Theta}  \right) \right]\right|.
\end{align}
Again we use the concavity of the mapping $x \mapsto \log(x)$, which gives
\begin{align}
	&  \log\left( 1 + \mu_n^{(k, +)}[c_2^{(k)}] q_{\Theta} \right) +  \mu_n^{(k, +)}[c_4^{(k)}]\E[\bThetas^4] - \frac{\Big|\mu_n^{(k, +)}[c_4^{(k)}]\E[\bThetas^4]\left(\mu_n^{(k, +)}[c_2^{(k)}] q_{\Theta} + \mu_n^{(k, +)}[c_4^{(k)}] \E[\bThetas^4]\right)\Big|}{1 + \mu_n^{(k, +)}[c_2^{(k)}] q_{\Theta} + \mu_n^{(k, +)}[c_4^{(k)}] \E[\bThetas^4]} \nonumber \\
	\leq & \log\left( 1 + \mu_n^{(k, +)}[c_2^{(k)}] q_{\Theta} \right) + \frac{\mu_n^{(k, +)}[c_4^{(k)}]\E[\bThetas^4]}{1 + \mu_n^{(k, +)}[c_2^{(k)}] q_{\Theta} + \mu_n^{(k, +)}[c_4^{(k)}] \E[\bThetas^4]} \nonumber\\
	\leq & \log \left(1 + \mu_n^{(k, +)}[c_2^{(k)}]q_{\Theta} + \mu_n^{(k, +)}[c_4^{(k)}] \E[\bThetas^4] \right) \leq \log\left( 1 + \mu_n^{(k, +)}[c_2^{(k)}] q_{\Theta} \right) + \frac{\mu_n^{(k, +)}[c_4^{(k)}]\E[\bThetas^4]}{1 + \mu_n^{(k, +)}[c_2^{(k)}]q_{\Theta} }\nonumber \\
	 \leq & \log\left( 1 + \mu_n^{(k, +)}[c_2^{(k)}] q_{\Theta} \right) +  \mu_n^{(k, +)}[c_4^{(k)}]\E[\bThetas^4] + \frac{|\mu_n^{(k, +)}[c_2^{(k)}]\mu_n^{(k, +)}[c_4^{(k)}]q_{\Theta}\E[\bThetas^4]|}{1 + \mu_n^{(k, +)}[c_2^{(k)}]q_{\Theta}}. \label{eq:220}
\end{align}
%
By \cref{lemma:some-facts}, $\mu_n^{(k,+)}$ and $\bZ_{\cdot k}$ are independent of each other, thus
\begin{align}\label{eq:24}
	\E\left[\mu_n^{(k, +)}\left[ \frac{1}{8nd} \langle \vlambda, \vlambda \rangle^2 - \frac{1}{4nd} \langle \vlambda, \vlambda \rangle \langle\bZ_{\cdot k}, \vlambda \rangle^2 + \frac{1}{24nd}\langle \bZ_{\cdot k}, \vlambda \rangle^4  \right]  \right] = 0.
\end{align}
Next, we plug \cref{eq:24} into the definition of $c_4^{(k)}$, then apply \cref{lemma:some-facts} claim 3, which gives
\begin{align}\label{eq:221}
	& \left| \E[\mu_n^{(k, +)}[c_4^{(k)}]]  \right| \nonumber \\
	=& \left| -\frac{\E\left[ \mu_n^{(k,+)}\left[ \langle \vlambda, \vlambda \rangle \langle \vlambda, \bLambda \rangle^2 \bTheta_k^2 \right] \right]}{4n^{3/2}d^{3/2}} + \frac{\E\left[ \mu_n^{(k,+)}\left[ \langle \bLambda, \vlambda \rangle^4 \bTheta_k^4 \right] \right]}{24n^2d^2}  + \frac{\E\left[ \mu_n^{(k,+)}\left[ \langle \vlambda, \bLambda \rangle^2 \langle \bZ_{\cdot k}, \vlambda \rangle^2 \bTheta_k^2 \right] \right]}{4n^{3/2}d^{3/2}} \right| \nonumber \\ 
	\leq & \frac{n^{3/2}}{2d^{3/2}}K^6q_{\Theta} + \frac{n^2}{24d^2}K^8 \E[\bThetas^4].
\end{align}
In addition, we have the following lemma:
\begin{lemma}\label{lemma:C3}
	There exist constants $A_1(K, \mu_{\Theta}), A_2(K, \mu_{\Theta}) > 0$, which are functions of $(K, \mu_{\Theta})$ only, such that
	\begin{align*}
		\E[\mu_n^{(k,+)}[|c_2^{(k)}|^2]] \leq A_1(K, \mu_{\Theta}) \times \frac{n}{d}, \qquad \E[\mu_n^{(k,+)}[|c_4^{(k)}|^2]] \leq A_2(K, \mu_{\Theta}) \times \frac{n^2}{d^2}. 
	\end{align*}
\end{lemma}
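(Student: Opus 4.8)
The plan is to reduce both bounds to elementary moment computations, using two ingredients only: the almost-sure boundedness coming from \cref{assumption:bounded-support}, and the independence recorded in \cref{lemma:some-facts}. First I would note that $\mu_n^{(k,+)}$, restricted to the $\vlambda$-coordinates, is absolutely continuous with respect to $\mu_{\Lambda}^{\otimes n}$, so $\mu_n^{(k,+)}$-almost surely every coordinate satisfies $|\vlambda_i|\le K$; hence $\langle\vlambda,\vlambda\rangle\le nK^2$ and, since also $\|\bLambda\|\le\sqrt nK$, $|\langle\bLambda,\vlambda\rangle|\le nK^2$ by Cauchy--Schwarz. The only unbounded objects in $c_2^{(k)}$ and $c_4^{(k)}$ are $\langle\bZ_{\cdot k},\vlambda\rangle$ and $\bTheta_k$.

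Next I would use that, by the second claim of \cref{lemma:some-facts}, the random measure $\mu_n^{(k,+)}$ is independent of $(\bZ_{\cdot k},\bTheta_k)$, while $\bLambda$ and $\bTheta_k$ are independent. Therefore, conditioning on the $\sigma$-algebra generated by all variables other than $(\bZ_{\cdot k},\bTheta_k)$, the pair $(\bZ_{\cdot k},\bTheta_k)$ keeps its original product law $\normal(\mathbf{0},\id_n)\otimes\mu_{\Theta}$. Consequently, after expanding $c_2^{(k)}$ and $c_4^{(k)}$ into monomials in $\langle\bLambda,\vlambda\rangle$, $\langle\bZ_{\cdot k},\vlambda\rangle$, $\|\vlambda\|^2$, $\bTheta_k$, the outer expectation $\E\big[\mu_n^{(k,+)}[\,\cdot\,]\big]$ of any such monomial factorizes, and I may replace each even power $\langle\bZ_{\cdot k},\vlambda\rangle^{2p}$ by its conditional mean $(2p-1)!!\,\|\vlambda\|^{2p}$ and each $\bTheta_k^{2p}$ by $\E_{\bThetas\sim\mu_{\Theta}}[\bThetas^{2p}]$, which is finite (all moments of the sub-Gaussian $\mu_{\Theta}$ are finite and depend on $\mu_{\Theta}$ only).

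Then comes the arithmetic. Writing $u:=\tfrac1{\sqrt{nd}}\langle\bLambda,\vlambda\rangle\bTheta_k$ and $v:=\tfrac1{\sqrt[4]{nd}}\langle\bZ_{\cdot k},\vlambda\rangle$, one has $c_2^{(k)}=\tfrac12u^2+uv+\tfrac1{2\sqrt{nd}}\big(\langle\bZ_{\cdot k},\vlambda\rangle^2-\|\vlambda\|^2\big)$ and $c_4^{(k)}=\tfrac1{8nd}\|\vlambda\|^4-\tfrac1{4\sqrt{nd}}\|\vlambda\|^2(u+v)^2+\tfrac1{24}(u+v)^4$. Applying $(\sum_{i=1}^m a_i)^2\le m\sum_i a_i^2$ and $(u+v)^{2p}\le 2^{2p-1}(u^{2p}+v^{2p})$ reduces everything to bounding a handful of squared monomials. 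Using $\|\vlambda\|^2\le nK^2$, $|\langle\bLambda,\vlambda\rangle|\le nK^2$, the Gaussian moment identity $\E[\langle\bZ_{\cdot k},\vlambda\rangle^{2p}\mid\vlambda]=(2p-1)!!\,\|\vlambda\|^{2p}$, and the finiteness of $\E[\bThetas^{2p}]$ for $p\le 4$, each such term is of the form $A(K,\mu_{\Theta})\,(n/d)^{j}$ with $j\ge1$ in the case of $c_2^{(k)}$ and $j\ge2$ in the case of $c_4^{(k)}$: for $c_2^{(k)}$ the dominant term is $\tfrac1{2\sqrt{nd}}(\langle\bZ_{\cdot k},\vlambda\rangle^2-\|\vlambda\|^2)$, whose second moment is $\tfrac1{4nd}\cdot2\|\vlambda\|^4\le\tfrac{K^4}{2}\,n/d$, while for $c_4^{(k)}$ the dominant term is $\big(\tfrac1{8nd}\|\vlambda\|^4\big)^2\le\tfrac{K^8}{64}\,(n/d)^2$. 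Since $d/n\to\infty$, for $n,d$ large one has $(n/d)^j\le n/d$ for $j\ge1$ and $(n/d)^j\le(n/d)^2$ for $j\ge2$, so summing the finitely many contributions yields the claimed inequalities with constants $A_1,A_2$ depending only on $(K,\mu_{\Theta})$.

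There is no real obstacle here: the lemma is pure bookkeeping. The only place where a little care is needed is the independence step --- namely, being precise that inside $\E\big[\mu_n^{(k,+)}[\,\cdot\,]\big]$ the integration variable $\vlambda$, together with $\bLambda$, may be treated as independent of $(\bZ_{\cdot k},\bTheta_k)$ even though $\mu_n^{(k,+)}$ itself depends on $\bLambda$; this is exactly the content of \cref{lemma:some-facts}. The secondary, minor, point is to track the powers of $n/d$ carefully enough that the bound for $c_4^{(k)}$ comes out as $n^2/d^2$ rather than merely $n/d$.
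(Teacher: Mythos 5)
Your proposal is correct and follows essentially the same route as the paper: expand $c_2^{(k)}$, $c_4^{(k)}$ into finitely many monomials, use the bounded support of $\mu_\Lambda$ to control $\|\vlambda\|^2,\langle\bLambda,\vlambda\rangle\le nK^2$, use the independence of $\mu_n^{(k,+)}$ from $(\bZ_{\cdot k},\bTheta_k)$ (Lemma on the basic facts) to replace Gaussian and $\bTheta_k$ powers by their moments, and invoke $d\gg n$ to absorb the higher powers of $n/d$ — exactly the "straightforward computation" the paper's proof leaves implicit, with the same dominant terms of order $n/d$ and $(n/d)^2$.
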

\begin{proof}
	Straightforward computation reveals that there exist $A_1'(K, \mu_{\Theta}), A_2'(K, \mu_{\Theta}) > 0$ depending only on $(K, \mu_{\Theta})$, such that
	\begin{align*}
		\E[\mu_n^{(k,+)}[|c_2^{(k)}|^2]] \leq &  A_1'(K, \mu_{\Theta})\E\left[ \mu_n^{(k,+)}\left[ \frac{1}{n^2d^2}\langle \bLambda, \vlambda \rangle^4 \Theta_k^4  +\frac{1}{{nd}} \langle \bZ_{\cdot k}, \vlambda \rangle^4 +\frac{1}{{nd}} \langle \vlambda, \vlambda \rangle^2 \right] \right] \\
		\E[\mu_n^{(k,+)}[|c_4^{(k)}|^2]] \leq & A_2'(K, \mu_{\Theta})\E\left[\mu_n^{(k,+)}\left[\frac{1}{n^2d^2}\langle \vlambda, \vlambda \rangle^4 +  \frac{1}{n^{3}d^{3}}\langle \vlambda, \vlambda \rangle^2 \langle  \bLambda, \vlambda \rangle^4 \bTheta_k^4  +   \frac{1}{n^2d^2}\langle \vlambda, \vlambda \rangle^2 \langle \bZ_{\cdot k}, \vlambda \rangle^4  + \right.\right. \\
		& \left. \left.  \frac{1}{n^4d^4} \langle \bLambda,\vlambda \rangle^8 \bTheta_k^8  + \frac{1}{n^2d^2} \langle \bZ_{\cdot k}, \vlambda \rangle^8  \right]\right]. 
	\end{align*}
	The rest of the proof follows from \cref{lemma:some-facts} and the assumption that $d \gg n$. 
\end{proof}
Recall that $c_2^{(k)} \geq -\frac{1}{2} \sqrt{\frac{n}{d}}K^2$ and $c_4^{(k)} \geq -\frac{n}{4d}K^4$. Then for $n,d$ large enough, using \cref{lemma:C3}, we obtain 
\begin{align}
	& \E\left[\frac{|\mu_n^{(k, +)}[c_2^{(k)}]\mu_n^{(k, +)}[c_4^{(k)}]q_{\Theta}\E[\bThetas^4]|}{1 + \mu_n^{(k, +)}[c_2^{(k)}]q_{\Theta}} \right] \leq \frac{q_{\Theta}\E[\bThetas^4]}{1 - \frac{1}{2} \sqrt{\frac{n}{d}}K^2q_{\Theta}} \times \E[\mu_n^{(k,+)}[|c_2^{(k)}|^2]]^{1/2}\E[\mu_n^{(k,+)}[|c_4^{(k)}|^2]]^{1/2}, \nonumber\\
	& \qquad \qquad \qquad \qquad \qquad \qquad \qquad \; \; \; \; \;  \leq \frac{2q_{\Theta}\E[\bThetas^4]A_1(K, \mu_{\Theta})^{1/2}A_2(K, \mu_{\Theta})^{1/2}n^{3/2}}{d^{3/2}}, \label{eq:222}
\end{align}
\begin{align}
	&\E\left[ \frac{|\mu_n^{(k, +)}[c_4^{(k)}]\E[\bThetas^4]\left(\mu_n^{(k, +)}[c_2^{(k)}] q_{\Theta} + \mu_n^{(k, +)}[c_4^{(k)}] \E[\bThetas^4]\right)|}{1 + \mu_n^{(k, +)}[c_2^{(k)}] q_{\Theta} + \mu_n^{(k, +)}[c_4^{(k)}] \E[\bThetas^4]} \right]  \nonumber  \\
	 \leq & \frac{\E[\bThetas^4]^2\E[\mu_n^{(k,+)}[|c_4^{(k)}|^2]]}{1 -\frac{1}{2} \sqrt{\frac{n}{d}}K^2q_{\Theta} -\frac{n}{4d} K^4 \E[\bThetas^4]}  + \frac{\E[\bThetas^4] q_{\Theta}\E[\mu_n^{(k,+)}[|c_2^{(k)}|^2]]^{1/2}\E[\mu_n^{(k,+)}[|c_4^{(k)}|^2]]^{1/2}}{1 -\frac{1}{2} \sqrt{\frac{n}{d}}K^2q_{\Theta} -\frac{n}{4d} K^4 \E[\bThetas^4]} \nonumber\\
	 \leq & \frac{2\E[\bThetas^4]^2 A_2(K, \mu_{\Theta})n^2}{d^2} +  \frac{2\E[\bThetas^4] q_{\Theta}A_1(K, \mu_{\Theta})^{1/2}A_2(K, \mu_{\Theta})^{1/2}n^{3/2}}{d^{3/2}}. \label{eq:223}	
\end{align}
Next, we plug \cref{eq:221,eq:222,eq:223} into \cref{eq:220}, then sum over $k \in [d]$. This implies the existence of $C(K, \mu_{\Theta}) > 0$, which is a constant depending only on $(K, \mu_{\Theta})$, such that for $n,d$ large enough
\begin{align}\label{eq:C87}
\begin{split}
	& \sum\limits_{k = 1}^d \frac{1}{n} \E\left[\log\left( 1 + \mu_n^{(k, +)}[c_2^{(k)}] q_{\Theta} \right) \right] - \frac{C(K, \mu_{\Theta})n^{1/2}}{d^{1/2}} \\
	\leq & \sum\limits_{k = 1}^d \frac{1}{n} \E\left[ \log \left(1 + \mu_n^{(k, +)}[c_2^{(k)}]q_{\Theta} + \mu_n^{(k, +)}[c_4^{(k)}] \E[\bThetas^4] \right) \right] \\
	 \leq & \sum\limits_{k = 1}^d \frac{1}{n} \E\left[\log\left( 1 + \mu_n^{(k, +)}[c_2^{(k)}] q_{\Theta} \right) \right] + \frac{C(K, \mu_{\Theta})n^{1/2}}{d^{1/2}}.
\end{split}
\end{align}
Combining \cref{eq:C87,eq:219}, we derive that
\begin{align}\label{eq:29}
	\sum\limits_{k = 1}^d \frac{1}{n} \E\left[\log\left( 1 + \mu_n^{(k, +)}[c_2^{(k)}] q_{\Theta} \right) \right] - \sum\limits_{k = 1}^d \frac{1}{n} \E\left[ \log\left( \mu_n^{(k, +)}\left[ \exp\left(h_n^{(k)}(\vlambda, \vtheta) \right) \right] \right)\right] = o_n(1).
\end{align}
Similarly, we can prove that
\begin{align}\label{eq:30}
	\sum\limits_{k = 1}^d \frac{1}{n} \E\left[\log\left( 1 + \mu_n^{(k, -)}[c_2^{(k)}] q_{\Theta} \right) \right] - \sum\limits_{k = 1}^d \frac{1}{n} \E\left[ \log\left( \mu_n^{(k, -)}\left[ \exp\left(h_n^{(k)}(\vlambda, \vtheta) \right) \right] \right)\right] = o_n(1).
\end{align}
Since $c_2^{(k)}$ is independent of $\vtheta_k$, by \cref{lemma:some-facts} we have $\mu_n^{(k, +)}[c_2^{(k)}] = \mu_n^{(k,-)}[c_2^{(k)}]$. Finally, we combine \cref{eq:diff-of-Phi,eq:29,eq:30}, which gives $\Phi_n^{(d)}- \Phi_n^{(0)} = o_n(1)$. Thus, we have completed the proof of \cref{lemma:free-energy-1}.

\subsection{Proof of \cref{lemma:free-energy-2}}\label{sec:proof-free-energy-density2}

In this section we prove \cref{lemma:free-energy-2}. Applying Gaussian integration by parts, we obtain that
\begin{align*}
	\Phi_n^{(0)}(h,s) = \frac{1}{n} \E\Big[ \log \Big( \int \exp \Big(\tilde{H}_n'(\vlambda; \bY'(h), \bx'(s)) + H_n(\vlambda; \bY'(h)) + H_n(\vlambda; \bx'(s))\Big)\tensorl \Big) \Big],
\end{align*}
where
\begin{align*}
	\tilde{H}_n'(\vlambda; \bY'(h), \bx'(s)) =&   \frac{\langle \bLambda, \vlambda \rangle^2 \|\bTheta\|^2 / (nd) + 2\langle \bLambda, \vlambda \rangle\langle \vlambda, \bZ \bTheta \rangle / (nd)^{3/4} + \|\bZ^{\top} \vlambda\|^2 / \sqrt{nd}}{2q_{\Theta}^{-1} + 2\|\vlambda\|^2 / \sqrt{nd}} \\
	& - \frac{d}{2}\log\Big( 1 + \frac{q_{\Theta}}{\sqrt{nd}}\|\vlambda\|^2 \Big).
\end{align*}
By triangle inequality,
\begin{align}
	 & \sup\limits_{\|\vlambda\|_{\infty} \leq K} \left| \frac{\langle \bLambda, \vlambda \rangle^2 \|\bTheta\|^2 / (nd)}{2q_{\Theta}^{-1} + 2\|\vlambda\|^2 / \sqrt{nd}} - \frac{q_{\Theta}^2}{2n}\langle \bLambda, \vlambda \rangle^2   \right| \nonumber \\
	  \leq &\sup\limits_{\|\vlambda\|_{\infty} \leq K}\left| \frac{ \langle \bLambda, \vlambda \rangle^2\left(\|\bTheta\|^2 - dq_{\Theta} \right) / (nd)}{2q_{\Theta}^{-1} + 2\|\vlambda\|^2 / \sqrt{nd}} \right| +\sup\limits_{\|\vlambda\|_{\infty} \leq K} \left| \frac{q_{\Theta}^2 \langle \bLambda, \vlambda \rangle^2 \|\vlambda\|^2 / (n^{3/2}d^{1/2})}{2q_{\Theta}^{-1} + 2\|\vlambda\|^2 / \sqrt{nd}} \right|\nonumber \\
	\leq & \frac{nK^4 q_{\Theta}}{2d} \left| \|\bTheta\|^2 - d q_{\Theta} \right| + \frac{q_{\Theta}^3n^{3/2}K^6}{2d^{1/2}}. \label{eq:225}
\end{align}
Furthermore, notice that the following inequalities hold:
\begin{align}
	& \sup\limits_{\|\vlambda\|_{\infty} \leq K} \left| \frac{\langle \bLambda, \vlambda \rangle \langle \vlambda ,\bZ\bTheta \rangle / (nd)^{3/4}}{q_{\Theta}^{-1} + \|\vlambda\|^2 / \sqrt{nd}} \right| \leq \frac{n^{3/4}q_{\Theta}K^3}{d^{3/4}}\|\bZ \bTheta\|, \label{eq:226} \\
	& \sup\limits_{\|\vlambda\|_{\infty} \leq K}\left| \frac{\left\| \bZ^\sT \vlambda \right\|^2 / \sqrt{nd}}{2q_{\Theta}^{-1} + 2\|\vlambda\|^2 / \sqrt{nd}} - \frac{\left\| \bZ^\sT \vlambda \right\|^2 / \sqrt{nd}}{2q_{\Theta}^{-1}} + \frac{\left\| \bZ^\sT \vlambda \right\|^2 \|\vlambda\|^2 / (nd)}{2q_{\Theta}^{-2}} \right| \leq  \frac{n^2K^6 q_{\Theta}^3}{2d\sqrt{nd}} \|\bZ\bZ^\sT\|_{\rm{op}}, \label{eq:227} \\
	&  \sup\limits_{\|\vlambda\|_{\infty} \leq K} \left| \frac{d}{2} \log \left(1 + \frac{q_{\Theta}}{\sqrt{nd}} \|\vlambda\|^2 \right) - \frac{dq_{\Theta}}{2\sqrt{nd}}\|\vlambda\|^2 + \frac{q_{\Theta}^2}{4n} \|\vlambda\|^4 \right| \leq  \frac{q_{\Theta}^3 n^{3/2}K^6}{6d^{1/2}}, \label{eq:228} \\
	& \sup\limits_{\|\vlambda\|_{\infty} \leq K} \left| \frac{q_{\Theta}^2 \|\bZ^{\top} \vlambda\|^2 \|\vlambda\|^2}{2nd} - \frac{q_{\Theta}^2}{2n} \|\vlambda\|^4  \right| \leq \frac{nq_{\Theta}^2K^4}{2d}\|\bZ\bZ^{\top} - d\id_n\|_{\rm{op}}, \label{eq:C94}
\end{align}
where in \cref{eq:228}, we use the fact that for all $x \in [0, \infty)$, there exists $y \in [0, x]$ such that
\begin{align*}
	\log(1 + x) = x - \frac{x^2}{2} + \frac{x^3}{3(1 + y)^3}.
\end{align*}
Next, we combine \cref{eq:225,eq:226,eq:227,eq:228,eq:C94}, and conclude that
\begin{align}
	& \left|\Phi_n^{(0)}(h,s) - \tilde{\Phi}_n(h,s) \right| \nonumber \\
	 \leq & \frac{1}{n} \E\left[ \frac{nK^4 q_{\Theta} \left| \|\bTheta\|^2 - d q_{\Theta} \right|}{2d} + \frac{q_{\Theta}^3n^{3/2}K^6}{2d^{1/2}} + \frac{n^{3/4}q_{\Theta}K^3\|\bZ \bTheta\|}{d^{3/4}}  +  \frac{n^{3/2}K^6 q_{\Theta}^3\|\bZ\bZ^\sT\|_{\rm{op}}}{2d^{3/2}} \right. \label{eq:229}\\
	 & \left.  + \frac{q_{\Theta}^3 n^{3/2}K^6}{6d^{1/2}} + \frac{nq_{\Theta}^2 K^4\|\bZ\bZ^{\top} - d\id_n\|_{\rm{op}}}{2d} \right]. \nonumber 
\end{align}
Using \cref{lemma:concentration-of-sample-covariance}, we see that 
\begin{align}
	 \E\left[ \left\| \frac{1}{d} \bZ\bZ^\sT - \id_n  \right\|_{\rm{op}} \right] \leq & 100\sqrt{\frac{n}{d}} + \int_{100\sqrt{n/d}}^{\infty} \exp\left( -\frac{dx^2}{3200} \right) \dd x \nonumber \\
	 \leq &  100\sqrt{\frac{n}{d}} + \frac{40}{\sqrt{d}}\int_0^{\infty} \exp\left( -\frac{y^2}{2} \right)\dd y. \label{eq:230}
\end{align}
Finally, we combine \cref{eq:229}, \eqref{eq:230}, the assumption that $d \gg n$, and conclude that as $n,d \to \infty$, $|\Phi_n^{(0)}(h,s)- \tilde{\Phi}_n(h,s)| = o_n(1) $, thus completing the proof of  \cref{lemma:free-energy-2}.

\subsection{Proof of \cref{lemma:free-energy-3}}\label{sec:proof-free-energy-density3}
Recall that $\bW \overset{d}{=} \GOE(n)$. Then for all fixed orthogonal matrix $\bO \in \RR^{n \times n}$,  $\bO^\top\bW \bO \overset{d}{=} \bW$ and $\bO^\top (\bZ\bZ^\top - d\id_n) \bO \overset{d}{=} (\bZ\bZ^\sT - d\id_n)$. By orthogonal invariance, we can couple $\left( \bZ \bZ^\sT - d\id_n \right) / {\sqrt{nd}}$ with $\bW$ such that they admit the following eigen-decomposition:
\begin{align}
	\frac{1}{\sqrt{nd}}\left( \bZ \bZ^\sT - d\id_n \right) = \bOmega^\sT \bS_1 \bOmega, \qquad  \bW = \bOmega^\sT \bS_2 \bOmega. \label{eq:231}
\end{align}
In the above display, $\bOmega$ is Haar-distributed on the orthogonal matrix group $\mathcal{O}(n)$, $\bS_1$ and $\bS_2$ are diagonal matrices containing ascendingly ordered eigenvalues of matrices $\left( \bZ \bZ^\sT - d\id_n \right) / \sqrt{nd}$ and $\bW$, respectively. Furthermore, $\bS_1, \bS_2 $ are both independent of $\bOmega$. Direct computation implies the following inequality:
\begin{align*}
	\left|\tilde{\Phi}_n(h,s) - \Phi_n^Y(h,s)\right| \leq &  \frac{1}{n} \E\left[ \sup\limits_{\|\vlambda\|_{\infty} \leq K} \left| \frac{q_{\Theta}}{2}\vlambda^\sT\left( \bW - \frac{1}{\sqrt{nd}}\left(\bZ\bZ^\sT - d\id_n \right) \right)\vlambda  \right|  \right] \\\
	\leq & \frac{q_{\Theta}K^2}{2} \E\left[ \|\bS_1 - \bS_2\|_{\rm{op}} \right].
\end{align*}
Let $\sigma_i(\bS_j)$ be the $i$-th largest eigenvalue of $\bS_j$ for $j \in [2]$, then $\|\bS_1 - \bS_2\|_{\rm{op}} = \max_{i \in [n]}|\sigma_i(\bS_1) - \sigma_i(\bS_2)|$. We denote by ESD$(\bM)$ the empirical spectral distribution of matrix $\bM$. Then using random matrix theory, ESD$(\bS_1)$ and ESD$(\bS_2)$ both converge almost surely to the semicircle law (see \cite{bai1988convergence}). Furthermore, according to the results in \cite{bai1988necessary, paul2012asymptotic, karoui2003largest}, asymptotically speaking, we have $\sigma_1(\bS_1), \sigma_1(\bS_2) \overset{a.s.}{\rightarrow} 2$ and $\sigma_n(\bS_1), \sigma_n(\bS_2) \overset{a.s.}{\rightarrow} -2$. Therefore, we see that $\|\bS_1 - \bS_2\|_{\rm{op}} \overset{a.s.}{\rightarrow} 0$ as $n,d \to \infty$.

By Theorem 1.1 in \cite{bandeira2016sharp}, for all $0 < \eps \leq 1 / 2$
\begin{align*}
	\E[\|\bW\|_{\rm{op}}] \leq (1 + \eps)\left\{2\sqrt{1 + \frac{1}{n}} + \frac{6}{\sqrt{\log(1 + \eps)}} \sqrt{\frac{2\log n}{n}} \right\}.
\end{align*}
In the above equation, we first let $n \rightarrow \infty$ then let $\eps \rightarrow 0^+$, which gives $\limsup_{n \rightarrow \infty}\E[\|\bS_2\|_{\rm{op}}] \leq 2$. By Fatou's lemma, we further have $\liminf_{n \rightarrow \infty} \E[\|\bS_2\|_{\rm{op}}] \geq 2$, thus $\lim_{n \rightarrow \infty} \E[\|\bS_2\|_{\rm{op}}] = 2$. By  \cref{lemma:concentration-of-sample-covariance}, for any $\eps > 0$, there exists $M > 0$, such that for $n,d$ large enough
	\begin{align*}
		\E\left[ \left\| \bS_1 \right\|_{\rm{op}} \mathbbm{1}\left\{\| \bS_1\|_{\rm{op}} \geq M \right\} \right] < \eps.
	\end{align*}
	Dominated convergence theorem gives $\limsup_{n \rightarrow \infty} \E[\|\bS_1\|_{\rm{op}} \mathbbm{1}\{\|\bS_1\|_{\rm{op}} < M\}] \leq 2$, thus we have $\limsup_{n \rightarrow \infty} \E[\|\bS_1\|_{\rm{op}} ] \leq 2 + \eps$. On the other hand, Fatou's lemma implies $\liminf_{n \rightarrow \infty} \E[\|\bS_1\|_{\rm{op}} ] \geq 2$, thus $\lim_{n \rightarrow \infty}\E[\|\bS_1\|_{\rm{op}}] = 2$. Finally, notice that $ \|\bS_1\|_{\rm{op}} + \|\bS_2\|_{\rm{op}} - \|\bS_1 - \bS_2\|_{\rm{op}} \geq 0$. We then apply Scheffé's lemma to both $\|\bS_1 - \bS_2\|_{\rm{op}}$ and $ \|\bS_1\|_{\rm{op}} + \|\bS_2\|_{\rm{op}} - \|\bS_1 - \bS_2\|_{\rm{op}}$, which gives $\E[\|\bS_1 - \bS_2\|_{\rm{op}}] \rightarrow 0$. This concludes the proof of \cref{lemma:free-energy-3}.

\subsection{Proof of \cref{lemma:free-energy-4}}\label{sec:proof-free-energy-density4}

The first claim is a direct consequence of \cref{lemma:free-energy-1,lemma:free-energy-2,lemma:free-energy-3}. As for the second claim, it is straightforward that the free energy densities $\Phi_n(h,s)$ and $\Phi^Y_n(h,s)$ are well-defined on $[0, \infty) \times [0, \infty)$ and differentiable for all $h,s \in (0, \infty)$.   
	
By Nishimori identity (\cref{lemma:nishimori}) and Gaussian integration by parts, we see that for $h,s > 0$,
\begin{align*}
	& \frac{\partial}{\partial h}\Phi_n(h, s) = \frac{1}{4n^2} \E\left[ \langle \bLambda \bLambda^{\top}, \E[\bLambda \bLambda^{\top} \mid \bA, \bY'(h), \bx'(s)] \rangle \right], \\
	& \frac{\partial}{\partial s}\Phi_n(h, s) = \frac{1}{2n} \E\left[ \langle \bLambda, \E[\bLambda \mid \bA, \bY'(h), \bx'(s)] \rangle \right]. 
\end{align*}
Recall that $h,s$ stand for the signal-to-noise ratios in the perturbed model. Therefore, we obtain that for fixed $s \geq 0$, $\frac{\partial}{\partial h}\Phi_n(h, s)$ is increasing in $h$ and for fixed $h \geq 0$, $\frac{\partial}{\partial s}\Phi_n(h, s)$ is increasing in $s$. 

As a result, for all fixed $h,s \geq 0$, the mappings $x \mapsto \Phi_n(h, x)$, $x \mapsto \Phi_n(x, s)$ are convex on $(0, \infty)$. Since these mappings are obviously continuous, we obtain that they are convex functions on $[0, \infty)$. Similarly, we can show that for all fixed $h,s \geq 0$, $x \mapsto \Phi_n^Y(h, x)$, $x \mapsto \Phi_n^Y(x, s)$ are convex functions on $[0, \infty)$. This concludes the proof of the second claim. 
	
Finally, we prove the third claim. This proof is based on Guerra's interpolation technique. For $t \in [0,1], x,q \in \RR_+$, we define
\begin{align*}
	 H_n^G(\vlambda; t, x, q) :=& \frac{xt}{2n}\langle \bLambda, \vlambda \rangle^2 + \frac{\sqrt{xt}}{2} \vlambda^{\top} \bW' \vlambda - \frac{xt}{4n} \|\vlambda\|^4 + \sqrt{(1 - t)xq} \langle \bg', \vlambda \rangle \\
	 & + (1 - t)xq \langle  \bLambda, \vlambda\rangle - \frac{(1 - t)xq}{2}\|\vlambda\|^2, \\
	 \Psi^G_n(t,x,q) := & \frac{1}{n} \E\left[ \log \int \exp\left(H_n^G(\vlambda; t, x, q) \right)\tensorl  \right].
\end{align*}
%
By \cite[Theorem 13]{lelarge2019fundamental}, we see that $\lim_{n \rightarrow \infty} \Psi^G_n(1,x,q) = \sup_{y \geq 0} \cF(x, y)$. Using \cref{lemma:nishimori} and Gaussian integration by parts, we see that
\begin{align*}
	\frac{\partial}{\partial t}\Psi^G_n(t,x,q) =& \frac{1}{n}\E\left[ \frac{x}{4n}\langle \bLambda, \vlambda \rangle^2 - \frac{xq}{2}\langle \bLambda, \vlambda\rangle \, \Big | \, \frac{\sqrt{xt}}{n} \bLambda \bLambda^{\sT} + \bW', \sqrt{(1 - t)xq} \bLambda + \bg' \right] \\
	=& \frac{x}{4}\E\left[ \Big( \frac{1}{n}\langle \bLambda, \vlambda \rangle - q \Big)^2 \, \Big | \, \frac{\sqrt{xt}}{n} \bLambda \bLambda^{\sT} + \bW', \sqrt{(1 - t)xq} \bLambda + \bg' \right] - \frac{xq^2}{4} \\
	 \geq & -  \frac{xq^2}{4}.
\end{align*}
Direct computation reveals that
\begin{align*}
	\Psi^G_n(0, x, q) = \cF(x, q) + \frac{xq^2}{4}.
\end{align*}
Therefore, for all $x,q \geq 0$,
\begin{align}\label{eq:D67}
	\Psi^G_n(1,x,q) = \Psi^G_n(0,x,q) + \int_0^1 \frac{\partial}{\partial t}\Psi_n^G(t, x, q) \dd t \geq \cF(x, q). 
\end{align}
According to \cite[Proposition 17]{lelarge2019fundamental}, for all but countably many $x > 0$, $\cF(x, \cdot)$ has a unique maximizer $q^{\ast}(x)$. For these $x$, we plug $q = q^{\ast}(x)$ into \cref{eq:D67}, which implies for all but countably many $x > 0$ and all $t \in [0,1]$, $\lim_{n \rightarrow \infty} \Psi_n^G(t, x, q^{\ast}(x)) = \cF(x, q^{\ast}(x)) + {xq^{\ast}(x)^2}(1 - t) / 4$. Notice that $\Psi_n^G(t, x, q^{\ast}(x)) = \Phi_n^Y(0, s)$ if $xt = q_{\Theta}^2$ and $(1 - t)xq^{\ast}(x) = s$. This concludes the proof of the third claim of the lemma.

\subsection{Proof of \cref{lemma:bounded-approx-free-energy}}\label{proof:lemma:bounded-approx-free-energy}
For $t \in [0,1]$, we define the interpolated Hamiltonian as
	\begin{align*}
		H_{n,t}^{\eps}(\vlambda, \vtheta; h) := & \frac{t}{\sqrt{nd}}\langle \bLambda, \vlambda \rangle \langle \bTheta, \vtheta\rangle + \frac{\sqrt{t}}{\sqrt[4]{nd}} \vlambda^{\top} \bZ \vtheta - \frac{t}{2\sqrt{nd}}\|\vlambda\|^2\|\vtheta\|^2 + \\
		& \frac{1 - t}{\sqrt{nd}} \langle \barbLambda, \bar\vlambda \rangle\langle \bTheta, \vtheta \rangle + \frac{\sqrt{1 - t}}{\sqrt[4]{nd}} \bar\vlambda^{\top} \bZ' \vtheta - \frac{1 - t}{2\sqrt{nd}}\|\bar\vlambda\|^2\|\vtheta\|^2 + \\
		& \frac{ht}{2n}\langle \bLambda, \vlambda \rangle^2 + \frac{\sqrt{ht}}{2}\vlambda^\sT\bW \vlambda - \frac{ht}{4n}\|\vlambda\|^4 + \\
		& \frac{h(1 - t)}{2n}\langle \bar\bLambda, \bar\vlambda \rangle^2 + \frac{\sqrt{h(1 - t)}}{2}\bar\vlambda^\sT\bW' \bar\vlambda - \frac{h(1 - t)}{4n}\|\bar\vlambda\|^4.
	\end{align*}
where $\bZ' = (Z_{ij}')_{i \in [n], j \in [d]}$ is an independent copy of $\bZ$ and is independent of everything else. We emphasize that $\bZ, \bZ', \bW, \bW', \bLambda, \bTheta$ are mutually independent.  Notice that $H_{n,t}^{\eps}(\vlambda, \vtheta; h)$ is the Hamiltonian that corresponds to observations $(\bA_1, \bA_2, \bY_1, \bY_2)$ 
\begin{align*}
	& \bA_1 = \frac{\sqrt{t}}{\sqrt[4]{nd}} \bLambda \bTheta^\sT + \bZ, \qquad \bA_2 = \frac{\sqrt{1 - t}}{\sqrt[4]{nd}} \bar{\bLambda} \bTheta^\sT + \bZ', \\
	& \bY_1 = \frac{\sqrt{ht}}{n} \bLambda \bLambda^{\top} + \bW, \qquad \bY_2 = \frac{\sqrt{h(1 - t)}}{n} \barbLambda \barbLambda^{\top} + \bW'.
\end{align*}
We define the corresponding free energy density
\begin{align*}
	\Phi_{n,t}^{\eps}(h) := \frac{1}{n} \E\left[ \log \left( \int \exp\left( H_{n,t}^{\eps}(\vlambda, \vtheta; h)\right)\tensorl\tensort \right) \right].
\end{align*}
At the endpoints, we have $\Phi_{n,0}^{\eps}(h) = \bar{\Phi}_n^{\eps}(h)$ and $\Phi_{n,1}^{\eps}(h) = {\Phi}_n(h, 0)$. For simplicity, we denote by $\langle \cdot \rangle_{h,\eps, t}$ the expectation with respect to the posterior distribution $\P(\bLambda = \cdot, \bTheta = \cdot \mid \bA_1, \bA_2, \bY_1, \bY_2)$. Using Gaussian integration by parts and Nishimori identity (\cref{lemma:nishimori}), we have
\begin{align*}
	\frac{\partial}{\partial t}\Phi_{n,t}^{\eps}(h) =&  \frac{1}{2n\sqrt{nd}} \sum\limits_{i \in [n], j \in [d]} \E\left[ \langle (\bLambda_i \vlambda_i - \bar\bLambda_i \bar{\vlambda}_i) \bTheta_j \vtheta_j \rangle_{h,\eps, t} \right] + \frac{h}{4n^2}\E[\langle (\bLambda^{\top}\vlambda)^2 \rangle_{h,\eps, t}] - \frac{h}{4n^2}\E[\langle (\bar\bLambda^{\top}\bar\vlambda)^2 \rangle_{h,\eps, t}]  \\
	=& \frac{1}{2\sqrt{nd}} \E\left[ \left\langle(\bLambda_1 \vlambda_1 - \bar\bLambda_1 \bar{\vlambda}_1) \langle \bTheta, \vtheta \rangle \right\rangle_{h,\eps, t}\right]+ \frac{h}{4n^2}\sum_{i \in [n], j \in [n]}\E\left[ \langle \bLambda_i\bLambda_j \vlambda_i \vlambda_j -  \bar\bLambda_i\bar\bLambda_j \bar\vlambda_i \bar\vlambda_j \rangle_{h,\eps, t} \right]. 
\end{align*}
Next, we provide upper bound for the above partial derivative.  Invoking Holder's inequality, we see that  
\begin{align*}
	& \Big|\frac{\partial}{\partial t}\Phi_{n,t}^{\eps}(h) \Big|\\
	\leq & \frac{1}{2\sqrt{nd}} \E\left[\left\langle(\bLambda_1 \vlambda_1 - \bar\bLambda_1 \bar{\vlambda}_1)^2 \right\rangle_{h,\eps,t}^{1/2}\left\langle  \langle \bTheta, \vtheta \rangle^2 \right\rangle_{h,\eps, t}^{1/2}\right] + \frac{h}{2n^2} \sum_{i \in [n], j \in [n]} \E[(\bLambda_i\bLambda_j - \bar\bLambda_i \bar\bLambda_j)^2]^{1/2}\E[\bLambda_i^2\bLambda_j^2]^{1/2} \\
	\leq & \frac{1}{2\sqrt{nd}} \E\left[\left\langle(\bLambda_1 \vlambda_1 - \bar\bLambda_1 \bar{\vlambda}_1)^2 \right\rangle_{h,\eps, t} \right]^{1/2}\E\left[\left\langle\langle \bTheta, \vtheta \rangle^2 \right\rangle_{h,\eps, t} \right]^{1/2} + {h} \E[\bLambda_1^4]^{3/4}\E[(\bLambda_1 - \bar\bLambda_1)^4]^{1/4}.
\end{align*}
We denote by $\langle \cdot \rangle_{h,\eps, t, \ast}$ the expectation with respect to the posterior distribution 
$$\P( \bTheta = \cdot \mid \bA_1, \bA_2, \bY_1, \bY_2, \bLambda, \bar{\bLambda}).$$
Direct computation gives the following inequality:
\begin{align}
	\E\left[\left\langle\langle \bTheta, \vtheta \rangle^2 \right\rangle_{ h,\eps, t, \ast} \right] =& d\E\left[\left\langle \bTheta_1^2\vtheta_1^2  \right\rangle_{h,\eps, t, \ast}\right] + d(d - 1)\E\left[\left\langle \bTheta_1\bTheta_2\vtheta_1\vtheta_2  \right\rangle_{h,\eps, t, \ast}\right] \nonumber \\
	\leq & d\E[\bTheta_1^4] + d(d - 1) \E\left[\left\langle \bTheta_1\bTheta_2\vtheta_1\vtheta_2  \right\rangle_{h,\eps, t, \ast}\right]. \label{eq:232}
\end{align}
Recall that $r_n = d^{1/4}n^{-1/4}$. We define the mapping 
\begin{align*}
	F_{\Theta}(\delta) := r_n^2 \E[ \E[ \bThetas \mid r_n^{-1}\delta \bThetas + \bG ]^2 ],
\end{align*}
where $\bThetas \sim \mu_{\Theta}$, $\bG \sim \normal(0,1)$ and $\bThetas \perp \bG$. Notice that
\begin{align}
	\E\left[\left\langle \bTheta_1\bTheta_2\vtheta_1\vtheta_2  \right\rangle_{h,\eps, t, \ast}\right] \leq \frac{n}{d}\E\Big[ F_{\Theta}\Big( \sqrt{({t\|\bLambda\|_2^2 + (1 - t)\|\bar{\bLambda}\|_2^2})/{n}} \Big)^2 \Big].  \label{eq:233}
\end{align}
Direct computation gives
\begin{align}
	& \frac{\dd}{\dd \delta}\E\left[\bThetas \mid \delta \bThetas + \bG \right] \nonumber \\
	 = & (2\delta \bThetas + \bG)\Var[\bThetas \mid \delta \bThetas + \bG]- \delta \E[\bThetas^3 \mid \delta \bThetas + \bG] + \delta \E[\bThetas^2 \mid \delta \bThetas + \bG] \E[\bThetas \mid \delta \bThetas + \bG], \label{eq:188}
\end{align}
Next, we apply triangle inequality to upper bound the right hand side of \cref{eq:188}, which gives
\begin{align*}
	& \left|\frac{\dd}{\dd\delta} \E\left[ \bThetas \mid r_n^{-1} \delta \bThetas + \bG \right] \right| \\
	\leq & r_n^{-1} \times \left\{ (2r_n^{-1} \delta |\bThetas| + |\bG|) \Var\left[ \bThetas \mid r_n^{-1} \delta \bThetas + \bG  \right]    \right. \\
	& \left. + r_n^{-1} \delta \E[|\bThetas|^3 \mid r_n^{-1} \delta \bThetas + \bG] + r_n^{-1} \delta \E[\bThetas^2 \mid r_n^{-1} \delta \bThetas + \bG]\E[|\bThetas|\mid r_n^{-1} \delta \bThetas + \bG] \right\}. 
\end{align*}
Leveraging the above formulas and \Holder's inequality, we obtain that for $n,d$ large enough
\begin{align}\label{eq:FThetadelta}
	F_{\Theta}(\delta) =& \sqrt{\frac{d}{n}} \E\left[ \E[\bThetas \mid r_n^{-1} \delta \bThetas + \bG]^2 \right] \nonumber \\
	\leq & \sqrt{\frac{d}{n}} \E\left[ \left( \int_0^{\delta} \left|\frac{\dd}{\dd x} \E\left[ \bThetas \mid r_n^{-1} x \bThetas + \bG \right] \right| \dd x \right)^2\right] \nonumber \\
	\leq & \sqrt{\frac{d}{n}} \E\left[ \delta \int_0^{\delta} \left|\frac{\dd}{\dd x} \E\left[ \bThetas \mid r_n^{-1} x \bThetas + \bG \right] \right|^2 \dd x\right] \nonumber \\
	\leq & 4 \delta \E\left[ \int_0^{\delta} 4r_n^{-2} x^2 \bThetas^2 \Var\left[ \bThetas \mid r_n^{-1} x \bThetas + \bG  \right]^2 + \bG^2 \Var\left[ \bThetas \mid r_n^{-1} x \bThetas + \bG  \right]^2 \right. \nonumber \\
	& \left. + r_n^{-2} x^2\E[|\bThetas|^3 \mid r_n^{-1} x \bThetas + \bG]^2 + r_n^{-2} x^2 \E[\bThetas^2 \mid r_n^{-1} x \bThetas + \bG]^2\E[|\bThetas|\mid r_n^{-1} x \bThetas + \bG]^2 \dd x  \right] \nonumber \\
	 \leq &  C_{\mu_{\Theta}}(\delta^4 + 1).
\end{align}
In the above display, $C_{\mu_{\Theta}} > 0$ is a constant that depends only on $\mu_{\Theta}$. 
	
We define the set $S = \left\{ \|\bLambda\|_2^2 \leq n\E[\bLambdas^2] + n\|\bLambdas^2\|_{\Psi_1} \right\}$, where $\|\cdot\|_{\Psi_1}$ is the sub-exponential norm of $\bLambdas^2$. Then by Bernstein's inequality \cite[Theorem 2.8.1]{vershynin2018high}, we can conclude that there exists a constant $C_{\mu_{\Lambda}} > 0$ depending only on $\mu_{\Lambda}$, such that for all $s \geq 1$,
	\begin{align*}
		\P\left( \|\bLambda\|_2^2 \geq n\E[\bLambdas^2] + sn\|\bLambdas^2\|_{\Psi_1} \right) \leq 2\exp\left( -C_{\mu_{\Lambda}}ns \right). 
	\end{align*}
Therefore, for $n,d$ large enough we have
\begin{align}
	& \frac{n}{d}\E\left[ F_{\Theta}\left( \sqrt{({t\|\bLambda\|_2^2 + (1 - t)\|\bar{\bLambda}\|_2^2})/{n}} \right)^2 \right] \nonumber \\
	=& \frac{n}{d}\E\left[ F_{\Theta}\left( \sqrt{({t\|\bLambda\|_2^2 + (1 - t)\|\bar{\bLambda}\|_2^2})/{n}} \right)^2 \mathbbm{1}_S \right] + \frac{n}{d}\E\left[ F_{\Theta}\left( \sqrt{({t\|\bLambda\|_2^2 + (1 - t)\|\bar{\bLambda}\|_2^2})/{n}} \right)^2 \mathbbm{1}_{S^c} \right]\nonumber \\
	\overset{(i)}{\leq} & \frac{4C_{\mu_{\Theta}}^2n}{d} + \frac{2C_{\mu_{\Theta}}^2n}{d}\left( \E[\bLambdas^2] + \|\bLambdas^2\|_{\Psi_1} \right)^4 + \frac{2nC_{\mu_{\Theta}}^2}{d} \E\left[ \left(\|\bLambda\|_2^2 / n \right)^4\mathbbm{1}_{S^c}  \right]\nonumber \\
	\leq & \frac{4C_{\mu_{\Theta}}^2n}{d} + \frac{2C_{\mu_{\Theta}}^2n}{d}\left( \E[\bLambdas^2] + \|\bLambdas^2\|_{\Psi_1} \right)^4 + \\
	& \frac{2nC_{\mu_{\Theta}}^2}{d} \int_1^{\infty} 4\P\left(\|\bLambda\|_2^2 / n \geq \E[\bLambdas^2] + s\|\bLambdas^2\|_{\Psi_1} \right)\left(\E[\bLambdas^2] + s\|\bLambdas^2\|_{\Psi_1} \right)^3\|\bLambdas^2\|_{\Psi_1} \dd s\nonumber \\
	\leq & \frac{4C_{\mu_{\Theta}}^2n}{d} + \frac{2C_{\mu_{\Theta}}^2n}{d}\left( \E[\bLambdas^2] + \|\bLambdas^2\|_{\Psi_1} \right)^4 + \frac{2nC_{\mu_{\Theta}}^2}{d}\int_1^{\infty}8\exp\left( -C_{\mu_{\Lambda}}ns \right)\left(\E[\bLambdas^2] + s\|\bLambdas^2\|_{\Psi_1} \right)^3\|\bLambdas^2\|_{\Psi_1} \dd s \nonumber \\
		 \leq &  \frac{C_1n}{d}, \label{eq:234}
\end{align}
where $C_1 > 0$ is a constant depending only on $(\mu_{\Theta}, \mu_{\Lambda})$, and in \emph{(i)} we use \cref{eq:FThetadelta}. Furthermore, 
\begin{align}
	\E\left[\left\langle(\bLambda_1 \vlambda_1 - \bar\bLambda_1 \bar{\vlambda}_1)^2 \right\rangle_{h,\eps, t} \right] \leq & 2\E\left[ \bLambda_1^2 \langle (\vlambda_1 - \bar{\vlambda}_1)^2 \rangle_{h,\eps, t} \right] + 2\E\left[(\bLambda_1 - \bar{\bLambda}_1)^2\langle  \bar{\vlambda}_1^2 \rangle_{h,\eps, t}   \right] \nonumber \\
	\leq & 2\E\left[ \bLambda_1^4  \right]^{1/2} \E\left[ \langle (\vlambda_1 - \bar{\vlambda}_1)^2  \rangle_{h,\eps, t}^2  \right]^{1/2} + 2\E\left[(\bLambda_1 - \bar\bLambda_1)^4 \right]^{1/2} \E\left[ \langle \bar{\vlambda}_1^2 \rangle_{h,\eps,t}^2 \right]^{1/2} \nonumber \\
	\leq & 2\E\left[ \bLambda_1^4  \right]^{1/2}\E\left[ (\bLambda_1 - \bar\bLambda_1)^4  \right]^{1/2} + 2\E\left[(\bLambda_1 - \bar\bLambda_1)^4 \right]^{1/2}\E\left[ \bar{\bLambda}_1^4 \right] \nonumber \\
	\leq &  C_2 \sqrt{\eps},\label{eq:235}
\end{align}
where $C_2 > 0$ is a constant depending only on $\mu_{\Lambda}$. Finally, we combine \cref{eq:232,eq:233,eq:234,eq:235}, and conclude that
\begin{align*}
	\left|\frac{\partial}{\partial t}\Phi_{n,t}^{\eps}(h)\right| \leq C_0 \sqrt[4]{\eps}
\end{align*}
for all $t, h \in [0,1]$, where $C_0 > 0$ is a constant depending only on $(\mu_{\Theta}, \mu_{\Lambda})$. This concludes the proof of the lemma.

\section{Achieving the Bayesian MMSE}\label{sec:tec-lemmas-upper-bound}
In this section we prove the technical lemmas required to prove \cref{thm:upper-bound}.
\subsection{Proof of \cref{lemma:log-truncate2}}\label{sec:proof-of-lemma:log-truncate2}
We define the set
\begin{align*}
	\Omega := \left\{ |\bLambda_i| \leq 2K_0 \sqrt{\log n}: i \in [n] \right\}.
\end{align*}
By \cref{eq:C4-31} we see that $\P(\Omega^c) \leq 2n^{-3}$. Furthermore, on $\Omega$ we have $\bA = {\bar\bA}$. For matrix $\bX \in \RR^{n \times d}$, we define the mapping  $\bar \bM: \RR^{n \times d} \rightarrow \RR^{n \times n}$ such that $\bar \bM(\bX) = \E[\barbLambda \barbLambda^{\top} \mid \bar\bA = \bX]$. Then we have
\begin{align}
	\frac{1}{n} \E\left[\left\| \bLambda \bLambda^{\top} - \E[\bLambda \bLambda^{\top} \mid \bA] \right\|_F^2 \right]^{1/2} \overset{(i)}{\leq} & \frac{1}{n} \E\left[\left\| \bLambda \bLambda^{\top} - \bar{\bM}(\bA) \right\|_F^2 \right]^{1/2} \nonumber \\
	\overset{(ii)}{\leq} & \frac{1}{n} \E\left[\left\| \barbLambda \barbLambda^{\top} - \bar{\bM}(\bA) \right\|_F^2 \right]^{1/2} + \frac{1}{n} \E\left[\left\| \barbLambda \barbLambda^{\top} - \bLambda \bLambda^{\top} \right\|_F^2 \right]^{1/2}, \label{eq:C24}
\end{align}
where \emph{(i)} is by the fact that the posterior expectation achieves Bayesian MMSE, and \emph{(ii)} is by triangle inequality. Applying triangle inequality and \Holder's inequality, we have
\begin{align}
	\frac{1}{n} \E\left[\left\| \barbLambda \barbLambda^{\top} - \bar{\bM}(\bA) \right\|_F^2 \right]^{1/2} \leq & \frac{1}{n} \E\left[\left\| \barbLambda \barbLambda^{\top} - \bar{\bM}(\bA) \right\|_F^2 \mathbbm{1}_{\Omega}\right]^{1/2} + \frac{1}{n} \E\left[\left\| \barbLambda \barbLambda^{\top} - \bar{\bM}(\bA) \right\|_F^2 \mathbbm{1}_{\Omega^c}\right]^{1/2}\nonumber \\
	\leq & \frac{1}{n} \E\left[\left\| \barbLambda \barbLambda^{\top} - \bar{\bM}({\bar\bA}) \right\|_F^2 \right]^{1/2} + \frac{1}{n} \E\left[\left\| \barbLambda \barbLambda^{\top} - \bar{\bM}(\bA) \right\|_F^4 \right]^{1/4}\P(\Omega^c)^{1/4}.\label{eq:C25}
\end{align}	
Direct computation reveals that $ \E[\| \barbLambda \barbLambda^{\top} - \bLambda \bLambda^{\top} \|_F^2 ]^{1/2} / n= o_n(1)$ as $n,d \rightarrow \infty$, and $ \E[\| \barbLambda \barbLambda^{\top} - \bar{M}(\bA) \|_F^4]^{1/4} / n \leq 8K_0^2 \log n$. As a result, we conclude that $ \E[\| \barbLambda \barbLambda^{\top} - \bar{M}(\bA) \|_F^4]^{1/4} \P(\Omega^c)^{1/4} / n = o_n(1)$. Combining these analysis with \cref{eq:C24,eq:C25} concludes the proof of the lemma.
	
%
%
%
%

\subsection{Proof of \cref{lemma:toMMSE}}\label{sec:proof-of-lemma:toMMSE}
%
%
%
%
%
Let $\bW, \bW' \iidsim \GOE(n)$ that are independent of $\bLambda$. For $t \in [0,1]$, $s \geq 0$, we define 
\begin{align*}
	& \bY_{a,t}^{(s)} := \frac{q_{\Theta}\sqrt{(1 - t)s} \bLambda \bLambda^{\top}}{n} + \bW, \\
	& \bY_{b,t}^{(s)} := \frac{\sqrt{ts}}{n}(q_{\Theta}^{1/2} \barbLambda + r_n^{-1} \bar\bg)(q_{\Theta}^{1/2} \barbLambda + r_n^{-1} \bar\bg)^{\top} + \bW'.
\end{align*}
For $\bx, \by \in \RR^n$, we define the corresponding truncated vectors $\bar\bx,\bar\by \in \RR^n$ such that $\bar x_i = x_i \mathbbm{1}\{|x_i| \leq 2K_0\sqrt{ \log n}\}$ and $\bar y_i = y_i \mathbbm{1}\{|y_i| \leq C_3 \sqrt{\log n}\}$ for all $i \in [n]$. The Hamiltonian that corresponds to $(\bY_{a,t}^{(s)}, \bY_{b,t}^{(s)})$ can be expressed as 
\begin{align*}
	& H_{n,t}^{(s)}(\bx, \by) \\
	 := & \frac{ts}{2n}((q_{\Theta}^{1/2}\barbLambda + r_n^{-1}  \bar\bg)^{\top} (q_{\Theta}^{1/2}\bar\bx + r_n^{-1}\bar\by))^2 + \frac{\sqrt{ts}}{2}(q_{\Theta}^{1/2}\bar\bx + r_n^{-1}\bar\by)^{\top} \bW' (q_{\Theta}^{1/2}\bar\bx + r_n^{-1}\bar\by)  - \frac{ts}{4n}\| q_{\Theta}^{1/2}\bar\bx + r_n^{-1}\bar\by\|^4 \\
	& + \frac{(1 - t)sq_{\Theta}^2}{2n}(\bLambda^{\top} \bx)^2 + \frac{\sqrt{(1 - t)s}q_{\Theta}}{2} \bx^{\top} \bW \bx - \frac{(1 - t)sq_{\Theta}^2}{4n}\|\bx\|^4. 
\end{align*}
The corresponding free energy density can be written as 
\begin{align*}
		G_n(t, s) := \frac{1}{n} \E\left[ \log \left( \int \exp\left( H_{n,t}^{(s)}(\bx, \by) \right)  P_{\bLambda}^{\otimes n}(\dd \bx)  P_{\normal(0,1)}^{\otimes n}(\dd \by)\right)  \right],
\end{align*}
where $P_{\normal(0,1) }^{\otimes n}$ is the distribution of $\normal(\mathbf{0}, \id_n)$. Invoking \cref{lemma:nishimori} and Gaussian integration by parts, we obtain that
\begin{align*}
	\frac{\partial}{ \partial t} G_n(t, s) = &  \frac{s}{4n^2} \E\left[ \big \|\E\big [(q_{\Theta}^{1/2} \barbLambda + r_n^{-1} \bar\bg)(q_{\Theta}^{1/2} \barbLambda + r_n^{-1} \bar\bg)^{\top} \mid \bY_{a,t}^{(s)}, \bY_{b,t}^{(s)}\big ] \big \|_F^2 \right] \\
	& - \frac{s q_{\Theta}^2}{4n^2} \E\left[ \big\| \E\big[\bLambda \bLambda^{\top} \mid \bY_{a,t}^{(s)}, \bY_{b,t}^{(s)}\big] \big\|_F^2 \right].
\end{align*}
Leveraging \Holder's inequality, we see that
\begin{align}
	& \Big|\frac{\partial}{ \partial t} G_n(t,s)\Big| \nonumber \\
	 \leq & \left| \E\left[\frac{s}{4}\E\left[(q_{\Theta}^{1/2} \bar\bLambda_1 + r_n^{-1} \bar g_1)(q_{\Theta}^{1/2} \bar\bLambda_2 + r_n^{-1} \bar g_2) \mid \bY_{a,t}^{(s)}, \bY_{b,t}^{(s)}\right]^2- \frac{s q_{\Theta}^2}{4} \E\left[ \bLambda_1 \bLambda_2 \mid \bY_{a,t}^{(s)}, \bY_{b,t}^{(s)}\right]^2 \right] \right| + o_n(1) \nonumber \\
	\leq & \frac{s}{4} \E\left[ \left( q_{\Theta} \bar\bLambda_1 \bar\bLambda_2 + r_n^{-1} q_{\Theta}^{1/2} \bar\bLambda_1 \bar{g}_2 + r_n^{-1} q_{\Theta}^{1/2} \bar\bLambda_2 \bar{g}_1 + r_n^{-2} \bar{g}_1 \bar{g}_2 - q_{\Theta}\bLambda_1 \bLambda_2\right)^2 \right]^{1/2} \times \nonumber \\
	& \E\left[ \left( q_{\Theta} \bar\bLambda_1 \bar\bLambda_2 + r_n^{-1} q_{\Theta}^{1/2} \bar\bLambda_1 \bar{g}_2 + r_n^{-1} q_{\Theta}^{1/2} \bar\bLambda_2 \bar{g}_1 + r_n^{-2} \bar{g}_1 \bar{g}_2 + q_{\Theta}\bLambda_1 \bLambda_2\right)^2 \right]^{1/2} + o_n(1). \label{eq:36} 
\end{align}	
The upper bound given in the last line of \cref{eq:36} is independent of $t$ and converges to 0 as $n,d \rightarrow \infty$. 
Therefore, we conclude that as $n,d \to \infty$
\begin{align*}
	\sup_{t \in (0,1), s \in [0,2]}\, \Big| \frac{\partial}{\partial t} G_n(t, s) \Big| \rightarrow 0,
\end{align*}
which further implies that $|G_n(1,s) - G_n(0,s)| = o_n(1)$ for all $s \in [0,2]$. Recall that $\cF(\cdot, \cdot)$ is defined in \cref{eq:FsQ}. Using \cite[Theorem 13]{lelarge2019fundamental}, we have $G_n(0, s) = \sup_{q \geq 0} \cF(q_{\Theta}^2 s, q) + o_n(1)$. Observe that $s \mapsto G_n(1, s)$ is convex differentiable on $(0, \infty)$, and converges point-wisely to $\sup_{q \geq 0} \cF(q_{\Theta}^2 s, q) $ as $n, d \to \infty$, the later is differentiable at $s = 1$ for all but countably many values of $q_{\Theta} > 0$ according to \cite[Proposition 17]{lelarge2019fundamental}. Invoking \cref{lemma:convex-derivative}, \cref{lemma:nishimori} and Gaussian integration by parts, we conclude that for all but countably many $q_{\Theta} > 0$
	\begin{align*}
		\lim_{n \rightarrow \infty} \frac{1}{n^2} \E\left[ \big\| q_{\Theta}^{-1}\bM_n(\bY_2) - q_{\Theta}^{-1}\big( q_{\Theta}^{1/2} \barbLambda + r_n^{-1} \bar\bg \big)\big( q_{\Theta}^{1/2} \barbLambda + r_n^{-1} \bar\bg \big)^{\top} \big\|_F^2 \right] = \lim_{n \to \infty} \MMSEsy_n(\mu_{\Lambda}; q_{\Theta}).
	\end{align*}
	Notice that $\lim_{n \rightarrow \infty} \E[ \| ( q_{\Theta}^{1/2} \barbLambda + r_n^{-1} \bar\bg )( q_{\Theta}^{1/2} \barbLambda + r_n^{-1} \bar\bg )^{\top} / n - q_{\Theta} \bar\bLambda \bar\bLambda^{\top} / n \|_F^2 ] = 0$, then the proof of the lemma follows immediately from triangle inequality. 
		
\subsection{Proof of \cref{lemma:log-truncate}}\label{sec:proof-of-lemma:log-truncate}
We define the set
\begin{align*}
	\Omega := \left\{ |\bTheta_j| \leq 2K_2 \sqrt{\log d}: j \in [d]\right\}.
\end{align*}
By \cref{eq:37} we have $\P(\Omega^c) \leq 2d^{-3}$. Furthermore, on the set $\Omega$ we have $\bA = \bar\bA$. For $\bX \in \RR^{n \times d}$, we define the mapping $\bM(\bX) := \E[\bLambda \bLambda^{\top} \mid \bar\bA = \bX]$. Leveraging triangle inequality and \Holder's inequality, we obtain that
\begin{align*}
	\frac{1}{n} \E\left[\left\| \bLambda \bLambda^{\top} - {M}(\bA) \right\|_F^2 \right]^{1/2}  \leq & \frac{1}{n} \E\left[\left\| \bLambda \bLambda^{\top} - {M}(\bA) \right\|_F^2 \mathbbm{1}_{\Omega}\right]^{1/2} + \frac{1}{n} \E\left[\left\| \bLambda \bLambda^{\top} - {M}(\bA) \right\|_F^2 \mathbbm{1}_{\Omega^c}\right]^{1/2} \\
	\leq & \frac{1}{n} \E\left[\left\| \bLambda \bLambda^{\top} - {M}(\bar\bA) \right\|_F^2 \right]^{1/2} + \frac{1}{n} \E\left[\left\| \bLambda \bLambda^{\top} - {M}(\bA) \right\|_F^4 \right]^{1/4}\P(\Omega^c)^{1/4}.
\end{align*}
Since the posterior expectation minimizes the expected $l^2$ risk, we then have 
\begin{align*}
	\frac{1}{n^2}\E\left[\left\| \bLambda \bLambda^{\top} - {M}(\bA) \right\|_F^2 \right] \geq \frac{1}{n^2} \E\left[\left\| \bLambda \bLambda^{\top} - \E[\bLambda \bLambda^{\top} \mid \bA] \right\|_F^2 \right].
\end{align*}
By the bounded-support assumption, we see that $\E[\| \bLambda \bLambda^{\top} - {M}(\bA) \|_F^4]^{1/4}/ n \leq 2K_1^2$. Thus, as $n,d \rightarrow \infty$,
	\begin{align*}
		\frac{1}{n} \E\left[\left\| \bLambda \bLambda^{\top} - {M}(\bA) \right\|_F^4 \right]^{1/4}\P(\Omega^c)^{1/4} \rightarrow 0,
	\end{align*}
	which completes the proof of the lemma.

\subsection{Proof of \cref{lemma:C9}}\label{sec:proof-of-lemma:C9}
For $t \in [0,1]$, we define the interpolated Hamiltonian as 
\begin{align*}
	H_{n,t}^{[s]}(\vlambda, \vtheta; h) := &  \sum\limits_{i\in [n], j \in [d]} \left\{ \frac{ts}{\sqrt{nd}}\bLambda_i\vlambda_i \bTheta_j \vtheta_j + \frac{\sqrt{ts}}{\sqrt[4]{nd}} Z_{ij} \vlambda_i\vtheta_j -  \frac{ts}{2\sqrt{nd}} \vlambda_i^2\vtheta_j^2 \right\} +   \\
	& \sum\limits_{i\in [n], j \in [d]}\left\{ \frac{s(1 - t)}{\sqrt{nd}}\bLambda_i\vlambda_i\bar\bTheta_j\bar\vtheta_j + \frac{\sqrt{s(1 - t)}}{\sqrt[4]{nd}}Z_{ij}'\vlambda_i\bar\vtheta_j - \frac{s(1 - t)}{2\sqrt{nd}} \vlambda_i^2 \bar\vtheta_j^2 \right\}+ H_n(\vlambda; \bY'(h) ),
\end{align*}
where $\bZ' = (Z_{ij}')_{i \in [n], j \in [d]}$ is an independent copy of $\bZ$ and is independent of everything else.  Note that $H_{n,t}^{[s]}(\vlambda, \vtheta; h)$ is the Hamiltonian corresponding to the observations $(\bA_1^{(s,t)}, \bA_2^{(s,t)},  \bY'(h))$, where $\bA_1^{(s,t)} = {\sqrt{ts}}\bLambda\bTheta^{\top} / {\sqrt[4]{nd}}  + \bZ$, $\bA_2^{(s,t)} = {\sqrt{(1 - t)s}} \bLambda \barbTheta^{\top} / {\sqrt[4]{nd}}+ \bZ'$ and $\bY'(h) = {\sqrt{h}} \bLambda \bLambda^{\top} / n + \bW'$. Here, we recall that $\bW' \sim \GOE(n)$, and $\bW', \bZ, \bZ'$ are mutually independent. We define the free energy density corresponding to the Hamiltonian $H_{n,t}^{[s]}(\vlambda, \vtheta; h)$ as
\begin{align*}
	\Phi_{n,t}^{[s]}(h) := \frac{1}{n} \E\left[ \log \left( \int \exp\left( H_{n,t}^{[s]}(\vlambda, \vtheta; h)\right)\tensorl\tensort \right) \right].
\end{align*}
At the endpoints, we have $\Phi_{n,0}^{[s]}(h) = \bar{\Phi}_n(s,0,0,h)$ and $\Phi_{n,1}^{[s]}(h) = {\Phi}_n(s,0,0,h)$. We denote by $\langle \cdot \rangle_{t,h}^{[s]}$ the expectation with respect to the posterior distribution $\P(\cdot \mid \bA_1^{(s,t)}, \bA_2^{(s,t)}, \bY'(h))$. Then we have
	\begin{align}
		\Big| \frac{\partial}{\partial t} \Phi_{n,t}^{[s]}(h) \Big| \overset{(i)}{=} & \Big| \frac{s}{2n \sqrt{nd}} \sum_{i \in [n], j \in [d]} \E\left[ \langle \bLambda_i \vlambda_i (\bTheta_j \vtheta_j - \bar\bTheta_j \bar\vtheta_j) \rangle_{t,h}^{[s]}  \right] \Big| \nonumber \\
		 \overset{(ii)}{\leq} & {\frac{s}{2n\sqrt{nd}}\sum\limits_{i \in [n], j \in [d]}\E\left[\langle \bLambda_i^2\vlambda_i^2 \rangle_{t,h}^{[s]} \right]^{1/2} \E\left[ \langle (\bTheta_j \vtheta_j - \bar\bTheta_j \bar\vtheta_j)^2 \rangle_{t,h}^{[s]} \right]^{1/2}}, \label{eq:C29}
	\end{align}
	where \emph{(i)} is by Gaussian integration by parts and Nishimori identity (Lemma \ref{lemma:nishimori}), and \emph{(ii)} is by \Holder's inequality. For all $j \in [d]$, using power mean inequality and \Holder's inequality, we have 
\begin{align}
	\E\left[ \langle (\bTheta_j \vtheta_j - \bar\bTheta_j \bar\vtheta_j)^2 \rangle_{t,h}^{[s]} \right] \leq & 2\E[\bTheta_j^2\langle (\vtheta_j - \bar\vtheta_j)^2 \rangle_{t,h}^{[s]}] + 2\E[(\bTheta_j - \bar\bTheta_j)^2\langle \bar\vtheta_j^2 \rangle_{t,h}^{[s]}] \nonumber \\
	\leq & 2\E[\bTheta_j^4]^{1/2}\E[\langle(\vtheta_j - \bar\vtheta_j)^2 \rangle_{t,h}^{[s] 2}]^{1/2} + 2\E[(\bTheta_j - \bar\bTheta_j)^4]^{1/2}\E[\langle \bar\vtheta_j^2 \rangle_{t,h}^{[s] 2}]^{1/2} \nonumber \\
	\leq & 2\E[\bTheta_j^4]^{1/2}\E[(\bTheta_j - \bar\bTheta_j)^4]^{1/2} + 2\E[(\bTheta_j - \bar\bTheta_j)^4]^{1/2}\E[\bar\bTheta_j^4]^{1/2}. \label{eq:C30}
\end{align}
Notice that
\begin{align}
	\E[(\bTheta_j - \bar\bTheta_j)^4] \leq & \int_{2K_2\sqrt{\log d}}^{\infty} 4x^3\P(|\bThetas| \geq x) \dd x \leq 4 \int_{4K_2^2{\log d}}^{\infty} y \exp\Big(-\frac{y}{K_2^2}\Big) \dd y \nonumber \\
	= & -4(yK_2^2 + K_2^4) \exp\Big(-\frac{y}{K_2^2} \Big) \Big|^{\infty}_{4K_2^2{\log d}} = \frac{4K_2^4 + 16K_2^4 \log d}{d^4}. \label{eq:C31}
\end{align}
Combining \cref{eq:C29,eq:C30,eq:C31}, we obtain that $\sup_{t \in [0,1], h \geq 0, S_0 \geq s \geq 0}\Big| \frac{\partial}{\partial t}\Phi_{n,t}^{(s)}(h) \Big| \rightarrow 0$ as $n, d \rightarrow \infty$, thus completing the proof of the lemma.

\subsection{Proof of \cref{lemma:G1}}\label{sec:proof-of-lemma:G1}
Using \cref{lemma:C9}, we have $\lim_{n,d \rightarrow \infty}\left|\bar\Phi_n(s,0,0, h) - \Phi_n(s,0,0, h) \right| = 0$. Similar to the proof of \cref{lemma:free-energy-converge}, we can conclude that $\lim_{n,d \rightarrow \infty}|\Phi_n(s,0,0, h) - \sup_{q \geq 0} \cF(q_{\Theta}^2 s^2 + h, q)| = 0$ as $n,d \rightarrow \infty$. 
	Therefore, in order to prove the lemma, it suffices to show
\begin{align*}
	\lim\limits_{n,d \rightarrow \infty}\sup\limits_{a,a' \in [0,10]} \left|\bar\Phi_n(s,a,a', h) - \bar\Phi_n(s,0,0, h) \right| = 0.
\end{align*} 
%
Using Gaussian integration by parts and Nishimori identity (Lemma \ref{lemma:nishimori}), we obtain that for all $a,a' \in [0,10]$,
\begin{align}\label{eq:27}
\begin{split}
	& \frac{\partial}{\partial \ep_n} \bar\Phi_n(s,a,a',h) = \frac{a^2}{2d} \E\left[ \barbTheta^{\top} \E[\barbTheta \mid \bar\bA(s), \bx'(a'), \bar\bx(a), \bY'(h)] \right] \leq 50\E_{\bThetas \sim \mu_{\Theta}}[\bar\bTheta_0^2], \\
	& \frac{\partial}{\partial \ep_n'} \bar\Phi_n(s,a,a',h) = \frac{{a'}^2}{2n}\E\left[ \bLambda^{\top} \E[\bLambda \mid \bar\bA(s), \bx'(a'), \bar\bx(a), \bY'(h)] \right] \leq 50\E_{\bLambdas \sim \mu_{\Lambda}}[\bLambdas^2].
\end{split}
\end{align}
Notice that if $\ep_n = \ep_n' = 0$, then $\bar\Phi_n(s,a,a',h) = \bar\Phi_n(s,0,0,h)$. Therefore, by \cref{eq:27}, we conclude that as $n, d \rightarrow \infty$, 
\begin{align*}
	\sup_{a,a' \in [0,10]} \left|\bar\Phi_n(s,a,a',h) - \bar\Phi_n(s,0,0,h) \right| \leq 50\big(\E_{\bLambdas \sim \mu_{\Lambda}}[\bLambdas^2] + \E_{\bThetas \sim \mu_{\Theta}}[\bar\bTheta_0^2]\big)(\ep_n + \ep_n') \rightarrow 0,
\end{align*}
thus completing the proof of the lemma.

\subsection{Proof of \cref{lemma:C11}}\label{sec:proof-of-lemma:C11}

Since $|\bar\bTheta_0| \leq 2K_2\sqrt{\log d}$, we then have
\begin{align}
	& \left| \E[\langle U(\barbtheta^{(1)}) (\barbtheta^{(1)})^{\top}\barbtheta^{(2)} / d \rangle_{1,a,a',h}] - \E[\langle (\barbtheta^{(1)})^{\top}\barbtheta^{(2)} / d \rangle_{1,a,a',h}] \E[\langle U(\barbtheta^{(1)}) \rangle_{1,a,a',h}]  \right| \nonumber \\
	\leq & 4K_2^2 \log d \E[\langle | U(\bar\vtheta) - \E[\langle U(\bar\vtheta) \rangle_{1,a,a',h}]|\rangle_{1,a,a',h}].\label{eq:G188}
\end{align} 
Using Gaussian integration by parts and Nishimori identity (\cref{lemma:nishimori}), we have
\begin{align}
	& \E[\langle (\barbtheta^{(1)})^{\top}\barbtheta^{(2)} / d \rangle_{1,a,a',h}] \E[\langle U(\barbtheta^{(1)}) \rangle_{1,a,a',h}] = a \E[\langle (\barbtheta^{(1)})^{\top}\barbtheta^{(2)} / d \rangle_{1,a,a',h} ]^2,  \label{eq:G189}\\
	& \E[\langle U(\barbtheta^{(1)}) (\barbtheta^{(1)})^{\top}\barbtheta^{(2)}/ d \rangle_{1,a,a',h}] = a \E[\langle ((\barbtheta^{(1)})^{\top}\barbtheta^{(2)} / d)^2\rangle_{1,a,a',h}]. \label{eq:G190}  
\end{align}
Next, we combine \cref{eq:G188,eq:G189,eq:G190}, and conclude that for all $a \in [10^{-1},10]$, 
\begin{align*}
	& \E[\langle ((\barbtheta^{(1)})^{\top}\barbtheta^{(2)} / d - \E[\langle (\barbtheta^{(1)})^{\top}\barbtheta^{(2)} / d\rangle_{1,a,a',h}])^2 \rangle_{1,a,a',h}] \\
	 \leq &  40K_2^2\log d\E[\langle | U(\barbtheta) - \E[\langle U(\barbtheta) \rangle_{1,a,a',h}]|\rangle_{1,a,a',h}]. 
	\end{align*}

\subsection{Proof of \cref{lemma:G3}}\label{sec:proof-of-lemma:G3}
One can verify that $\bar\phi_n(1,a,a',h)$ is twice differentiable for $a,a' \in (0,10)$. Using Gaussian integration by parts and Nishimori identity, we can compute its partial derivatives:
\begin{align}
& \frac{\partial }{\partial a} \bar\phi_n(1,a,a',h) = \ep_n \langle U(\barbtheta) \rangle_{1,a,a',h}, \label{eq:pU} \\
& \frac{\partial^2 }{\partial a^2 } \bar\phi_n(1,a,a',h) = n\ep_n^2\langle (U(\barbtheta) - \langle U(\barbtheta)\rangle_{1,a,a',h})^2 \rangle_{1,a,a',h} + \ep_n \langle 2\bar\bTheta^{\top} \barbtheta / d - \barbtheta^{\top} \barbtheta / d \rangle_{1,a,a',h}.  \label{eq:G191}
\end{align}
Notice that $|\bar\bTheta_0| \leq 2K_2\sqrt{\log d}$, then $\left|\E[\frac{\partial }{\partial a} \bar\phi_n(1,a,a',h)] \right| =  \ep_n a \E[\langle \barbTheta^{\top} \barbtheta / d \rangle_{1,a,a',h}] \leq 40 \ep_n K_2^2 \log d$ for all $a,a' \in (0,10)$. Using these results, we further obtain that
\begin{align}
	& \langle (U(\barbtheta) - \langle U(\barbtheta)\rangle_{1,a,a',h})^2 \rangle_{1,a,a',h} \leq \frac{1}{n\ep_n^2}\left( \frac{\partial^2 }{\partial a^2 } \bar\phi_n(1,a,a',h) + 12K_2^2 \ep_n \log d  \right), \nonumber\\
	& \int_1^2 \int_1^2 \E [\langle (U(\barbtheta) - \langle U(\barbtheta)\rangle_{1,a,a',h})^2 \rangle_{1,a,a',h}] \dd a \dd a' \nonumber \\
	 \leq & \int_1^2 \E\left[  \frac{1}{n\ep_n^2}\left( \frac{\partial }{\partial a} \bar\phi_n(1, a,a',h)\Big\vert_{a = 2} - \frac{\partial }{\partial a} \bar\phi_n(1, a,a',h)\Big\vert_{a = 1} + 12K_2^2 \ep_n \log d \right) \right] \dd a' \nonumber \\
	  \leq & CK_2^2n^{-1}\ep_n^{-1} \log d, \label{eq:G194}
\end{align}
where $C > 0$ is a numerical constant. Leveraging \cref{eq:G191}, we conclude that the following two functions are convex for all fixed $a' \in (0,10)$ and $h \geq 0$:
\begin{align*}
	& a \mapsto \bar\phi_n(1, a,a',h) + 6\ep_nK_2^2 a^2 \log d , \\
	& a \mapsto \E[\bar\phi_n(1, a,a',h)] + 6\ep_nK_2^2 a^2 \log d . 
\end{align*}
By \cref{lemma:A10}, for all $a \in [1,2]$, $b \in (0, 1/2), a' \in [1/2,3]$ and $h \geq 0$, we have
\begin{align}
	& \E \left[ \left| \frac{\partial }{\partial a} 
	\bar\phi_n(1, a,a',h) - E[ \frac{\partial }{\partial a} \bar\phi_n(1, a,a',h)] \right| \right] \nonumber \\
	\leq & \E\left[ \frac{\partial }{\partial a} \bar\phi_n(1, a + b,a',h) - \frac{\partial }{\partial a} \bar\phi_n(1, a - b,a',h) \right] + 24\ep_nK_2^2b \log d + \frac{3v_n(h)}{b}. \label{eq:G192}
\end{align}
Again we use the fact that $\left|\E[\frac{\partial }{\partial a} \bar\phi_n(1, a,a',h)] \right|  \leq 40 \ep_n K_2^2 \log d$ for all  $a,a' \in (0,10)$, and conclude that
\begin{align}
	& \int_1^2 \E\left[ \frac{\partial }{\partial a} \bar\phi_n(1, a + b,a',h) - \frac{\partial }{\partial a}  \bar\phi_n(1, a - b,a',h) \right] \dd a \nonumber \\
	 = & \E\left[  \bar\phi_n(1, b + 2, a',h) -  \bar\phi_n(1, b + 1, a',h) -  \bar\phi_n(1, 2 - b, a',h) +  \bar\phi_n(1, 1 - b, a',h) \right] \nonumber \\
		 \leq & C'K_2^2b\ep_n \log d, \label{eq:G193}
\end{align}
where $C' > 0$ is a numerical constant. Then we combine \cref{eq:G192,eq:G193} and obtain that
\begin{align}\label{eq:Cpp}
	\int_1^2\int_1^2 \E \left[ \left| \frac{\partial }{\partial a} \bar\phi_n(1, a,a',h) - E[ \frac{\partial }{\partial a} \bar\phi_n(1, a,a',h)] \right| \right] \dd a \dd a' \leq C''\left(b\ep_n K_2^2 \log d + \frac{v_n(h)}{b}\right),
\end{align}
where $C'' > 0$ is another numerical constant. Later in \cref{lemma:concentration-of-free-energy} we will see that under the current conditions, for $n,d$ large enough we have $v_n(h) < \frac{1}{4}K_2^2\ep_n \log d$. Since $b$ is arbitrary in $(0, 1/2)$, we can then take $b = \sqrt{v_n(h) / (\eps_nK_2^2 \log d)}$ in \cref{eq:Cpp} and apply this to \cref{eq:pU}, which gives 
\begin{align}\label{eq:G195}
	\int_1^2 \int_1^2 \E\left[ \left| \langle U(\barbtheta)  \rangle_{1,a,a',h} - \E [\langle U(\barbtheta)  \rangle_{1,a,a',h}] \right| \right] \dd a \dd a' \leq 2C''K_2 \sqrt{v_n(h) \ep_n^{-1} \log d}. 
\end{align} 
Finally, we combine \Holder's inequality, \cref{eq:G194,eq:G195} and concludes the proof of the lemma. 
		
\subsection{Proof of \cref{lemma:concentration-of-free-energy}}\label{sec:proof-of-lemma:concentration-of-free-energy}
Conditioning on $(\bLambda, \barbTheta)$, we consider the mapping
\begin{align*}
	f: (\bZ, \bg, \bg', \sqrt{n}\bW') \mapsto \bar\phi_n( 1, a, a', h). 
\end{align*}
For $n,d$ large enough, the following inequality holds for all $a,a' \in [0,10]$.
\begin{align*}
	\|\nabla f\|^2 \leq CK_1^2K_2^2d^{1/2}n^{-3/2}\log d,
\end{align*}
where $C > 0$ is a numerical constant. By Gaussian Poincar\'e inequality \cite{van2014probability},  we conclude that for $n,d$ large enough
\begin{align}
	\E_{\bZ, \bg, \bg', \bW'} \left[ \left( \bar\phi_n(1,  a, a', h) - \E_{\bZ, \bg, \bg', \bW'}[\bar\phi_n( 1, a, a',h)] \right)^2 \right] \leq CK_1^2K_2^2d^{1/2}n^{-3/2}\log d . \label{eq:conc187}
\end{align}
In the above display, the expectations are taken over $(\bZ, \bg, \bg', \bW')$. 

Next, we show that $\E_{\bZ, \bg, \bg', \bW'}[\bar\phi_n(1, a, a',h)]$ (as a function of $(\bLambda, \barbTheta)$), concentrates around its expectation. Notice that for $n,d$ large enough, for all $i \in [n], j \in [d]$ we have 
\begin{align*}
	& \left|\frac{\partial}{\partial \bLambda_i} \E_{\bZ, \bg, \bg', \bW'}[\bar\phi_n(1, a, a',h)]\right| \leq C'K_1K_2^2d^{1/2}n^{-3/2} \log d, \\
	& \left|\frac{\partial}{\partial \bar\bTheta_j} \E_{\bZ, \bg, \bg', \bW'}[\bar\phi_n(1, a, a',h)]\right| \leq C''K_1^2 K_2 d^{-1/2}n^{-1/2}(\log d)^{1/2},
\end{align*} 
where $C', C'' > 0$ are numerical constants. By Efron-Stein inequality \cite{van2014probability}, we see that there exists a numerical constant $C''' > 0$, such that for $n,d$ large enough
\begin{align}
	\E\left[ \left(\E_{\bZ, \bg, \bg', \bW'}[\bar\phi_n(1, a, a', h)] - \E[\bar\phi_n(1, a, a', h)] \right)^2 \right] \leq C'''K_1^4 K_2^4 dn^{-2}(\log d)^2. \label{eq:conc188}
\end{align}
Finally, we combine \cref{eq:conc187,eq:conc188} and conclude that for $n,d$ large enough, there exists a numerical constant $C_1 > 0$, such that for all $a,a' \in [0,10]$
\begin{align*}
	\E\left[ \left(\bar\phi_n(1, a, a', h) -  \E[\bar\phi_n(1, a, a', h)] \right)^2 \right] \leq C_1^2K_1^4K_2^4 dn^{-2} (\log d)^2 , 
\end{align*}
which concludes the proof of the lemma using Cauchy–Schwarz inequality. 

\subsection{Proof of the first claim of \cref{lemma:C14}}\label{sec:proof-of-lemma:C14}
Invoking \cref{lemma:G1}, as $n, d \rightarrow \infty$ we have
\begin{align}\label{eq:48}
	\sup_{a \in [0, 2a_{\ast}], a' \in [0, 2a_{\ast}']}\left| \bar\Phi_n(1,a,a',h) - \sup_{q \geq 0} \cF(q_{\Theta}^2 + h, q) \right| = o_n(1). 
\end{align}
%
By Jensen's inequality, for $a \sim \Unif[a_{\ast} / 2, a_{\ast}]$ and $a' \sim \Unif[a'_{\ast} / 2, a'_{\ast}]$ we have 
	\begin{align}\label{eq:49}
		& \frac{1}{n^2}\E\left[\big\| \E[\bLambda \bLambda^{\top} \mid \bar\bA(1), \bY'(h)] - \E[\bLambda \bLambda^{\top} \mid \bar\bA(1), \bx'(a'), \bar\bx(a), \bY'(h)] \big\|_F^2 \right] \nonumber  \\ 
		\leq & \frac{1}{n^2}\E\left[\big\| \E[\bLambda \bLambda^{\top} \mid \bar\bA(1), \bY'(h)] - \E[\bLambda \bLambda^{\top} \mid \bar\bA(1), \bx'(a'_{\ast}), \bar\bx(a_{\ast}), \bY'(h)] \big\|_F^2 \right].
	\end{align}
	Notice that the mapping $h \mapsto \bar\Phi_n(1,a_{\ast}, a_{\ast}', h)$ is convex and differentiable, and $h \mapsto \sup_{q \geq 0} \cF(q_{\Theta}^2 + h, q)$ is differentiable for all $q_{\Theta}^2 + h \in D$. Therefore, using Gaussian integration by parts, \cref{lemma:nishimori,lemma:convex-derivative}, we conclude that for $h + q_{\Theta}^2 \in D$, as $n,d \rightarrow \infty$ the right hand side of \cref{eq:49} converges to 0 as $n,d \rightarrow \infty$. Furthermore, 
\begin{align*}
	 \frac{\partial}{\partial a'} \E\left[ \|\bLambda_{1,a,a',h}\|^2 \right] 	=  \ep_n'a'\E\left[ \|\E[\bLambda \bLambda^{\top} \mid \bar\bA(1), \bar\bx(a), \bx'(a'), \bY'(h)] - \bLambda_{1,a,a',h} \bLambda_{1,a,a',h}^{\top}\|_F^2  \right] \leq  4K_1^4n^2\ep_n'a'.
\end{align*}
Therefore, for $a \sim \Unif[a_{\ast} / 2, a_{\ast}]$ and $a' \sim \Unif[a_{\ast}'/2, a_{\ast}']$, using the above equation we have
\begin{align}\label{eq:50}
	& \frac{1}{n^2} \E\left[\|\E[\bLambda\bLambda^{\top} \mid \bar\bA(1), \bx'(a'), \bar\bx(a), \bY'(h)] - \bLambda_{1,a,a',h} \bLambda_{1,a,a',h}^{\top}\|_F^2\right] \nonumber \\
	\leq & \frac{8}{n^2a_{\ast}(a_{\ast}')^2\ep_n'} \int_{a_{\ast}/2}^{a_{\ast}}\int_{a_{\ast}'/2}^{a_{\ast}'}\frac{\partial}{\partial a'} \E\left[ \|\bLambda_{1,a,a',h}\|^2 \right] \dd a' \dd a \nonumber \\
	\leq & \frac{4K_1^2}{n \ep_n'(a_{\ast}')^2}, 
\end{align}
which converges to zero as $n,d \rightarrow \infty$ under the current assumptions.  \cref{eq:49,eq:50} imply 
\begin{align*}
	\frac{1}{n^2}\| \E[\bLambda\bLambda^{\top} \mid \bar\bA(1), \bY'(h)] \|_F^2 = \frac{1}{n^2} \|\bLambda_{1,a,a',h}\|^4+ o_P(1). 
\end{align*}
By \cref{cor:overlap-concentration} we have
\begin{align*}
	\frac{1}{n} \|\bLambda_{1,a,a',h}\|^2 = \frac{1}{n}\E\left[\|\bLambda_{1,a,a',h}\|^2 \mid a, a' \right] + o_P(1).
\end{align*}
By Jensen's inequality, for all $a \in [a_{\ast} / 2, a_{\ast}]$ and $a' \in [a_{\ast}' / 2, a_{\ast}']$ we have 
 $$\E\left[\|\bLambda_{1,a,a',h}\|^2 \mid a, a' \right] \leq \E\left[\|\bLambda_{1,a_{\ast},a_{\ast}',h}\|^2  \right].$$ 
Next, we combine the above equations and obtain that as $n,d \to \infty$
\begin{align*}
	\frac{1}{n^2}\| \E[\bLambda\bLambda^{\top} \mid \bar\bA(1), \bY'(h)] \|_F^2 \leq \frac{1}{n^2}\E\left[\|\bLambda_{1,a_{\ast},a_{\ast}',h}\|^2  \right]^2 + o_P(1). 
\end{align*}
Similarly, if we consider $a \sim \Unif [a_{\ast}, 2a_{\ast}]$ and $a' \sim  \Unif [a_{\ast}', 2a_{\ast}']$, then we have
\begin{align*}
	\frac{1}{n^2}\| \E[\bLambda\bLambda^{\top} \mid \bar\bA(1), \bY'(h)] \|_F^2 \geq \frac{1}{n^2}\E\left[\|\bLambda_{1,a_{\ast},a_{\ast}',h}\|^2  \right]^2 + o_P(1). 
\end{align*}
Since support$(\bLambdas) \subseteq [-K_1, K_1]$, by Lebesgue dominated convergence theorem we have
\begin{align}\label{eq:51}
	\lim_{n,d \rightarrow \infty}\frac{1}{n^2}\E\left[\|\bLambda_{1,a_{\ast},a_{\ast}',h}\|^2 \right]^2 = \lim_{n,d \rightarrow \infty}\frac{1}{n^2}\E[\| \E[\bLambda\bLambda^{\top} \mid \bar\bA(1), \bY'(h)] \|_F^2].
\end{align}
By \cref{lemma:G1}, as $n,d \rightarrow \infty$ we have $\bar\Phi_n(1, 0, 0, h) \rightarrow \sup_{q \geq 0}\cF(q_{\Theta}^2 + h, q)$. Furthermore, the mapping $h \mapsto \bar\Phi_n(1, 0, 0, h)$ is convex and differentiable. Therefore, if $q_{\Theta}^2 + h \in D$, then by \cref{lemma:convex-derivative} and Gaussian integration by parts we have
\begin{align}\label{eq:52}
	\lim_{n,d \rightarrow \infty}\frac{1}{4n^2}\E[\| \E[\bLambda\bLambda^{\top} \mid \bar\bA(1), \bY'(h)] \|_F^2] = \frac{\partial}{\partial h} \sup_{q \geq 0}\cF(q_{\Theta}^2 + h, q).
\end{align}
The proof of the first claim follows immediately from \cref{eq:51,eq:52}. 


\subsection{Proof of the second claim of \cref{lemma:C14}}\label{sec:proof-of-lemma:C15}
We let $a \sim \Unif[a_{\ast} / 2, a_{\ast}]$ and $a' \sim \Unif[a_{\ast}' / 2, a_{\ast}']$, then by \cref{cor:overlap-concentration}, we have
\begin{align*}
	& \frac{1}{n}\|\bLambda_{1,a,a',h}\|^2 = \frac{1}{n}\E\left[\|\bLambda_{1,a,a',h}\|^2 \mid a,a' \right] + \delta_{n,1}, \\
	& \frac{1}{\sqrt{nd}}\|\barbTheta_{1,a,a',h}\|^2 = \frac{1}{\sqrt{nd}}\E\left[\|\barbTheta_{1,a,a',h}\|^2 \mid a,a' \right] + \delta_{n,2},
\end{align*}
where $\E[\delta_{n,1}^2]$ and $\E[\delta_{n,2}^2]$ are random variables that converge to 0 as $n,d \rightarrow \infty$. By \cref{eq:C47} we have
\begin{align*}
	 \frac{1}{2n\sqrt{nd}}\|\bM_{1,0,0,h}\|_F^2 =  \frac{1}{2n\sqrt{nd}}\|\bLambda_{1,a,a',h}\|^2 \|\barbTheta_{1,a,a',h}\|^2 + \delta_{n,0}.
\end{align*}
In the above equation, $\delta_{n,0}$ is a random variable satisfying $\E[|\delta_{n,0}|] \rightarrow 0$ as $n,d \rightarrow \infty$. Therefore, for all $a \in [a_{\ast} / 2, a_{\ast}]$ and $a' \in [a_{\ast}' / 2, a_{\ast}']$

%

%
\begin{align}
	& \frac{1}{2n\sqrt{nd}}\|\bM_{1,0,0,h}\|_F^2 \nonumber \\
	\leq & \frac{1}{2n\sqrt{nd}}\E\left[\|\bLambda_{1,a,a',h}\|^2 \right]\E\left[\|\barbTheta_{1,a,a',h}\|^2 \right] + \frac{1}{2}|\delta_{n,1}||\delta_{n,2}|  \\
	&   + \frac{|\delta_{n,1}|}{2\sqrt{nd}}\E\left[\|\barbTheta_{1,a,a',h}\|^2\right] + \frac{|\delta_{n,2}|}{2n}\E\left[\|\bLambda_{1,a,a',h}\|^2 \right]+ \delta_{n,0}, \label{eq:C52}
\end{align}
Notice that 
\begin{align}
	 \limsup_{n,d \rightarrow \infty}\frac{1}{\sqrt{nd}}\E\left[\|\barbTheta_{1,a_{\ast}, a'_{\ast}, h}\|^2 \right]< \infty, \qquad  \limsup_{n,d \rightarrow \infty}\frac{1}{n}\E\left[\|\bLambda_{1,a_{\ast}, a'_{\ast}, h}\|^2 \right] < \infty. \label{eq:C54}
\end{align}
We plug \cref{eq:C54} into \cref{eq:C52} then take the expectation, which implies that as $n,d \rightarrow \infty$ we have
\begin{align}
	& \liminf_{n,d \rightarrow \infty}\frac{1}{2n\sqrt{nd}}\E\left[ \|\bLambda_{1,a_{\ast}, a'_{\ast}, h}\|^2\right]\E\left[ \|\barbTheta_{1,a_{\ast}, a'_{\ast}, h}\|^2 \right] \nonumber \\
	\geq & \lim_{n,d \rightarrow \infty} \frac{1}{2n\sqrt{nd}}\E\left[\|\bM_{1,0,0,h}\|^2\right] = D_{\Theta}(h). \label{eq:C55}
\end{align}
Similarly, if we let $a \sim \Unif[a_{\ast}, 2a_{\ast}]$ and $a' \sim \Unif[a_{\ast}', 2a_{\ast}']$, then we can conclude that
\begin{align}
	& \limsup_{n,d \rightarrow \infty}\frac{1}{2n\sqrt{nd}}\E\left[ \|\bLambda_{1,a_{\ast}, a'_{\ast}, h}\|^2\right] \E\left[ \|\barbTheta_{1,a_{\ast}, a'_{\ast}, h}\|^2 \right] \nonumber \\
	\leq & \lim_{n,d \rightarrow \infty} \frac{1}{2n\sqrt{nd}}\E\left[\|\bM_{1,0,0,h}\|^2\right] = D_{\Theta}(h). \label{eq:C56}
\end{align}
Notice that $D_{\Theta}(h) =  2 q_{\Theta}^2 \frac{\partial}{\partial h} \sup_{q \geq 0} \cF(q_{\Theta}^2 + h, q)$, then the proof of the second claim follows from \cref{eq:C55}, \cref{eq:C56} and the first claim.

\section{Proofs for the Gaussian mixture clustering example}

\subsection{Proof of \cref{prop:GMM} claim (a)}\label{sec:proof-of-prop:GMM}

We define the pairwise overlap achieved by estimator $\hat\bLambda$ as 
\begin{align*}
 \Pairoverlap_n := \frac{2}{n^2}\sum\limits_{i<j} \mathbbm{1}\left\{ \mathbbm{1}\{\bLambda_i = \bLambda_j\} = \mathbbm{1}\{\hat{\bLambda}_i = {\hat{\bLambda}_j}\} \right\}.
\end{align*} 
We notice that
\begin{align}\label{eq:45}
	\Pairoverlap_n =& \frac{2}{n^2} \sum_{i < j} \left( \mathbbm{1}\{\bLambda_i = \hat\bLambda_i\}\mathbbm{1}\{\bLambda_j = \hat\bLambda_j\} + (1 - \mathbbm{1}\{\bLambda_i = \hat\bLambda_i\})(1 - \mathbbm{1}\{\bLambda_j = \hat\bLambda_j\} ) \right) \nonumber\\
	=& 2\Overlap_n^2 + 1 - 2\Overlap_n + o_n(1).
\end{align}
According to \cite[Section 2.3]{lelarge2019fundamental}, under the symmetric model \eqref{model:symmetric},  if $q_{\Theta} \leq 1$, then we have $\lim_{n \rightarrow \infty} \MMSEsy_n(\mu_{\Lambda}; q_{\Theta}) = 1$. This is also the mean square error achieved by the constant estimator $\mathbf{0}_{n \times n}$. For an estimate of the labels $\hat\bLambda \in \{-1, +1\}^n$ and $a \in (0,1)$, we define the rescaled vector $\hat{\bLambda}_a := \sqrt{a} \hat{\bLambda} \in \{-\sqrt{a},+\sqrt{a}\}^n$. Then by \cref{thm:lower-bound}, we have
\begin{align*}
	\frac{1}{n^2}\E\left[\left\| \bLambda \bLambda^\intercal - \hat{\bLambda}_a \hat{\bLambda}_a^\intercal \right\|_F^2\right] =& \E\left[(1 - a)^2(\Pairoverlap_n + n^{-1}) + (1 + a)^2(1 - n^{-1} -  \Pairoverlap_n)\right] \\
	 \geq & \MMSEas_n(\mu_{\Lambda}, \mu_{\Theta}). 
\end{align*}
\cref{thm:lower-bound} implies that $\lim_{n,d \rightarrow \infty} \MMSEas_n(\mu_{\Lambda}, \mu_{\Theta}) = 1$, thus
\begin{align*}
	\limsup\limits_{n,d \rightarrow \infty} \E\left[ \Pairoverlap_n\right] \leq \frac{a^2 + 2a}{4a},
\end{align*}
which holds for every $a \in(0,1)$. Let $a \rightarrow 0^+$, we  then conclude that $\limsup_{n,d \rightarrow \infty}\E[\Pairoverlap_n] \leq 1 / 2$. Next, we plug this result into \cref{eq:45} then apply dominated convergence theorem, which gives $\Overlap_n \toP 1 / 2$. In summary, partial recovery of component identity is impossible in the current setting.

\subsection{Proof of \cref{thm:GMM1} part (a)}\label{sec:proof-of-thm:GMM1}

Let $\hat{\bLambda} \in \RR^{n \times k} $ be any estimator of the cluster assignments constructed based on data matrix $\bA$. For $a > 0$, we define $\bhLambda_a := \sqrt{a} \bhLambda$.  Under the current conditions, as $n,d \to \infty$ we have
 \begin{align*}
 & \MMSEas_n(\mu_{\Lambda}, \mu_{\Theta}) \\
  = & k^{-2}(k - 1) + o_n(1)  \\
 \leq & \frac{1}{n^2}\E\left[ \big\|\bLambda\bLambda^{\top} - k^{-1}\mathbf{1}_{n \times n} - \bhLambda_a \bhLambda_a^{\top} \big\|_F^2  \right] \\
 =& k^{-2}(k - 1) + \frac{a^2}{n^2}\sum_{i,j \in [n]} \E\left[ \mathbbm{1}\{\hat\bLambda_i = \hat\bLambda_j\} \right]  - \frac{2a}{n^2}\sum_{i,j \in [n]} \E\left[ \mathbbm{1}\{\hat\bLambda_i = \hat\bLambda_j\} (\mathbbm{1}\{\bLambda_i = \bLambda_j\} - k^{-1}) \right] + o_n(1) \\
 \leq & k^{-2}(k - 1) + a^2  - \frac{2a}{n^2}\sum_{i,j \in [n]} \E\left[ \mathbbm{1}\{\hat\bLambda_i = \hat\bLambda_j\} (\mathbbm{1}\{\bLambda_i = \bLambda_j\} - k^{-1}) \right] + o_n(1). 
\end{align*}
Using the above equation we can conclude that 
\begin{align*}
	\limsup_{n,d \rightarrow \infty} \frac{1}{n^2}\sum_{i,j \in [n]} \E\left[ \mathbbm{1}\{\hat\bLambda_i = \hat\bLambda_j\} (\mathbbm{1}\{\bLambda_i = \bLambda_j\} - k^{-1}) \right]  \leq\frac{a}{2}.
\end{align*}
Since $a > 0$ is arbitrary, we then have
\begin{align}\label{eq:supr}
	\limsup_{n,d \rightarrow \infty} \frac{1}{n^2}\sum_{i,j \in [n]} \E\left[ \mathbbm{1}\{\hat\bLambda_i = \hat\bLambda_j\} (\mathbbm{1}\{\bLambda_i = \bLambda_j\} - k^{-1}) \right] \leq 0.
\end{align}
For $s,r \in [k]$, we define
\begin{align}\label{eq:Csk}
 C_{sr} := \frac{1}{n}\sum\limits_{i = 1}^n \mathbbm{1}\{\hat\bLambda_i = \be_s, \bLambda_i = \be_r\}.
\end{align}
We immediately see that $C_{sr} \geq 0$ and $\sum_{s \in [k]}\sum_{r \in [k]} C_{sr} = 1$. Furthermore, notice that 
\begin{align}\label{eq:14}
 	\frac{1}{n^2}\sum_{i,j \in [n]} \mathbbm{1}\{\hat\bLambda_i = \hat\bLambda_j, \bLambda_i = \bLambda_j\} = & \frac{1}{n^2}\sum_{i,j \in [n]} \sum_{s,r \in [k]} \mathbbm{1}\{\hat\bLambda_i = \hat\bLambda_j = \be_s, \bLambda_i = \bLambda_j = \be_r\} \nonumber \\
 = & \sum_{s,r \in [k]}\Big(\frac{1}{n}\sum_{i \in [n]}  \mathbbm{1}\{\hat\bLambda_i  = \be_s, \bLambda_i =\be_r\}\Big)^2 \\
 	= &  \sum_{s, r \in [k]} C_{sr}^2, \nonumber 
\end{align}
\begin{align}\label{eq:15}
 \frac{1}{kn^2}\sum\limits_{i,j \in [n]} \mathbbm{1}\{\hat\bLambda_i = \hat\bLambda_j\} = &\frac{1}{kn^2}\sum\limits_{i,j \in [n]} \sum_{s \in [k]} \mathbbm{1}\{\hat\bLambda_i = \be_s\} \mathbbm{1}\{\hat\bLambda_j = \be_s\} \nonumber \\
 =& \frac{1}{k}\sum_{s \in [k]} \Big( \frac{1}{n}\sum_{i \in [n]} \sum_{r \in [k]} \mathbbm{1}\{\hat\bLambda_i = \be_s, \bLambda_i = \be_r\} \Big)^2 \\
 = & \frac{1}{r}\sum_{s \in [k]} (C_{s1} + \cdots + C_{sk})^2. \nonumber
\end{align}
Next, we subtract \cref{eq:14} by \cref{eq:15} and apply \cref{eq:supr}, which gives 
\begin{align*}
	\lim_{n \rightarrow \infty}\E\Big[ \sum_{s \in [k]}\sum_{1 \leq r_1 < r_2 \leq k}(C_{sr_1} - C_{s r_2})^2 \Big] = 0.
\end{align*}
Note that for all $s \in [k]$, there exists $r_s \in [k]$, such that $[k] = \{r_s: s \in [k]\}$ and $\Overlap_n = \sum_{s \in [k]}C_{sr_s}$. Combining the above results, we conclude that $\Overlap_n = k^{-1} + o_P(1)$, thus completing the proof of part (a).

\subsection{Proof of \cref{thm:GMM1} part (b)}\label{sec:proof-of-thm:GMM2}

Suppose the statement is not true, then for any $\hat\bLambda$ and any subsequence of $\NN_+$, there further exists a subsequence $\{n_i\}_{i \in \NN_+} \subseteq \NN_+$ of the previous subsequence, such that $n_i < n_{i + 1}$ and $\lim_{i \rightarrow \infty} \E[\Overlap_{n_i}] = k^{-1}$. Therefore, $\Overlap_{n_i} \toP k^{-1}$. In the following parts of the proof, we will restrict to this subsequence $\{n_i\}_{i \in \NN_+}$.
 
We assume $\hat\bLambda \in \RR^{n \times k}$ such that $\hat\bLambda \overset{d}{=} \mu(\bLambda = \cdot \mid \bA)$. 
Recall that for $s,r \in [k]$, $C_{sr}$ is defined in \cref{eq:Csk}.  Furthermore, notice that $\hat\bLambda \overset{d}{=} \bLambda$, then by the law of large numbers, for all $s,r \in [k]$ we have
 	\begin{align}\label{eq:16}
 		C_{s1} + C_{s2} + \cdots + C_{sk} \toP \frac{1}{k}, \qquad C_{1r} + C_{2r} + \cdots + C_{kr} \toP \frac{1}{k}.
 	\end{align}
 	For $\delta > 0$, if $C_{11} > k^{-2} + \delta$, then by \cref{eq:16} $\sum_{2 \leq s,r \leq k} C_{sr} > (1 - k^{-1})^2 + \delta + o_P(1)$. As a result, we conclude that there exists a permutation $\pi$ of $\{2,3,\cdots, k\}$, such that $C_{2\pi(2)} + C_{3\pi(3)} + \cdots + C_{k \pi(k)} \geq k^{-2}(k - 1) + (r - 1)^{-1}\delta + o_P(1)$. Therefore, $\Overlap_n \geq C_{11} + C_{2\pi(2)} + C_{3\pi(3)} + \cdots + C_{k \pi(k)} > k^{-1} + k(k - 1)^{-1}\delta + o_P(1)$. For $s,r \in [k]$, we define the set $S_{sr}^{\delta} := \{C_{sr} > k^{-2} + \delta\}$. Since $\Overlap_{n_i} \toP k^{-1}$, using the above analysis we derive that $\lim_{i \rightarrow \infty}\P(S_{11}^{\delta}) = 0$. Indeed, we can repeat such analysis for all $s,r \in [k]$ and conclude that $\lim_{i \rightarrow \infty}\P(S_{sr}^{\delta}) = 0$, thus $C_{sr} \leq k^{-2} + \delta + o_P(1)$ along the subsequence $\{n_i\}_{i \in \NN_+}$. Since $\sum_{s,r \in [k]}C_{sr} = 1$ and $\delta$ is arbitrary, we deduce that $C_{sr} \toP k^{-2}$ along the subsequence $\{n_i\}_{i \in \NN_+}$. This further implies that $C_{sr} \toP k^{-2}$. 

However, according to \cite[Theorem 2]{banks2018information} and \cref{thm:upper-bound}, we see that under the conditions of this part 
 \begin{align*}
 	\liminf_{n,d \rightarrow \infty}\frac{1}{n^2}\sum_{i,j \in [n]}\E\left[ \mathbbm{1}\{\hat\bLambda_i = \hat\bLambda_j\} \mathbbm{1}\{\bLambda_i = \bLambda_j\} \right] = &\liminf_{n \rightarrow \infty}\frac{1}{n^2}\sum_{i,j \in [n]}\E\left[ \E[ \mathbbm{1}\{\bLambda_i = \bLambda_j\}\mid \bA] ^2 \right] > k^{-2}.
 \end{align*}
 Finally, we plug \cref{eq:14} into the formula above, which leads to $\liminf_{n,d \rightarrow \infty} \E[\sum_{s,k \in [r]} C_{sk}^2] > k^{-2}$. This is in contradiction with the previously established claim that $C_{sr} \toP k^{-2}$ for all $s, r \in [k]$, thus completing the proof of part (b).

\end{appendices}

\end{document}